\documentclass[12pt]{amsart}
\usepackage{a4wide}
\usepackage{amssymb}
\usepackage{amsthm} 
\usepackage{amsmath} 
\usepackage{amscd}
\usepackage{verbatim}
\usepackage{mathrsfs}
\usepackage{dsfont,bbm} 
\usepackage{youngtab}
\usepackage[usenames]{color}
\usepackage{tikz,graphicx}
\usepackage{hyperref}
\usepackage{multicol}

\numberwithin{equation}{section}
\theoremstyle{plain}
\newtheorem{theorem}{Theorem}[section]
\newtheorem{corollary}[theorem]{Corollary}
\newtheorem{lemma}[theorem]{Lemma}
\newtheorem{proposition}[theorem]{Proposition}
\newtheorem{question}[theorem]{Question}
\theoremstyle{definition}

\newtheorem*{example}{Example}
\newtheorem{conjecture}[theorem]{Conjecture}
\newtheorem{problem}[theorem]{Problem}
\theoremstyle{remark}
\newtheorem*{remark}{Remark}

\textheight8.15in \textwidth6.25in

\DeclareMathOperator{\SL}{SL}
\DeclareMathOperator{\GL}{GL}

\DeclareMathOperator{\ch}{ch}

\renewcommand{\H}{\mathbb{H}}

\newcommand{\Real}{\mathbb R}
\newcommand{\N}{\mathbb N}

\newcommand{\Z}{\mathbb Z}

\newcommand{\C}{\mathrm C}
\newcommand{\D}{\mathrm D}

\newcommand{\Vnat}{V^{\natural}}
\newcommand{\m}{\mathbf{m}}
\newcommand{\n}{\mathbf{n}}

\newcommand{\Lee}{\Lambda}
\newcommand{\MM}{\mathbb{M}}
\newcommand{\Co}{\textsl{Co}}
\newcommand{\CC}{\mathbb C}
\newcommand{\RR}{\mathbb R}
\newcommand{\HH}{\mathbb H}
\renewcommand{\t}{\tau}
\usepackage{ctable}
\newcommand{\tr}{{\rm tr}}
\newcommand{\dimq}{\dim_*}
\newcommand{\qdim}{\operatorname{qdim}}
\newcommand{\gt}[1]{\mathfrak{#1}}
\newcommand{\Vir}{\mathcal{V}}

\newcommand{\Id}{{\rm Id}}

\newcommand{\Vsnat}{V^{s\natural}}
\DeclareMathOperator{\td}{td}
\newcommand{\QQ}{\mathbb{Q}}
\DeclareMathOperator{\Aut}{Aut}
\DeclareMathOperator{\Out}{Out}

\newcommand{\md}{M}
\newcommand{\tw}{{\rm tw}}

\begin{document}

\title[Moonshine]
{Moonshine}

\author{John F. R. Duncan, Michael J. Griffin and Ken Ono}

\address{Department of Mathematics, Applied Mathematics and Statistics,
Case Western Reserve University, Cleveland, Ohio 44106}
\email{john.duncan@case.edu}

\address{Department of Mathematics and Computer Science,
Emory University, Atlanta, Georgia 30322}
\email{mjgrif3@emory.edu}

\address{Department of Mathematics and Computer Science,
Emory University, Atlanta, Georgia 30322}
\email{ono@mathcs.emory.edu}

\thanks{The authors are supported by the NSF. The first author also thanks the Simons Foundation (\#316779), and the third author also thanks the support of the Asa Griggs Candler Fund.
The authors thank John McKay and and John Thompson for answering questions about the history related to the study of
distributions within the moonshine module. 
Thanks are due also to Miranda Cheng, Igor Frenkel, Bob Griess, Daniel Grumiller, Jeff Harvey, Shamit Kachru, Barry Mazur, John McKay, Jean-Pierre Serre and the anonymous referee, for comments, corrections and excellent suggestions.}

\subjclass[2010]{11F11, 11F22, 11F37, 11F50, 20C34, 20C35}

\begin{abstract} 
{\it Monstrous moonshine} relates distinguished modular functions to the representation theory of the Monster $\MM$. The celebrated observations that 
\begin{equation}\label{star}
1=1,\ \ \ 196884=1+196883,\ \ \ 21493760=1+196883+21296876,\ \ \ \dots\dots  \tag{*}
\end{equation}
illustrate the case of $J(\tau)=j(\tau)-744$, whose coefficients turn out to be sums of the dimensions of the 194 irreducible representations of $\MM$. Such formulas are dictated by the structure of the graded monstrous moonshine modules. Recent works in moonshine suggest deep relations between number theory and physics. Number theoretic Kloosterman sums have reappeared in quantum gravity, and mock modular forms have emerged as candidates for the computation of black hole degeneracies. This paper is a survey of past and present research on moonshine. We also compute the quantum dimensions of the monster orbifold, and obtain exact formulas for the multiplicities of the irreducible components of the moonshine modules. 
These formulas imply that such multiplicities are asymptotically proportional to dimensions. For example, the proportion of 1's in (*) tends to
$$
\frac{\dim(\chi_1)}{\sum_{i=1}^{194}\dim(\chi_i)}=\frac{1}{5844076785304502808013602136}=1.711\ldots \times 10^{-28}.
$$
\end{abstract}
\maketitle

\section{Introduction}
This story begins innocently with peculiar numerics, and in its present form exhibits connections to conformal field theory, string theory, quantum gravity, and the arithmetic of mock modular forms. This paper is an introduction to the many facets of this beautiful theory. 

We begin with a review of the principal results in the development of monstrous moonshine in \S\S\ref{sec:edays}-\ref{sec:intro:mod}. We refer to the introduction of \cite{FLM}, and the more recent survey \cite{MR2201600}, for more on these topics. 
After describing these classic works, we discuss the interplay between moonshine and Rademacher sums in \S\ref{sec:intro:radsums}, and related observations which suggest a connection between monstrous moonshine and 
three-dimensional quantum gravity in \S\ref{sec:intro:qgrav}. 
The remainder of the paper is devoted to more recent works. 
We describe a generalization of moonshine, the 
{\it moonshine tower}, in \S\ref{sec:tower}, and 
in \S\ref{sec:mmdist} we compute the quantum dimensions of the monster orbifold and discuss the distributions of irreducible representations of the monster arising in monstrous moonshine. 
We present a survey of the recently discovered {\it umbral moonshine} phenomenon in \S\ref{sec:um}, and conclude, in \S\ref{sec:open}, with problems for future work.
 
\section{Early Days}\label{sec:edays}

The classification of finite simple groups \cite{MR2072045} distinguishes twenty-six examples above the others; namely, the {\em sporadic} simple groups, which are those that belong to none of the naturally occurring infinite families: cyclic groups, alternating groups, or finite groups of Lie type. Distinguished amongst the sporadic simple groups is the {\em Fischer--Griess monster} $\MM$, on account of its size, which is
\begin{gather}\label{eqn:intro-morder}
	|\MM|
	=
	2^{46}\cdot 3^{20}\cdot 5^9\cdot 7^6\cdot 11^2\cdot 13^3\cdot 17\cdot 19\cdot 23\cdot 29\cdot 31\cdot 41\cdot 47\cdot 59\cdot 71
\end{gather}
(cf. \cite{MR0399248}). Note that the margin is not small, for the order of the monster is 
\begin{gather}
2^5\cdot 3^7\cdot 5^3\cdot 7^4\cdot 11\cdot 13^2\cdot 29\cdot 41\cdot 59\cdot 71
\end{gather}
times that of the next largest sporadic simple group, the baby monster (cf. \cite{MR0444765}).

Fischer and Griess independently produced evidence for the monster group in 1973 (cf. \cite{MR0399248}). Well before it was proven to exist, Tits gave a lecture on its conjectural properties at the Coll\`ege de France in 
1975. In particular, he described its order (\ref{eqn:intro-morder}). Around this time, Ogg had been considering the automorphism groups of certain algebraic curves, and had arrived at the set of primes 
\begin{gather}
\left\{2,3,5,7,11,13,17,19,23,29,31,41,47,59,71\right\}
\end{gather} 
in a purely geometric way (cf. the Corollaire of \cite{Ogg_AutCrbMdl}). Making what may now be identified as the first observation of monstrous moonshine, Ogg offered a bottle of Jack Daniels\footnote{We refer the reader to \cite{2014arXiv1411.5354D} for a recent analysis of the Jack Daniels problem.} for an explanation of this coincidence (cf. Remarque 1 of \cite{Ogg_AutCrbMdl}).

Ogg's observation would ultimately be recognized as reflecting another respect in which the monster is distinguished amongst finite simple groups: as demonstrated by the pioneering construction of Frenkel--Lepowsky--Meurman \cite{FLMPNAS,FLMBerk,FLM}, following the astonishing work of Griess \cite{MR605419,MR671653}, the ``most natural'' representation of the monster, is infinite-dimensional. 

The explanation of this statement takes us back to McKay's famous observation, that
\begin{gather}\label{eqn:intro-mckobs}
	196884=1+196883
\end{gather}
(cf. \cite{MR554399,Tho_NmrlgyMonsEllModFn}), and the generalizations of this observed by Thompson \cite{Tho_NmrlgyMonsEllModFn}, including\footnote{As was pointed out to us by J.-P. Serre, the 
decomposition for $20245856256$ that appears in \cite{Tho_NmrlgyMonsEllModFn} differs from that which develops from the monster's natural, infinite-dimensional representation. We reproduce Serre's decomposition in (\ref{eqn:intro-thomckobs}).}
\begin{gather}\label{eqn:intro-thomckobs}
	\begin{split}
	21493760&=1+196883+21296876,\\
	864299970&=2\times 1+2\times 196883+21296876+842609326,\\
	20245856256&=2\times 1+3\times 196883+2\times 21296876+842609326 + 19360062527.
	\end{split}
\end{gather}

Of course the left hand sides of (\ref{eqn:intro-mckobs}) and (\ref{eqn:intro-thomckobs}) are familiar to number theorists and algebraic geometers, as coefficients in the Fourier coefficients of the {\em normalized elliptic modular invariant}
\begin{equation}\label{eqn:intro-expJ}
\begin{split}
J(\t)
&:=\frac{1728g_2(\tau)^3}{g_2(\tau)^3-27g_3(\tau)^2}-744\\
&= q^{-1} + 196884 q + 21493760 q^2 + 864299970 q^3 + 20245856256 q^4+\ldots
\end{split}
\end{equation}
Here $q:=e^{2\pi i \t}$, and we set $g_{2}(\t):=60G_4(\t)$ and $g_3(\t):=140G_6(\t)$, where $G_{2k}(\t)$ denotes the Eisenstein series of weight $2k$, 
\begin{gather}
	G_{2k}(\tau):=\sum_{(m,n)\neq (0,0)}(m+n\tau)^{-2k},
\end{gather}
for $k\geq 2$. The functions $g_2$ and $g_3$ serve to translate between the two most common parameterizations of a complex elliptic curve: as a quotient $\CC/(\Z+\Z\tau)$ for $\tau$ in the upper-half plane, $\H:=\{\tau\in\CC\mid \Im(\tau)>0\}$, and as the locus of a Weierstrass equation, $y^2=4x^3-g_2x-g_3$. 

The fundamental property of $J(\tau)$, from both the number theoretic and algebro-geometric points of view, is
that it is a modular function for $\SL_2(\Z)$.
In fact, and importantly for the monster's natural infinite-dimensional representation, $J(\tau)$ is a generator for the field of $\SL_2(\Z)$-invariant holomorphic functions on $\H$ that have at most exponential growth as $\Im(\tau)\to \infty$.

The right hand sides of (\ref{eqn:intro-mckobs}) and (\ref{eqn:intro-thomckobs}) are familiar to finite group theorists, as simple sums of dimensions of irreducible representations of the monster $\MM$. In fact, the irreducible representations appearing in (\ref{eqn:intro-mckobs}) and (\ref{eqn:intro-thomckobs}) are just the first six, of a total of $194$, in the character table of $\MM$ (cf. \cite{ATLAS}), when ordered by size.  We have that
\begin{gather}\label{eqn:intro-dimchiprifac}
\begin{split}
\chi_1(e)&=1\\
\chi_2(e)&=196883\\
\chi_3(e)&=21296876\\
\chi_4(e)&=842609326\\
\chi_5(e)&=18538750076\\
\chi_6(e)&=19360062527\\
&\hspace{3em}\vdots\\
\chi_{194}(e)&=258823477531055064045234375.
\end{split}
\end{gather}
Here $e$ denotes the identity element of $\MM$, so $\chi_i(e)$ is just the dimension of the irreducible representation of $\MM$ with character $\chi_i$. 

\section{Classical Moonshine}\label{previouswork}

The coincidences (\ref{eqn:intro-mckobs}) and (\ref{eqn:intro-thomckobs}) led Thompson to make the following conjecture \cite{Tho_NmrlgyMonsEllModFn} 
which realizes the natural representation of the monster alluded to above.

\begin{conjecture}[Thompson]\label{conj:Thompson}
There is a naturally defined
graded infinite-dimensional monster module, denoted $\Vnat=\bigoplus_{n=-1}^{\infty}\Vnat_n$, which satisfies
\begin{gather}\label{eqn:intro-dimVnatcn}
	\dim(\Vnat_n)=c(n)
\end{gather}
for $n\geq -1$ (Cf. (\ref{eqn:intro-expJ})), such that the decompositions into irreducible representations of the monster satisfy (\ref{eqn:intro-mckobs}) and  (\ref{eqn:intro-thomckobs}) for $n=1, 2, 3$ and $4$ (and a similar condition for $n=5$).
\end{conjecture}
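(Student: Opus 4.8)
\emph{Proof proposal.} Since the statement asserts the \emph{existence} of a natural module, the plan is to construct one explicitly --- this is the Frenkel--Lepowsky--Meurman construction, resting on Griess's work --- and then to check the numerics. First I would start from the Leech lattice $\Lambda$, the unique even unimodular rank-$24$ lattice with no vectors of norm $2$, and form the associated lattice vertex operator algebra $V_\Lambda$. Its graded dimension is $\Theta_\Lambda(\tau)/\eta(\tau)^{24}=J(\tau)+24$, a modular function for $\SL_2(\Z)$; the superfluous constant term $24$, which is just the rank, is removed by passing to the $\Z/2$-orbifold. Concretely, let $\theta$ be the automorphism of $V_\Lambda$ lifting $v\mapsto -v$ on $\Lambda$, and set $\Vnat:=V_\Lambda^{+}\oplus(V_\Lambda^{T})^{+}$, where $V_\Lambda^{+}$ is the $\theta$-fixed subalgebra and $(V_\Lambda^{T})^{+}$ is the $\theta$-invariant part of the (essentially unique) $\theta$-twisted $V_\Lambda$-module. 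A short computation with the theta function of $\Lambda$ and standard Dedekind-$\eta$ identities shows that the graded dimensions of the two summands add up to exactly $q^{-1}+196884q+21493760q^{2}+\cdots=J(\tau)$; since (as recalled above) $J$ generates the field of $\SL_2(\Z)$-invariant holomorphic functions on $\H$ of at most exponential growth, matching finitely many $q$-coefficients against (\ref{eqn:intro-expJ}) is enough. This proves (\ref{eqn:intro-dimVnatcn}), and also explains the ``natural'' in the conjecture: $\Vnat$ comes with a vertex operator algebra structure.

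Next I would put an action of $\MM$ on $\Vnat$. The orbifold involution has a manifest symmetry group $N\cong 2^{1+24}.\mathrm{Co}_1$ acting on $\Vnat$, and the crux --- the step I expect to be the main obstacle --- is to exhibit at least one further automorphism $\sigma\notin N$ and to control $G:=\langle N,\sigma\rangle$. Following Griess, one uses the commutative nonassociative algebra carried by the weight-two space $\Vnat_{2}$, the \emph{Griess algebra} of dimension $196884=1+196883$, together with its positive-definite invariant bilinear form: $\sigma$ is built so as to preserve this finite-dimensional structure, so that $G$ lands in the orthogonal group of that form and, with a little more work, is finite. One then identifies $G$ with the Monster, either by invoking Griess's characterization of $\MM$ via its faithful action on the Griess algebra, or by checking directly that $G$ is simple of order (\ref{eqn:intro-morder}) with the Monster's $2$-local structure. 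This identification is where the deepest input lies.

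Finally, granted the $\MM$-action, the decompositions (\ref{eqn:intro-mckobs}) and (\ref{eqn:intro-thomckobs}) for $n=1,\dots,4$ --- and the analogous statement for $n=5$ --- reduce to finite computations: the $N$-module structure of each $\Vnat_{n}$ is explicit from the lattice-plus-twisted-sector description, so one restricts it to $\MM$ and decomposes against the character table (\ref{eqn:intro-dimchiprifac}); since $c(n)$ is small relative to the $194$ dimensions $\chi_{i}(e)$, the nonnegative integer combination is in fact forced. In short, everything but the construction and identification of the Monster symmetry is modular-forms bookkeeping and character-table linear algebra; the hard part is showing that $\langle N,\sigma\rangle$ is precisely $\MM$. (The stronger Conway--Norton genus-zero assertions for the graded traces of individual elements $g\in\MM$ are a separate matter, established later by Borcherds using this very module.)
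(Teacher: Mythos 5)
Your proposal is correct and follows essentially the same route the paper takes: it accurately outlines the Frenkel--Lepowsky--Meurman $\Z/2\Z$-orbifold construction of $\Vnat$ from the Leech lattice vertex operator algebra (cf.\ (\ref{eqn:class-Leechtorusorb})), the computation of its graded dimension as $J(\tau)$, and the identification of $\Aut(\Vnat)$ with $\MM$ via the Griess algebra and the extra ``triality'' automorphism, which is exactly the resolution the paper records by citing \cite{FLMPNAS,FLMBerk,FLM} and \cite{MR605419,MR671653}. One minor notational point: in the grading convention of this paper (by $L(0)-{\bf c}/24$) the $196884$-dimensional space carrying the Griess algebra is $\Vnat_1$, not $\Vnat_2$.
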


At the time that Thompson's conjecture was made, the monster had not yet been proven to exist, but Griess \cite{MR0399248}, and Conway--Norton \cite{MR554399}, had independently conjectured the existence of a faithful representation of dimension $196883$, and Fischer--Livingstone--Thorne had constructed the character table of $\MM$, by assuming the validity of this claim (cf. \cite{MR554399}) together with conjectural statements (cf. \cite{MR0399248}) about the structure of $\MM$.

Thompson also suggested \cite{Tho_FinGpsModFns} to investigate the properties of the graded-trace functions
\begin{gather}\label{eqn:intro-Tg}
	T_g(\tau):=\sum_{n=-1}^{\infty}\tr(g|\Vnat_n)q^n,
\end{gather}
for $g\in \MM$, now called the {\em monstrous McKay--Thompson series}, that would arise from the conjectural monster module $\Vnat$. 
Using the character table constructed by Fischer--Livingstone--Thorne, it was observed \cite{MR554399,Tho_FinGpsModFns} that the functions $T_g$ are in many cases directly similar to $J$ in the following respect: the first few coefficients of each $T_g$ coincide with those of a generator for the function field of a discrete group\footnote{The relevant groups $\Gamma_g$ shall be discussed in detail in Section~\ref{Gammag}.} $\Gamma_g<\SL_2(\RR)$, commensurable with $\SL_2(\Z)$, containing $-I$, and having {\em width one at infinity}, meaning that the subgroup of upper-triangular matrices in $\Gamma_g$ coincides with 
\begin{gather}\label{eqn:intro:class-Gammainfty}
	\Gamma_\infty:=\left\{ \pm\begin{pmatrix} 1&n\\0&1\end{pmatrix}\mid n\in\Z\right\},
\end{gather}
for all $g\in \MM$.

This observation was refined and developed by Conway--Norton \cite{MR554399}, leading to their famous {\em monstrous moonshine conjectures}:.

\begin{conjecture}[Monstrous Moonshine: Conway--Norton]\label{conj:MonstrousMoonshine}
For each $g\in\MM$ there is a specific
group $\Gamma_g<\SL_2(\RR)$ such that $T_g$ is the unique {\em normalized principal modulus}\footnote{A principal modulus is also referred to as a {\em Hauptmodul}.} for $\Gamma_g$.
\end{conjecture}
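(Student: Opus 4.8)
The plan is to make the monster's natural module $\Vnat$ completely explicit, use it to manufacture an infinite-dimensional Lie algebra whose (equivariant) denominator identity forces the coefficients of each $T_g$ to obey strong recursions, and then match those recursions against the recursions that define the Conway--Norton functions. First I would take as input the Frenkel--Lepowsky--Meurman vertex operator algebra $\Vnat=\bigoplus_{n\ge-1}\Vnat_n$: a holomorphic VOA of central charge $24$ with $\Aut(\Vnat)\cong\MM$ and $\dim\Vnat_n=c(n)$. This already settles Conjecture~\ref{conj:Thompson} and guarantees that each $T_g$ is a well-defined $q$-series beginning $q^{-1}+O(q)$. Next I would form the \emph{monster Lie algebra} $\mathfrak{m}$ \`a la Borcherds: tensor $\Vnat$ with the vertex algebra attached to the even unimodular Lorentzian lattice $\mathrm{II}_{1,1}$ to obtain a VOA of central charge $26$, and apply the Goddard--Thorn ``no-ghost'' theorem from string theory to split off $\mathfrak{m}=\bigoplus_{(m,n)\in\Z^2}\mathfrak{m}_{(m,n)}$ with $\mathfrak{m}_{(m,n)}\cong\Vnat_{mn}$ for $(m,n)\neq(0,0)$ (and $\mathfrak{m}_{(0,0)}$ a two-dimensional Cartan).

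The structural heart of the first half is to check Borcherds' axioms identifying $\mathfrak{m}$ as a generalized Kac--Moody algebra: a contravariant almost-invariant bilinear form, a self-centralizing abelian subalgebra, a root-space decomposition with the multiplicities just listed, and a set of real and imaginary simple roots. From this one reads off the Weyl--Kac--Borcherds denominator identity, which for $\mathfrak{m}$ takes the shape
$$
p^{-1}\prod_{m>0,\ n\in\Z}\bigl(1-p^mq^n\bigr)^{c(mn)}=J(p)-J(q).
$$
Because $\MM=\Aut(\Vnat)$ acts on $\mathfrak{m}$ preserving the grading, applying the Euler--Poincar\'e principle to the homology of the nilpotent subalgebra together with the $\langle g\rangle$-action produces, for every $g\in\MM$, the \emph{twisted denominator identities}
$$
p^{-1}\exp\!\left(-\sum_{k\ge1}\frac1k\sum_{m>0,\ n\in\Z}\tr\!\bigl(g^k\mid\Vnat_{mn}\bigr)\,p^{mk}q^{nk}\right)=T_g(p)-T_g(q),
$$
where $T_g(p):=\sum_{n\ge-1}\tr(g\mid\Vnat_n)p^n$ (the case $g=e$ recovers the previous display).

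Extracting coefficients from these identities shows that each $T_g$ is \emph{completely replicable}: for all $a$ the coefficients of $T_{g^a}$ are fixed universal polynomials in a finite head of the coefficients of the $T_{g^{ak}}$. Two things then remain. First, compute that finite list of head coefficients directly from the FLM construction and the character table of $\MM$ (cf.\ (\ref{eqn:intro-dimchiprifac})); by the explicit nature of $\Vnat$ this is a finite check. Second, invoke the fact --- due to Koike, Norton, Cummins--Norton and others --- that a completely replicable $q$-series is determined by finitely many of its coefficients, and that the function so determined, with the head coefficients we have computed, is exactly the principal modulus that Conway--Norton attach to $g$. Uniqueness in the statement is then automatic: two normalized principal moduli for the same genus-zero group $\Gamma_g$ differ by a M\"obius transformation fixing $\infty$, and the normalization $q^{-1}+O(q)$ removes the remaining freedom.

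I expect the derivation of the twisted denominator identities to be the main obstacle. One must understand the entire homology $H_\ast(\mathfrak{n})$ of the ``positive'' part $\mathfrak{n}$ of $\mathfrak{m}$ as a module for $\langle g\rangle$, and prove it is assembled only from the pieces visible in the formula above --- a vanishing/freeness statement that is the technical core of Borcherds' argument and rests delicately on the rank-two Lorentzian geometry of the root lattice and on the precise output of the no-ghost theorem. A secondary difficulty is completeness in the last step: one needs the classification of completely replicable functions to be airtight, so that matching finitely many coefficients of $T_g$ with the Conway--Norton candidate is genuinely conclusive rather than merely consistent.
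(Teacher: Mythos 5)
Your proposal is correct and follows essentially the same route the paper attributes to Borcherds in \S\ref{previouswork}: construct $\gt{m}$ from $\Vnat$ via the $I\!I_{1,1}$ lattice and the no-ghost theorem, derive the twisted denominator identities (\ref{eqn:class-mondenidg}) from the categorified Euler--Poincar\'e identity (\ref{eqn:class-catdenidm}), deduce complete replicability, and finish by matching finitely many head coefficients against the Conway--Norton principal moduli. You also correctly single out the homology/no-ghost analysis as the technical core and the uniqueness of the normalized principal modulus as the reason the finite check is conclusive.
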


\noindent
This means that each $T_g$ is
 the unique $\Gamma_g$-invariant holomorphic function on $\HH$ which satisfies
\begin{gather}\label{eqn:intro-Tgatinfty}
T_g(\tau)=q^{-1}+O(q),
\end{gather}
as $\Im(\tau)\to \infty$, and remains bounded as $\tau$ approaches any non-infinite cusp of $\Gamma_g$. We refer to this feature of the $T_g$ as the {\em principal modulus property} of monstrous moonshine.

The hypothesis that $T_g$ is $\Gamma_g$-invariant, satisfying (\ref{eqn:intro-Tgatinfty}) near the infinite cusp of $\Gamma_g$ but having no other poles, implies that $T_g$ generates the field of $\Gamma_g$-invariant holomorphic functions on $\HH$ that have at most exponential growth at cusps, in direct analogy with $J$. In particular, the natural Riemann surface structure on $\Gamma_g\backslash\HH$ (cf. e.g. \cite{Shi_IntThyAutFns}) must be that of the Riemann sphere $\widehat{\CC}=\CC\cup\{\infty\}$ with finitely many points removed, and for this reason the groups $\Gamma_g$ are said to have {\em genus zero}, and the principal modulus property is often referred to as the {\em genus zero property} of monstrous moonshine.

The reader will note the astonishing predictive power that the principal modulus property of monstrous moonshine bestows: the fact that a normalized principal modulus for a genus zero group $\Gamma_g$ is unique, means that we can compute the trace of an element $g\in\MM$, on any homogeneous subspace of the monster's natural infinite-dimensional representation $\Vnat$, without any information about the monster, as soon as we can guess correctly the discrete group $\Gamma_g$. The analysis of Conway--Norton in \cite{MR554399} establishes very strong guidelines for the determination of $\Gamma_g$, and once $\Gamma_g$ has been chosen, the ``theory of replicability'' (cf. \cite{MR1200252, MR554399,MR760657}) allows for efficient computation of the coefficients of the normalized principal modulus $T_g$, given the knowledge of just a few of them (cf. \cite{MR1291027}, or (\ref{eqn:intro:class-recrsn})). 

It was verified by Atkin--Fong--Smith \cite{MR822245}, using results of Thompson \cite{Tho_FinGpsModFns} (cf. also \cite{MR1213794}), that a graded (possibly virtual) infinite-dimensional monster module $\Vnat$, such that the functions $T_g$ of (\ref{eqn:intro-Tg}) are exactly those predicted by Conway--Norton in \cite{MR554399}, exists. 

\begin{theorem}[Atkin--Fong--Smith]\label{firstthm}
There exists a (possibly virtual) graded $\MM$-module $\Vnat=\bigoplus_{n=-1}^{\infty}\Vnat_n$ such that if $T_g$ is defined by (\ref{eqn:intro-Tg}), then $T_g$ is the Fourier expansion of the unique $\Gamma_g$-invariant holomorphic function on $\HH$ that satisfies $T_g(\tau)=q^{-1}+O(q)$ as $\tau$ approaches the infinite cusp, and has no poles at any non-infinite cusps of $\Gamma_g$, where $\Gamma_g$ is the discrete subgroup of $\SL_2(\RR)$ specified by Conway--Norton in \cite{MR554399}. \end{theorem}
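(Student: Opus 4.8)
The plan is to turn the statement into a finite computation, following Thompson \cite{Tho_FinGpsModFns} for the reduction and Atkin--Fong--Smith \cite{MR822245} for the computation itself. Giving a graded virtual $\MM$-module $\Vnat=\bigoplus_{n\geq -1}\Vnat_n$ amounts to giving, for each $n\geq -1$, a virtual character $H_n$ of $\MM$; and the requirement $T_g=\sum_n \tr(g|\Vnat_n)q^n$ then forces $H_n(g)=c_g(n)$, where $c_g(n)$ denotes the $n$-th Fourier coefficient of the normalized principal modulus that Conway--Norton attach to the class of $g$. (These moduli are well defined, with integral $q$-expansions, because the relevant groups $\Gamma_g$ are, by direct inspection of the list in \cite{MR554399}, genus zero subgroups of $\SL_2(\RR)$ of the stated shape.) Thus the theorem is equivalent to: \emph{for every $n$, the class function $g\mapsto c_g(n)$ is a $\Z$-linear combination of the irreducible characters $\chi_1,\dots,\chi_{194}$ of $\MM$.}

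Since the $\chi_i$ form a $\CC$-basis of the space of class functions, one may write $g\mapsto c_g(n)$ as $\sum_i m_i(n)\chi_i$ with $m_i(n)\in\CC$, and orthogonality gives $m_i(n)=\sum_{[g]}|C_\MM(g)|^{-1}\,c_g(n)\,\overline{\chi_i(g)}$. Two easy reductions locate the difficulty. First, the Conway--Norton functions are Galois-compatible, $c_{g^a}(n)=\sigma_a(c_g(n))$ for $(a,o(g))=1$ (indeed all $c_g(n)$ turn out to be rational integers), which, together with the analogous property of the character table of $\MM$, forces $m_i(n)\in\QQ$. Second, each $c_g(n)$ is an algebraic integer, so $|\MM|\,m_i(n)$ is an algebraic integer and hence $m_i(n)\in|\MM|^{-1}\Z$. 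What remains is therefore a family of congruences: $m_i(n)\in\Z_{(p)}$ for each of the fifteen primes $p\mid|\MM|$.

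The core of the argument is to show that these congruences hold for all $n$ once they hold for all $n$ below an explicit bound $N_0$ depending only on $\MM$ and on the levels of the $\Gamma_g$. Two structural inputs drive the reduction. (i) The family $\{T_g\}$ is closed under replication: each $T_g$ satisfies the replication identities of \cite{MR554399}, with $k$-th replicate again among the $T_h$ --- a fact that one can check directly for these specific modular functions --- and the resulting recursions (of Faber-polynomial type) express $c_g(n)$ for large $n$ as integral polynomials in the $c_{g^k}(m)$ with $m$ small. (ii) The $\Gamma_g$ all have level dividing a fixed integer, so the $T_g$ satisfy congruences modulo powers of each $p\mid|\MM|$ --- both Kummer-type congruences in the variable $n$ and congruences relating $T_g$ to $T_{g^p}$ --- coming from their modularity. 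Feeding (i) and (ii) into the formula for $m_i(n)$ and iterating, one bounds the $p$-adic denominator of $m_i(n)$ for every $n$ in terms of the finitely many values $m_i(n)$ with $n\leq N_0$.

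Finally one verifies by direct calculation that $m_i(n)\in\Z$ for $1\leq i\leq 194$ and all $n\leq N_0$: using the explicit $q$-expansions of the $171$ distinct Conway--Norton principal moduli together with the character table of $\MM$ constructed by Fischer--Livingstone--Thorne, one computes the inner products above and checks that each is a rational integer. This is a large but purely mechanical computation, and is the contribution of \cite{MR822245}. I expect the genuine obstacle to be the reduction step: making $N_0$ effective demands a uniform analysis --- simultaneously over all conjugacy classes --- of how the replication recursions interact with the mod-$p$ congruences, together with careful control of the centralizer orders $|C_\MM(g)|$, since it is exactly the classes with small centralizers for which integrality of $m_i(n)$ is most delicate.
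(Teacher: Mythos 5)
The paper does not prove this theorem: it is quoted as a result of Atkin--Fong--Smith \cite{MR822245}, resting on Thompson's reduction \cite{Tho_FinGpsModFns} (cf. also \cite{MR1213794}), so there is no internal proof to compare against. Judged on its own terms, your outline is a faithful reconstruction of that cited argument. The reformulation is right: since the irreducible characters are an orthonormal basis for class functions, the theorem is exactly the assertion that each multiplicity $m_i(n)=|\MM|^{-1}\sum_{g}c_g(n)\overline{\chi_i(g)}$ lies in $\Z$; rationality follows from $T_{g^a}=T_g$ for $(a,o(g))=1$ (visible from the Conway--Norton table, where only inverse pairs such as $23AB$ share a class of functions) together with the near-rationality of the character table; integrality of $|\MM|\,m_i(n)$ is immediate; and the content is the local statement $m_i(n)\in\Z_{(p)}$ for the fifteen primes dividing $|\MM|$. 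Your identification of the finite verification as the contribution of \cite{MR822245} is also correct.

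Two caveats on the reduction step, which you rightly single out as the crux. First, as a matter of both history and logic, the mechanism that makes $N_0$ finite in \cite{Tho_FinGpsModFns,MR822245} is not replication but modularity mod $p$: the relevant combinations of the $T_g$ are meromorphic modular forms of bounded level and weight, so congruences of the shape needed (relating $c_g(pn)$ to $c_{g^p}(n)$, etc.) hold for all $n$ once they hold up to an explicit Sturm-type bound. Leaning instead on closure of $\{T_g\}$ under replication, with $k$-th replicate again a $T_h$, imports a statement that in 1985 was only conjectural and whose full verification is essentially Borcherds' theorem \cite{borcherds_monstrous} (cf. \cite{MR1200252}); if you want a self-contained argument at the level of \cite{MR822245} you should route the reduction through finite-dimensionality of spaces of modular forms mod $p$ rather than through replicability. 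Second, the statement also asserts that each $T_g$ is the \emph{unique} normalized $\Gamma_g$-invariant function with the prescribed pole behaviour; this requires checking that every group in the Conway--Norton list has genus zero, which you correctly dispose of by inspection but which should be stated as part of the verification rather than assumed.
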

  
Thus Thompson's conjecture was confirmed, albeit indirectly. By this point in time, Griess, in an astonishing tour de force, had constructed the monster explicitly, by hand, by realizing it as the automorphism group of a commutative but non-associative algebra of dimension $196884$ \cite{MR605419,MR671653}. (See also \cite{Con_CnstM,MR768989}.) Inspired by Griess' construction, and by the representation theory of affine Lie algebras, which also involves graded infinite-dimensional vector spaces whose graded dimensions enjoy good modular properties (cf. e.g. \cite{MR0374210,MR513845,MR563927,MR750341}), Frenkel--Lepowsky--Meurman established Thompson's conjecture in a strong sense.

\begin{theorem}[Frenkel--Lepowsky--Meurman]
Thompson's Conjecture is true. In particular, the
 {\em moonshine module} $\Vnat$ is constructed in \cite{FLMPNAS,FLMBerk}.
 \end{theorem}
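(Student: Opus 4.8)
The plan is to realize $\Vnat$ as the underlying graded vector space of a holomorphic vertex operator algebra of central charge $24$, built as a $\Z/2\Z$-orbifold of the vertex operator algebra attached to the Leech lattice, and then to exhibit the monster inside its automorphism group. First I would form the lattice vertex operator algebra $V_\Lee$ associated to the Leech lattice $\Lee$ --- the unique even unimodular lattice of rank $24$ with no vectors of square-length $2$. Its graded dimension is $\Theta_\Lee(\t)/\eta(\t)^{24}=J(\t)+24$, and the absence of roots in $\Lee$ forces the weight-one subspace $(V_\Lee)_{(1)}$ to reduce to its $24$-dimensional Heisenberg part, which is precisely the unwanted summand we must remove. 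The isometry $-\Id$ of $\Lee$ lifts to an involution $\theta$ of $V_\Lee$; writing $V_\Lee^{\pm}$ for the $\theta$-eigenspaces and constructing the (essentially unique) $\theta$-twisted $V_\Lee$-module $V_\Lee^T$ --- which itself carries a natural $\theta$-action --- I would set
$$
\Vnat:=V_\Lee^{+}\oplus (V_\Lee^T)^{+},
$$
where the $\theta$-fixed subspace $(V_\Lee^T)^{+}$ is, after the correct normalization of $\theta$ on the twisted sector, integrally graded.

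Next I would equip $\Vnat$ with a vertex operator algebra structure: on the untwisted part this is inherited from $V_\Lee$, while on the twisted part one builds vertex operators via a bosonic construction with half-integral mode expansions. The substantive point is to verify the Jacobi identity (equivalently, locality together with associativity) for operators that mix the untwisted sector $V_\Lee^{+}$ with the twisted sector $(V_\Lee^T)^{+}$; this comes down to a long but explicit computation with cocycles on $\Lee$ and careful tracking of signs. Granting the vertex operator algebra structure, a computation with $\Theta_\Lee$ and the Dedekind eta function shows that the graded dimension of $\Vnat$ is precisely $J(\t)$, with the offending constant $24$ cancelled, so that $\dim\Vnat_n=c(n)$ for every $n\ge-1$; this is the assertion (\ref{eqn:intro-dimVnatcn}) of Thompson's conjecture.

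The third and central step is to construct the monster action. Griess's $196884$-dimensional commutative nonassociative algebra should reappear as $\Vnat_1$, the weight-$2$ subspace of the vertex operator algebra, equipped with the product $(u,v)\mapsto u_1v$ coming from the vertex operators and with identity a rescaling of the conformal vector $\omega$; the automorphisms of $V_\Lee^{+}$ preserving this structure form a group $C\cong 2^{1+24}{\cdot}\Co_1$, which should be identified with the centralizer in $\MM$ of an involution. The hard part will be to construct at least one further automorphism $\sigma$ of $\Vnat$ --- necessarily mixing the untwisted and twisted sectors --- and to prove that the group generated by $C$ and $\sigma$ is finite; once finiteness is established, identifying $\langle C,\sigma\rangle$ with $\MM$ follows from the characterization of $\MM$ by the structure of an involution centralizer, or by direct verification of defining relations. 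For Thompson's conjecture one needs only the inclusion $\MM\le\Aut(\Vnat)$; the sharper equality $\MM=\Aut(\Vnat)$ requires in addition an analysis of how an arbitrary automorphism must act on $\Vnat_1$.

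Finally, with the $\MM$-action in hand, the decompositions (\ref{eqn:intro-mckobs}) and (\ref{eqn:intro-thomckobs}) for $n=1,2,3,4$ (and the analogous statement for $n=5$) reduce to a finite character computation: evaluate $\tr(g\mid\Vnat_n)$ for enough conjugacy classes $g\in\MM$ --- feasible from the explicit action of $C$ on each graded piece together with the extra generators --- and compare with the character table of $\MM$. The case $n=1$ is immediate, since $\Vnat_1$ is Griess's algebra, whose $\MM$-module structure is $\chi_1\oplus\chi_2$ by construction. I expect the two genuine obstacles to be, first, the cross-sector Jacobi identity that legitimizes the vertex operator algebra structure on $\Vnat$, and second --- the true heart of the matter --- the construction of the extra automorphism $\sigma$ and the proof that $C$ and $\sigma$ together close up to a copy of the monster.
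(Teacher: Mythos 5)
Your proposal is correct and follows essentially the same route the paper attributes to Frenkel--Lepowsky--Meurman: the $\Z/2\Z$-orbifold $\Vnat=V_{\Lee}^{+}\oplus(V_{\Lee}^{T})^{+}$ of the Leech lattice vertex operator algebra (cf. the quotient (\ref{eqn:class-Leechtorusorb}) described in the paper), with graded dimension $J(\tau)$ after cancellation of the constant $24$, the Griess algebra recovered as $\Vnat_1$, and $\MM$ generated by the involution centralizer $2^{1+24}.\Co_1$ together with an extra "triality" automorphism mixing the sectors. You have also correctly isolated the two genuinely hard points of that construction, namely the cross-sector Jacobi identity and the closure of $\langle C,\sigma\rangle$ to a finite group identified with $\MM$.
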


Frenkel--Lepowsky--Meurman generalized the {homogeneous} realization of the basic representation of an affine Lie algebra $\hat{\mathfrak{g}}$ due, independently, to Frenkel--Kac \cite{FreKac_AffLieDualRes} and Segal \cite{MR626704}, in such a way that {\em Leech's lattice} $\Lee$ \cite{Lee_SphPkgHgrSpc,Lee_SphPkgs}---the unique \cite{Con_ChrLeeLat} even self-dual positive-definite lattice of rank $24$ with no roots---could take on the role played by the root lattice of $\mathfrak{g}$ in the Lie algebra case. In particular, their construction came equipped with rich algebraic structure, furnished by vertex operators, which had appeared first in the physics literature in the late 1960's. 

We refer to \cite{FreKac_AffLieDualRes}, and also the introduction to \cite{FLM} for accounts of the role played by vertex operators in physics (up to $1988$) along with a detailed description of their application to the representation theory of affine Lie algebras. The first application of vertex operators to affine Lie algebra representations was obtained by Lepowsky--Wilson in \cite{MR0573075}.

Borcherds described a powerful axiomatic formalism for vertex operators in \cite{Bor_PNAS}. In particular, he introduced the notion of a {\em vertex algebra}, which can be regarded as similar to a commutative associative algebra, except that multiplications depend upon formal variables $z_i$, and can be singular, in a certain formal sense, along the canonical divisors $\{z_i=0\}$, $\{z_i=z_j\}$ (cf. \cite{MR1865087,MR2082709}). 

The appearance of affine Lie algebras above, as a conceptual ingredient for the Frenkel--Lepowsky--Meurman construction of $\Vnat$ hints at an analogy between complex Lie groups and the monster. Borcherds' vertex algebra theory makes this concrete, for Borcherds showed \cite{Bor_PNAS} that both in the case of the basic representation of an affine Lie algebra, and in the case of the moonshine module $\Vnat$, the vertex operators defined by Frenkel--Kac, Segal, and Frenkel--Lepowsky--Meurmann, extend naturally to vertex algebra structures. 

In all of these examples the {\em Virasoro algebra}, $\Vir=\bigoplus_{n}\CC L(n)\oplus \CC {\bf c}$, being the unique universal central extension of the Lie algebra $\CC[t,t^{-1}]\frac{{\rm d}}{{\rm d}t}$ of polynomial vector fields on the circle,
\begin{gather}\label{eqn:intro:class-vir}
	[L(m),L(n)]=(m-n)L(m+n)+\frac{m^3-m}{12}\delta_{m+n,0}{\bf c},\quad [L(m),{\bf c}]=0,
\end{gather}
acts naturally on the underlying vector space. (See \cite{MR1021978} for a detailed analysis of $\Vir$. The generator $L(m)$ lies above the vector field $-t^{m+1}\frac{{\rm d}}{{\rm d}t}$.)
This Virasoro structure, which has powerful applications, was axiomatized in \cite{FLM}, with the introduction of the notion of a {\em vertex operator algebra}. If $V$ is a vertex operator algebra and the central element ${\bf c}$ of the Virasoro algebra acts as $c$ times the identity on $V$, for some $c\in \CC$, then $V$ is said to have {\em central charge} $c$.
 
For the basic representation of an affine Lie algebra $\hat{\gt{g}}$, the group of vertex operator algebra automorphisms---i.e. those vertex algebra automorphisms that commute with the Virasoro action---is the adjoint complex Lie group associated to $\gt{g}$. For the moonshine module $\Vnat$, it was shown by Frenkel--Lepowsky--Meurman in \cite{FLM}, that the group of vertex operator algebra automorphisms is precisely the monster.

\begin{theorem}[Frenkel--Lepowsky--Meurman]
The moonshine module $\Vnat=\bigoplus_{n=-1}^{\infty}\Vnat_n$ is a vertex operator algebra of central charge $24$ whose graded dimension is given by $J(\tau)$, and whose automorphism group is $\MM$.
\end{theorem}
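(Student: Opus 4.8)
The plan is to reconstruct the Frenkel--Lepowsky--Meurman construction of $\Vnat$ and then read each clause of the theorem off from it. The point of departure is the lattice vertex operator algebra: to an even lattice $L$ of rank $r$ one associates $V_L = S(\hat{\hfrak}{}^{-})\otimes\CC[L]$, with $\hat{\hfrak}$ the rank-$r$ Heisenberg algebra on $\hfrak = L\otimes_{\Z}\CC$ and $\CC[L]$ a suitably twisted group algebra of $L$; the standard vertex operators $Y(\cdot,z)$ make this a vertex operator algebra of central charge $r$ whose graded dimension is $\Theta_L(\tau)/\eta(\tau)^{r}$. I would apply this with $L=\Lee$, the Leech lattice, which by Conway's theorem is the unique even self-dual lattice of rank $24$ with no vectors of norm $2$. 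Then $V_\Lee$ has central charge $24$ and graded dimension $q^{-1}+24+196884\,q+\cdots$ --- correct except for the constant term $24$, whose removal is precisely the purpose of the orbifold step.

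Second, I would pass to the $\Z/2$-orbifold. Let $\theta$ be the involution of $V_\Lee$ lifting $-1\in\Aut(\Lee)$, and write $V_\Lee = V_\Lee^{+}\oplus V_\Lee^{-}$ for its eigenspaces. One constructs the (essentially unique) irreducible $\theta$-twisted $V_\Lee$-module $V_\Lee^{T}$, extends the action of $\theta$ to it, and sets
$$\Vnat := V_\Lee^{+}\oplus (V_\Lee^{T})^{+}.$$
The real work is to equip $\Vnat$ with vertex operators: define $Y(\cdot,z)$ on $V_\Lee^{+}\times V_\Lee^{+}$ by restriction, on $V_\Lee^{+}\times (V_\Lee^{T})^{+}$ via twisted vertex operators, and on $(V_\Lee^{T})^{+}\times (V_\Lee^{T})^{+}$ by an intertwining-operator type formula, and then to verify the Jacobi identity (equivalently, associativity together with commutativity) across all three sectors. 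This is the technical heart of the matter, resting on the calculus of twisted vertex operators --- the relevant $\Delta$-operator, the requisite cocycle identities, and the twisted Jacobi identity --- and I expect establishing the consistency of these operators to be the main obstacle, since it is where the bulk of \cite{FLM} is devoted. The Virasoro element $\omega$ of $V_\Lee$ is $\theta$-fixed, hence lies in $V_\Lee^{+}\subset\Vnat$, so the central charge is still $24$.

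Third, I would compute the graded dimension. On $V_\Lee$ one has $\ch V_\Lee^{\pm}=\tfrac12\big(\tr_{V_\Lee}(q^{L_0-1})\pm\tr_{V_\Lee}(\theta\,q^{L_0-1})\big)$, with the analogous identity on $V_\Lee^{T}$; inserting $\theta$ into the traces replaces the integer-moded bosons by their signed analogues (and, in the twisted sector, by half-integer-moded bosons) and collapses the lattice sums, since $\theta$ negates $\Lee$. Summing $\ch V_\Lee^{+}$ and $\ch (V_\Lee^{T})^{+}$ and simplifying by standard identities among $\eta(\tau)$, $\eta(2\tau)$, $\eta(\tau/2)$ and $\Theta_\Lee(\tau)$ produces an $\SL_2(\Z)$-invariant holomorphic function on $\H$ of the form $q^{-1}+a_0+a_1 q+\cdots$, hence equal to $J(\tau)+a_0$; and $a_0=0$, because $V_\Lee^{+}$ has no vector of $L_0$-weight $1$ --- here the absence of norm-$2$ vectors in $\Lee$ is essential, together with the fact that $\theta$ acts by $-1$ on $\hfrak$ --- while the twisted sector begins at $L_0$-weight $3/2=24/16$. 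Thus the graded dimension of $\Vnat$ equals $J(\tau)$; in this normalization $\Vnat_{-1}=\CC\mathbf{1}$, $\Vnat_{0}=0$, and $\Vnat_{1}$ is $196884$-dimensional and contains $\omega$.

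Finally, I would identify the automorphism group. On $\Vnat_{1}$ the assignments $a\cdot b:=a_1 b$ and $\langle a,b\rangle\mathbf{1}:=a_3 b$ define a commutative nonassociative product and an invariant symmetric bilinear form; splitting off the line $\CC\omega$ recovers the $196883$-dimensional Griess algebra $B$. Any vertex operator algebra automorphism of $\Vnat$ fixes $\omega$ and preserves these structures, so it induces an automorphism of $B$, giving a homomorphism $\Aut(\Vnat)\to\Aut(B)$; this is injective because $\Vnat$ is generated as a vertex operator algebra by its weight-two subspace $\Vnat_1$, and $\Aut(B)=\MM$ by Griess's theorem, so $\Aut(\Vnat)\hookrightarrow\MM$. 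For the reverse inclusion I would produce automorphisms of $\Vnat$ by hand: the centralizer of $\theta$ in $\Aut(\Vnat)$ already has the form $2^{1+24}.\Co_1$ (with $\Co_1=\Aut(\Lee)/\{\pm1\}$ the Conway group, acting on both $V_\Lee^{+}$ and $V_\Lee^{T}$), and one constructs one further automorphism $\sigma$ not preserving the decomposition $V_\Lee^{+}\oplus(V_\Lee^{T})^{+}$; the group generated by these is a copy of $\MM$ (invoking Griess's realization of $\MM$ as generated by $2^{1+24}.\Co_1$ and one additional involution). Combining the two inclusions gives $\Aut(\Vnat)=\MM$. The substantial external inputs are Conway's uniqueness theorem for $\Lee$ and Griess's determination of $\Aut(B)$; within the construction itself the crux remains the consistency of the twisted vertex operators defining $Y$ on $\Vnat$.
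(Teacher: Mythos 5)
The paper states this theorem without proof, as a citation of the Frenkel--Lepowsky--Meurman construction \cite{FLMPNAS,FLMBerk,FLM} (together with Griess's construction of $\MM$ inside, and Tits's identification of, the automorphism group of the Griess algebra); your outline is a faithful sketch of exactly that construction, including the correct reasons why the constant term vanishes (no roots in $\Lee$, $\theta=-1$ on $\hfrak$, twisted ground states at weight $3/2$). The two steps you explicitly defer --- verifying the Jacobi identity across the untwisted and twisted sectors, and producing the extra \emph{triality} automorphism needed to get all of $\MM$ rather than just $2^{1+24}.\Co_1$ --- are precisely where essentially all of the work in \cite{FLM} lies, so as a roadmap your proposal is accurate and consistent with the source the paper relies on.
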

Vertex operator algebras are of relevance to physics, for we now recognize them as ``chiral halves'' of two-dimensional conformal field theories (cf. \cite{Gaberdiel:1999mc,Gaberdiel:2005sk}). From this point of view, the construction of $\Vnat$ by Frenkel--Lepowsky--Meurman constitutes one of the first examples of an orbifold conformal field theory (cf. \cite{MR968697, MR818423,MR851703}). In the case of $\Vnat$, the underlying geometric orbifold is the quotient 
\begin{gather}\label{eqn:class-Leechtorusorb}
	\left(\RR^{24}/\Lee\right)/(\Z/2\Z),
\end{gather}
of the $24$-dimensional torus $\Lee\otimes_{\Z}\RR/\Lee\simeq \RR^{24}/\Lee$ by the Kummer involution $x\mapsto -x$, where $\Lee$ denotes the Leech lattice. So in a certain sense, $\Vnat$ furnishes a ``24-dimensional'' construction of $\MM$. We refer to \cite{MR2082709,FLM,MR1651389,MR2023933} for excellent introductions to vertex algebra, and vertex operator algebra theory.

Affine Lie algebras are special cases of Kac--Moody algebras, first considered by Kac \cite{MR0259961} and Moody \cite{MR0207783,MR0229687}, independently. Roughly speaking, a Kac--Moody
algebra is ``built'' from copies of $\mathfrak{sl}_2$, in such a way that most examples are infinite-dimensional, but much of the finite-dimensional theory carries through (cf. \cite{MR1104219}). Borcherds generalized this further, allowing also copies of the three-dimensional Heisenberg Lie algebra to serve as building blocks, and thus arrived \cite{Bor_GKM} at the notion of {\em generalized Kac--Moody algebra}, or {\em Borcherds--Kac--Moody (BKM) algebra}, which has subsequently found many applications in mathematics and mathematical physics (cf. \cite{Hohenegger:2012zz, MR1803076}).

One of the most powerful such applications occurred in moonshine, when Borcherds introduced a particular example---the {\em monster Lie algebra} $\gt{m}$---and used it to prove \cite{borcherds_monstrous} the moonshine conjectures of Conway--Norton. His method entailed using monster-equivariant versions of the denominator identity for $\gt{m}$ to verify that the coefficients of the McKay--Thompson series $T_g$, defined by (\ref{eqn:intro-Tg}) according to the Frenkel--Lepowsky--Meurman construction of $\Vnat$, satisfy the replication formulas conjectured by Conway--Norton in \cite{MR554399}. This powerful result reduced the proof of the moonshine conjectures to a small, finite number of identities, that he could easily check by hand. 
\begin{theorem}[Borcherds]\label{thm:intro:class-borcherds}
Let $\Vnat$ be the moonshine module vertex operator algebra constructed by Frenkel--Lepowsky--Meurman, whose automorphism group is $\MM$. If $T_g$ is defined by (\ref{eqn:intro-Tg}) for $g\in \MM$, and if $\Gamma_g$ is the discrete subgroup of $\SL_2(\RR)$ specified by Conway--Norton in \cite{MR554399}, then $T_g$ is the unique normalized principal modulus for $\Gamma_g$.
\end{theorem}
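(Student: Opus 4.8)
The plan is to follow the strategy of Borcherds \cite{borcherds_monstrous}: introduce the monster Lie algebra $\gt{m}$ as a Borcherds--Kac--Moody (BKM) algebra built from the moonshine module $\Vnat$, identify its root spaces with the graded pieces of $\Vnat$ in a monster-equivariant fashion, and then extract from the denominator identity of $\gt{m}$---and, crucially, its $g$-twisted analogues---a system of recursions, the replication formulas, strong enough to pin down each $T_g$.

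First I would construct $\gt{m}$. Form the tensor product vertex algebra $\Vnat\otimes V_{\mathrm{II}_{1,1}}$, where $V_{\mathrm{II}_{1,1}}$ is the lattice vertex algebra attached to the even unimodular Lorentzian lattice $\mathrm{II}_{1,1}$ of signature $(1,1)$; since $\Vnat$ has central charge $24$ and $V_{\mathrm{II}_{1,1}}$ has central charge $2$, the total central charge is the critical value $26$. Applying the Goddard--Thorn ``no-ghost theorem'' to the weight-one part of the (BRST) cohomology of this vertex algebra yields a Lie algebra $\gt{m}$, graded by $\mathrm{II}_{1,1}\cong\Z\oplus\Z$, with bracket induced by the zero-mode product; the $\MM$-action on $\Vnat$ (which commutes with the Virasoro action) induces an action on $\gt{m}$ by Lie algebra automorphisms. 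The key output of the no-ghost theorem is the $\MM$-equivariant identification
\[
	\gt{m}_{(m,n)}\cong\Vnat_{mn}\qquad\text{for all }(m,n)\neq(0,0),
\]
together with $\dim_{\CC}\gt{m}_{(0,0)}=2$; in particular the root $(m,n)$ has multiplicity $c(mn)$. One then checks that $\gt{m}$ satisfies Borcherds' axioms for a BKM algebra, with real simple root $(1,-1)$, imaginary simple roots $(1,n)$ of multiplicity $c(n)$ for $n\geq 1$, and Weyl group $\{\pm 1\}$ generated by the reflection in $(1,-1)$.

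With this in hand, the Weyl--Kac--Borcherds denominator identity for $\gt{m}$ specializes to
\[
	p^{-1}\prod_{\substack{m>0\\ n\in\Z}}(1-p^mq^n)^{c(mn)}=J(\sigma)-J(\tau),
\]
where $p=e^{2\pi i\sigma}$ and $q=e^{2\pi i\tau}$. To obtain the twisted versions I would run the homological proof of this identity---computing the class of the exterior algebra on $\gt{m}_-$ via Lie algebra homology $H_\ast(\gt{m}_-)$ in the representation ring of $\MM$---but take the graded trace of $g\in\MM$ rather than the graded dimension; the plethystic (i.e.\ $\lambda$-ring) expression for the trace of $g$ on an exterior algebra then produces the \emph{twisted denominator identity}
\[
	T_g(\sigma)-T_g(\tau)=p^{-1}\prod_{\substack{m>0\\ n\in\Z}}\exp\!\left(-\sum_{k\geq 1}\frac{\tr(g^k|\Vnat_{mn})}{k}\,p^{mk}q^{nk}\right),
\]
valid for every $g\in\MM$ and recovering the previous display when $g=e$.

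Finally, comparing the coefficients of $p^mq^n$ on the two sides of the twisted denominator identity expresses the Fourier coefficients of $T_g$ through those of $T_g,T_{g^2},T_{g^3},\dots$; these relations are precisely the statement that $T_g$ is \emph{completely replicable} with replicates $T_{g^k}$. I would then combine this with: (i) the fact that the normalized principal modulus of each Conway--Norton group $\Gamma_g$ is itself completely replicable, the replicate of $T_g$ being the Hauptmodul of $\Gamma_{g^k}$ (a consequence of the modularity of these functions, essentially already extracted by Conway--Norton); and (ii) the fact that the replication recursions determine every Fourier coefficient of a completely replicable function from a bounded initial segment of coefficients. Since the family $\{T_g:g\in\MM\}$ is finite and closed under passing to replicates, (i) and (ii) reduce the theorem to checking, for each function that occurs, that the first several coefficients $\tr(g|\Vnat_n)$---accessible from the explicit Frenkel--Lepowsky--Meurman construction of $\Vnat$ together with the character table of $\MM$---agree with those of the normalized principal modulus of $\Gamma_g$, a finite computation. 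I expect the principal obstacle to be the monster-equivariant form of the no-ghost theorem: obtaining $\gt{m}_{(m,n)}\cong\Vnat_{mn}$ as $\MM$-modules, not merely as vector spaces, since every later step---the twisted denominator identity, hence replicability, hence the reduction to a finite check---rests on it; a secondary difficulty is pinning down (ii) with an explicit, verifiable bound on how many coefficients must be matched.
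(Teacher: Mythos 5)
Your proposal is a faithful outline of Borcherds' argument, which is exactly the route this (survey) paper takes: it does not reprove the theorem but attributes it to \cite{borcherds_monstrous} and sketches precisely your chain --- the no-ghost construction of $\gt{m}$ with $\gt{m}_{(m,n)}\cong\Vnat_{mn}$ as $\MM$-modules, the categorified denominator identity (\ref{eqn:class-catdenidm}) and its twisted form (\ref{eqn:class-mondenidg}), the resulting replication formulas, and the reduction to a finite coefficient check. The two caveats you flag (the $\MM$-equivariance of the no-ghost theorem, and an explicit bound on how many initial coefficients determine a completely replicable function) are indeed the genuinely delicate points of the published proof, so your assessment is accurate.
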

Recall that an even self-dual lattice of signature $(m,n)$ exists if and only if $m-n=0\pmod{8}$ (cf. e.g. \cite{MR1662447}). Such a lattice is unique up to isomorphism if $mn>0$, and is typically denoted $I\!I_{m,n}$. In the case that $m=n=1$ we may take 
\begin{gather}\label{eqn:class-II11}
	I\!I_{1,1}:=\Z e +\Z f,
\end{gather} 
where $e$ and $f$ are isotropic, $\langle e,e\rangle=\langle f,f\rangle=0$, and $\langle e,f\rangle =1$. Then $me+nf\in I\!I_{1,1}$ has square-length $2mn$. Note that $I\!I_{25,1}$ and $\Lee\oplus I\!I_{1,1}$ are isomorphic, for $\Lee$ the Leech lattice, since both lattices are even and self-dual, with signature $(25,1)$. 

In physical terms the monster Lie algebra $\gt{m}$ is (``about half'' of) the space of ``physical states'' of a bosonic string moving in the quotient
\begin{gather}
	\left(\RR^{24}/\Lee\oplus \RR^{1,1}/ I\!I_{1,1}\right)/(\Z/2\Z)
\end{gather}
of the $26$-dimensional torus $I\!I_{25,1}\otimes_{\Z}\RR/I\!I_{25,1}\simeq \RR^{24}/\Lee\oplus \RR^{1,1}/ I\!I_{1,1}$ by the involution $(x,y)\mapsto (-x,y)$ (acting as the Kummer involution on the Leech summand, and trivially on the hyperbolic summand). 
The monster Lie algebra $\gt{m}$ is constructed in a functorial way from $\Vnat$ (cf. \cite{Carnahan:2012gx}), inherits an action by the monster from $\Vnat$, and admits a monster-invariant grading by $I\!I_{1,1}$.

The denominator identity for a Kac--Moody algebra $\gt{g}$ equates a product indexed by the positive roots of $\gt{g}$ with a sum indexed by the Weyl group of $\gt{g}$. A BKM algebra also admits a denominator identity, which for the case of the monster Lie algebra $\gt{m}$ is the beautiful {\em Koike--Norton--Zagier formula}
\begin{gather}\label{eqn:class-mondenidid}
	p^{-1}\prod_{\substack{m,n\in\Z\\ m>0}}(1-p^mq^n)^{c(mn)}=
	J(\sigma)-J(\tau),
\end{gather}
where $\sigma\in\HH$ and $p=e^{2\pi i \sigma}$ (and $c(n)$ is the coefficient of $q^n$ in $J(\tau)$, cf. (\ref{eqn:intro-expJ})). Since the right hand side of (\ref{eqn:class-mondenidid}) implies that the left hand side has no terms $p^mq^n$ with $mn\neq 0$, this identity imposes many non-trivial polynomial relations upon the coefficients of $J(\tau)$. Among these is
\begin{gather}\label{eqn:intro:class-recrsn}
	c(4n+2)=c(2n+2)+\sum_{k=1}^nc(k)c(2n-k+1),
\end{gather}
which was first found by Mahler \cite{MR0441867} by a different method, along with similar expressions for $c(4n)$, $c(4n+1)$, and $c(4n+3)$, which are also entailed in (\ref{eqn:class-mondenidid}). Taken together these relations allow us to compute the coefficients of $J(\tau)$ recursively, given just the values 
\begin{gather}
\begin{split}
	c(1)&=196884,\\
	c(2)&=21493760,\\
	c(3)&=864299970,\\
	c(5)&=333202640600.
\end{split}
\end{gather}

To recover the replication formulas of \cite{MR554399,MR760657} we require to replace $J$ with $T_g$, and $c(n)=\dim(\Vnat_n)$ with $\tr(g|\Vnat_n)$ in (\ref{eqn:class-mondenidid}), and for this we require a categorification of the denominator identity, whereby the positive integers $c(mn)$ are replaced with $\MM$-modules of dimension $c(mn)$. 

A categorification of the denominator formula for a finite-dimensional simple complex Lie algebra was obtained by Kostant \cite{MR0142696}, following an observation of Bott \cite{MR0089473}. This was generalized to Kac--Moody algebras by Garland--Lepowsky \cite{MR0414645}, and generalized further to BKM algebras by Borcherds in \cite{borcherds_monstrous}. In its most compact form, it is the identity of virtual vector spaces
\begin{gather}\label{eqn:class-catdenid}
	\bigwedge\nolimits_{-1}(\gt{e})=H(\gt{e}),
\end{gather}
where $\gt{e}$ is the sub Lie algebra of a BKM algebra corresponding to its positive roots (cf. \cite{MR1395597,MR1600542, MR1104219}). 

In (\ref{eqn:class-catdenid}), which is a version of the {\em Euler--Poincar\'e principle},  we understand $\bigwedge_{-1}(\gt{e})$ to be the specialization of the formal series
\begin{gather}\label{eqn:class-bigwedget}
\bigwedge\nolimits_t(\gt{e}):=\sum_{k\geq 0}\wedge^k(\gt{e})t^k
\end{gather}
to $t=-1$, where $\wedge^k(\gt{e})$ is the $k$-th exterior power of $\gt{e}$, and we write 
\begin{gather}
H(\gt{e}):=\sum_{k\geq 0} (-1)^kH_k(\gt{e})
\end{gather} 
for the alternating sum of the Lie algebra homology groups of $\gt{e}$. 

In the case of the monster Lie algebra $\gt{m}$, the spaces $\wedge^k(\gt{e})$ and $H_k(\gt{e})$ are graded by $I\!I_{1,1}$, and acted on naturally by the monster. If we use the variables $p$ and $q$ to keep track of the $I\!I_{1,1}$-gradings, then the equality of (\ref{eqn:class-catdenid}) holds in the ring $R(\MM)[[p,q]][q^{-1}]$ of formal power series in $p$ and $q$ (allowing finitely many negative powers of $q$), with coefficients in the (integral) representation ring of $\MM$. The so-called no-ghost theorem (cf. Theorem 5.1 in \cite{borcherds_monstrous}) allows to express the $H_k(\gt{e})$ in terms of the homogenous subspaces of $\Vnat$, and the identity (\ref{eqn:class-catdenid}) becomes
\begin{gather}\label{eqn:class-catdenidm}
\bigwedge\nolimits_{-1}\left(\sum_{\substack{m,n\in\Z\\m>0}}\Vnat_{mn}p^mq^n\right)
=
\sum_{m\in\Z}\Vnat_mp^{m+1}-\sum_{n\in\Z}\Vnat_npq^n.
\end{gather}
(It turns out that $H_k(\gt{e})=0$ for $k\geq 3$.)

The identity (\ref{eqn:class-catdenidm}) returns (\ref{eqn:class-mondenidid}) once we replace $\Vnat_{k}$ everywhere with $\dim (\Vnat_k)=c(k)$, and divide both sides by $p$. More generally, replacing $\Vnat_k$ with $\tr(g|\Vnat_k)$ for $g\in \MM$, we deduce from (\ref{eqn:class-catdenidm}) that
\begin{gather}\label{eqn:class-mondenidg}
	p^{-1}\exp\left(-\sum_{k>0}\sum_{\substack{m,n\in\Z\\m>0}}\frac{1}{k}\tr(g^k|\Vnat_{mn})p^{mk}q^{nk}\right)
	=
	T_g(\sigma)-T_g(\tau)
\end{gather}
(cf. \cite{borcherds_monstrous}, and also \cite{MR1327230}), which, in turn, implies the replication formulas formulated in \cite{MR554399,MR760657}. Taking $g=e$ in (\ref{eqn:class-mondenidg}) we recover (\ref{eqn:class-mondenidid}), so (\ref{eqn:class-mondenidg}) furnishes a natural, monster-indexed family of analogues of the identity (\ref{eqn:class-mondenidid}).

\section{Modularity}\label{sec:intro:mod}

Despite the power of the BKM algebra 
theory developed by Borcherds, and despite some conceptual improvements (cf. \cite{MR1465329,MR1600542,MR2681781}) upon Borcherds' original proof of the moonshine conjectures, a conceptual explanation for the principal modulus property of monstrous moonshine is yet to be established.  Indeed, there are generalizations and analogs of the notion of replicability which hold for generic modular functions and forms (for example, see \cite{BruinierKohnenOno}), not just those modular functions which are principal moduli.

Zhu explained the modularity of the graded dimension of $\Vnat$ in \cite{Zhu_ModInv}, by proving that this is typical for vertex operator algebras satisfying certain natural (but restrictive) hypotheses, and Dong--Li--Mason extended Zhu's work in \cite{Dong2000}, obtaining modular invariance results for graded trace functions arising from the action of a finite group of automorphisms. 

To prepare for a statement of the results of Zhu and Dong--Li--Mason, we mention that the module theory for vertex operators algebras includes so-called {\em twisted modules}, associated to finite order automorphisms. If $g$ is a finite order automorphism of $V$, then $V$ is called {\em $g$-rational} in case every $g$-twisted $V$-module is a direct sum of simple $g$-twisted $V$-modules. Dong--Li--Mason proved \cite{MR1615132} that a $g$-rational vertex operator algebra has finitely many simple $g$-twisted modules up to isomorphism. So in particular, a rational vertex operator algebra has finitely many simple (untwisted) modules.

Given a module $M$ for the vertex operator algebra $V$, let us write $\dimq M$ for its graded dimension, 
\begin{gather}\label{eqn:mod-dimqM}
	\dimq M:=\sum_n\dim(M_n)q^n.
\end{gather}
When the substitution $q=e^{2\pi i \tau}$ in (\ref{eqn:mod-dimqM}) yields a locally uniformly convergent series for $\tau\in\HH$ write $Z_M(\tau)$ for the resulting holomorphic function on the upper half plane,
\begin{gather}\label{eqn:mod-ZM}
	Z_M(\tau):=\dimq M|_{q=e^{2\pi i \tau}}.
\end{gather}

\begin{theorem}[Zhu, Dong--Li--Mason]\label{thm:intro:mod-mod}
Let $V$ be rational $C_2$-cofinite vertex operator algebra. Then the graded dimensions $\dimq M^i$ of its simple modules $M^i$ define holomorphic functions $Z_{M^i}(\tau)$ which span a finite-dimensional representation of $\SL_2(\Z)$. More generally, if $G$ is a finite subgroup of ${\rm Aut}(V)$ and $V$ is $g$-rational for every $g\in G$, then the graded trace functions $\sum_n \tr(\tilde{h}|M_n)q^n$, attached to the triples $(g,\tilde{h},M)$, where $g,h\in G$ commute, $M$ is a simple $h$-stable $g$-twisted module for $V$, and $\tilde{h}$ is a lift of $h$ to $\GL(M)$, span a finite-dimensional representation of $\SL_2(\Z)$.
\end{theorem}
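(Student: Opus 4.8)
The plan is to follow Zhu's analysis of genus-one correlation functions in \cite{Zhu_ModInv}, as extended to twisted modules and the equivariant setting by Dong--Li--Mason in \cite{Dong2000}. Fix a finite $G\leq\Aut(V)$ with $V$ being $C_2$-cofinite and $g$-rational for all $g\in G$, and fix commuting $g,h\in G$. Replace $V$ by Zhu's isomorphic ``square bracket'' vertex operator algebra $(V,Y[\,\cdot\,,z],\mathbf 1,\tilde\omega)$, obtained from the exponential change of coordinate $z\mapsto q_z-1$, $q_z=e^{2\pi i z}$; write $\mathrm{wt}[v]$ for the corresponding weight and $o(v)=v[\mathrm{wt}[v]-1]$ for the zero mode acting on a module. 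For an $h$-stable simple $g$-twisted $V$-module $M$ with a chosen lift $\tilde h\in\GL(M)$ of $h$, the objects of interest are the genus-one $n$-point functions
\[
Z_M\big((v_1,z_1),\dots,(v_n,z_n);\tau\big)=\tr_M\,\tilde h\,Y_M(v_1,q_{z_1})\cdots Y_M(v_n,q_{z_n})\,q^{L(0)-c/24},
\]
and in particular the one-point functions $Z_M(v;\tau)=\tr_M\tilde h\,o(v)\,q^{L(0)-c/24}$; taking $v=\mathbf 1$ recovers, up to an overall rational power of $q$, the graded trace $\sum_n\tr(\tilde h\,|\,M_n)q^n$ of the theorem. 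The goal is to show that the span of the $Z_M(v;\tau)$, as $(g,\tilde h,M)$ ranges over all admissible triples, is a finite-dimensional $\SL_2(\Z)$-module.

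The first block of work is convergence: one shows the above traces converge absolutely and locally uniformly in $\tau\in\HH$ (and in punctured discs in the $z_i$) to functions holomorphic in $\tau$ and meromorphic in the $z_i$. This is where $C_2$-cofiniteness enters decisively --- it yields a spanning set for $M$ by modes of a finite generating subset of $V$ acting on finitely many vectors, which both bounds $\dim M_n$ polynomially and controls the pole structure of the $n$-point functions. The second block is the recursion: from the commutator and genus-one associativity identities, together with the behaviour of $o(v)$ under $L[-1]$, one derives reduction formulas expressing an $n$-point function through $(n-1)$-point functions, with coefficients built from (twisted) Weierstrass functions $\wp_k(z,\tau)$ and (twisted) Eisenstein series $G_k\!\left[\begin{smallmatrix}\theta\\\phi\end{smallmatrix}\right](\tau)$, the characters $\theta,\phi$ being fixed by the $g$- and $h$-eigenvalues. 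Iterating collapses everything onto one-point functions, and combined with $C_2$-cofiniteness this forces each graded trace to satisfy a linear differential equation whose coefficients are holomorphic modular forms on $\SL_2(\Z)$; in particular the $\SL_2(\Z)$-orbit of any $Z_M$ lies in a fixed finite-dimensional solution space.

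Modular invariance is then obtained by tracking the change of variables $\tau\mapsto\gamma\tau$, $z_i\mapsto z_i/(c\tau+d)$ for $\gamma=\left(\begin{smallmatrix}a&b\\c&d\end{smallmatrix}\right)\in\SL_2(\Z)$. The classical transformation laws of $\wp_k$ and of the (twisted) Eisenstein series --- under which the character data $(\theta,\phi)$, equivalently the pair $(g,h)$, transforms by the natural right action $(g,h)\mapsto(g^ah^c,g^bh^d)$ --- show that $(c\tau+d)^{-\mathrm{wt}[v]}Z_M(v;\gamma\tau)$ again solves the very same recursion system, but with $(g,h)$ replaced by $(g^ah^c,g^bh^d)$. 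Since $g'$-rationality guarantees only finitely many simple $g'$-twisted modules (the result of Dong--Li--Mason cited above), and rationality makes every such module a finite direct sum of simples, the collection of one-point functions over all admissible triples is finite-dimensional; the recursion exhibits it as $\SL_2(\Z)$-stable, and matching $q$-expansions identifies the transformed function $(c\tau+d)^{-\mathrm{wt}[v]}Z_M(v;\gamma\tau)$ explicitly as a linear combination of trace functions attached to the $(g^ah^c,g^bh^d)$-data. Specializing to $v=\mathbf 1$ yields the claim for graded trace functions, and further to $g=h=e$ the statement for the $\dimq M^i$.

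The principal obstacle is the simultaneous control of analysis and algebra: one must prove that the formal $q$-traces of products of twisted vertex operators actually converge to meromorphic functions having exactly the predicted elliptic singularities, which requires the $C_2$-cofiniteness spanning argument for twisted modules together with the operator-product identities valid inside a trace. The twisted, $\tilde h$-equivariant refinement adds bookkeeping --- rational grading shifts, the normalization $q_z=e^{2\pi i z}$, Eisenstein series twisted by characters, and the verification that the chosen lift $\tilde h$ propagates consistently through the reduction formulas --- but these are technical elaborations of Zhu's scheme rather than new ideas.
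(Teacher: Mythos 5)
Your sketch is an accurate outline of the Zhu and Dong--Li--Mason argument (square-bracket formalism, $C_2$-cofiniteness giving convergence and the modular linear differential equation, the twisted Weierstrass/Eisenstein recursion, and finiteness of simple $g$-twisted modules), which is precisely the route the paper relies on: this survey does not reprove Theorem~\ref{thm:intro:mod-mod} but cites it to \cite{Zhu_ModInv} and \cite{Dong2000}, whose proof your proposal faithfully summarizes. No substantive gap beyond the technical details you already flag as the main obstacles.
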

We refer to the Introduction of \cite{Dong2000} (see also \S2 of \cite{MR1284796}) for a discussion of $h$-stable twisted modules, and the relevant notion of  {\it lift}. Note that any two lifts for $h$ differ only up to multiplication by a non-zero scalar, so $\sum_n \tr(\tilde{h}|M_n)q^n$ is uniquely defined by $(g,h,M)$, up to a non-zero scalar.

In the case of $\Vnat$, there is a unique simple $g$-twisted module $\Vnat_g=\bigoplus_n(\Vnat_g)_{n}$ for every $g\in \MM={\rm Aut}(\Vnat)$ (cf. Theorem 1.2 of \cite{MR1615132}), and $\Vnat_g$ is necessarily $h$-stable for any $h\in\MM$ that commutes with $g$.  Therefore, Theorem \ref{thm:intro:mod-mod} suggests that the functions 
\begin{gather}\label{eqn:intro:mod-Tgh}
	T_{(g,\tilde h)}(\tau):=\sum_n \tr(\tilde{h}|(\Vnat_g)_n)q^n,
\end{gather}
associated to pairs $(g,h)$ of commuting elements of $\MM$, may be of interest. 

Indeed, this was anticipated a decade earlier by Norton, following observations of Conway--Norton \cite{MR554399} and Queen \cite{MR628715}, which associated  principal moduli to elements of groups that appear as centralizers of cyclic subgroups in the monster. Norton formulated his {\em generalized moonshine} conjectures in \cite{generalized_moonshine} (cf. also \cite{MR1877765}, and the Appendix to \cite{MR933359}).

\begin{conjecture}[Generalized Moonshine: Norton]\label{conj:GeneralizedMoonshine}
There is an assignment of holomorphic functions $T_{(g,\tilde{h})}:\HH\to\CC$ to every pair $(g,h)$ of commuting elements in the monster, such that 
the following are true:
\begin{enumerate}
\item For every $x\in \MM$ we have  $T_{(x^{-1}gx,x^{-1}\tilde{h}x)}=T_{(g,\tilde h)}$.  
\item  For every $\gamma\in \SL_2(\Z)$ we have that $T_{(g,\tilde{h})\gamma}(\tau)$ is a scalar multiple of $T_{(g,\tilde{h})}(\gamma\tau)$. 
\item The coefficient functions $\tilde{h}\mapsto \tr(\tilde{h}|(\Vnat_g)_n)$, for fixed $g$ and $n$, define characters of a projective representation of the centralizer of $g$ in $\MM$,
\item We have that $T_{(g,\tilde{h})}$ is either constant or a generator for the function field of a genus zero group $\Gamma_{(g,h)}<\SL_2(\RR)$.
\item We have that $T_{(g,\tilde{h})}$ is a scalar multiple of $J$ if and only if $g=h=e$.
\end{enumerate}
\end{conjecture}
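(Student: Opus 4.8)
The plan is to follow the blueprint of Borcherds' proof of Conjecture~\ref{conj:MonstrousMoonshine} (see Theorem~\ref{thm:intro:class-borcherds}), but carried out equivariantly, one cyclic subgroup of $\MM$ at a time. Three ingredients are needed: a family of twisted modules $\Vnat_g$ together with an action of the centralizer $C_\MM(g)$ on each; for every $g$, a ``generalized monster Lie algebra'' $\gt{m}_g$ built functorially from $\Vnat$ and $\Vnat_g$, whose BKM denominator identity yields twisted analogues of the Koike--Norton--Zagier formula (\ref{eqn:class-mondenidid}) and hence twisted replication formulas for the putative $T_{(g,\tilde h)}$; and a classification result ensuring that a modular function obeying such recursions together with the modular covariance of Theorem~\ref{thm:intro:mod-mod} must be a Hauptmodul.

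First I would dispose of the ``soft'' properties (1), (2), (3) and (5). Since $\Vnat$ is holomorphic and $g$-rational for every $g$, the Dong--Li--Mason theory cited above supplies a unique simple $g$-twisted module $\Vnat_g$, automatically $h$-stable whenever $[g,h]=e$, so that $T_{(g,\tilde h)}$ is defined by (\ref{eqn:intro:mod-Tgh}) once a lift $\tilde h\in\GL(\Vnat_g)$ is fixed. Property~(1) is immediate from functoriality of the twisted-module construction under $\Aut(\Vnat)=\MM$. Property~(3) follows by organizing the lifts $\tilde h$ into a projective representation of $C_\MM(g)$ on $\Vnat_g$, the obstruction being a class in $H^2(C_\MM(g);\CC^\times)$ that one controls class by class. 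Property~(2), the $\SL_2(\Z)$-covariance, is the Dong--Li--Mason upgrade of Zhu's theorem (Theorem~\ref{thm:intro:mod-mod}) applied to the triples $(g,\tilde h,\Vnat_g)$, assembled into a projective $\SL_2(\Z)$-action on the span of the $T_{(g,\tilde h)}$. Property~(5) is a conformal-weight count: a $q^{-1}$ pole forces the $g$-twisted vacuum into degree $-1$, which occurs only for $g=e$, and then $\tr(\tilde h|\Vnat_{-1})=1$ together with the faithfulness of $\Vnat$ forces $h=e$ as well.

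The substantive step is (4). For $g$ of order $N$ I would construct $\gt{m}_g$ via the no-ghost theorem (Theorem~5.1 of \cite{borcherds_monstrous}) as the BKM algebra attached to the ``twisted'' string background obtained by adjoining a rescaled hyperbolic plane to $\Vnat_g$, so that $\gt{m}_g$ carries a $\tfrac{1}{N}I\!I_{1,1}$-grading and a $C_\MM(g)$-action, with root multiplicities governed by $\dim(\Vnat_g)_n$. Its monster-equivariant denominator identity then generalizes (\ref{eqn:class-mondenidg}), and expanding the equivariant Euler--Poincar\'e identity (\ref{eqn:class-catdenid}) in this setting yields, for each commuting pair, recursions expressing the coefficients of $T_{(g,\tilde h)}$ through those of the $T_{(g^a,\tilde h^b)}$---twisted complete replicability. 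I would then invoke a classification theorem for completely replicable functions: a holomorphic modular function on $\HH$ that is completely replicable, has a $q^{-1}+O(q)$ expansion, and is invariant under a congruence group of the expected level is the Hauptmodul of a genus zero group. Matching these against the finite table of fixing groups $\Gamma_{(g,h)}$ anticipated by Conway--Norton and Queen then identifies each non-constant $T_{(g,\tilde h)}$ as the normalized principal modulus for $\Gamma_{(g,h)}$, which is (4).

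The main obstacle is (4), and it is two-headed. Algebraically, $\gt{m}_g$ must be built \emph{intrinsically} from $\Vnat_g$---the ``Leech lattice plus $I\!I_{1,1}$'' model for $\gt{m}$ does not twist directly---and the abelian-intertwining-algebra cocycles and the lift ambiguities have to be tracked precisely enough for the denominator identity to close, together with a no-ghost theorem valid in the orbifold setting. Arithmetically, the implication ``twisted complete replicability $\Rightarrow$ Hauptmodul'' is itself a deep classification requiring tight control of the possible levels and cusps; one alternative is to realize each $T_{(g,\tilde h)}$ as a Rademacher sum attached to $\Gamma_{(g,h)}$ and verify convergence and polar behavior directly. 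Reconciling the unavoidably case-by-case nature of both halves with the uniform statement of the conjecture---in particular handling the finitely many cyclic subgroups of $\MM$ and their centralizer cohomology---is where the real effort goes.
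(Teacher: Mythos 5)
The statement you are addressing is presented in the paper as a \emph{conjecture}, not a theorem: the text immediately following it says ``Conjecture~\ref{conj:GeneralizedMoonshine} is yet to be proven in full, but has been established for a number of special cases,'' citing Dong--Li--Mason for the case that $g$ and $h$ generate a cyclic subgroup, H\"ohn for $g\in 2A$, and Carnahan's papers for the most general results. So there is no proof in the paper to compare against, and your submission should not be read as one either. What you have written is a research programme --- essentially the Borcherds-style equivariant strategy that underlies the partial results the paper surveys --- and you candidly identify its two central unresolved steps yourself. That candour is to your credit, but it means the document does not establish the conjecture.

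Concretely, the gap is all of part (4). Your plan requires (i) an intrinsic construction of a generalized Kac--Moody algebra $\gt{m}_g$ from the twisted module $\Vnat_g$, with a no-ghost theorem valid in the orbifold setting and with the cocycle and lift ambiguities controlled well enough that the equivariant denominator identity closes; and (ii) a classification theorem to the effect that a twisted completely replicable function with the right polar behaviour and level is a principal modulus for a genus zero group. Neither is proved or even precisely stated in your write-up; both are exactly where the published partial proofs (H\"ohn's $2A$ case, Carnahan's series) expend their effort, and step (ii) in particular is not a formal consequence of anything cited in this paper --- Theorem~\ref{thm:intro:qgrav-Rgammainv} converts a Rademacher-sum realization into the genus zero property, but you would first have to \emph{prove} that each $T_{(g,\tilde h)}$ is such a Rademacher sum, which is again open in general. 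The ``soft'' parts are also not uniformly soft: property (2) as you derive it from Theorem~\ref{thm:intro:mod-mod} gives that the functions span a finite-dimensional $\SL_2(\Z)$-representation, and extracting from this that $T_{(g,\tilde h)\gamma}$ is a \emph{scalar multiple} of $T_{(g,\tilde h)}(\gamma\tau)$ (rather than a linear combination of several such functions) needs the uniqueness of the simple $g$-twisted module and a careful bookkeeping of lifts that you only gesture at; and your argument for (5) conflates ``has a $q^{-1}$ pole'' with ``is a scalar multiple of $J$,'' which requires in addition that the invariance group be all of $\SL_2(\Z)$. As it stands, your text is an accurate map of the terrain, not a crossing of it.
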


\begin{remark}
In Conjecture~\ref{conj:GeneralizedMoonshine} (2) above, the right-action of $\SL_2(\Z)$ on commuting pairs of elements of the monster is given by 
\begin{gather}
(g,h)\gamma:=(g^ah^c,g^bh^d)
\end{gather}
for $\gamma=\left(\begin{smallmatrix} a&b\\c&d\end{smallmatrix}\right)$. The (slightly ambiguous) $T_{(g,\tilde{h})\gamma}$ denotes the graded trace of a lift of $g^bh^d$ to $\GL(\Vnat_{g^ah^c})$. Norton's generalized moonshine conjectures reduce to the original Conway--Norton moonshine conjectures of \cite{MR554399} when $g=e$. 
\end{remark}

Conjecture~\ref{conj:GeneralizedMoonshine} is yet to be proven in full, but has been established for a number of special cases. Theorem \ref{thm:intro:mod-mod} was used by Dong--Li--Mason in \cite{Dong2000}, following an observation of Tuite (cf. \cite{MR1372716}, and \cite{Tuite_CMP92,Tuite_CMP95,Tuite_CM95} for broader context), to prove Norton's conjecture for the case that $g$ and $h$ generate a cyclic subgroup of $\MM$, and this approach, via twisted modules for $\Vnat$, has been extended by Ivanov--Tuite in \cite{MR1915258,MR1915259}. H\"ohn obtained a generalization of Borcherds' method by using a particular twisted module for $\Vnat$ to construct a BKM algebra adapted to the case that $g$ is in the class named $2A$ in \cite{ATLAS}---the smaller of the two conjugacy classes of involutions in $\MM$---and in so doing established \cite{Hoe_GenMnsBbyMns} generalized moonshine for the functions $T_{(g,\tilde{h})}$ with $g\in 2A$.
So far the most general results in generalized moonshine have been obtained by Carnahan \cite{MR2728485,MR2904095,Carnahan:2012gx}. (See \cite{Car_MnsLieAlg} for a recent summary.)

Theorem \ref{thm:intro:mod-mod} explains why the McKay--Thompson series $T_g(\tau)$ of (\ref{eqn:intro-Tg}), and the $T_{(g,\tilde{h})}(\tau)$ of (\ref{eqn:intro:mod-Tgh}) more generally, should be invariant under the actions of (finite index) subgroups of $\SL_2(\Z)$, but it does not explain the surprising predictive power of monstrous moonshine. That is, it does not explain why the full invariance groups $\Gamma_g$ of the $T_g$ should be so large that they admit normalized principal moduli, nor does it explain why the $T_g$ should actually be these normalized principal moduli.

A program to establish a conceptual foundation for the principal modulus property of monstrous moonshine, via {\it Rademacher sums} and
{\it  three-dimensional gravity}, was initiated in \cite{DunFre_RSMG} by the first author and Frenkel.

\section{Rademacher Sums}\label{sec:intro:radsums}

To explain the conjectural connection between gravity and moonshine, we 
first recall some history.
The roots of the approach of \cite{DunFre_RSMG} extend back almost a hundred years, to Einstein's theory of general relativity, formulated in 1915, and the introduction of the circle method in analytic number theory, by Hardy--Ramanujan \cite{MR2280879}. At the same time that pre-war efforts to quantize Einstein's theory of gravity were gaining steam (see \cite{MR1663704} for a review), the circle method was being refined and developed, by Hardy--Littlewood (cf. \cite{MR0201267}), and  Rademacher \cite{Rad_PtnFn}, among others. (See \cite{MR1435742} for a detailed account of what is now known as the {\em Hardy--Littlewood circle method}.) Despite being contemporaneous, these works were unrelated in science until this century: as we will explain presently, Rademacher's analysis led to a Poincar\'e series-like expression---the prototypical Rademacher sum---for the elliptic modular invariant $J(\tau)$. 
It was suggested first in \cite{Dijkgraaf2007} (see also \cite{Manschot2007}) that this kind of expression might be useful for the computation of partition functions in quantum gravity.

Rademacher ``perfected''   the circle method introduced by Hardy--Ramanujan, and he obtained an exact convergent series expression for the combinatorial partition function $p(n)$.
In 1938 he generalized this work \cite{Rad_FouCoeffMdlrInv} and obtained such exact formulas
for the Fourier coefficients of general modular functions.   For the elliptic modular invariant $J(\tau)=\sum_n c(n)q^n$ (cf. (\ref{eqn:intro-expJ})), Rademacher's formula (which was obtained earlier by Petersson \cite{Pet_UbrEntAutFrm}, via a different method) may be written as
\begin{gather}\label{eqn:intro:qgrav-Radcn}
	c(n)=4\pi^2\sum_{c>0}\sum_{\substack{0<a<c\\(a,c)=1}}
	\frac{e^{-2\pi i \frac{a}{c}}e^{2\pi i n\frac{d}{c}}}
	{c^2}\sum_{k\geq 0}
	\frac{(4\pi^2)^{k}}{c^{2k}}\frac{1}{(k+1)!}\frac{n^k}{k!},
\end{gather}
where $d$, in each summand, is a multiplicative inverse for $a$ modulo $c$, and $(a,c)$ is the greatest common divisor of $a$ and $c$.
Having established the formula (\ref{eqn:intro:qgrav-Radcn}), Rademacher sought to reverse the process, and use it to derive the modular invariance of $J(\tau)$. That is, he set out to prove directly that $J_0(\tau+1)=J_0(-1/\tau)=J_0(\tau)$, when $J_0(\tau)$ is defined by setting $J_0(\tau)=q^{-1}+\sum_{n>0}c(n)q^n$, with $c(n)$ defined by (\ref{eqn:intro:qgrav-Radcn}).

Rademacher achieved this goal in \cite{Rad_FuncEqnModInv}, by reorganizing the summation
\begin{gather}
	\sum_{n>0}c(n)q^n=
	4\pi^2
	\sum_{n>0}
	\sum_{c>0}\sum_{\substack{0<a<c\\(a,c)=1}}
	\frac{e^{-2\pi i \frac{a}{c}}e^{2\pi i n(\tau+\frac{d}{c})}}
	{c^2}\sum_{k\geq 0}
	\frac{(4\pi^2)^{k}}{c^{2k}}\frac{1}{(k+1)!}\frac{n^k}{k!}
\end{gather}
into a Poincar\'e series-like expression for $J$. More precisely, Rademacher proved that
\begin{gather}\label{eqn:intro:qgrav-JRadsum}
	J(\tau)+12=e^{-2\pi i\tau}+\lim_{K\to \infty}\sum_{\substack{0<c<K\\-K^2<d<K^2\\(c,d)=1}}
	e^{-2\pi i\frac{a\tau+b}{c\tau+d}}-e^{-2\pi i\frac{a}{c}},
\end{gather}
where $a,b\in\Z$ are chosen arbitrarily, in each summand, so that $ad-bc=1$. We call the right hand side of (\ref{eqn:intro:qgrav-JRadsum}) the first {\em Rademacher sum}.

Rademacher's expression (\ref{eqn:intro:qgrav-JRadsum}) for the elliptic modular invariant $J$ is to be compared to the formal sum 
\begin{gather}\label{eqn:intro:qgrav-Jformal}
	\sum_{\substack{c,d\in\Z\\
	(c,d)=1}}e^{-2\pi i m \frac{a\tau+b}{c\tau+d}},
\end{gather}
for $m$ a positive integer, which we may regard as a (formal) Poincar\'e series of weight zero for $\SL_2(\Z)$.
In particular, (\ref{eqn:intro:qgrav-Jformal}) is (formally) invariant for the action of $\SL_2(\Z)$, as we see by recognizing the matrices $\left(\begin{smallmatrix} a&b\\c&d\end{smallmatrix}\right)$ as representatives for the right coset space $\Gamma_\infty\backslash\Gamma$, where $\Gamma=\SL_2(\Z)$ and $\Gamma_\infty$ is defined in (\ref{eqn:intro:class-Gammainfty}):
for a fixed bottom row $(c,d)$ of matrices in $\SL_2(\Z)$, any two choices for the top row $(a,b)$ are related by left-multiplication by some element of $\Gamma_\infty$. 

The formal sum (\ref{eqn:intro:qgrav-Jformal}) does not converge for any $\tau\in \HH$, so a regularization procedure is required. Rademacher's sum (\ref{eqn:intro:qgrav-JRadsum}) achieves this, for $m=1$, by constraining the order of summation, and subtracting the limit as $\Im(\tau)\to\infty$ of each summand $e^{-2\pi i \frac{a\tau+b}{c\tau+d}}$, whenever this limit makes sense. 
Rademacher's method has by now been generalized in various ways by a number of authors. The earliest generalizations are due to Knopp \cite{Kno_ConstMdlrFnsI,Kno_ConstMdlrFnsII,Kno_ConstAutFrmsSuppSeries,Kno_AbIntsMdlrFns}, and a very general negative weight version of the Rademacher construction was given by Niebur in \cite{Nie_ConstAutInts}. We refer to \cite{Cheng:2012qc} for a detailed review and further references. A nice account of the original approach of Rademacher appears in \cite{Kno_RadonJPoinSerNonPosWtsEichCohom}. 

We note here that one of the main difficulties in establishing formulas like (\ref{eqn:intro:qgrav-JRadsum}) is the demonstration of convergence. When the weight $w$ of the Rademacher sum under consideration lies in the range $0\leq w\leq 2$, then one requires non-trivial estimates on {\em sums of Kloosterman sums}, like  
\begin{gather}
\sum_{c>0}
\sum_{\substack{0<a<c\\(a,c)=1}}
	\frac{e^{-2\pi i m \frac{a}{c}}e^{2\pi i n\frac{d}{c}}}{c^{2}}
\end{gather}
(for the case that $w=0$ or $w=2$). The demonstration of convergence generally becomes more delicate as $w$ approaches $1$.

In \cite{DunFre_RSMG} the convergence of a weight zero Rademacher sum $R_\Gamma^{(-m)}(\tau)$ is shown, for $m$ a positive integer and $\Gamma$ an arbitrary subgroup of $\SL_2(\RR)$ that is commensurable with $\SL_2(\Z)$. Assuming that $\Gamma$ contains $-I$ and has width one at infinity (cf. (\ref{eqn:intro:class-Gammainfty})), we have
\begin{gather}\label{eqn:intro:qgrav-RGamma}
	R_{\Gamma}^{(-m)}(\tau)=e^{-2\pi i m \tau}+\lim_{K\to \infty}
	\sum_{	(\Gamma_\infty\backslash\Gamma)_{<K}^\times}
	e^{-2\pi i m \frac{a\tau+b}{c\tau+d}}-e^{-2\pi i m \frac{a}{c}},
\end{gather}
where the summation, for fixed $K$, is over non-trivial right cosets of $\Gamma_\infty$ in $\Gamma$ (cf. (\ref{eqn:intro:class-Gammainfty})), having representatives $\left(\begin{smallmatrix}a&b\\c&d\end{smallmatrix}\right)$ such that $0<c<K$ and $|d|<K^2$. 

The modular properties of the $R_{\Gamma}^{(-m)}$ are also considered in \cite{DunFre_RSMG}, and it is at this point that the significance of Rademacher sums in monstrous moonshine appears. 
To state the relevant result we give the natural generalization (cf. \S3.2 of \cite{DunFre_RSMG}) of the Rademacher--Petersson formula (\ref{eqn:intro:qgrav-Radcn}) for $c(n)$, which is
\begin{gather}\label{eqn:intro:qgrav-cGamman}
	c_\Gamma(-m,n)=
	4\pi^2\lim_{K\to \infty}\sum_{(\Gamma_\infty\backslash\Gamma/\Gamma_\infty)_{<K}^\times}
\frac{e^{-2\pi i m \frac{a}{c}}e^{2\pi i n\frac{d}{c}}}
	{c^2}\sum_{k\geq 0}
	\frac{(4\pi^2)^{k}}{c^{2k}}\frac{m^{k+1}}{(k+1)!}\frac{n^k}{k!},
\end{gather}
where the summation, for fixed $K$, is over non-trivial double cosets of $\Gamma_\infty$ in $\Gamma$ (cf. (\ref{eqn:intro:class-Gammainfty})), having representatives $\left(\begin{smallmatrix}a&b\\c&d\end{smallmatrix}\right)$ such that $0<c<K$. Note that this formula simplifies for $n=0$, to 
\begin{gather}\label{eqn:intro:qgrav-cGamma0}
	c_{\Gamma}(-m,0)=4\pi^2m\lim_{K\to \infty}\sum_{(\Gamma_\infty\backslash\Gamma/\Gamma_\infty)_{<K}^\times}
	\frac{e^{-2\pi i m\frac{a}{c}}}{c^2}.
\end{gather}
The value $c_{\Gamma}(-1,0)$ is the {\em Rademacher constant} attached to $\Gamma$. (Cf. \S6 of \cite{MR760657} and \S5.1 of \cite{DunFre_RSMG}.) 

A {\em normalized Rademacher sum} $T_\Gamma^{(-m)}(\tau)$ is defined in \S4.1 of \cite{DunFre_RSMG} by introducing an extra complex variable and taking a limit. It is shown in \S4.4 of \cite{DunFre_RSMG} that
\begin{gather}\label{eqn:intro:radsums-TRc}
T_\Gamma^{(-m)}(\tau)=R_\Gamma^{(-m)}(\tau)-\frac12c_{\Gamma}(-m,0)
\end{gather}
for any group $\Gamma<\SL_2(\RR)$ that is commensurable with $\SL_2(\Z)$. If $\Gamma$ has width one at infinity (cf. (\ref{eqn:intro:class-Gammainfty})), then also
\begin{gather}\label{eqn:intro:radsums-TmGammaFouExp}
	T_\Gamma^{(-m)}(\tau)=q^{-m}+\sum_{n>0}c_\Gamma(-m,n)q^n,
\end{gather}
so in particular, $T_\Gamma^{(-m)}(\tau)=q^{-m}+O(q)$ as $\Im(\tau)\to\infty$.
The following theorem by the first author and Frenkel summarizes the central role of Rademacher sums and 
the principal modulus property.

\begin{theorem}[Duncan--Frenkel \cite{DunFre_RSMG}]
\label{thm:intro:qgrav-Rgammainv}
Let $\Gamma$ be a subgroup of $\SL_2(\RR)$ that is commensurable with $\SL_2(\Z)$. 
Then the normalized Rademacher sum $T_\Gamma^{(-m)}$ is $\Gamma$-invariant if and only if $\Gamma$ has genus zero. Furthermore, if $\Gamma$ has genus zero then $T_\Gamma^{(-1)}$ is the normalized principal modulus for $\Gamma$. 
\end{theorem}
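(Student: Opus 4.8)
The plan is to replace $T_\Gamma^{(-m)}$ by $\phi:=R_\Gamma^{(-m)}$, which by \eqref{eqn:intro:radsums-TRc} differs from it by an additive constant, so that $\Gamma$-invariance of the two is equivalent; then to establish a \emph{near-invariance} for $\phi$, translate the question into one about differentials of the second kind on the compactified quotient, and read off both implications from the genus. First I would prove that for each $\gamma\in\Gamma$ there is a constant $\epsilon(\gamma)\in\CC$ with $\phi(\gamma\tau)=\phi(\tau)+\epsilon(\gamma)$ for all $\tau\in\HH$. The mechanism is that right translation by $\gamma$ permutes the cosets $\Gamma_\infty\backslash\Gamma$ (sending $\Gamma_\infty\delta$ to $\Gamma_\infty\delta\gamma$), and since $\delta(\gamma\tau)=(\delta\gamma)\tau$ the unregularized Poincar\'e-type sum is formally unchanged; after relabelling, the two regularizations in \eqref{eqn:intro:qgrav-RGamma} (the truncation by $0<c<K$, $|d|<K^2$, and the subtraction of the constants $e^{-2\pi i m a/c}$) differ only by contributions of cosets near the truncation boundary, which vanish as $K\to\infty$ by the convergence estimates established in \cite{DunFre_RSMG}, together with a $\tau$-independent rearrangement of the subtracted constants. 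Iterating, $\epsilon\colon\Gamma\to(\CC,+)$ is a homomorphism; it annihilates $\Gamma_\infty$ (since the same reorganization applied to a generator of $\Gamma_\infty$ shows $\phi$ is a function of $q=e^{2\pi i\tau}$) and, being valued in a torsion-free group, it annihilates every elliptic element of $\Gamma$.

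Next I would differentiate. The relation $\phi(\gamma\tau)=\phi(\tau)+\epsilon(\gamma)$ shows that $\omega:=d\phi$ is genuinely $\Gamma$-invariant, so it descends to a meromorphic $1$-form on the compact Riemann surface $\overline{X}:=\overline{\Gamma\backslash\HH}$. Since $\phi$ is holomorphic on $\HH$ and $\epsilon$ kills elliptic elements, $\phi$ is locally $\Gamma$-invariant near each elliptic point, so $\omega$ extends holomorphically there; it is holomorphic at each cusp other than $\infty$, where $\phi$ stays bounded; and at $\infty$, where $\phi=q^{-m}+\sum_{n>0}c_\Gamma(-m,n)q^n$ by \eqref{eqn:intro:radsums-TmGammaFouExp}, it has a pole of order exactly $m+1$ with residue zero. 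Thus $\omega$ is a differential of the second kind on $\overline X$ with a single singular point. The key observation is that the period of $\omega$ along the loop represented by $\gamma\in\Gamma$ equals $\int_{\tau_0}^{\gamma\tau_0}d\phi=\epsilon(\gamma)$; hence $\phi$ is $\Gamma$-invariant if and only if every period of $\omega$ vanishes, i.e. if and only if $\omega$ is exact on $\overline X$.

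The two directions then follow. If $\Gamma$ has genus zero then $\overline X\cong\widehat{\CC}$, every residueless meromorphic $1$-form on $\widehat{\CC}$ is exact, so $\omega$ is exact and $\phi$, hence $T_\Gamma^{(-m)}$, is $\Gamma$-invariant; in this case $T_\Gamma^{(-1)}$ descends to a meromorphic function on $\widehat{\CC}$ with a single simple pole, i.e. a degree-one map $\overline X\to\widehat{\CC}$, which together with $T_\Gamma^{(-1)}=q^{-1}+O(q)$ identifies it as \emph{the} normalized principal modulus for $\Gamma$ (and, for general $m$, identifies $T_\Gamma^{(-m)}$ with the monic degree-$m$ Faber polynomial in $T_\Gamma^{(-1)}$). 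Conversely, suppose $\Gamma$ has positive genus. For $m=1$ this is immediate: an invariant $\phi$ would be a non-constant meromorphic function on $\overline X$ with a single simple pole, forcing $\overline X\cong\widehat{\CC}$. For $m\ge 2$ the surface $\overline X$ may well carry meromorphic functions with one pole of order $m$ (already in genus one, via a Weierstrass $\wp$-function), so here one must use that $\phi$ is the specific Rademacher sum: the plan is to combine the symmetry $n\,c_\Gamma(-m,n)=m\,c_\Gamma(-n,m)$ of the coefficients in \eqref{eqn:intro:qgrav-cGamman} (which comes from the invariance of the double-coset sums under $\delta\mapsto\delta^{-1}$) with a Faber-type generating identity for the family $\{T_\Gamma^{(-m)}\}_{m\ge 1}$ to show that the obstruction homomorphisms are proportional, $\epsilon_m=m\,\epsilon_1$; since $\epsilon_1\neq 0$ in positive genus by the $m=1$ case, this forces $\epsilon_m\neq 0$, so $T_\Gamma^{(-m)}$ is not $\Gamma$-invariant.

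The main obstacle is exactly this last step: proving that the positive-genus anomaly is detected by the Rademacher sum of \emph{every} order $m$, equivalently that $dT_\Gamma^{(-m)}$ is never an exact differential on $\overline X$ when $\overline X$ has positive genus. Everything else is either formal or relies only on the Kloosterman-sum convergence bounds already available from \cite{DunFre_RSMG}; by contrast the order-reduction to $m=1$ requires genuine control of how the Rademacher sums of different orders are intertwined, and pinning that down — rather than the clean $m=1$ computation — is where the real work lies.
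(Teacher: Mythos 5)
The paper you are commenting on does not actually prove this theorem: it is quoted from \cite{DunFre_RSMG}, so your proposal has to be judged against that source rather than against an in-paper argument. Your overall architecture is the right one and is essentially how Duncan--Frenkel proceed: take as input the near-invariance $R_\Gamma^{(-m)}(\gamma\tau)=R_\Gamma^{(-m)}(\tau)+\epsilon_m(\gamma)$, observe that $\epsilon_m$ is a homomorphism annihilating elliptic and (non-infinite) parabolic elements, pass to the residueless second-kind differential $\omega_m=dR_\Gamma^{(-m)}$ on the compactified quotient $\overline{X}$, and read off invariance from exactness. The genus-zero implication, the identification of $T_\Gamma^{(-1)}$ as the normalized principal modulus, and the $m=1$ converse all go through on this basis. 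Do note, however, that the near-invariance itself, together with boundedness at the non-infinite cusps, is exactly where the analytic content of \cite{DunFre_RSMG} lives (conditional convergence, estimates on sums of Kloosterman sums); you are importing the hard part by citation.

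The genuine gap is the claimed proportionality $\epsilon_m=m\,\epsilon_1$, which is false. By the Riemann bilinear relation for differentials of the first and second kind, if $f=\sum_{n\geq 1}a_nq^n$ is a weight two cusp form for $\Gamma$ and $\eta_f=2\pi i f(\tau)\,d\tau$ the corresponding holomorphic differential on $\overline{X}$, then the intersection pairing of $[\omega_m]$ with $[\eta_f]$ in $H^1_{\mathrm{dR}}(\overline{X})$ is a fixed nonzero multiple of the residue at the infinite cusp of $\bigl(\sum_n \tfrac{a_n}{n}q^n\bigr)\omega_m$, i.e. of $a_m$. Hence $\epsilon_m=m\epsilon_1$ would force $a_m=m\,a_1$ for \emph{every} weight two cusp form for $\Gamma$; for $\Gamma=\Gamma_0(11)$ and $m=2$ the unique newform has $a_1=1$ and $a_2=-2$, so $[\omega_2]\neq 2[\omega_1]$ and $\epsilon_2\neq 2\epsilon_1$. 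Worse, the same computation (together with the Bruinier--Funke pairing, which identifies the shadow of the associated harmonic Maass form with the weight two Poincar\'e series of index $m$) shows that the entire class $[\omega_m]$ is controlled by the $m$-th Fourier coefficients of weight two cusp forms for $\Gamma$, and these can all vanish for a fixed $m\geq 2$ on a positive-genus group: the unique newform of level $49$ is a CM form with $a_3=0$, so for $\Gamma_0(49)$ (genus one) and $m=3$ every obstruction visible to your argument vanishes. So no proportionality or Faber-type reduction to the $m=1$ case can work, and the ``only if'' direction for a single fixed $m\geq 2$ is at best delicate. The statement that is robust, and the only one the surrounding paper actually uses, is: genus zero implies invariance for every $m$, together with the full equivalence at $m=1$. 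You should either restrict the converse to $m=1$ (or to the quantification ``invariant for all $m$'') or supply an argument that genuinely engages with the non-vanishing of $m$-th coefficients of weight two cusp forms.
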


In the case that the normalized Rademacher sum $T_\Gamma^{(-1)}$ is not $\Gamma$-invariant, $T_\Gamma^{(-m)}$ is an {\em abelian integral of the second kind} for $\Gamma$, in the sense that it has at most exponential growth at the cusps of $\Gamma$, and satisfies $T_{\Gamma}^{(-m)}(\gamma\tau)=T_\Gamma^{(-m)}(\tau)+\omega(\gamma)$ for $\gamma\in \Gamma$, for some function $\omega:\Gamma\to \CC$ (depending on $m$). 

Theorem \ref{thm:intro:qgrav-Rgammainv} is used as a basis for the formulation of a characterization of the discrete groups $\Gamma_g$ of monstrous moonshine in terms of Rademacher sums in \S6.5 of \cite{DunFre_RSMG}, following earlier work \cite{ConMcKSebDiscGpsM} of Conway--McKay--Sebbar. It also facilitates a proof of the following result, which constitutes a uniform construction of the McKay--Thompson series of monstrous moonshine.
\begin{theorem}[Duncan--Frenkel \cite{DunFre_RSMG}]\label{thm:radsums-TgTGammag}
Let $g\in \MM$. Then the McKay--Thompson series $T_g$ coincides with the normalized Rademacher sum $T_{\Gamma_g}^{(-1)}$.
\end{theorem}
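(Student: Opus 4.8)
The plan is to deduce the theorem from two results already in hand: Borcherds' Theorem~\ref{thm:intro:class-borcherds}, which identifies $T_g$ with the normalized principal modulus for $\Gamma_g$, and the Duncan--Frenkel dichotomy Theorem~\ref{thm:intro:qgrav-Rgammainv}, which identifies the normalized Rademacher sum $T_\Gamma^{(-1)}$ with the normalized principal modulus for $\Gamma$ whenever $\Gamma$ has genus zero. Since a genus zero group of the relevant type carries exactly one normalized principal modulus, these two identifications force $T_g=T_{\Gamma_g}^{(-1)}$. In other words, the statement is precisely the point where the monster-side description of the Hauptmoduln of moonshine (via $\Vnat$ and the monster Lie algebra) meets the purely modular, analytic description (via Poincar\'e-type series).

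First I would record the structural input, due to Conway--Norton \cite{MR554399}, on the groups $\Gamma_g$ themselves: for every $g\in\MM$, $\Gamma_g$ is commensurable with $\SL_2(\Z)$, contains $-I$, has width one at infinity in the sense of (\ref{eqn:intro:class-Gammainfty}), and --- crucially --- has genus zero. These are properties of the explicit groups $\Gamma_g$ as specified in \cite{MR554399}, independent of any facts about $\Vnat$. Consequently each $\Gamma_g$ lies in the class of groups for which the Rademacher sum $R_{\Gamma_g}^{(-1)}$ of (\ref{eqn:intro:qgrav-RGamma}), hence the normalized sum $T_{\Gamma_g}^{(-1)}$, is defined, with Fourier expansion $T_{\Gamma_g}^{(-1)}(\tau)=q^{-1}+O(q)$ as $\Im(\tau)\to\infty$ by (\ref{eqn:intro:radsums-TmGammaFouExp}).

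Next I would apply Theorem~\ref{thm:intro:qgrav-Rgammainv} with $\Gamma=\Gamma_g$: since $\Gamma_g$ has genus zero, $T_{\Gamma_g}^{(-1)}$ is $\Gamma_g$-invariant, holomorphic on $\HH$, bounded at every non-infinite cusp, and normalized as $q^{-1}+O(q)$ at the infinite cusp --- that is, it \emph{is} the normalized principal modulus for $\Gamma_g$. On the other hand, Borcherds' Theorem~\ref{thm:intro:class-borcherds} asserts that the McKay--Thompson series $T_g$ built from $\Vnat$ via (\ref{eqn:intro-Tg}) is also the normalized principal modulus for $\Gamma_g$. Since $\Gamma_g$-invariance, holomorphy on $\HH$, the normalization $q^{-1}+O(q)$ at $\infty$ (hence vanishing constant term --- here one uses $\Vnat_0=0$, so that $T_g=q^{-1}+O(q)$ as well), and boundedness at the finite cusps determine such a function uniquely on a genus zero group of width one at infinity, I would conclude $T_g=T_{\Gamma_g}^{(-1)}$.

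Given Theorems~\ref{thm:intro:class-borcherds} and \ref{thm:intro:qgrav-Rgammainv}, the remaining steps are essentially bookkeeping: checking that the two normalizations match, and that every $\Gamma_g$ genuinely satisfies the commensurability, $-I$, width-one, and genus-zero hypotheses needed to invoke Theorem~\ref{thm:intro:qgrav-Rgammainv}. All of the analytic weight is carried by those imported results --- Theorem~\ref{thm:intro:qgrav-Rgammainv} itself rests on the convergence of weight-zero Rademacher sums for arbitrary $\Gamma$ commensurable with $\SL_2(\Z)$ (the delicate estimates on sums of Kloosterman sums mentioned in \S\ref{sec:intro:radsums}) together with the genus-zero-versus-invariance dichotomy, and Theorem~\ref{thm:intro:class-borcherds} encapsulates Borcherds' monster-Lie-algebra proof of the Conway--Norton conjectures. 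The one place a genuine obstacle would appear is if one sought to prove the statement \emph{without} appealing to Borcherds --- thereby obtaining a new, analytic proof of the principal modulus property of $T_g$: one would then have to match Fourier coefficients directly, showing $c_{\Gamma_g}(-1,n)=\tr(g\mid\Vnat_n)$ for all $n\ge -1$, for instance by verifying that both sides satisfy the same replication-type recursions with the same initial data. That identification, not the formal deduction above, is where the real difficulty lies.
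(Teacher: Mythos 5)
Your proposal is correct and follows exactly the same route as the paper's own proof: invoke Theorem~\ref{thm:intro:class-borcherds} to identify $T_g$ as the normalized principal modulus for the genus zero group $\Gamma_g$, invoke Theorem~\ref{thm:intro:qgrav-Rgammainv} to identify $T_{\Gamma_g}^{(-1)}$ as the same object, and conclude by uniqueness of the normalized principal modulus. The additional remarks about verifying the hypotheses of Theorem~\ref{thm:intro:qgrav-Rgammainv} and about what a Borcherds-free argument would require are accurate but not needed for the deduction itself.
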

\begin{proof}
Theorem \ref{thm:intro:class-borcherds} states that $T_g$ is a normalized principal modulus for $\Gamma_g$, and in particular, all the $\Gamma_g$ have genus zero. Given this, it follows from Theorem \ref{thm:intro:qgrav-Rgammainv} that $T_{\Gamma_g}^{(-1)}$ is also a normalized principal modulus for $\Gamma_g$. A normalized principal modulus is unique if it exists, so we conclude $T_g=T_{\Gamma_g}^{(-1)}$ for all $g\in \MM$, as we required to show.
\end{proof}

Perhaps most importantly, Theorem \ref{thm:intro:qgrav-Rgammainv} is an indication of how the principal modulus property of monstrous moonshine can be explained conceptually. For if we can develop a mathematical theory in which the underlying objects are graded with graded traces that are provably
\begin{enumerate}
\item modular invariant, for subgroups of $\SL_2(\RR)$ that are commensurable with $\SL_2(\Z)$, and
\item given explicitly by Rademacher sums, such as (\ref{eqn:intro:qgrav-RGamma}),
\end{enumerate}
then these graded trace functions are necessarily normalized principal moduli, according to Theorem \ref{thm:intro:qgrav-Rgammainv}.

We are now led to ask: what kind of mathematical theory can support such results? As we have alluded to above, Rademacher sums have been related to quantum gravity by articles in the physics literature. Also, a possible connection between the monster and three-dimensional quantum gravity was discussed in \cite{Witten2007}. This suggests the possibility that three-dimensional quantum gravity and moonshine are related via Rademacher sums, and was a strong motivation for the work \cite{DunFre_RSMG}. In the next section we will give a brief review of quantum gravity, since it is an important area of physical inquiry which has played a role in the development of moonshine, 
but we must first warn the reader: problems have been identified with the existing conjectures that relate the monster to gravity, and the current status of this connection is uncertain.  

\section{Quantum Gravity}\label{sec:intro:qgrav}

Witten was the first to predict a role for the monster in quantum gravity. In \cite{Witten2007} Witten considered {\em pure quantum gravity} in three dimensions with negative cosmological constant, and presented evidence that the moonshine module $\Vnat$ is a chiral half of the conformal field theory dual to such a quantum gravity theory, at the most negative possible value of the cosmological constant. 

To explain some of the content of this statement, 
note that the action in pure three-dimensional quantum gravity is given explicitly by 
\begin{gather}\label{eqn:QG-IEH}
	I_{\rm EG}:=\frac{1}{16\pi G}\int {\rm d}^3x\sqrt{-g}(R-2\Lambda),
\end{gather}
where $G$ is the {\em Newton} or {\em gravitational constant}, $R$ denotes the Ricci scalar, and the {\em cosmological constant} is the scalar denoted by $\Lambda$. 

The case that the cosmological constant $\Lambda$ is negative is distinguished, since then there exist black hole solutions to the action (\ref{eqn:QG-IEH}), as was discovered by Ba\~nados--Teitelboim--Zanelli \cite{BanTeiZan_BTZBlckHle}. These black hole solutions---the {\em BTZ black holes}---are locally isomorphic to three-dimensional anti-de Sitter space \cite{MR1236812}, which is a Lorentzian analogue of hyperbolic space, and may be realized explicitly as the universal cover of a hyperboloid 
\begin{gather}\label{eqn:intro:qgrav-hyp}
	-X_{-1}^2-X_0^2+X_1^2+X_2^2+X_3^3=-\ell^2
\end{gather} 
in $\RR^{2,3}$ (cf. e.g. \cite{2012qchs.confE...9M}). The parameter $\ell$ in (\ref{eqn:intro:qgrav-hyp}) is called the {\em radius of curvature}. For a locally anti-de Sitter (AdS) solution to (\ref{eqn:QG-IEH}), the radius of curvature is determined by the cosmological constant, according to 
\begin{gather}\label{eqn:intro:qgrav-ellLambda}
\ell^2={-1/\Lambda}.
\end{gather}

In what has become the most cited\footnote{Maldacena's groundbreaking paper \cite{MaldacenaAdv.Theor.Math.Phys.2:231-2521998} on the gauge/gravity duality has over 10,000 citations at the time of writing, according to {\tt inspirehep.net}.} paper in the history of high energy physics, Maldacena opened the door on a new, and 
powerful approach to quantum gravity in \cite{MaldacenaAdv.Theor.Math.Phys.2:231-2521998}, by presenting 
evidence for a gauge/gravity duality, in which gauge theories serve as duals to gravity theories in one dimension higher. (See \cite{2012qchs.confE...9M} for a recent review.) In the simplest examples, the gauge theories are conformal field theories, and the gravity theories involve locally AdS spacetimes. The gauge/gravity duality for these cases is now known as the {\em AdS/CFT correspondence}. 

Maldacena's duality furnishes a concrete realization of the {\em holographic principle}, introduced by 't Hooft \cite{1993gr.qc....10026T}, and elaborated on by Susskind \cite{1995JMP....36.6377S}. For following refinements to Maldacena's proposal due to Gubser--Klebanov--Polyakov \cite{Gubser:1998bc}, and Witten \cite{Witten:1998qj}, it is expected that gravity theories with $(d+1)$-dimensional locally AdS spacetimes can be understood through the analysis of $d$-dimensional conformal field theories defined on the boundaries of these AdS spaces. Thus in the case of AdS solutions to three-dimensional quantum gravity, a governing role may be played by two-dimensional conformal theories, which can be accessed mathematically via vertex operator algebras (as we have mentioned in \S\ref{previouswork}). 

The conjecture of \cite{Witten2007} is that the two-dimensional conformal field theory corresponding to a tensor product of two copies of the moonshine module $\Vnat$ (one ``left-moving,'' the other ``right-moving'') is the holographic dual to pure three-dimensional quantum gravity with $\ell=16G$, and therefore 
\begin{gather}\label{eqn:intro:qgrav-minLambda}
\Lambda=-\frac{1}{256G^2}.
\end{gather}
It is also argued that the only physically consistent values of $\ell$ are $\ell=16Gm$, for $m$ a positive integer, so that (\ref{eqn:intro:qgrav-minLambda}) is the most negative possible value for $\Lambda$, by force of (\ref{eqn:intro:qgrav-ellLambda}).

Shortly after this conjecture was formulated, problems with the quantum mechanical interpretation were identified by Maloney--Witten in \cite{MalWit_QGPtnFn3D}. 
Moreover, Gaiotto \cite{Gaiotto:2008jt} and H\"ohn \cite{Hoe_SDVOSALgeMinWt} cast doubt on the relevance of the monster to gravity by demonstrating  that it cannot act on a holographically dual conformal field theory corresponding to $\ell=32G$ (i.e. $m=2$), at least under the hypotheses (namely, an extremality condition, and holomorphic factorization) presented in \cite{Witten2007}. 

Interestingly, the physical problems with the analysis of \cite{Witten2007} seem to disappear in the context of {\em chiral three-dimensional gravity}, which was introduced 
and discussed in detail by Li--Song--Strominger in \cite{LiSonStr_ChGrav3D} (cf. also \cite{Maloney:2009ck,Strominger:2008dp}). 
This is the gravity theory which motivates much of the discussion in \S7 of \cite{DunFre_RSMG}.

In order to define chiral three-dimensional gravity, we first describe {\em topologically massive gravity}, which was introduced in 1982 by  Deser--Jackiw--Templeton \cite{MR665601,Deser:1982vy}. (See also \cite{MR965585}.) The action for topologically massive gravity is given by 
\begin{gather}\label{eqn:intro:qgrav-ITMG}
I_{\rm TMG}:=I_{\rm EG}+I_{\rm CSG}, 
\end{gather}
where $I_{\rm EG}$ is the Einstein--Hilbert action (cf. (\ref{eqn:QG-IEH})) of pure quantum gravity, and $I_{\rm CSG}$ denotes the {\em gravitational Chern--Simons term} 
\begin{gather}\label{eqn:QG-ICS}
	I_{\rm CSG}
	:=\frac{1}{32\pi G\mu}\int{\rm d}^3x\sqrt{-g}\epsilon^{\lambda\mu\nu}\Gamma^{\rho}_{\;\lambda\sigma}
	\left(
	\partial_{\mu}\Gamma^\sigma_{\;\rho\nu}+\frac23\Gamma^\sigma_{\;\mu\tau}\Gamma^\tau_{\;\nu\rho}
	\right).
\end{gather}
The $\Gamma^{*}_{\;**}$ are Christoffel symbols, and the parameter $\mu$ is called the {\em Chern--Simons coupling constant}. 

{Chiral three-dimensional gravity} is the special case of topologically massive gravity in which the Chern--Simons coupling constant is set to $\mu=1/\ell=\sqrt{-\Lambda}$. It is shown in \cite{LiSonStr_ChGrav3D} that at this special value of $\mu$, the left-moving central charges of the boundary conformal field theories vanish, and the right-moving central charges are 
\begin{gather}\label{eqn:intro:qgrav-c24m}
c=\frac{3\ell}{2G}=24m,
\end{gather}
for $m$ a positive integer, $\ell=16Gm$. 

Much of the analysis of \cite{Witten2007} still applies in this setting, and the natural analogue of the conjecture mentioned above states that $\Vnat$ is holographically dual to chiral three-dimensional quantum gravity at $\ell=16G$, i.e. $m=1$. However, as argued in detail in \cite{Maloney:2009ck}, the problem of quantizing chiral three-dimensional gravity may be regarded as equivalent to the problem of constructing a sequence of extremal chiral two-dimensional conformal field theories (i.e. vertex operator algebras), one for each central charge $c=24m$, for $m$ a positive integer. Here, a vertex operator algebra $V=\bigoplus_n V_n$ with central charge $c=24m$ 
is called {\em extremal}, if its  graded\footnote{We regard all vertex operator algebras as graded by $L(0)-{\bf c}/24$. Cf. (\ref{eqn:intro:class-vir}).} dimension function satisfies
\begin{gather}\label{eqn:qgrav-ext}
	\dimq V =
	q^{-m}\frac{1}{\prod_{n>1}(1-q^n)}+O(q)
\end{gather}
(cf. (\ref{eqn:mod-dimqM}).) The moonshine module is the natural candidate for $m=1$ (indeed, it is the only candidate if we assume the uniqueness conjecture of \cite{FLM}), as the right hand side of (\ref{eqn:qgrav-ext}) reduces to $q^{-1}+O(q)$ in this case, but the analysis of \cite{Gaiotto:2008jt,Hoe_SDVOSALgeMinWt} also applies here, indicating that the monster cannot act non-trivially on any candidate\footnote{The existence of extremal vertex operator algebras with central charge $c=24m$ for $m>1$ remains an open question. We refer to \cite{MR2725054,Gaiotto:2007xh,Hoe_SDVOSALgeMinWt,MR2442775} for analyses of this problem.} for $m=2$. Thus the role of the monster in quantum gravity is still unclear, even in the more physically promising chiral gravity setting.

Nonetheless, the moonshine module $\Vnat$ may still serve as the holographic dual to chiral three-dimensional quantum gravity at $\ell=16G$, $m=1$. In this interpretation, the graded dimension, or {\em genus one partition function} for $\Vnat$---namely, the elliptic modular invariant $J$---serves as the exact spectrum of physical states of chiral three-dimensional gravity 
at $\mu=\sqrt{-\Lambda}=1/16G$, in spacetime asymptotic to the three-dimensional anti-de Sitter space (cf. (\ref{eqn:intro:qgrav-hyp})). 

Recall that if $V$ is a representation of the Virasoro algebra $\Vir$ (cf. (\ref{eqn:intro:class-vir})), then $v\in V$ is called a {\em Virasoro highest weight vector} with {\em highest weight} $h\in \CC$ if $L(m)v=h\delta_{m,0}v$ whenever $m\geq 0$. A {\em Virasoro descendant} is a vector of the form
\begin{gather}\label{eqn:intro:qgrav-virdesc}
	L(m_1)\cdots L(m_k)v,
\end{gather}
where $v$ is a Virasoro highest weight vector, and $m_1\leq \cdots\leq m_k\leq -1$.

Assuming 
that $\Vnat$ is dual to chiral three-dimensional gravity at $m=1$, the Virasoro highest weight vectors in $\Vnat$ define operators that create black holes, and the Virasoro descendants of a highest weight vector describe black holes embellished by boundary excitations. In particular, the $196883$-dimensional representation of the monster which is contained in the $196884$-dimensional homogenous subspace $\Vnat_1<\Vnat$ (cf. (\ref{eqn:intro-mckobs}) and (\ref{eqn:intro-dimVnatcn})), represents an $196883$-dimensional space of black hole states in the chiral gravity theory. 

More generally, the black hole states in the theory are classified, by the monster, into $194$ different kinds, according to which monster irreducible representation they belong to. 

\begin{question} 
Assuming that the moonshine module $\Vnat$ serves as the holographic dual to chiral three-dimensional quantum gravity at $m=1$, how are the $194$ different kinds of black hole states distributed amongst the homogeneous subspaces $\Vnat_n<\Vnat$. Are some kinds of black holes more or less common than others? 
\end{question}

This question will be answered precisely in \S\ref{sec:mmdist}.

A positive solution to the conjecture 
that $\Vnat$ is dual to chiral three-dimensional gravity 
at $m=1$ may furnish a conceptual explanation for why the graded dimension of $\Vnat$ 
is the normalized principal modulus for $\SL_2(\Z)$. For on the one hand, modular invariance is a consistency requirement of the physical theory---the genus one partition function function is really defined on the moduli space $\SL_2(\Z)\backslash\HH$ of genus one curves, rather than on $\HH$---and on the other hand, the genus one partition function of chiral three-dimensional gravity is given by a Rademacher sum, as explained by Manschot--Moore \cite{Manschot2007}, following earlier work \cite{Dijkgraaf2007} by Dijkgraaf--Maldacena--Moore--Verlinde. (Cf. also \cite{Maloney:2009ck,MalWit_QGPtnFn3D,Man_AdS3PFnsRecon}.) So, as we discussed in \S\ref{sec:intro:radsums}, the genus one partition function must be the normalized principal modulus $J(\tau)$ for $\SL_2(\Z)$, according to Theorem \ref{thm:intro:qgrav-Rgammainv}.

In the analysis of \cite{Man_AdS3PFnsRecon,Manschot2007}, the genus one partition function of chiral three-dimensional gravity is a Rademacher sum (\ref{eqn:intro:qgrav-RGamma}), because it is obtained as a sum over three-dimensional hyperbolic structures on a solid torus with genus one boundary, 
and such structures are naturally parameterized by the coset space $\Gamma_\infty\backslash\SL_2(\Z)$ (cf. (\ref{eqn:intro:class-Gammainfty})), as explained in \cite{Maldacena:1998bw} (see also \S5.1 of \cite{Dijkgraaf2007}). 
The terms $e^{-2\pi i m\frac{a\tau+b}{c\tau+d}}$ in (\ref{eqn:intro:qgrav-RGamma}) are obtained by evaluating $e^{-I_{\rm TMG}}$, with $\mu=\sqrt{-\Lambda}=1/16Gm$, on a solution with boundary curve $\CC/(\Z+\tau\Z)$, and the subtraction of $e^{-2\pi i m\frac{a}{c}}$ represents quantum corrections to the classical action.

In \cite{DunFre_RSMG}, the above conjecture is extended so as to encompass the principal modulus property for all elements of the monster, with a view to establishing a conceptual foundation for monstrous moonshine. More specifically, the first main conjecture of \cite{DunFre_RSMG} states the following.

\begin{conjecture}[Duncan--Frenkel]\label{conj:qgrav-tw3dg}
There exists a monster-indexed family of {\em twisted chiral three-dimensional gravity} theories, whose genus one partition functions at 
\begin{gather}
\mu=\sqrt{-\Lambda}=1/16G
\end{gather}
are given by $T_{\Gamma_g}^{(-1)}(-1/\tau)$, where $T^{(-1)}_{\Gamma_g}(\tau)$ is the normalized Rademacher sum attached to $\Gamma_g$, satisfying (\ref{eqn:intro:radsums-TRc}).
\end{conjecture}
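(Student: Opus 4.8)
The plan is to proceed in three stages: (i) give a precise definition of \emph{twisted chiral three-dimensional gravity} attached to each $g\in\MM$; (ii) reduce its genus one partition function to a sum over hyperbolic fillings of the boundary torus decorated by a $g$-holonomy; and (iii) identify that sum with the Rademacher sum $R_{\Gamma_g}^{(-1)}$, and hence with $T_{\Gamma_g}^{(-1)}$ after the Rademacher-constant shift of (\ref{eqn:intro:radsums-TRc}).

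For stage (i) I would anchor everything to the untwisted theory at $\mu=\sqrt{-\Lambda}=1/16G$, $m=1$, whose genus one partition function is $J$ by the analysis of Manschot--Moore and Dijkgraaf--Maldacena--Moore--Verlinde and whose holographic dual is $\Vnat$. Since $\MM=\Aut(\Vnat)$, the natural candidate for the $g$-twisted theory is the one built on the unique simple $g$-twisted $\Vnat$-module $\Vnat_g$ (Theorem 1.2 of \cite{MR1615132}), with the expectation that its ``spectrum'' is read off from $\dimq\Vnat_g$ up to the S-transformation the gravity path integral builds in. On the gravity side this corresponds to inserting a topological defect line along the non-contractible cycle of the solid torus that implements the $g$-action; the bulk geometries surviving the sum are then the hyperbolic fillings carrying a flat connection with the prescribed $g$-monodromy.

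For stages (ii) and (iii) I would run the ``$\SL_2(\Z)$ black hole'' argument of \cite{Maldacena:1998bw}: solid-torus fillings of a boundary torus of modulus $\tau$ are parameterized by $\Gamma_\infty\backslash\SL_2(\Z)$, and in the twisted theory the admissible fillings are exactly those to which the $g$-defect continues consistently into the bulk, which cuts the sum down to the cosets $\Gamma_\infty\backslash\Gamma_g$ --- this is the point at which the genus zero group $\Gamma_g$ of monstrous moonshine, with its multiplier/eigenvalue data, enters the gravitational computation. Evaluating $e^{-I_{\rm TMG}}$ at $\mu=1/16G$ on each such filling and adding the quantum corrections $-e^{-2\pi i a/c}$ reproduces the summand of (\ref{eqn:intro:qgrav-RGamma}) with $m=1$ and $\Gamma=\Gamma_g$, while the regularization (order of summation and subtraction) is forced by the convergence estimates of \cite{DunFre_RSMG}. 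This yields $R_{\Gamma_g}^{(-1)}$, evaluated at $-1/\tau$ rather than $\tau$ because the cycle wrapped by the defect is the one that becomes contractible in the reference geometry; equation (\ref{eqn:intro:radsums-TRc}) then turns this into $T_{\Gamma_g}^{(-1)}(-1/\tau)$ once the Rademacher constant is matched against the Casimir (vacuum) energy of $\Vnat_g$.

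The main obstacle is stage (i): there is at present no rigorous construction of chiral three-dimensional gravity as a quantum theory --- Maloney--Witten \cite{MalWit_QGPtnFn3D} already exposed problems with the naive path-integral definition in the untwisted case --- so ``defining the $g$-twisted theory'' is not a routine extension but presupposes settling what the theory is. One may be tempted to invert the logic and simply \emph{define} the $g$-twisted theory so that its genus one partition function is $T_{\Gamma_g}^{(-1)}(-1/\tau)$, and then verify the expected consistency conditions (covariance under $\Gamma_g$ via Theorem \ref{thm:intro:qgrav-Rgammainv}; a BTZ-type spectral decomposition; factorization compatible with the monster action on $\Vnat$ and its twisted modules, as controlled by Carnahan \cite{Carnahan:2012gx}); but this renders the existence claim nearly tautological and moves the real content into those checks. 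In either approach the number-theoretic core --- that the sum over geometries is a convergent Rademacher sum and therefore, by Theorem \ref{thm:intro:qgrav-Rgammainv}, a normalized principal modulus --- is the part I expect to go through; the genuine difficulty is the physical input about what ``twisted chiral gravity'' means and why it is governed by $\Vnat$.
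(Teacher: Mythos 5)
The statement you are attempting to prove is Conjecture~\ref{conj:qgrav-tw3dg}: it is presented in the paper as an open conjecture of Duncan--Frenkel, and the paper contains no proof of it. So there is nothing in the paper to compare your argument against step by step; the only honest comparison is with the surrounding heuristic discussion in \S\ref{sec:intro:qgrav} and \S\ref{sec:tower}, and there your proposal does faithfully reconstruct the intended narrative: the Manschot--Moore/Dijkgraaf--Maldacena--Moore--Verlinde identification of the genus one partition function with a sum over hyperbolic fillings indexed by $\Gamma_\infty\backslash\Gamma$, the appearance of $T_g(-1/\tau)$ as the graded dimension of the twisted module $\Vnat_g$ (Carnahan), and the passage from $R_{\Gamma_g}^{(-1)}$ to $T_{\Gamma_g}^{(-1)}$ via (\ref{eqn:intro:radsums-TRc}) and Theorem~\ref{thm:intro:qgrav-Rgammainv}. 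That is a correct account of \emph{why the conjecture is believed}, not a proof of it.

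The genuine gap is the one you yourself name in your final paragraph, and it is fatal to the argument as a proof: stage (i) is not a technical preliminary but the entire content of the conjecture. There is currently no mathematical (or even physically settled) definition of chiral three-dimensional quantum gravity, twisted or untwisted --- Maloney--Witten \cite{MalWit_QGPtnFn3D} and Gaiotto/H\"ohn \cite{Gaiotto:2008jt,Hoe_SDVOSALgeMinWt} exposed obstructions already in the untwisted setting --- so stages (ii) and (iii) have no object to apply to. Your fallback of \emph{defining} the $g$-twisted theory by declaring its partition function to be $T_{\Gamma_g}^{(-1)}(-1/\tau)$ does, as you observe, render the existence claim tautological; the consistency checks you list (covariance under $\Gamma_g$, spectral decomposition, compatibility with the $\MM$-action on $\Vnat_g$) are necessary conditions but nowhere near sufficient to constitute a gravity theory. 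You should also be careful with the claim that the admissible fillings are ``exactly'' the cosets $\Gamma_\infty\backslash\Gamma_g$: which geometries and (generalized) spin structures survive the twisting is precisely the unproven physical input (cf.\ \S7.1 and \S6.5 of \cite{DunFre_RSMG}), and it is only conjectured that physical consistency singles out the monstrous groups $\Gamma_g$. In short: your write-up is a reasonable exposition of the motivation for the conjecture, but it does not close, and cannot currently close, the statement as a theorem.
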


From the point of view of vertex operator algebra theory, $T_{g}(-1/\tau)$---which coincides with $T_{\Gamma_g}^{(-1)}(-1/\tau)$ according to Theorems \ref{thm:intro:class-borcherds} and \ref{thm:intro:qgrav-Rgammainv}---is the graded dimension of the unique simple $g$-twisted $\Vnat$-module $\Vnat_g$ (cf. \S\ref{sec:intro:mod}). This non-trivial fact about the functions $T_g(-1/\tau)$ is proven by Carnahan in Theorem 5.1.4 of \cite{Carnahan:2012gx}.

Geometrically, the twists of the above conjecture are defined by imposing (generalized) spin structure conditions on solutions to the chiral gravity equations, and allowing orbifold solutions of certain kinds. See \S7.1 of \cite{DunFre_RSMG} for a more complete discussion. The corresponding sums over geometries are then indexed by coset spaces $\Gamma_\infty\backslash\Gamma$, for various groups $\Gamma<\SL_2(\RR)$, commensurable with $\SL_2(\Z)$. According to Theorem \ref{thm:intro:qgrav-Rgammainv}, the genus one partition function corresponding to such a twist, expected to be a Rademacher sum on physical grounds, will only satisfy the basic physical consistency condition of $\Gamma$-invariance if $\Gamma$ is a genus zero group. One may speculate that a finer analysis of physical consistency will lead to the list of conditions given in \S6.5 of \cite{DunFre_RSMG}, which characterize the groups $\Gamma_g$ for $g\in \MM$, according to Theorem 6.5.1 of \cite{DunFre_RSMG}. Thus the discrete groups $\Gamma_g$ of monstrous moonshine may ultimately be recovered as those defining physically consistent twists of chiral three-dimensional gravity.

On the other hand, it is reasonable to expect that twisted chiral gravity theories are determined by symmetries of the underlying untwisted theory. Conceptually then, but still conjecturally, the monster group appears as the symmetry group of chiral three-dimensional gravity, for which the corresponding twists exist. The principal modulus property of monstrous moonshine may then be explained: as a consequence of Theorem \ref{thm:intro:qgrav-Rgammainv}, together with the statement that the genus one partition function of a twisted theory is $T^{(-1)}_{\Gamma}(-1/\tau)$, where $T^{(-1)}_{\Gamma}(\tau)$ is the normalized Rademacher sum attached to the subgroup $\Gamma<\SL_2(\RR)$ that parameterizes the geometries of the twist.

Before concluding this section we mention two further variations on three-dimensional gravity which may ultimately prove relevant to moonshine. The first of these is {\em log gravity} which was initiated by work \cite{2008JHEP...07..134G} of Grumiller--Johansson, and discussed also in \cite{Maloney:2009ck}. We refer to \cite{Grumiller:2013at} for a detailed review. The second is {\em flat space chiral gravity} which was introduced in \cite{Bagchi:2012yk}, and is also reviewed in \cite{Grumiller:2013at}.

For more background on the mathematics and physics of black holes we refer the reader to \cite{Grumiller:2014qma}. We refer to \cite{Carlip:2005zn,Carlip:2011vr} for reviews that focus on the particular role of conformal field theory in understanding quantum gravity. 

\section{Moonshine Tower}\label{sec:tower}

An optimistic view on the relationship between moonshine and gravity is adopted in \S7 of \cite{DunFre_RSMG}. In particular, in \S7.2 of \cite{DunFre_RSMG} the consequences of Conjecture \ref{conj:qgrav-tw3dg} for the {\em second quantization} of chiral three-dimensional gravity are explored. (We warn the reader that the notion of second quantized gravity is very speculative at this stage.) 

Motivated in part by the results on second quantized string theory in \cite{Dijkgraaf1997}, the existence of a 
tower of monster modules 
\begin{gather}\label{eqn:tower-Vminusm}
V^{(-m)}=\bigoplus_{n=-m}^\infty V^{(-m)}_n,
\end{gather}
parameterized by positive integer values of $m$,
is predicted in \S7.2 of \cite{DunFre_RSMG}. Moreover, it is suggested that the graded dimension of $V^{(-m)}$ should be given by
\begin{gather}\label{Jmfunctions}
J^{(-m)}: =m\hat{T}(m)J,
\end{gather}
where 
$\hat{T}(m)$ denotes the {\em (order $m$)}
{\em Hecke operator}, acting on $\SL_2(\Z)$-invariant holomorphic functions 
on $\HH$ according to the rule
\begin{gather}
	(\hat{T}(m)f)(\tau):=\frac{1}{m}\sum_{\substack{ad=m\\0\leq b<d}}f\left(\frac{a\tau+b}{d}\right).
\end{gather}

Standard calculations (cf. e.g. Chp.VII, \S5 of \cite{Ser_CrsArth}) determine that $m\hat{T}(m)J$ is  an $\SL_2(\Z)$-invariant holomorphic function on $\HH$, whose Fourier coefficients 
\begin{gather}
J^{(-m)}(\tau)=\sum_nc(-m,n)q^n
\end{gather}
are expressed in terms of those of $J(\tau)=\sum_{n=-1}^{\infty}c(n)q^n$, by $c(-m,n)=\delta_{-m,n}$ for $n\leq 0$, and 
\begin{gather}\label{eqn:intro:tower-cmnsumcn}
	c(-m,n)=\sum_{\substack{k>0\\k|(m,n)}}\frac{m}{k}c(mn/k^2),
\end{gather}
for $n>0$, where $(m,n)$ denotes the greatest common divisor of $m$ and $n$. In particular, $J^{(-m)}(\tau)=q^{-m}+O(q)$ as $\Im(\tau)\to \infty$. There is only one such $\SL_2(\Z)$-invariant holomorphic function on $\HH$, so we have 
\begin{gather}
	J^{(-m)}(\tau)=\sum_{n=-m}^{\infty}\dim(V^{(-m)})q^n=T^{(-m)}_{\Gamma}(\tau)
\end{gather}
according to (\ref{eqn:intro:radsums-TmGammaFouExp}) and Theorem \ref{thm:intro:qgrav-Rgammainv}, when $\Gamma=\SL_2(\Z)$. So the graded dimension of $V^{(-m)}$ is also a normalized Rademacher sum. 

We would like to 
investigate the higher order analogues 
of the McKay--Thompson series $T_g$ (cf. (\ref{eqn:intro-Tg})), encoding the graded traces of monster elements on $V^{(-m)}$, but for this we must first determine the $\MM$-module structure on each homogeneous subspace $V^{(-m)}_n$.  

A solution to this problem is entailed in Borcherds' proof \cite{borcherds_monstrous} of the monstrous moonshine conjectures, and the identity (\ref{eqn:class-catdenidm}), 
in particular. To explain this, recall 
the {\em Adams operation} $\psi^k$ on virtual $G$-modules, defined, for $k\geq 0$ and $G$ a finite group, 
by
requiring that 
\begin{gather}\label{eqn:tower-adamsop}
	\tr(g|\psi^k(V))=\tr(g^k|V)
\end{gather}
for $g\in G$. (Cf. \cite{MR1043170,MR0364425} for more details on Adams operations.) Using the $\psi^k$ we may equip 
$V^{(-m)}$ with a virtual $\MM$-module structure (we will see momentarily that it is actually an $\MM$-module, cf. Proposition \ref{prop:tower-Vmnhonest}) by defining 
$V^{(-m)}_{-m}:=\CC$ to be the one-dimensional trivial $\MM$-module, $V^{(-m)}_{n}:=0$ for $-m<n\leq 0$, and 
\begin{gather}\label{eqn:tower-Vmn}
	V^{(-m)}_n:=\bigoplus_{\substack{k>0\\k|(m,n)}}\CC^{m/k}\otimes\psi^k(\Vnat_{mn/k^2})
\end{gather}
for $n>0$, where 
$\CC^{m/k}$ denotes the trivial $\MM$-module of dimension $m/k$. For convenience later on, we also define $V^{(0)}=V^{(0)}_0:=\CC$ to be the trivial, one-dimensional $\MM$-module, regarded as graded, with grading concentrated in degree $n=0$.

Evidently $\psi^k$ preserves dimension, so 
the graded dimension of $V^{(-m)}$ is still given by $J^{(-m)}$, according to (\ref{eqn:intro:tower-cmnsumcn}).
Define the {\em order $m$ McKay--Thompson series} $T^{(-m)}_g$, for $m\geq 0$ and $g\in \MM$, by setting 
\begin{gather}\label{eqn:tower-Tmg}
	T^{(-m)}_g(\tau):=q^{-m}+\sum_{n>0}\tr(g|V^{(-m)}_n)q^n.
\end{gather}
Then $T^{(0)}_g=1$ for all $g\in \MM$, and $T^{(-1)}_g$ is the original McKay--Thompson series $T_g$. 
More generally, we have the following result, which 
constructs the $T^{(-m)}_g$ uniformly and explicitly as Rademacher sums. 
\begin{theorem}\label{secondthm}
For $m> 0$ and $g\in \MM$ we have $T^{(-m)}_g(\tau)=T^{(-m)}_{\Gamma_g}(\tau)$, where $\Gamma_g$ is the invariance group of $T_g(\tau)$, and $T^{(-m)}_\Gamma$ denotes the normalized Rademacher sum of order $m$ attached to $\Gamma$, as in (\ref{eqn:intro:radsums-TRc}). In particular, $T^{(-m)}_g(\tau)$ is a monic integral polynomial of degree $m$ in $T_g(\tau)$.
\end{theorem}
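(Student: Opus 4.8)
The plan is to prove the explicit polynomial relation first, by pushing the $\MM$-equivariant denominator identity of Borcherds through a logarithmic derivative, and then to deduce the Rademacher sum statement by a uniqueness argument on the genus zero curve $\Gamma_g\backslash\HH$. I would begin by unwinding the definitions: combining (\ref{eqn:tower-Tmg}), (\ref{eqn:tower-Vmn}) and the defining property (\ref{eqn:tower-adamsop}) of the Adams operations $\psi^k$,
\begin{gather}
T^{(-m)}_g(\tau)=q^{-m}+\sum_{n\geq 1}\ \sum_{k\mid(m,n)}\frac{m}{k}\,\tr(g^k|\Vnat_{mn/k^2})\,q^n ,
\end{gather}
a $q$-series of shape $q^{-m}+O(q)$. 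Since $T_g(\tau)=q^{-1}+O(q)$ there is a unique monic $\Phi_{m,g}\in\CC[X]$ of degree $m$ with $\Phi_{m,g}(T_g(\tau))=q^{-m}+O(q)$, namely the $m$-th Faber polynomial of $T_g$; its coefficients are universal integer polynomials in the Fourier coefficients $c_g(j)=\tr(g|\Vnat_j)$ of $T_g$, so since these are rational integers (a classical feature of the McKay--Thompson series, cf. \cite{MR554399, borcherds_monstrous}) we get $\Phi_{m,g}\in\Z[X]$. It therefore suffices to prove $T^{(-m)}_g=\Phi_{m,g}(T_g)=T^{(-m)}_{\Gamma_g}$.

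The identity $T^{(-m)}_g=\Phi_{m,g}(T_g)$ comes from (\ref{eqn:class-mondenidg}). Writing $D:=p\,\partial/\partial p$ and applying $D\log(\cdot)$ to (\ref{eqn:class-mondenidg}) with $q$ held fixed gives
\begin{gather}
\frac{D\,T_g(\sigma)}{T_g(\sigma)-T_g(\tau)}=-1-\sum_{k>0}\ \sum_{a>0,\,b\in\Z}a\,\tr(g^k|\Vnat_{ab})\,p^{ak}q^{bk} .
\end{gather}
The left side is, up to sign, the classical generating series for Faber polynomials,
\begin{gather}
\sum_{m\geq 0}\Phi_{m,g}(T_g(\tau))\,p^m=\frac{-D\,T_g(\sigma)}{T_g(\sigma)-T_g(\tau)} .
\end{gather}
Equating the coefficients of $p^m$ for $m\geq 1$ (so $ak=m$, $a=m/k$, $k\mid m$), reindexing via $n=bk$, and using $\Vnat_0=0$ together with $\Vnat_{-1}=\CC$ to see that the only contribution with $n\leq 0$ is a single $q^{-m}$, one recovers precisely the formula for $T^{(-m)}_g$ displayed above. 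Hence $T^{(-m)}_g(\tau)=\Phi_{m,g}(T_g(\tau))$, a monic integral polynomial of degree $m$ in $T_g(\tau)$; this is the final sentence of the theorem.

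It remains to identify $\Phi_{m,g}(T_g)$ with $T^{(-m)}_{\Gamma_g}$. By Theorem \ref{thm:intro:class-borcherds}, $\Gamma_g$ has genus zero and $T_g$ is its normalized principal modulus, so $\Phi_{m,g}(T_g)$ is $\Gamma_g$-invariant and holomorphic on $\HH$, equals $q^{-m}+O(q)$ at the infinite cusp, and is bounded at all other cusps of $\Gamma_g$. By the genus zero case of Theorem \ref{thm:intro:qgrav-Rgammainv}, $T^{(-m)}_{\Gamma_g}$ is also $\Gamma_g$-invariant; by (\ref{eqn:intro:radsums-TmGammaFouExp}) it equals $q^{-m}+O(q)$ at infinity, and it is bounded at the non-infinite cusps (a property of the normalized Rademacher sums from \cite{DunFre_RSMG}). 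Thus $\Phi_{m,g}(T_g)-T^{(-m)}_{\Gamma_g}$ is a $\Gamma_g$-invariant holomorphic function on $\HH$, bounded at every cusp and $O(q)$ at infinity; it descends to a holomorphic function on $\overline{\Gamma_g\backslash\HH}\cong\widehat{\CC}$, hence is a constant, which vanishes since it tends to $0$ at the infinite cusp. Therefore $T^{(-m)}_g=T^{(-m)}_{\Gamma_g}$. I expect the main obstacle to be the second paragraph: getting the bookkeeping in Borcherds' identity exactly right — tracking the boundary terms $n=-1$ and $n=0$ and matching the reindexed triple sum against (\ref{eqn:tower-Vmn}) — and correctly invoking the Faber polynomial generating series; once the boundedness of $T^{(-m)}_{\Gamma_g}$ at the non-infinite cusps is quoted, the genus zero uniqueness step is routine.
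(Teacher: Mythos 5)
Your proof is correct and follows essentially the same route as the paper: apply $p\partial_p\log$ to Borcherds' denominator identity to obtain the generating function $\sum_{m}T^{(-m)}_g(\tau)p^m=p\partial_pT_g(\sigma)/(T_g(\tau)-T_g(\sigma))$, read off that $T^{(-m)}_g$ is a degree-$m$ polynomial in $T_g$, and conclude by genus-zero uniqueness of functions with prescribed principal part. The only (cosmetic) difference is that you differentiate the scalar trace identity (\ref{eqn:class-mondenidg}) and package the polynomiality via Faber polynomials — which has the small advantage of making ``monic integral'' explicit — whereas the paper applies $-p\partial_p\log$ to the module-valued identity (\ref{eqn:class-catdenidm}) using the Adams-operation formula for $\log\bigwedge_{-1}$ and then takes traces.
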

\begin{proof} 
We will use Borcherds' identity (\ref{eqn:class-catdenidm}). To begin, note that $T^{(-m)}_g$ is given explicitly in terms of traces on $\Vnat$ by
\begin{gather}\label{eqn:tower-Tmgexp}
	T^{(-m)}_g(\tau)=q^{-m}+\sum_{n>0}\sum_{k|(m,n)}\frac mk\tr(g|\psi^k(\Vnat_{mn/k^2}))q^n
\end{gather}
according to (\ref{eqn:tower-Vmn}) and (\ref{eqn:tower-Tmg}). Recall that $R(G)$ denotes the integral representation ring of a finite group $G$. Extend the $\psi^k$ from $R(G)$ to $R(G)[[p,q]][q^{-1}]$, by setting 
$\psi^k(Mp^mq^n)=\psi^k(M)p^{km}q^{km}$ for $M\in R(G)$. Then it is a general property of the Adams operations (cf. \S5.2 of \cite{MR1327230}) that 
\begin{gather}
\log\bigwedge\nolimits_{-1}(X)=-\sum_{k>0}\frac 1k\psi^k(X)
\end{gather}
in $R(G)[[p,q]][q^{-1}]\otimes_{\Z}\QQ$, for $X\in R(G)[[p,q]][q^{-1}]$. 
So taking $X=\sum_{\substack{m,n\in\Z\\m>0}} \Vnat_{mn}p^mq^n$ we obtain 
\begin{gather}
\begin{split}\label{eqn:tower-logLambda}
\log\bigwedge\nolimits_{-1}\left(\sum_{\substack{m,n\in\Z\\m>0}}\Vnat_{mn}p^mq^n\right)&=
-\sum_{k>0}\sum_{\substack{m,n\in\Z\\m>0}}\frac 1k\psi^k(\Vnat_{mn})p^{km}q^{kn}\\
&=
-\sum_{\substack{m,n\in\Z\\m>0}}\sum_{k|(m,n)}\frac 1k\psi^k(\Vnat_{mn/k^2})p^mq^n
\end{split}
\end{gather}
for the logarithm of the left hand side of (\ref{eqn:class-catdenidm}).
If we now define $V^{(-m)}(q):=\sum_n V^{(-m)}_nq^n$, an element of $R(\MM)[[q]][q^{-1}]$, then the generating series $\sum_{m>0}p^mV^{(-m)}(q)$ 
is obtained when we apply $-p\partial_p$ to (\ref{eqn:tower-logLambda}), according to the definition (\ref{eqn:tower-Vmn}) of the $V^{(-m)}_n$ as elements of $R(\MM)$. 
So apply $-p\partial_p\log(\,\cdot\,)$ to both sides of (\ref{eqn:class-catdenidm}) to obtain the identity
\begin{gather}\label{eqn:tower-Vmqpm}
	\sum_{m>0}V^{(-m)}(q)p^m=-1-
	({p\partial_p\Vnat(p)})
	\sum_{k\geq 0}
	{\Vnat(q)^k}{\Vnat(p)^{-k-1}}
\end{gather}
in $R(\MM)[[p,q]][q^{-1}]$, where $\Vnat(q)=V^{(-1)}(q)=q^{-1}+\sum_{n>0}\Vnat_nq^n$. The right hand side of (\ref{eqn:tower-Vmqpm}) really is a taylor series in $p$, for we use $\Vnat(p)^{-1}$ as a short hand for $\sum_{k\geq0}(-1)^kp^{k+1}\Vnat_+(p)$, where $\Vnat_+(p):=\sum_{n>0}\Vnat_np^n$ is the regular part of $\Vnat(p)$.

The McKay--Thompson series $T^{(-m)}_g(\tau)$ is just the trace of $g$ on $V^{(-m)}(q)$, so an application of $\tr(g|\,\cdot\,)$ to (\ref{eqn:tower-Vmqpm}) replaces $V^{(-m)}(q)$ with $T^{(-m)}_g(\tau)$, and $\Vnat(q)$ with $T_g(\tau)$, 
etc. and shows that $T^{(-m)}_g$ is indeed a polynomial in $T_g$, of degree $m$ since the leading term of $T^{(-m)}_g$ is $q^{-m}$ by definition. In particular, $T^{(-m)}_g$ is a modular function for $\Gamma_g$, with no poles away from the infinite cusp. Since $\Gamma_g$ has genus zero, such a function is uniquely determined (up to an additive constant) by the polar terms in its Fourier expansion. The McKay--Thompson series $T^{(-m)}_g$ and the Rademacher sum $T_{\Gamma_g}^{(-m)}$ both satisfy $q^{-m}+O(q)$ as $\Im(\tau)\to \infty$ (cf. (\ref{eqn:intro:radsums-TmGammaFouExp})), and neither have poles away from the infinite cusp, so they must coincide. This completes the proof.
\end{proof}

\begin{remark}
The identity obtained by taking the trace of $g\in \MM$ on (\ref{eqn:tower-Vmqpm}) may be compactly rewritten
\begin{gather}\label{eqn:qg-dTgpTgpTgq}
	\sum_{m\geq 0}T^{(-m)}_g(\tau)p^m
	=\frac{p\partial_p T_g(\sigma)}{T_g(\tau)-T_g(\sigma)},
\end{gather}
where $p=e^{2\pi i \sigma}$ and $T_g(\sigma)=\sum_m\tr(g|\Vnat_m)p^m$. This expression (\ref{eqn:qg-dTgpTgpTgq}) is proven for some special cases by a different method in \cite{2014arXiv1407.4479B}.
\end{remark}

Recall that the monster group has $194$ irreducible ordinary representations, up to equivalence. Let us denote these by $M_i$, for $1\leq i\leq 194$, where the ordering is as in \cite{ATLAS}, so that the character of $M_i$ is the function denoted $\chi_i$ in \cite{ATLAS}. Define $\m_i(-m,n)$ to be the multiplicity of $M_i$ in $V^{(-m)}_n$, so that
\begin{gather}\label{eqn:tower-mi}
	V^{(-m)}_n\simeq \bigoplus_{i=1}^{194} 
	M_i^{\oplus \m_i(-m,n)}
\end{gather}
as $\MM$-modules, and $c(-m,n)=\sum_{i=1}^{194}\m_i(-m,n)\chi_i(e)$. 

A priori, the $\MM$-modules $V^{(-m)}_n$ may be virtual, meaning that some of the integers $\m_i(-m,n)$ are negative. 
\begin{proposition}\label{prop:tower-Vmnhonest}
The $V^{(-m)}_n$ are all (non-virtual) modules for the monster. In particular, the integers $\m_i(-m,n)$ are all non-negative. 
\end{proposition}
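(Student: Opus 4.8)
The plan is to reorganize Borcherds' Euler--Poincar\'e identity (\ref{eqn:class-catdenidm}) --- equivalently, the generating identity (\ref{eqn:tower-Vmqpm}) --- into a form in which every coefficient of $\sum_{m>0}V^{(-m)}(q)p^m$ is \emph{manifestly} a genuine (non-virtual) $\MM$-module. A naive term-by-term argument from (\ref{eqn:tower-Vmn}) cannot succeed, since an Adams operation of an honest module is in general virtual (for instance $\psi^2(W)=S^2(W)-\wedge^2(W)$); the effectivity of $V^{(-m)}_n$ must emerge from cancellations in the generating series.

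\emph{Step 1: factor the right-hand side of (\ref{eqn:class-catdenidm}).} Pulling out the $\Vnat_{-1}$-contributions and using $p^N-q^N=-(q-p)\sum_{i=0}^{N-1}p^iq^{N-1-i}$ for $N\geq 1$, one checks that
\[
	\sum_{m\in\Z}\Vnat_m p^{m+1}-\sum_{n\in\Z}\Vnat_n pq^n \;=\; \bigl(1-\tfrac pq\bigr)\bigl(1-\Xi(p,q)\bigr),\qquad \Xi(p,q):=\sum_{a,b\geq 1}\Vnat_{a+b-1}\,p^aq^b .
\]
The essential properties of $\Xi$ are that it has vanishing constant term, that each of its monomials has total degree $\geq 2$, and --- decisively --- that all of its coefficients, being homogeneous subspaces $\Vnat_N$ of the moonshine module, are honest $\MM$-modules.

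\emph{Step 2: apply $-p\,\partial_p\log$.} By the computation leading to (\ref{eqn:tower-logLambda}), applying $-p\,\partial_p\log(\,\cdot\,)$ to the left-hand side of (\ref{eqn:class-catdenidm}) produces $\sum_{m>0}V^{(-m)}(q)p^m$ with $V^{(-m)}(q)=\sum_n V^{(-m)}_nq^n$; applying it to the factored right-hand side of Step 1 gives
\[
	\sum_{m>0}V^{(-m)}(q)\,p^m \;=\; \sum_{k\geq 1}p^k q^{-k} \;+\; \frac{p\,\partial_p\Xi}{1-\Xi},
\]
with every Adams operation now absent. The logarithm and the expansions $\tfrac{1}{1-\Xi}=\sum_{r\geq 0}\Xi^r$ and $\tfrac{1}{1-p/q}=\sum_{k\geq 0}(p/q)^k$ are legitimate for the same reasons the manipulations in the proof of Theorem \ref{secondthm} are, since $p/q$ and $\Xi$ both have positive $p$-degree and $\Xi$ has total degree $\geq 2$.

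\emph{Step 3: conclude.} On the right-hand side, $\sum_{k\geq 1}p^kq^{-k}$ contributes only copies of the trivial module; in $p\,\partial_p\Xi=\sum_{a,b\geq 1}a\,\Vnat_{a+b-1}p^aq^b$ every coefficient is a non-negative integer multiple of an honest module; in $\tfrac{1}{1-\Xi}=\sum_{r\geq 0}\Xi^r$ each coefficient is, by the degree bound, a finite sum of tensor products of the $\Vnat_N$, hence honest; and a product of two series with honest coefficients again has honest coefficients, since tensor products and direct sums of honest modules are honest. Taking the coefficient of $p^m$ shows that each $V^{(-m)}_n$ is an honest $\MM$-module (the case $V^{(0)}_0=\CC$ being trivial), so every multiplicity $\m_i(-m,n)$ in (\ref{eqn:tower-mi}) is non-negative. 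The sole step demanding insight is the factorization in Step 1: one must spot that although (\ref{eqn:class-catdenidm}) records the data through the virtual-looking alternating sum $\bigwedge_{-1}$, its right-hand side is divisible by $q-p$ (with quotient $q^{-1}(1-\Xi)$), so that the apparent subtraction underlying (\ref{eqn:tower-Vmqpm}) resolves into an honest quotient $\tfrac{p\,\partial_p\Xi}{1-\Xi}$ in which no cancellation against the numerator is required.
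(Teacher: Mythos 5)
Your proof is correct, and it lands on exactly the same generating identity as the paper: writing $\Xi(p,q)=\sum_{a,b\geq 1}\Vnat_{a+b-1}p^aq^b$, your final expression $(p\partial_p\Xi)\sum_{r\geq 0}\Xi^r=\frac{p\partial_p\Xi}{1-\Xi}$ is precisely the right-hand side of the paper's equation (\ref{eqn:tower-pdpJLWid}), from which non-negativity is read off in the same way. The difference is in how that identity is justified. The paper obtains it by citing the Jurisich--Lepowsky--Wilson identity (\ref{eqn:tower-JLWid}) from \cite{MR1327230}, which expresses $\sum_{m,n>0}\sum_{k|(m,n)}\frac1k\psi^k(\Vnat_{mn/k^2})p^mq^n=\sum_{k>0}\frac1k\Xi^k$ and is proved there via the Euler--Poincar\'e identity for a free Lie subalgebra $\gt{u}^-$ of the monster Lie algebra. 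You instead derive the equivalent statement directly from Borcherds' identity (\ref{eqn:class-catdenidm}) by observing that its right-hand side factors, via a telescoping sum, as $(1-p/q)(1-\Xi)$, and then taking $-p\partial_p\log$ of both sides exactly as in the paper's proof of Theorem \ref{secondthm}; the factor $1-p/q$ accounts for the polar terms $\sum_{k\geq1}p^kq^{-k}$ and the factor $1-\Xi$ yields everything else. Your route is self-contained and purely formal (indeed it amounts to an elementary re-proof of (\ref{eqn:tower-JLWid}) given (\ref{eqn:class-catdenidm})), at the cost of losing the structural explanation --- freeness of $\gt{u}^-$ --- for why the miracle factorization happens; the paper's route buys that conceptual interpretation at the cost of an external citation. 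Your opening remark that termwise positivity cannot work because Adams operations of honest modules are generally virtual is also well taken, and correctly identifies why a generating-function argument is needed. The only caveats are routine ones about the ambient formal ring (the series $\sum_k p^kq^{-k}$ and the expansion of $(1-\Xi)^{-1}$ and of the logarithm should be interpreted in $R(\MM)((q))[[p]]\otimes\QQ$, say), but these are exactly the same conventions the paper itself uses in deriving (\ref{eqn:tower-logLambda}) and (\ref{eqn:tower-Vmqpm}), so they do not constitute a gap.
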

\begin{proof}
The claim follows from the modification of Borcherds' proof of Theorem \ref{thm:intro:class-borcherds} presented by Jurisich--Lepowsky--Wilson in \cite{MR1327230}. In \cite{MR1327230} a certain free Lie sub algebra $\gt{u}^-$ of the monster Lie algebra $\gt{m}$ is identified, for which the identity $\Lambda(\gt{u}^-)=H(\gt{u}^-)$ (or rather, the logarithm of this) yields 
\begin{gather}\label{eqn:tower-JLWid}
	\sum_{m,n>0}\sum_{k|(m,n)}\frac{1}{k}\psi^k(\Vnat_{mn/k^2})p^mq^n
	=
	\sum_{k>0}\frac{1}{k}\left(\sum_{m,n>0}\Vnat_{m+n-1}p^mq^n\right)^k
\end{gather} 
in $R(\MM)[[p,q]][q^{-1}]\otimes\QQ$. (Notice the different range of summation, compared to (\ref{eqn:class-catdenidm}).) We apply $p\partial_p$ to (\ref{eqn:tower-JLWid}), and recall the definition (\ref{eqn:tower-Vmn}) of $V^{(-m)}_n$ to obtain
\begin{gather}\label{eqn:tower-pdpJLWid}
	\sum_{m,n>0}V^{(-m)}_np^mq^n
	=
	\sum_{k>0}\left(\sum_{m,n>0}m\Vnat_{m+n-1}p^mq^n\right)
	\left(\sum_{m,n>0}\Vnat_{m+n-1}p^mq^n\right)^{k-1}.
\end{gather} 
The coefficient of $p^mq^n$ in the right hand side of (\ref{eqn:tower-pdpJLWid}) is evidently a non-negative integer combination of the $\MM$-modules $\Vnat_n$, so the proof of the claim is complete.
\end{proof}

In \S\ref{sec:mmdist} we will determine the 
behavior of the multiplicity functions $\m_i(-m,n)$ (cf. (\ref{eqn:tower-mi})) as $n\to \infty$. For applications to gravity a slightly different statistic is more relevant.
Recall from \S\ref{sec:intro:qgrav} that it is the Virasoro highest weight vectors---i.e. those $v\in \Vnat_n$ with $L(k)v=0$ for $k>0$---that represent black hole states in chiral three-dimensional gravity at $m=1$. 
Such vectors generate {\em highest weight modules} for $\Vir$, the structure of which has been determined by Feigin--Fuchs in \cite{MR770243}. (See \cite{MR1463812} for an alternative treatment.) Specializing to the case that the central element ${\bf c}$ (cf. (\ref{eqn:intro:class-vir})) acts as $c=24m$ times the identity, for some positive integer $m$, we obtain from the results of \cite{MR770243} that the isomorphism type of an irreducible highest weight module for $\Vir$ depends only on the $L(0)$-eigenvalue of a generating highest weight vector, $v$, and if 
$L(0)v=hv$ for $h$ a non-negative integer, then 
\begin{gather}\label{eqn:tower-gdimLhc}
	\dimq L(h,c)=
		\begin{cases}
		q^{-m}(1-q)(q)_{\infty}^{-1}&\text{ if $h=0$,}\\
		q^{h-m}(q)_{\infty}^{-1}&\text{ if $h>0$,}
		\end{cases}
\end{gather}
(cf. (\ref{eqn:mod-dimqM})) where $L(h,c)$ denotes the irreducible highest weight $\Vir$-module generated by $v$, 
and 
\begin{gather}\label{eqn:tower-qinfty}
	(q)_{\infty}:=\prod_{n>0}(1-q^n).
\end{gather}
(Note that all the Virasoro modules in this work are graded by $L(0)-{\bf c}/24$. See \cite{MR1650637} for details of the calculation that returns (\ref{eqn:tower-gdimLhc}) in the case that $m=1$.)

\begin{remark}
We may now recognize the leading terms in (\ref{eqn:qgrav-ext}) as exactly those of the graded dimension of the Virasoro module $L(0,24m)$.
\end{remark}

It is known that $\Vnat$ is a direct sum of highest weight modules for the Virasoro algebra (cf. e.g. \cite{MR1650637}). Since the Virasoro and monster actions on $\Vnat$ commute, we have an isomorphism
\begin{gather}\label{eqn:tower-VnatVirMmod}
	\Vnat\simeq L(0,24)\otimes W^\natural_{-1}\oplus \bigoplus_{n>0}L(n+1,24)\otimes W^\natural_n
\end{gather}
of modules for $\Vir\times \MM$, where $W^\natural_n$ denotes the subspace of $\Vnat_n$ spanned by Virasoro highest weight vectors. To investigate how the black hole states in $\Vnat$ are organized by the representation theory of the monster, we define non-negative integers $\n_i(n)$ by requiring that
\begin{gather}\label{eqn:tower-Wnat}
	W^{\natural}_n\simeq \bigoplus_{i=1}^{194} 
	M_i^{\oplus \n_i(n)},
\end{gather}
for $n\geq -1$. 

Evidently $\n_i(n)\leq \m_i(-1,n)$ for all 
$i$ and $n$ since $W^\natural_n$ is a subspace of $\Vnat_n$. 
To determine the precise relationship between the $\n_i(n)$ and $\m_i(-1,n)$, define $U_g(\tau)$ for $g\in \MM$ by setting
\begin{gather}\label{eqn:tower-Ug}
	U_g(\tau):=\sum_{n=-1}^\infty\tr(g|W^{\natural}_n)q^n,
\end{gather}
so that $U_g(\tau)=q^{-1}+\sum_{n>0}\sum_{i=1}^{194}\n_i(n)\chi_i(g)q^n$ (cf. (\ref{eqn:tower-Wnat})). 
Combining (\ref{eqn:tower-gdimLhc}), (\ref{eqn:tower-VnatVirMmod}) and (\ref{eqn:tower-Wnat}), together with the definitions (\ref{eqn:intro-Tg}) of $T_g$ and (\ref{eqn:tower-Ug}) of $U_g$, we obtain
\begin{gather}
	T_g(\tau)=q^{-1}\frac{(1-q)}{(q)_{\infty}}+\sum_{n>0}q^n\frac{1}{(q)_\infty}\sum_{i=1}^{194}\n_i(n)\chi_i(g),
\end{gather}
or equivalently,
\begin{gather}\label{eqn:tower-UgTg}
	U_g(\tau)={(q)_{\infty}}T_g(\tau)+1
\end{gather}
for all $g\in \MM$. (This computation also appears in \cite{MR1650637}.)

In \S\ref{sec:mmdist} we will use (\ref{eqn:tower-UgTg}) to determine the asymptotic behavior of the $\n_i(n)$ (cf. Theorem \ref{distribution}), and thus the statistics of black hole states, at $\ell=16G$, in the conjectural chiral three-dimensional gravity theory dual to $\Vnat$.

\begin{remark}
Note that we may easily construct modules for $\Vir\times\MM$ satisfying the extremal condition (\ref{eqn:qgrav-ext}), for each positive integer $m$, by considering direct sums of the monster modules $V^{(-m)}$ constructed in Proposition \ref{prop:tower-Vmnhonest}. A very slight generalization of the argument just given will then yield formulas for the graded traces of monster elements on the corresponding Virasoro highest weight spaces. Since it has been shown \cite{Gaiotto:2008jt,Hoe_SDVOSALgeMinWt} that such modules cannot admit vertex operator algebra structure, we do not pursue this here.
\end{remark}

\section{Monstrous Moonshine's Distributions}\label{sec:mmdist}

We now address the problem of determining exact formulas and asymptotic distributions of irreducible components. 
This work will rely heavily on the modularity of the underlying McKay--Thompson series (i.e. Theorems~\ref{firstthm} and \ref{secondthm}).

We prove formulas for the multiplicities $\m_i(-m,n)$ and $\n_i(n)$ which in turn imply the following asymptotics.

\begin{theorem}\label{distribution} If $m$ is a positive integer and $1\leq i\leq 194$, then as $n\rightarrow +\infty$ we have 
\begin{displaymath}
\begin{split}
\m_i(-m,n)&\sim \frac{\dim(\chi_i)|m|^{1/4}
}{\sqrt{2} |n|^{3/4}|\MM|} \cdot e^{4 \pi \sqrt{|mn|}}\\
\n_i(n)&\sim 
\frac{\sqrt{12}~\dim(\chi_i)}{|24n+1|^{1/2}|\MM|} \cdot e^{\frac{\pi}{6} \sqrt{23|24n+1|}}
\end{split}
\end{displaymath}
\end{theorem}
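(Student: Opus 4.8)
The plan is to realize each multiplicity as a character-weighted average over $\MM$ of Fourier coefficients of McKay--Thompson series, and then to show that in each average a single term — the one coming from the identity element — dominates all the others at an exponential scale.

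First I would use the second orthogonality relation for the $194$ irreducible characters of $\MM$ to write, for $n>0$,
\begin{gather}
\m_i(-m,n)=\frac{1}{|\MM|}\sum_{g\in\MM}\overline{\chi_i(g)}\,\tr\!\big(g\mid V^{(-m)}_n\big),\qquad
\n_i(n)=\frac{1}{|\MM|}\sum_{g\in\MM}\overline{\chi_i(g)}\,\tr\!\big(g\mid W^{\natural}_n\big).
\end{gather}
By Theorem~\ref{secondthm}, $\tr(g\mid V^{(-m)}_n)=c_{\Gamma_g}(-m,n)$ is the $n$-th coefficient of the Rademacher sum $T^{(-m)}_{\Gamma_g}$, given by the explicit Kloosterman-sum/Bessel-function expansion (\ref{eqn:intro:qgrav-cGamman}); and by (\ref{eqn:tower-UgTg}), $\tr(g\mid W^{\natural}_n)$ is the $n$-th coefficient of $(q)_\infty T_g(\tau)+1$, equivalently the $(n{+}1)$-st coefficient of the weight-$\tfrac12$ weakly holomorphic form $\eta(\tau)T_g(\tau)$ on $\Gamma_g$ (here $\eta(\tau)=q^{1/24}(q)_\infty$), which likewise has a Rademacher-type expansion. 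Organizing $g$ into conjugacy classes makes these into exact finite formulas for $\m_i(-m,n)$ and $\n_i(n)$; the asymptotics then come from isolating the dominant summand.

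Next I would extract the leading behaviour of the $g=e$ term. Since $\MM$ acts faithfully on $\Vnat$, $g=e$ is the only element with $\Gamma_g=\SL_2(\Z)$. For $\Gamma=\SL_2(\Z)$ the $c=1$ summand of (\ref{eqn:intro:qgrav-cGamman}) equals $\tfrac{2\pi\sqrt m}{\sqrt n}\,I_1(4\pi\sqrt{mn})$, while the $c\ge2$ summands together contribute $O\!\big(e^{(2\pi+o(1))\sqrt{mn}}\big)$; using $I_1(x)\sim e^{x}/\sqrt{2\pi x}$ this gives $c(-m,n)=\tfrac{m^{1/4}}{\sqrt2\,n^{3/4}}\,e^{4\pi\sqrt{mn}}(1+o(1))$, so the $g=e$ contribution to $\m_i(-m,n)$ is $\tfrac{\dim\chi_i}{|\MM|}$ times this. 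For $\n_i(n)$ the $g=e$ contribution is $\tfrac{\dim\chi_i}{|\MM|}\dim W^{\natural}_n$, and $\dim W^{\natural}_n$ is the $(n{+}1)$-st coefficient of $\eta(\tau)J(\tau)$, a weight-$\tfrac12$ weakly holomorphic form on $\SL_2(\Z)$ whose only pole at the cusp is $q^{-23/24}$; Rademacher's circle-method formula for such coefficients (cf.\ \cite{Rad_PtnFn,Rad_FouCoeffMdlrInv}) yields $\dim W^{\natural}_n=\tfrac{\sqrt{12}}{(24n+1)^{1/2}}\,e^{\frac{\pi}{6}\sqrt{23(24n+1)}}(1+o(1))$, the integer $23=24\cdot\tfrac{23}{24}$ and the shift $24n+1$ recording the pole order $1-\tfrac1{24}$ of $\eta J$ and the substitution $N\mapsto N+1$.

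Finally I would bound the $g\ne e$ terms. The hard part will be showing that for every $g\ne e$ the coefficients of $T_g$ (hence of $T^{(-m)}_g$, hence of $\eta T_g$) grow at an exponential rate \emph{strictly smaller} than for $J$: since $T_g$ is bounded at every non-infinite cusp of $\Gamma_g$ (the principal modulus property) but has its pole at a width-one cusp, the rate is governed by the smallest positive lower-left entry $c_0(\Gamma_g)$ occurring among representatives of $\Gamma_\infty\backslash\Gamma_g$, and one must show $c_0(\Gamma_g)>1$ for $g\ne e$. This can be certified by direct inspection of the explicit list of groups $\Gamma_g$ in \cite{MR554399} — each such $\Gamma_g$ sits between a level-$n_g$ congruence group ($n_g\ge2$) and the $\SL_2(\RR)$-normalizer of $\Gamma_0(n_g)$, whose minimal denominator is $\ge\sqrt{n_g}>1$ — the extreme case being $c_0=\sqrt2$ for $\Gamma_0(2)+$ (class $2A$), so that $c_{\Gamma_g}(-m,n)=O(e^{2\sqrt2\,\pi\sqrt{mn}})$ uniformly in $g\ne e$, and correspondingly for $\eta T_g$ (which has principal part of order $\le1-\tfrac1{24}$ at the width-one cusp $\infty$ and is bounded at the other cusps of $\Gamma_g$) the coefficients are $O(e^{2\sqrt2\,\pi\sqrt{(23/24)n}})$. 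Since $|\MM|$ is fixed and $|\chi_i(g)|\le\dim\chi_i$, the contributions of all $g\ne e$ are then $o(c(-m,n))$ and $o(\dim W^{\natural}_n)$ respectively, and combining with the previous step gives the two stated asymptotics. A uniform, conceptual proof of the inequality $c_0(\Gamma_g)>1$ (in place of the case check), together with the routine saddle-point bookkeeping needed to pin down the constants $\tfrac1{\sqrt2}$, $\sqrt{12}$ and the exponents $4\pi\sqrt{mn}$, $\tfrac{\pi}{6}\sqrt{23(24n+1)}$, would complete the argument.
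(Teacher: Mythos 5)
Your proposal is correct and follows essentially the same route as the paper: character orthogonality reduces $\m_i(-m,n)$ and $\n_i(n)$ to $\tfrac{1}{|\MM|}$-weighted averages of coefficients of the $T_g^{(-m)}$ and of $\eta T_g$, these are evaluated by exact Rademacher/Maass--Poincar\'e formulas, and the $g=e$, $c=1$ Bessel term dominates because for $g\neq e$ the minimal admissible Kloosterman modulus relative to the pole order (your $c_0(\Gamma_g)>1$, extreme case $\sqrt2$ for class $2A$) forces a strictly smaller exponential rate --- the paper certifies this via the Harada--Lang pole data at the cusps of $\Gamma_0(N_gh_g)$ rather than your normalizer bound, but the content is identical. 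The one point you gloss over is that in weight $\tfrac12$ the Poincar\'e series determines $\eta T_g$ only up to a holomorphic form, which by Serre--Stark is a combination of unary theta functions with bounded coefficients and hence does not affect the asymptotic, exactly as the paper notes via its correction term $h_g(\tau)$.
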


These asymptotics immediately imply that the following limits are well-defined
\begin{gather}
\begin{split}
\delta\left(\m_i(-m)\right)&:=\lim_{n\rightarrow +\infty}\frac{\m_i(-m,n)}{\sum_{i=1}^{194}\m_i(-m,n)}
\\\delta\left(\n_i\right)&:=\lim_{n\rightarrow +\infty}\frac{\n_i(n)}{\sum_{i=1}^{194}\n_i(n)}.
\end{split}
\end{gather}

\begin{corollary}\label{cor:dist}
In particular, we have that
\begin{gather*}\label{eqn:intro-deltas}
	\delta\left(\m_i{(-m)}\right)=
	\delta\left(\n_i\right)=
	\frac{\dim(\chi_i)}{\sum_{j=1}^{194}\dim(\chi_j)}=
	\frac{\dim(\chi_i)}{5844076785304502808013602136}.
\end{gather*}
\end{corollary}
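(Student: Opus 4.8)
The plan is to extract the multiplicities by the orthogonality relations for characters of $\MM$, and then to run the Hardy--Ramanujan--Rademacher circle method on the resulting finite linear combinations of (higher) McKay--Thompson series, showing that the term coming from the identity element dominates all the others. Fix $1\le i\le 194$, and write $c_g(-m,n)$ for the coefficient of $q^n$ in $T^{(-m)}_g$ (cf.\ (\ref{eqn:tower-Tmg})) and $u_g(n)$ for the coefficient of $q^n$ in $U_g=(q)_\infty T_g+1$ (cf.\ (\ref{eqn:tower-UgTg})). Since $\tr(g|V^{(-m)}_n)=\sum_j\m_j(-m,n)\chi_j(g)$ and $\tr(g|W^{\natural}_n)=\sum_j\n_j(n)\chi_j(g)$ (cf.\ (\ref{eqn:tower-mi}), (\ref{eqn:tower-Wnat})), orthogonality gives
\begin{gather*}
	\m_i(-m,n)=\frac1{|\MM|}\sum_{g\in\MM}\overline{\chi_i(g)}\,c_g(-m,n),\qquad
	\n_i(n)=\frac1{|\MM|}\sum_{g\in\MM}\overline{\chi_i(g)}\,u_g(n).
\end{gather*}
Collecting each sum by conjugacy classes, the $g=e$ contribution is $\tfrac{\dim(\chi_i)}{|\MM|}\,c(-m,n)$, respectively $\tfrac{\dim(\chi_i)}{|\MM|}\,u_e(n)$, since $\chi_i(e)=\dim(\chi_i)$, $c_e(-m,n)=c(-m,n)$, and $u_e(n)$ is the $n$-th coefficient of $U_e=(q)_\infty J+1$. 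Thus it suffices to (i) pin down the asymptotics of these two identity terms, and (ii) show every other class contributes a strictly smaller exponential.

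For step (i): by Theorem~\ref{secondthm} and the discussion around (\ref{Jmfunctions}), $T^{(-m)}_e=J^{(-m)}$ is the unique weight-$0$ weakly holomorphic modular form for $\SL_2(\Z)$ whose only pole is the simple pole $q^{-m}$ at the cusp, and it coincides with the Rademacher sum $T^{(-m)}_{\SL_2(\Z)}$, so $c(-m,n)$ is given by (\ref{eqn:intro:qgrav-cGamman}) with $\Gamma=\SL_2(\Z)$. That identity is a convergent sum of Kloosterman sums weighted by $\tfrac{2\pi}{c}\sqrt{m/n}\,I_1\!\bigl(\tfrac{4\pi\sqrt{mn}}{c}\bigr)$; isolating the $c=1$ term and using $I_1(z)\sim e^z/\sqrt{2\pi z}$ yields
\begin{gather*}
	c(-m,n)\sim\frac{m^{1/4}}{\sqrt2\,n^{3/4}}\,e^{4\pi\sqrt{mn}}
\end{gather*}
(alternatively one may use $J^{(-m)}=m\hat{T}(m)J$ together with the classical $c(n)\sim e^{4\pi\sqrt n}/(\sqrt2\,n^{3/4})$). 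For $U_e$ I would instead use that $q^{1/24}U_e-q^{1/24}=\eta(\tau)J(\tau)$, where $\eta(\tau)=q^{1/24}(q)_\infty$, is a weight-$\tfrac12$ weakly holomorphic modular form for $\SL_2(\Z)$ (with the $\eta$-multiplier) whose principal part is exactly $q^{-23/24}$ -- here one uses that $U_e=q^{-1}+O(q)$, which holds because $W^{\natural}_{-1}$ is the trivial module and $W^{\natural}_0=0$. The same analysis, now producing an $I_{1/2}$-Bessel function, gives
\begin{gather*}
	u_e(n)\sim\frac{\sqrt{12}}{(24n+1)^{1/2}}\,e^{\frac{\pi}{6}\sqrt{23(24n+1)}},
\end{gather*}
the numbers $23$ and $24n+1$ appearing as $24\cdot\tfrac{23}{24}$ (the strength of the pole) and $24\bigl(n+\tfrac1{24}\bigr)$ (the index at which the coefficient is read off). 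Feeding these two asymptotics into the identity terms above reproduces exactly the right-hand sides of Theorem~\ref{distribution}.

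Step (ii) is the main obstacle. For $g\ne e$ the invariance group $\Gamma_g$ is not $\SL_2(\Z)$ (equivalently $T_g\ne J$), so the least positive lower-left entry $\lambda_g$ occurring among double-coset representatives of $\Gamma_\infty$ in $\Gamma_g$ -- the parameter controlling the largest Bessel argument in (\ref{eqn:intro:qgrav-cGamman}) -- satisfies $\lambda_g>1$; by the Conway--Norton determination of the $\Gamma_g$ the minimum over all $193$ nontrivial classes is $\lambda_g=\sqrt2$, attained in class $2A$. Using (\ref{eqn:intro:qgrav-cGamman}) for $T^{(-m)}_g=T^{(-m)}_{\Gamma_g}$ (Theorem~\ref{secondthm}) and the convergence estimates of \cite{DunFre_RSMG}, one gets $c_g(-m,n)=O\!\bigl(n^{A}e^{4\pi\sqrt{mn}/\lambda_g}\bigr)$ for some constant $A$, hence $\sum_{g\ne e}\overline{\chi_i(g)}c_g(-m,n)=O\!\bigl(e^{(4\pi/\sqrt2)\sqrt{mn}}\bigr)=o\bigl(c(-m,n)\bigr)$, so $\m_i(-m,n)\sim\tfrac{\dim(\chi_i)}{|\MM|}c(-m,n)$. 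For the $\n_i$ statement one observes in addition that multiplying a $q$-series by $(q)_\infty$ cannot raise the exponential growth rate of its coefficients (because $(q)_\infty$ stays bounded as $\tau$ tends to any rational point, and its nonzero Fourier coefficients are supported on $O(\sqrt n)$ indices below $n$), so $u_g(n)=O\!\bigl(n^{A'}e^{4\pi\sqrt n/\lambda_g}\bigr)$; since $4\pi/\sqrt2=2\sqrt2\,\pi<\tfrac{\pi}{6}\sqrt{552}$ (equivalently $288<552$), these are $o(u_e(n))$ and $\n_i(n)\sim\tfrac{\dim(\chi_i)}{|\MM|}u_e(n)$. The genuinely delicate inputs here are the uniform lower bound on $\lambda_g$ over all nontrivial classes and, for the $\n_i$, controlling how the $\eta$-twist interacts with the nontrivial cusp geometry of the $\Gamma_g$.

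Finally, Corollary~\ref{cor:dist} follows at once from Theorem~\ref{distribution}: summing $\m_i(-m,n)\sim\tfrac{\dim(\chi_i)}{|\MM|}c(-m,n)$ over $i$ gives $\sum_{j=1}^{194}\m_j(-m,n)\sim\tfrac{1}{|\MM|}\bigl(\textstyle\sum_j\dim(\chi_j)\bigr)c(-m,n)$, so $\m_i(-m,n)/\sum_j\m_j(-m,n)\to\dim(\chi_i)/\sum_j\dim(\chi_j)$, and likewise for the $\n_i$; the value $\sum_{j=1}^{194}\dim(\chi_j)=5844076785304502808013602136$ is simply the sum of the entries of the first column of the character table of $\MM$.
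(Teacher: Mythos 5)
Your proposal is correct and follows essentially the same route as the paper: the corollary itself is just the one-line deduction from Theorem~\ref{distribution} that you give at the end (the common factors $|m|^{1/4}e^{4\pi\sqrt{|mn|}}/(\sqrt{2}|n|^{3/4}|\MM|)$, resp.\ $\sqrt{12}e^{\frac{\pi}{6}\sqrt{23|24n+1|}}/(|24n+1|^{1/2}|\MM|)$, cancel in the ratio), and your re-derivation of that theorem --- orthogonality of characters to isolate $\m_i(-m,n)$ and $\n_i(n)$, exact Rademacher/Poincar\'e-series formulas, $I$-Bessel asymptotics, and dominance of the identity class because every $g\neq e$ has its leading Bessel argument shrunk by a factor $(h,\gamma)/(h\sqrt{eNh/(h,\gamma)^2\cdot e^{-1}})<1$ --- is the argument of \S\ref{SectionExactTg}--7.6. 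The only cosmetic differences are that the paper packages the $g\neq e$ suppression via the exact formulas of Theorem~\ref{ExactTg} rather than your ad hoc bound on the minimal lower-left entry $\lambda_g$, and handles $U_g$ via weight-$\frac12$ Maass--Poincar\'e series plus Serre--Stark rather than your sparseness argument for $(q)_{\infty}$.
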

We illustrate these asymptotics explicitly, for $\chi_1$, $\chi_2$, and $\chi_{194}$, in the case that $m=1$, in Table 1, where we 
let $\delta\left(\m_i(-m,n)\right)$ denote the
proportion of components corresponding to $\chi_i$ in $V^{(m)}_n$.

\begin{table}[h]\label{tab:dist-exp}
\caption{}
\begin{tabular}{|c|c|c|c|c|}\hline
$n$&$\delta\left(\m_1(-1,n)\right)$&$\delta\left(\m_2(-1,n)\right)$&$\cdots$&$\delta\left(\m_{194}(-1,n)\right)$
\\\hline 
-1&1&0&$\cdots$&0\\
0&0&0&$\cdots$&0\\
1& 1/2& 1/2&$\cdots$&0\\
2& 1/3& 1/3 &$\cdots$ &0\\
$\vdots$&$\vdots$&$\vdots$&$\vdots$ &$\vdots$\\
40& $4.011\ldots \times 10^{-4}$     &  $2.514\ldots \times 10^{-3}$   &  $\cdots$  &  0.00891\ldots\\
60& $2.699\ldots \times 10^{-9}$     &  $2.732\ldots \times 10^{-8}$   &  $\cdots$  &  0.04419\ldots\\
80  &$4.809\ldots \times 10^{-14}$  & $7.537\ldots \times 10^{-13}$ & $\cdots$&  0.04428\ldots \\
100& $4.427\ldots \times 10^{-18}$ & $1.077\ldots \times 10^{-16}$ & $\cdots$& 0.04428\ldots\\
120&  $1.377\ldots \times 10^{-21}$ & $5.501\ldots \times 10^{-20}$ & $\cdots$& 0.04428\ldots \\
140&$1.156\ldots \times 10^{-24}$ & $1.260\ldots \times 10^{-22}$ & $\cdots$& 0.04428\ldots  \\
160& $2.621\ldots \times 10^{-27}$ & $3.443\ldots \times 10^{-23}$ & $\cdots$& 0.04428\ldots \\
180& $1.877\ldots \times 10^{-28}$ & $3.371\ldots \times 10^{-23}$ & $\cdots$& 0.04428\ldots \\
200& $1.715\ldots \times 10^{-28}$ & $3.369\ldots \times 10^{-23}$ & $\cdots$& 0.04428\ldots  \\
220& $1.711\ldots \times 10^{-28}$ & $3.368\ldots \times 10^{-23}$ & $\cdots$& 0.04428\ldots \\
240& $1.711\ldots \times 10^{-28}$ & $3.368\ldots \times 10^{-23}$ & $\cdots$& 0.04428\ldots  \\
$\vdots$&$\vdots$&$\vdots$&&$\vdots$\\
$\infty$&$\frac{1}{5844076785304502808013602136}$&$\frac{196883}{5844076785304502808013602136}$&$\cdots$&
$\frac{258823477531055064045234375}{5844076785304502808013602136}$\\
\ & \ & \ & & \ 
\\\hline
\end{tabular}
\end{table}
The precise values given in the bottom row of Table~1 admit the following decimal approximations:
\begin{gather}
	\begin{split}
\delta\left(\m_1{(-m)}\right)&=
\frac{1}{5844076785304502808013602136}\approx 1.711\ldots\times 10^{-28}\\ \ \\
\delta\left(\m_2{(-m)}\right)&=
\frac{196883}{5844076785304502808013602136}\approx 3.368\ldots\times 10^{-23}\\ \ \ \\
\delta\left(\m_{194}{(-m)}\right)&=
\frac{258823477531055064045234375}{5844076785304502808013602136}\approx 4.428\ldots\times 10^{-2}
	\end{split}
\end{gather}

Before explaining the proof of Theorem \ref{distribution} we describe an application to the computation\footnote{We are grateful to the referee for suggesting this.} of quantum dimensions. Suppose that $V$ is a vertex operator algebra and $M$ is a $V$-module such that the graded-dimension functions $Z_V$ and $Z_M$ are defined (cf. (\ref{eqn:mod-ZM})). Then the {\em quantum dimension} of $M$ relative to $V$ is defined by setting
\begin{gather}
	\qdim_VM:=\lim_{y\to 0}\frac{Z_M(iy)}{Z_V(iy)}
\end{gather}
(for $y$ real and positive), assuming the limit exists. (Cf. \S3.1 of \cite{MR3105758}.) 

Dong--Mason initiated a vertex algebraic {\em quantum Galois theory} in \cite{MR1430435}. (Cf. also \cite{MR1420556,MR1684904,MR1682988}.) In this theory, inclusions of vertex operator algebras take on the role played by inclusions of fields in the classical setting. It is established in \cite{MR3105758} that the quantum dimension $\qdim_UV$, for $U$ a sub vertex operator algebra of $V$, serves as the quantum analogue of the relative dimension $\dim_EF$ of a field $F$ over a subfield $E$. 

Write $V^\MM$ for the fixed points of the action of $\MM$ on $\Vnat$. Then $V^\MM$ is a sub vertex operator algebra of $\Vnat$, called the {\em monster orbifold}. Using Theorem \ref{distribution}, together with a result from \cite{MR3105758}, we will compute the quantum dimensions of the monster orbifold. To formulate this precisely, note that, according to the main theorem of \cite{MR1420556}, we have
\begin{gather}
	\Vnat\simeq\bigoplus_{i=1}^{194}V^\MM_i\otimes M_i,
\end{gather}
as $V^\MM\times\MM$-modules, 
for some $V^\MM$-submodules $V^\MM_i$ in $\Vnat$, where $M_i$ denotes an irreducible module for $\MM$ with character $\chi_i$, as in (\ref{eqn:tower-mi}).
\begin{corollary}\label{cor:qdims}
We have $\qdim_{V^\MM}V^\MM_i=\chi_i(e)$. In particular, the quantum dimension of $V^\MM_i$ relative to $V^\MM$ exists, for all $1\leq i\leq 194$.
\end{corollary}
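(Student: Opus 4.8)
The plan is to compute $\qdim_{V^\MM}V^\MM_i$ directly from the definition together with the asymptotics of Theorem~\ref{distribution}. First I would record that, by the decomposition $\Vnat\simeq\bigoplus_{j}V^\MM_j\otimes M_j$ as $V^\MM\times\MM$-modules, the graded dimension of $V^\MM_i$ is exactly the generating function $\sum_n \m_i(-1,n)q^n$ of the multiplicity of the $i$-th irreducible of $\MM$ in $\Vnat$. Likewise the graded dimension of the monster orbifold $V^\MM=V^\MM_1$ is $\sum_n\m_1(-1,n)q^n$, the multiplicity generating function of the trivial representation. Hence, assuming the relevant limits exist,
\begin{gather}
	\qdim_{V^\MM}V^\MM_i
	=\lim_{y\to 0^+}\frac{Z_{V^\MM_i}(iy)}{Z_{V^\MM}(iy)}
	=\lim_{y\to 0^+}\frac{\sum_n\m_i(-1,n)e^{-2\pi ny}}{\sum_n\m_1(-1,n)e^{-2\pi ny}}.
\end{gather}

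Next I would invoke the result from \cite{MR3105758} that lets one pass from the $q\to 1^-$ (equivalently $y\to 0^+$) behavior of such a Dirichlet-type series to the limiting proportion of its coefficients: by a Tauberian/Abelian comparison, since $\m_i(-1,n)\sim \dim(\chi_i)\,\m_1(-1,n)/\dim(\chi_1)$ coefficient-wise as $n\to\infty$ (this is precisely what the $m=1$ case of Theorem~\ref{distribution} gives, with $\dim(\chi_1)=1$, so $\m_i(-1,n)\sim\dim(\chi_i)\m_1(-1,n)$), the ratio of the two series tends to $\dim(\chi_i)$ as $y\to 0^+$. Concretely, Theorem~\ref{distribution} gives $\m_i(-1,n)\sim \dim(\chi_i)\,n^{-3/4}e^{4\pi\sqrt n}/(\sqrt2\,|\MM|)$, so both numerator and denominator are, up to the common asymptotic factor, governed by $\dim(\chi_i)$ and $\dim(\chi_1)=1$ respectively; the subexponential corrections and the tails cancel in the limit. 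This yields $\qdim_{V^\MM}V^\MM_i=\dim(\chi_i)=\chi_i(e)$, and in particular the limit defining the quantum dimension exists for every $i$.

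The main obstacle is the justification of the passage from the asymptotic equivalence of coefficients to the equality of the limits of the generating functions: one must make sure the error terms in Theorem~\ref{distribution} are uniform enough (or at least $o$ of the main term uniformly in the relevant range) that the partial sums $\sum_{n\le N}\m_i(-1,n)e^{-2\pi ny}$ and the tails $\sum_{n>N}$ can be controlled simultaneously as $y\to 0^+$ and $N\to\infty$. This is exactly the content of the cited result of \cite{MR3105758}, which provides the needed comparison theorem for quantum dimensions in terms of coefficient asymptotics, so in the write-up I would simply cite it and feed in Theorem~\ref{distribution}; the remaining step — observing that the exponential growth rates $e^{4\pi\sqrt n}$ match for all $i$, so only the constants (namely $\dim(\chi_i)$) survive in the ratio — is then immediate. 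I would close by noting that the same computation with $V^\MM_i$ replaced by $V^\MM$ itself gives $\qdim_{V^\MM}V^\MM=1$, consistent with $\dim(\chi_1)=1$, as a sanity check.
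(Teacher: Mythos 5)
Your proposal is correct and follows essentially the same route as the paper: identify $\dim(V^\MM_i)_n$ with the multiplicity $\m_i(-1,n)$ via the decomposition of $\Vnat$ as a $V^\MM\times\MM$-module, deduce $\m_i(-1,n)/\m_1(-1,n)\to\chi_i(e)$ from Theorem~\ref{distribution}, and invoke Proposition 3.6 of \cite{MR3105758} to convert this coefficient-ratio limit into the quantum dimension. The Abelian-type comparison whose justification you flag as the main obstacle is exactly what that cited proposition supplies, so no further uniformity argument is needed.
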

\begin{proof}
Recall (cf. (\ref{eqn:mod-ZM})) that $Z_{V^{\MM}_i}(\tau)$ is the function obtained by substituting $q=e^{2\pi i \tau}$ in $\dimq V^\MM_i=\sum_n (V^\MM_i)_nq^n$, assuming this series converges. Note that $V^\MM=V^\MM_1$. According to Proposition 3.6 of \cite{MR3105758}, if the limit
\begin{gather}
d_i:=\lim_{n\to \infty} \frac{\dim (V^\MM_i)_n}{\dim(V^{\MM}_1)_n}
\end{gather}
exists, then the quantum dimension of $V^\MM_i$ relative to $V^\MM_1$ also exists, and equals $d_i$.
Comparing with (\ref{eqn:tower-mi}) we see that $\dim(V^\MM_i)_n=\m_i(-1,n)$.  Applying Theorem \ref{distribution} we obtain $d_i=\chi_i(e)$, and this completes the proof.
\end{proof}
Note that Corollary \ref{cor:qdims} confirms a special case of Conjecture 6.7 of \cite{MR3105758}. It would be interesting to see how generally this method can be applied, to the computation of quantum dimensions of orbifolds $V^G$, where $V$ is a vertex operator algebra and $G$ is a group of automorphisms of $\Aut(G)$. (See also Problem \ref{prob:qdim}.)

\subsection{The modular groups in monstrous moonshine}\label{Gammag}
To obtain exact formulas, we begin by recalling the modular groups which arise in monstrous moonshine.
Suppose $\Gamma_*<\GL_2(\RR)$ is a discrete group which is commensurable with $\SL_2(\Z).$ If $\Gamma_*$ defines a genus zero quotient of $\HH$, then the field of modular functions which are invariant under $\Gamma_*$ is generated by a single element, the  principal modulus (cf. (\ref{eqn:intro-Tgatinfty})). Theorem~\ref{thm:intro:class-borcherds} implies that the  $T_g$ (defined by (\ref{eqn:intro-Tg})) are 
principal moduli for certain groups $\Gamma_g$. We can describe these groups in terms of groups $E_g$ which in turn may be described in terms of the congruence subgroups 
\begin{equation}\label{Gamma0N}
\Gamma_0(N):=\left\{\begin{pmatrix}a&b\\c&d\end{pmatrix}\in \SL_2(\Z) \ : \ c\equiv 0 \pmod{N}\right\},
\end{equation}
and the Atkin--Lehner involutions $W_e$ for $\Gamma_0(N)$ given by
\begin{equation}
 W_e:=\begin{pmatrix} ae&b\\cN&de\end{pmatrix},
\end{equation}
where $e$ is an exact divisor of $N$ (i.e. $e|N$, and $(e,N/e)=1$), and  $a,b,c,$ and $d$ are integers chosen so that $W_e$ has determinant $e$.

Following Conway--Norton \cite{MR554399} and Conway--McKay--Sebbar \cite{ConMcKSebDiscGpsM}, we denote the groups $E_g$ by symbols of the form $\Gamma_0(N|h)+e,f,\dots$ (or simply $N|h+e,f,\dots$), where $h$ divides $(N,24)$, and each of $e,f,$ etc. exactly divide $N/h$. This symbol represents the group 
$$\Gamma_0(N|h)+e,f,\dots:=\begin{pmatrix} 1/h&0\\0&1\end{pmatrix}\langle\Gamma_0(N/h),W_e,W_f,\dots \rangle\begin{pmatrix} h&0\\0&1\end{pmatrix},$$
where $W_e, W_f,$ etc. are representative of Atkin--Lehner involutions on $\Gamma_0(N/h).$ We use the notation $\mathcal W_g:=\{1,e,f,\dots\}$ to denote this list of Atkin--Lehner involutions contained in $E_g.$ We also note that $\Gamma_0(N|h)+e,f,\dots$ contains $\Gamma_0(Nh).$

The groups $E_g$ are eigengroups for the $T_g$, so that if $\gamma\in E_g$, then $T_g(\gamma \tau)=\sigma_g(\gamma)T_g$, where $\sigma_g$ is a multiplicative group homomorphism from $E_g$ to the group of $h$-th roots of unity. Conway and Norton \cite{MR554399} give the following values for $\sigma_g$ evaluated on generators of $N|h+e,f,\dots$.

\begin{lemma}[Conway--Norton]\label{GammaEigenvalues}
Assuming the notation above, the following are true:
\begin{displaymath}
\begin{array}{ll}
\text{(a)}& \sigma_g(\gamma)=1$  if $\gamma\in \Gamma_0(Nh)\\
\text{(b)} &\sigma_g(\gamma)=1$ if $\gamma$ is an Atkin--Lehner involution of $\Gamma_0(Nh)$ inside $E_g\\
\text{(c)}& \sigma_g(\gamma)=e^{\frac{-2 \pi i}{h} }$ if $\gamma=\begin{pmatrix}1&1/h\\0&1\end{pmatrix}\\
\text{(d)} &\sigma_g(\gamma)=e^{-\lambda_g \frac{2 \pi i}{h}}$ if $\gamma=\begin{pmatrix}1&0\\N&1\end{pmatrix},
\end{array}
\end{displaymath}
where $\lambda_g$ in (d) is $-1$ if $ N/h\in \mathcal W_g,$ and $+1$ otherwise. 
\end{lemma}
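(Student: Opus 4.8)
The plan is to realize $\sigma_g$ as a genuine character of the finite abelian quotient $E_g/\Gamma_g$, show its values are $h$-th roots of unity, and then evaluate it on an explicit generating set of $E_g$. For the first step one uses Theorem~\ref{thm:intro:class-borcherds}: $T_g$ is the \emph{unique} normalized principal modulus for the genus zero group $\Gamma_g$, so for $\gamma\in E_g$ (which normalizes $\Gamma_g$) the function $\tau\mapsto T_g(\gamma\tau)$ is $\Gamma_g$-invariant with a single simple pole, lying over the cusp $\gamma^{-1}\infty$. Since $E_g$ preserves the $\Gamma_g$-orbit of $\infty$ (a structural feature of the Conway--Norton groups, e.g.\ because $E_g$ is generated by $\Gamma_g$ together with $\begin{pmatrix}1 & 1/h \\ 0 & 1\end{pmatrix}$, which fixes $\infty$), this pole lies over $\infty$, so $T_g\circ\gamma = aT_g+b$; matching the vanishing constant terms forces $b=0$, whence $T_g(\gamma\tau)=\sigma_g(\gamma)\,T_g(\tau)$ for a scalar $\sigma_g(\gamma)$. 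Multiplicativity is then automatic, and because $\begin{pmatrix}1 & 1/h \\ 0 & 1\end{pmatrix}^{h} = \begin{pmatrix}1 & 1 \\ 0 & 1\end{pmatrix}\in\Gamma_0(Nh)\subseteq\Gamma_g$, one gets $\sigma_g(E_g)\subseteq\mu_h$.

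Next I would reduce the lemma to a computation on generators, using the structural fact (visible in the symbol $N|h+e,f,\dots$) that $E_g$ is generated by $\Gamma_0(Nh)$, by the Atkin--Lehner involutions indexed by $\mathcal{W}_g$, by $\begin{pmatrix}1 & 1/h \\ 0 & 1\end{pmatrix}$, and by $\begin{pmatrix}1 & 0 \\ N & 1\end{pmatrix}$. Part~(a) then expresses that $\Gamma_0(Nh)\subseteq\Gamma_g=\ker\sigma_g$, which is part of Conway--Norton's (now proven) description of $\Gamma_g$; part~(b) is the same statement once one knows that the Atkin--Lehner involutions of $\Gamma_0(Nh)$ lying in $E_g$ also lie in $\Gamma_g$, again Conway--Norton's identification of $\Gamma_g$ as $\Gamma_0(N|h)$ adjoined with the $\mathcal{W}_g$-involutions. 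Part~(c) is a one-line $q$-expansion check: $\begin{pmatrix}1 & 1/h \\ 0 & 1\end{pmatrix}$ acts by $\tau\mapsto\tau+1/h$, multiplying the leading term $q^{-1}=e^{-2\pi i\tau}$ by $e^{-2\pi i/h}$, so $\sigma_g\!\left(\begin{pmatrix}1 & 1/h \\ 0 & 1\end{pmatrix}\right)=e^{-2\pi i/h}$.

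The substantive case is (d). Conjugating by $D=\begin{pmatrix}h & 0 \\ 0 & 1\end{pmatrix}$ identifies $E_g$ with $\langle\Gamma_0(N/h),W_e,\dots\rangle$ and sends $\begin{pmatrix}1 & 0 \\ N & 1\end{pmatrix}$ to $\begin{pmatrix}1 & 0 \\ N/h & 1\end{pmatrix}$ and $\begin{pmatrix}1 & 1/h \\ 0 & 1\end{pmatrix}$ to $\begin{pmatrix}1 & 1 \\ 0 & 1\end{pmatrix}$, transporting $\sigma_g$ to a character $\sigma$ with $\sigma\!\left(\begin{pmatrix}1 & 1 \\ 0 & 1\end{pmatrix}\right)=e^{-2\pi i/h}$; thus (d) becomes the evaluation of $\sigma$ on $\begin{pmatrix}1 & 0 \\ N/h & 1\end{pmatrix}$. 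If $N/h\in\mathcal{W}_g$, the normalized Fricke involution $W=\begin{pmatrix}0 & -1 \\ N/h & 0\end{pmatrix}$ lies in the group, and the direct identity $W^{-1}\begin{pmatrix}1 & 0 \\ N/h & 1\end{pmatrix}W=\begin{pmatrix}1 & -1 \\ 0 & 1\end{pmatrix}$ yields $\sigma\!\left(\begin{pmatrix}1 & 0 \\ N/h & 1\end{pmatrix}\right)=\sigma\!\left(\begin{pmatrix}1 & 1 \\ 0 & 1\end{pmatrix}\right)^{-1}=e^{2\pi i/h}$, i.e.\ $\lambda_g=-1$. If $N/h\notin\mathcal{W}_g$, no such conjugator is available; instead one writes $\begin{pmatrix}1 & 0 \\ N/h & 1\end{pmatrix}$ as a word in $\begin{pmatrix}1 & 1 \\ 0 & 1\end{pmatrix}$, in elements of $D\,\Gamma_0(Nh)\,D^{-1}$ (on which $\sigma$ is trivial by~(a)), and in the involutions of $\mathcal{W}_g$ (on which $\sigma$ is trivial by~(b)), and tracks the accumulated phase to get $e^{-2\pi i/h}$, i.e.\ $\lambda_g=+1$; equivalently, one checks that $\sigma|_{\Gamma_0(N/h)}$ is the unique character of $\Gamma_0(N/h)$ killing $D\,\Gamma_0(Nh)\,D^{-1}$ and sending $\begin{pmatrix}1 & 1 \\ 0 & 1\end{pmatrix}$ to $e^{-2\pi i/h}$, then reads off its value on $\begin{pmatrix}1 & 0 \\ N/h & 1\end{pmatrix}$.

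I expect the main obstacle to be the non-Fricke branch of (d): unlike the Fricke case it cannot be settled by a single conjugation identity, and really needs an explicit presentation of $\Gamma_0(N/h)$ (or, equivalently, control of $E_g/\Gamma_g$ and its natural generators) --- exactly the point where one must lean on the concrete Conway--Norton classification of the groups $\Gamma_g$ rather than on soft arguments. Everything else --- multiplicativity, the $\mu_h$-valuedness, (a), (b), (c), and the Fricke case of (d) --- is either formal or a two-by-two matrix computation.
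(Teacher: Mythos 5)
First, for orientation: the paper does not prove Lemma~\ref{GammaEigenvalues} at all --- it is quoted from Conway--Norton \cite{MR554399} --- so there is no in-paper argument to compare against, and your attempt has to stand on its own. Much of it does. The framing of $\sigma_g$ as a character of $E_g$ via uniqueness of the normalized principal modulus is the right one; parts (a) and (b) really are just the (proven) statement that $\Gamma_0(Nh)$ and the relevant Atkin--Lehner involutions lie in the invariance group of $T_g$; part (c) is the correct one-line $q$-expansion check; and your conjugation identity $W^{-1}\left(\begin{smallmatrix}1&0\\N/h&1\end{smallmatrix}\right)W=\left(\begin{smallmatrix}1&-1\\0&1\end{smallmatrix}\right)$, combined with the fact that a homomorphism to an abelian group is conjugation-invariant, genuinely settles the Fricke branch of (d).

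The genuine gap is the one you flag yourself: the non-Fricke branch of (d), which is exactly where the content of the lemma (the asymmetric sign $\lambda_g$) lives, and neither of your proposed routes is carried out or routine. Expressing $\left(\begin{smallmatrix}1&0\\N/h&1\end{smallmatrix}\right)$ as a word in $T$, elements of $D\Gamma_0(Nh)D^{-1}=\Gamma_0(N)\cap\Gamma^0(h)$, and the involutions in $\mathcal W_g$ presupposes that these generate --- a presentation-level fact you have not established --- and then requires computing the image of that word in the abelianization; the answer $e^{-2\pi i/h}$ rather than $1$ or $e^{+2\pi i/h}$ is precisely the delicate point. Your alternative, ``the unique character of $\Gamma_0(N/h)$ killing $\Gamma_0(N)\cap\Gamma^0(h)$ and sending $T$ to $e^{-2\pi i/h}$,'' asserts a uniqueness statement about the abelianization of $\Gamma_0(N/h)$ that itself needs proof, and even granted it you must still evaluate the character on $\left(\begin{smallmatrix}1&0\\N/h&1\end{smallmatrix}\right)$. (Conway--Norton in effect do this via the $\eta$-product multiplier system, which is where the condition $h\mid(N,24)$ and the $\pm1$ asymmetry actually come from.) Two smaller soft spots: $E_g$ is not in general generated by $\Gamma_g$ together with $\left(\begin{smallmatrix}1&1/h\\0&1\end{smallmatrix}\right)$ --- that would make $E_g/\Gamma_g$ cyclic, which is part of what you are trying to prove --- so the claim that every $\gamma\in E_g$ fixes the $\Gamma_g$-class of the cusp $\infty$ needs a direct argument (e.g.\ from the shape $\left(\begin{smallmatrix}ae&b/h\\cN&de\end{smallmatrix}\right)$ of elements of $E_g$ together with the Atkin--Lehner elements already known to lie in $\Gamma_g$); and $\sigma_g(E_g)\subseteq\mu_h$ does not follow from the order of that single element alone, only a posteriori once all generators have been evaluated.
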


This information is sufficient to properly describe the modularity of the series $T_g^{(-m)}(\tau)$ on $E_g$. In section \ref{SectionExactTg}, we give an explicit procedure for evaluating $\sigma_g$. The invariance group $\Gamma_g,$ denoted by $\Gamma_0(N||h)+e,f,\dots$ (or by the symbol $N||h+e,f,\dots$), is defined as the kernel of $\sigma_g.$ A complete list of the groups $\Gamma_g$ can be found in the Appendix (\S \ref{Appendix}) of this paper, or in table 2 of \cite{MR554399}.

Theorems~\ref{firstthm} and \ref{secondthm} are summarized by the following uniform statement.
\begin{theorem}\label{usefultheorem}
Let $g\in \MM$ and $m\geq 1$. Then $T^{(-m)}_g$ is the unique weakly holomorphic modular form of weight zero for $\Gamma_g$ that satisfies $T^{(-m)}_g=q^{-m}+O(q)$ as $\tau$ approaches the infinite cusp, and has no poles at any cusps inequivalent to the infinite one. 
\end{theorem}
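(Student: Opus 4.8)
The plan is to deduce Theorem \ref{usefultheorem} directly from Theorems \ref{firstthm} and \ref{secondthm} together with the genus zero property and the characterization of the invariance group $\Gamma_g$ as $\ker(\sigma_g)$. First I would recall that by Theorem \ref{thm:intro:class-borcherds} (equivalently Theorem \ref{firstthm}) the McKay--Thompson series $T_g = T_g^{(-1)}$ is the normalized principal modulus for $\Gamma_g$; in particular $T_g$ is holomorphic on $\H$, $\Gamma_g$-invariant, satisfies $T_g = q^{-1} + O(q)$ at the infinite cusp, and is bounded (hence holomorphic) at all other cusps of $\Gamma_g$. This is exactly the $m=1$ case of the assertion, so it remains to treat $m \geq 2$.

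For general $m$, the key input is Theorem \ref{secondthm}, which states that $T_g^{(-m)}$ is a monic integral polynomial of degree $m$ in $T_g$. From this the required properties follow formally: since $T_g$ is a modular function of weight zero for $\Gamma_g$, holomorphic on $\H$, any polynomial in $T_g$ is again a weakly holomorphic modular form of weight zero for $\Gamma_g$, holomorphic on $\H$, with possible poles only where $T_g$ has a pole, i.e. only at the infinite cusp. Monicity of degree $m$, combined with $T_g = q^{-1} + O(q)$, gives $T_g^{(-m)} = (q^{-1})^m + (\text{lower order in } q^{-1}) = q^{-m} + (\text{terms } q^{-m+1}, \ldots, q^{-1}, q^0) + O(q)$; I would then note that the constant term and the intermediate polar coefficients are pinned down by the definition (\ref{eqn:tower-Tmg}), which has no constant term and whose polar part is exactly $q^{-m}$, so that in fact $T_g^{(-m)} = q^{-m} + O(q)$ as claimed. (Alternatively one can invoke (\ref{eqn:intro:radsums-TmGammaFouExp}) via the identification $T_g^{(-m)} = T_{\Gamma_g}^{(-m)}$ from Theorem \ref{secondthm}, where the Fourier expansion $q^{-m} + \sum_{n>0} c_{\Gamma_g}(-m,n)q^n$ is manifest once $\Gamma_g$ has width one at infinity.)

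Next I would establish uniqueness. Suppose $F$ is another weight-zero weakly holomorphic modular form for $\Gamma_g$ with $F = q^{-m} + O(q)$ and no poles away from the infinite cusp. Then $F - T_g^{(-m)}$ is a $\Gamma_g$-invariant holomorphic function on $\H$, bounded at every cusp (the difference of two functions each $\equiv q^{-m} + O(q)$ is $O(q)$ at infinity, and both are holomorphic at the finite cusps), hence extends to a holomorphic function on the compact Riemann surface $\Gamma_g \backslash \H^*$. Since $\Gamma_g$ has genus zero this surface is $\widehat{\CC}$, so the extension is constant; and since $F - T_g^{(-m)} = O(q)$ at infinity that constant is $0$. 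Therefore $F = T_g^{(-m)}$.

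I expect the only real subtlety—the "main obstacle," though it is minor—to be bookkeeping around the cusps: one must be careful that "no poles at any non-infinite cusp" is genuinely what holomorphy of a polynomial in $T_g$ delivers, which hinges on the fact (from Theorem \ref{thm:intro:class-borcherds}) that $T_g$ itself has its unique pole at the infinite cusp of $\Gamma_g$, and on knowing the cusp structure of $\Gamma_g = \Gamma_0(N\|h)+e,f,\dots$ well enough to see that a polynomial expression transforms correctly under the eigencharacter $\sigma_g$ of Lemma \ref{GammaEigenvalues}. Since $T_g$ is genuinely $\Gamma_g$-invariant (not merely $E_g$-equivariant), any polynomial in it is $\Gamma_g$-invariant too, so this last point causes no trouble; one simply records that $\Gamma_g = \ker(\sigma_g)$ and that $T_g^{(-m)} \in \CC[T_g]$ is $\Gamma_g$-invariant of weight zero. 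With these observations assembled, the theorem follows.
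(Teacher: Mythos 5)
Your proposal is correct and follows essentially the same route as the paper, which presents Theorem \ref{usefultheorem} as a summary of Theorems \ref{firstthm} and \ref{secondthm}: existence comes from $T^{(-m)}_g$ being a monic degree-$m$ polynomial in the principal modulus $T_g$ (so it is $\Gamma_g$-invariant with its only pole at the infinite cusp, and the normalization $q^{-m}+O(q)$ is immediate from the definition (\ref{eqn:tower-Tmg})), while uniqueness follows because the difference of two such forms is holomorphic and bounded on the compactification of $\Gamma_g\backslash\HH$, hence the zero constant. Your observation that the uniqueness step does not actually require genus zero (only compactness) is a minor, harmless refinement of the paper's phrasing.
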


\begin{remark}
A {\it weakly holomorphic} modular form is a meromorphic modular form whose poles (if any) are supported at cusps.
\end{remark}

\subsection{Harmonic Maass forms}

Maass--Poincar\'e series allow us to obtain formulas for weakly holomorphic modular forms and mock modular forms.
We begin by briefly recalling the definition of a \emph{harmonic Maass form} of weight $k\in \frac{1}{2}\Z$ and multiplier $\nu$ (a generalization of the notion of a Nebentypus). If $\tau=x+iy$ with $x$ and $y$ real, we define the weight $k$ hyperbolic Laplacian by
\begin{equation}
\Delta_k := -y^2\left( \frac{\partial^2}{\partial x^2} +
\frac{\partial^2}{\partial y^2}\right) + iky\left(
\frac{\partial}{\partial x}+i \frac{\partial}{\partial y}\right),
\end{equation}
and if $\gamma=\begin{pmatrix}a&b\\ c&d\end{pmatrix}\in \SL_2(\Z),$ define 
\[(\gamma:\tau):=(c\tau+d).\]
Suppose $\Gamma$ is a subgroup of finite index in $\SL_2(\Z)$ and $\frac{3}{2}\leq k\in \frac{1}{2}\Z$. Then a real analytic function $F(\tau)$ is a {\it harmonic Maass form}  of weight $k$ on $\Gamma$ with multiplier $\nu$ if: 

\begin{enumerate}
\item[(a)]  The function $F(\tau)$ satisfies the modular transformation with respect to the weight $k$ slash operation,
$$F(\tau)|_k \gamma :=(\gamma:\tau)^{-k}F(\gamma\tau)=\nu(\gamma)F(\tau)$$ for every matrix $\gamma\in \Gamma,$ where if $k\in \Z+\frac12,$ the square root is taken to be the principal branch. In particular, if $\nu$ is trivial, then $F$ is invariant under the action of the slash operator.
\item[(b)] We have that $\Delta_kF(\tau)=0,$
\item[(c)] The singularities of $F$ (if any) are supported at the cusps of $\Gamma$, and for each cusp $\rho$ there is a polynomial $P_{F,\rho}(q^{-1})\in \C[q^{-1/t_\rho}]$ and a constant $c>0$ such that $F(\tau)-P_{F,\rho}(e^{-2\pi i \tau})=O(e^{-cy})$ as $\tau\to \rho$ from inside a fundamental domain. Here $t_\rho$ is the width of the cusp $\rho$.
\end{enumerate}

\begin{remark}
The polynomial $P_{F,\rho}$ above is referred to as the \emph{principal part of $F$ at $\rho$}. In certain applications, condition (c) of the definition may be relaxed to admit larger classes of harmonic Maass forms. However, for our purposes we will only be interested in those satisfying the given definition, having a holomorphic principal part.
\end{remark}

We denote the complex vector space of such functions by $H_k(\Gamma,\nu),$ and note that in order for $H_k(\Gamma,\nu)$ to be nonzero, $\nu$ must satisfy 
$(\gamma:\delta\tau)^k(\delta:\tau)^k\nu(\gamma)\nu(\delta)=(\gamma\delta:\tau)^k\nu(\gamma\delta)$ for every $\gamma,\delta \in \Gamma.$ 

Let $\mathcal S(\Gamma)$ denote some fixed complete set of inequivalent representatives of the cusps of $\Gamma.$ For each representative $\rho=\frac{\alpha}{\gamma}$ with $(\alpha,\gamma)=1,$ fix a matrix $L_\rho=\begin{pmatrix}-\delta&\beta\\ \gamma&-\alpha\end{pmatrix}\in \SL_2(\Z)$ so that $\rho=L_\rho^{-1}\infty.$
 Following Rankin \cite{MR0498390}, let $t_\rho$ be the cusp width and let $\kappa_\rho$ be the cusp parameter, defined as the least nonnegative integer so that $\nu(L_\rho T^{t_\rho}L_{\rho}^{-1})=e^{2\pi i \kappa_\rho},$ where $T:=\begin{pmatrix}1&1\\0&1\end{pmatrix}.$
The stabilizer of $\rho$ in $\Gamma$ is given by ${\Gamma}_{\rho}:=\left\langle \pm T^{t_\rho}\right\rangle,$ so for example $\Gamma_\infty=\langle \pm T\rangle.$
Given $F(\tau)\in H_{2-k}(\Gamma,\nu),$ we refer to $F_{\rho}(\t):=F(\tau)|_{2-k}L_\rho$ as the expansion of $F$ at the cusp $\rho.$
We note that this expansion depends on the choice of $L_\rho$. These facts imply that the expansion of $F_\rho$ can be given as a Fourier series of the form 
$$F_\rho(\tau)=\sum_{n}a(n,y)e^{2\pi i x(n+\kappa_\rho)/t_\rho}.$$ 
More precisely, we have the following. The  Fourier expansion of harmonic Maass forms $F$ at a cusp $\rho$ (see Proposition 3.2 of \cite{BruFun_TwoGmtThtLfts}) splits into two components. 
As before, we let $q:=e^{2\pi i \tau}$.
\begin{lemma}\label{HMFparts}
If $F(\tau)$ is a harmonic Maass form of weight $2-k$ for $\Gamma$ where $\frac{3}{2}\leq k\in \frac12\Z$, and if $\rho$ is a cusp of $\Gamma$, then

\begin{displaymath}
F_\rho(\tau)=F_\rho^+(\tau)+F_\rho^{-}(\tau)
\end{displaymath}
where $F_\rho^+$ is the holomorphic part of $F_\rho$, given by
$$
F_\rho^+(\tau):=\sum_{n\gg -\infty} c_{F,\rho}^+(n) q^{(n+\kappa_\rho)/\t_\rho},
$$
and $F_\rho^{-}$ is the nonholomorphic part, given by
$$
F_\rho^{-}(\tau) + \sum_{n<0} c_{F\rho}^-(n)
\Gamma(k-1,4\pi y|(n+\kappa_\rho)/t_\rho|) q^{(n+\kappa_\rho)/t_\rho}.
$$
\end{lemma}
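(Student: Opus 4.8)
The statement is the familiar description of the Fourier expansion of a harmonic Maass form at a cusp (as in Proposition~3.2 of \cite{BruFun_TwoGmtThtLfts}), and the plan is to prove it by separation of variables applied to the equation $\Delta_{2-k}F=0$. First I would record that the weight $2-k$ hyperbolic Laplacian $\Delta_{2-k}$ commutes with the weight $2-k$ slash action of $\SL_2(\RR)$ (a direct computation, or a consequence of the factorization $\Delta_{2-k}=-\xi_k\circ\xi_{2-k}$, where $\xi_w(h):=2iy^w\,\overline{\partial_{\bar\tau}h}$, together with the equivariance of the two $\xi$-operators). Hence $F_\rho:=F|_{2-k}L_\rho$ is again real-analytic and annihilated by $\Delta_{2-k}$. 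Using $L_\rho T^{t_\rho}L_\rho^{-1}\in\Gamma$ and $\nu(L_\rho T^{t_\rho}L_\rho^{-1})=e^{2\pi i\kappa_\rho}$, one gets $F_\rho(\tau+t_\rho)=e^{2\pi i\kappa_\rho}F_\rho(\tau)$, so $e^{-2\pi i\kappa_\rho x/t_\rho}F_\rho(x+iy)$ is $t_\rho$-periodic in $x$, giving a Fourier expansion
\[
F_\rho(\tau)=\sum_{n\in\Z}a_n(y)\,e^{2\pi i(n+\kappa_\rho)x/t_\rho}
\]
with each $a_n$ real-analytic (integrate against characters, differentiate under the integral).

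Next I would feed this expansion into $\Delta_{2-k}F_\rho=0$; term-by-term application of $\Delta_{2-k}$ is legitimate since the series and its derivatives converge locally uniformly on $\H$. Comparing Fourier modes and writing $w:=2-k$, $\alpha_n:=(n+\kappa_\rho)/t_\rho$ produces, for each $n$, the ordinary differential equation
\[
y^2a_n''+w\,y\,a_n'-(2\pi\alpha_n)^2y^2a_n+2\pi\alpha_n w\,y\,a_n=0 .
\]
The substitution $a_n(y)=e^{-2\pi\alpha_n y}b_n(y)$ collapses this to $b_n''+\bigl(w/y-4\pi\alpha_n\bigr)b_n'=0$, hence $b_n'=c\,y^{-w}e^{4\pi\alpha_n y}$. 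For $\alpha_n<0$ the change of variable $u=4\pi|\alpha_n|y$ and the identity $\int_X^{\infty}u^{-w}e^{-u}\,du=\Gamma(1-w,X)$ exhibit the two independent solutions as $e^{-2\pi\alpha_n y}$ (the mode $q^{\alpha_n}$) and $e^{-2\pi\alpha_n y}\Gamma(1-w,4\pi|\alpha_n|y)=\Gamma(k-1,4\pi|\alpha_n|y)\,q^{\alpha_n}$; the latter decays like $y^{k-2}e^{-2\pi|\alpha_n|y}$ as $y\to\infty$. For $\alpha_n>0$ the independent solutions are $q^{\alpha_n}$ and a second one growing like $y^{-w}e^{2\pi\alpha_n y}$, and for $\alpha_n=0$ (which forces $\kappa_\rho=0$, and where $w\le\frac12\neq1$) they are the constants and $y^{1-w}=y^{k-1}$.

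The last step is to invoke the singularity condition (c): after slashing, $F_\rho$ differs from a finite Fourier polynomial in $q^{-1/t_\rho}$ by a term that is $O(e^{-cy})$ as $y\to\infty$. This forces the coefficient of the exponentially growing second solution to vanish for every $\alpha_n>0$; it forces the coefficient of $y^{k-1}$ to vanish when $\alpha_n=0$, which uses $k\ge\frac32$; and for $\alpha_n<0$ it allows the growing holomorphic mode $q^{\alpha_n}$ only for the finitely many $n$ recorded by the Fourier polynomial, i.e.\ only for $n\gg-\infty$, while imposing no constraint on the (decaying) incomplete-gamma mode. Collecting what survives gives $F_\rho=F_\rho^++F_\rho^-$ with
\[
F_\rho^+(\tau)=\sum_{n\gg-\infty}c^+_{F,\rho}(n)\,q^{(n+\kappa_\rho)/t_\rho},\qquad
F_\rho^-(\tau)=\sum_{n<0}c^-_{F,\rho}(n)\,\Gamma\!\bigl(k-1,4\pi y|(n+\kappa_\rho)/t_\rho|\bigr)\,q^{(n+\kappa_\rho)/t_\rho},
\]
and convergence of both series is inherited from the real-analyticity of $F_\rho$ together with the standard polynomial bound on the $a_n$. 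The ODE and its solution are routine; the step that needs genuine care is the bookkeeping of the cusp width, cusp parameter, and multiplier in the first paragraph, and then the precise comparison of the small- and large-$y$ asymptotics of $\Gamma(k-1,\cdot)$ with condition (c) — which is exactly where the hypothesis $k\ge\frac32$ is used and where all three families of ``wrong'' solutions get eliminated.
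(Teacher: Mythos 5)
Your proof is correct, but note that the paper does not actually prove this lemma: it is quoted directly from Proposition 3.2 of Bruinier--Funke \cite{BruFun_TwoGmtThtLfts}, so there is no in-paper argument to compare against. What you have written is precisely the standard separation-of-variables derivation underlying that citation, and the details check out: the weight-$(2-k)$ Laplacian commutes with the slash action (via $\Delta_{2-k}=-\xi_{k}\circ\xi_{2-k}$), the twisted periodicity $F_\rho(\tau+t_\rho)=e^{2\pi i\kappa_\rho}F_\rho(\tau)$ yields the expansion in $e^{2\pi i(n+\kappa_\rho)x/t_\rho}$, and your mode equation is the correct specialization of the Whittaker equation at eigenvalue $0$ (consistent with the paper's remark that $\phi_{k/2}$ has eigenvalue $s(1-s)+\frac{k^2-2k}{4}=0$). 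The reduction $a_n=e^{-2\pi\alpha_n y}b_n$, the identification $\int_X^\infty u^{k-2}e^{-u}\,du=\Gamma(k-1,X)$, and the three-way case analysis against condition (c) are all right, including the observation that $k\ge\frac{3}{2}$ is exactly what kills the $y^{k-1}$ solution when $\alpha_n=0$. Two points you gloss over, both standard: term-by-term application of $\Delta_{2-k}$ is most cleanly justified by integrating $\Delta_{2-k}F_\rho=0$ against the characters and integrating by parts in $x$, which produces the ODE for each $a_n$ directly; and the separate convergence of $F_\rho^+$ and $F_\rho^-$ requires extracting $c^+(n)$ and $c^-(n)$ individually (e.g.\ by evaluating $a_n$ at two heights $y$) before invoking coefficient bounds. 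Incidentally, the displayed formula for $F_\rho^-$ in the statement contains a typographical ``$+$'' where an ``$=$'' is intended; your reading of it is the correct one.
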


By inspection, we see that weakly holomorphic modular forms are themselves harmonic Maass forms. In fact, under the given definition, all harmonic Maass forms of positive weight are weakly holomorphic. On the other hand, if the
weight is non-positive, then the space of harmonic Maass forms may be larger than the space of weakly holomorphic modular forms. However, as with the weakly holomorphic modular forms, a harmonic Maass form is uniquely defined by its principal parts at all cusps. This is clear if the form is weakly holomorphic. If the nonholomorphic part is non-zero, we have the following lemma which follows directly from the work of Bruinier and Funke \cite{BruFun_TwoGmtThtLfts}.
\begin{lemma}\label{ppexists}
If $F\in H_{2-k}(\Gamma_0(N))$ has the property that
$F^{-}\neq 0$, then the principal part of $F$
is nonconstant for at least one cusp.
\end{lemma}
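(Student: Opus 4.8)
The plan is to argue by contradiction using the $\xi$-operator of Bruinier--Funke. Suppose $F\in H_{2-k}(\Gamma_0(N))$ satisfies $F^-\neq 0$, and suppose toward a contradiction that the principal part $P_{F,\rho}$ is constant at every cusp $\rho$ of $\Gamma_0(N)$; equivalently, $c^+_{F,\rho}(n)=0$ whenever $(n+\kappa_\rho)/t_\rho<0$. Set $g:=\xi_{2-k}(F)=2iy^{2-k}\,\overline{\partial_{\bar\tau}F}$. Since $\Delta_{2-k}F=0$, the function $g$ is holomorphic on $\HH$ and transforms as a weight $k$ modular form for $\Gamma_0(N)$ (with the conjugate multiplier). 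The kernel of $\xi_{2-k}$ consists precisely of the holomorphic functions, so if $g=0$ then $F$ is holomorphic and $F^-=0$; hence $F^-\neq 0$ forces $g\neq 0$, and it is this nonvanishing we must contradict.

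The first key step is to check that $g$ is in fact a \emph{cusp} form. Because $\xi_{2-k}$ intertwines the weight $2-k$ and weight $k$ slash actions of $\SL_2(\Z)$, at each cusp $\rho$ we have $g|_kL_\rho=\xi_{2-k}(F|_{2-k}L_\rho)=\xi_{2-k}(F^-_\rho)$, where the last equality uses that $\xi_{2-k}$ annihilates the holomorphic part $F^+_\rho$. By Lemma~\ref{HMFparts}, $F^-_\rho$ is a sum of terms $c^-_{F,\rho}(n)\,\Gamma(k-1,4\pi y|(n+\kappa_\rho)/t_\rho|)\,q^{(n+\kappa_\rho)/t_\rho}$ with $n<0$, all of strictly negative order; applying $\xi_{2-k}$ turns each such term into a constant multiple of $\overline{c^-_{F,\rho}(n)}\,|(n+\kappa_\rho)/t_\rho|^{k-1}\,q^{-(n+\kappa_\rho)/t_\rho}$, which has strictly positive order. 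Thus $g$ has vanishing constant term at every cusp of $\Gamma_0(N)$, so $g$ is a holomorphic cusp form of weight $k$; in particular its Petersson norm is finite.

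Next I would invoke the regularized pairing of Bruinier--Funke (Proposition~3.5 of \cite{BruFun_TwoGmtThtLfts}). For a cusp form $h$ of weight $k$ for $\Gamma_0(N)$ with the multiplier dual to that of $F$, the bracket
\[
\{h,F\}:=\bigl(h,\xi_{2-k}(F)\bigr)_{\mathrm{Pet}}
\]
is equal to a finite, explicit linear combination of products $c_{h,\rho}(n)\,c^+_{F,\rho}(-n)$ in which the only coefficients of $F$ that occur are those of its principal parts $P_{F,\rho}$. Under our assumption every such coefficient vanishes, so $\{h,F\}=0$ for all admissible $h$. Taking $h=g$ gives $\|g\|^2_{\mathrm{Pet}}=\bigl(g,\xi_{2-k}(F)\bigr)_{\mathrm{Pet}}=\{g,F\}=0$, hence $g=0$, contradicting $g\neq 0$. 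Therefore the principal part of $F$ must be nonconstant at some cusp.

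The points I am glossing over are routine: the precise normalizing constants in $\xi_{2-k}$ and in the induced Fourier expansions (these change only nonzero scalars, never the vanishing of constant terms), the bookkeeping of multiplier systems and of the parameters $t_\rho,\kappa_\rho$ when passing to expansions at the various cusps, and convergence of the Petersson product, which is automatic because $k\geq\tfrac32$ and $g$ is cuspidal. The one step requiring real care is the cuspidality of $g$: that $g$ vanishes --- rather than merely being holomorphic --- at the cusps rests on the shape of the nonholomorphic part given in Lemma~\ref{HMFparts}, in particular the absence of a term proportional to $y^{1-k}$, and this is precisely where the present paper's definition of a harmonic Maass form (condition~(c), with a holomorphic principal part) enters. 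So that step reduces to quoting Lemma~\ref{HMFparts}, and the remaining effort is in assembling the Bruinier--Funke formalism correctly.
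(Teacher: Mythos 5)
Your argument is correct and is essentially the paper's own proof, which also proceeds by evaluating the Bruinier--Funke pairing $\{\xi_{2-k}F,F\}$ both as a Petersson norm and as a sum of products of principal-part coefficients with coefficients of the nonholomorphic part; the paper gives only a sketch, and you have filled in the details (notably the cuspidality of $\xi_{2-k}F$, where the relevant decaying solution for the constant term is $y^{k-1}$ rather than $y^{1-k}$ at weight $2-k$, a harmless slip). No changes needed.
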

\begin{proof}[Sketch of the Proof]
Bruinier and Funke define a pairing $\{\bullet,\bullet\}$ on harmonic weak Maass Forms. The particular quantity $\{2iy^k\overline{\frac{\partial}{\partial \overline \tau}F},F\}$ can be described in terms of either a Petersson norm,
 or in terms of products of coefficients of the principal part of $F$ with coefficients of the nonholomorphic part. Since the Petersson norm is non-zero, at least one coefficient of the principal part is also non-zero.
\end{proof}

Hence, if $F$ and $G$ are harmonic Maass forms of non-positive weight whose principal parts are equal at all cusps, then $F-G$ is holomorphic and vanishes at cusps, and therefore identically $0$.

\subsection{Maass--Poincar\'e series}
The Maass--Poincar\'e series define a basis for a space of harmonic Maass forms and provide exact formulas for their coefficients. The following construction of the Maass--Poincar\'e series follows the method and notation of Bringmann and the third author \cite{BriOno_CoeffHmcMaaFrms} which builds on the early work of Rademacher, followed by more contemporary work of Fay, Niebur, among many others  \cite{MR0506038,Nie_ClassNonAnalyticAutFuns,Nie_ConstAutInts}. The Poincar\'e series we construct in this section are modular for congruence subgroups $\Gamma_0(N)$ which we will then use to construct the McKay--Thompson series. Although we could follow similar methods to construct Poincar\'e series for the groups $\Gamma_g$ directly, we restrict our attention to these groups since the congruence subgroups $\Gamma_0(N)$ are more standard.

For $s \in \C,$  $w \in \Real \setminus \{0\}$, and $k\geq 3/2$, $k\in \frac12 \Z$, let
\begin{equation}
\mathcal{M}_s(w)
:=|y|^{ \frac{k}{2}-1} M_{\text{sign}(w) (1-k/2),s-\frac12} (|w|),
\end{equation}
where $M_{\nu,\mu}(z)$ is the 
 $M$-Whittaker function 
which is a solution to the differential equation
\begin{displaymath}
 \frac{\partial^2 u}{\partial z^2}+
  \left(-\frac{1}{4}+\frac{\nu}{z}+\frac{\frac{1}{4}-\mu^2}{z^2}\right)u=0,
\end{displaymath}
and (here and throughout this paper) $\tau=x+iy.$
Using this function, let
\begin{equation}
\phi_s(\tau):= \mathcal{M}_s(4 \pi y) e^{2 \pi i x}.
\end{equation}
 
Given a positive integer $m$ and a cusp $\rho$, Maass--Poincar\'e series provide a form with principal part equal to $q^{(-m+\kappa_\rho)/t_\rho}$ plus a constant at the cusp $\rho$, and constant at all other cusps, thereby forming a basis for $H_{2-k}(\Gamma,\nu).$

Suppose $m>0$ and $L\in \SL_2(\Z)$ with $\rho=L^{-1}\infty$. Then we have the Maass--Poincar\'e series

\begin{equation}  \label{Poincare}
\mathcal{P}_{L}(\tau,m,\Gamma,2-k,s,\nu) 
:=
\sum_{ M \in {\Gamma}_{\rho} \backslash \Gamma} \frac{\phi_s \left( \frac{-m+\kappa_\rho}{t_\rho} \cdot L^{-1}M\tau\right)}{(L^{-1}:M\tau)^{2-k}(M:\tau)^{2-k}\nu(M)}. 
\end{equation} 
It is easy to check that $\phi_s(\tau)$ is an eigenfunction of $\Delta_{2-k}$ with eigenvalue
$$
s(1-s) + \frac{k^2-2k}{4} .
$$
The right hand side of (\ref{Poincare}) converges absolutely for $\Re(s)>1,$ however Bringmann and the third author establish conditional convergence when $s\geq 3/4$  \cite{BriOno_CoeffHmcMaaFrms}, giving Theorem \ref{PoincareTheorem} below. The theorem is stated for the specific case $\Gamma=\Gamma_0(N)$ for some $N$ and $k\geq 3/2$, in which case we modify the notation slightly and define
\begin{equation}  \label{PoincareModified}
\mathcal{P}_L(\tau,m,N,2-k,\nu) 
:= \frac{1}{\Gamma(k)}\mathcal{P}_L(\tau,m,\Gamma_0(N),2-k,\frac k2,\nu)
\end{equation}
In the statement of the theorem below, $K_c$ is a modified Kloosterman sum given by
\begin{equation}\label{Kloostermann}
K_c(2-k,L,\nu,m,n):=
\sum_
{ \substack{ \substack{ \substack{  0 \leq a <ct_\rho } 
\\ 
a \equiv - \frac{c\cdot(\alpha,N)}{\alpha \gamma} \pmod{ \frac{N}{(\gamma,N)}  }}  \\  ad\equiv1 \pmod c}
} \frac{(S:\tau)^{2-k}
 \exp \left( 2 \pi i 
\left( \frac{a\cdot \frac{(-m+\kappa_\rho)}{t_{\rho}}  + d\cdot \frac{(n+\kappa_\infty)}{t_\infty} }{c}\right)
\right)}{(L^{-1}:LS\tau)^{2-k}(LS:\tau)^{2-k}\nu(LS)},
\end{equation}
where $S=\begin{pmatrix}a&b\\c&d\end{pmatrix}\in \SL_2(\Z).$ If $\nu$ is trivial, we omit it from the notation. We also have that $\delta_{L,S}(m)$ is an indicator function for the cusps $\rho=L^{-1}\infty$ and $\mu=S^{-1}\infty$ 
given by 
$$\delta_{L,S}(m):=\begin{cases}\nu(M)^{-1}e^{2\pi i r\frac{ -m+\kappa_\rho}{t_\rho}}& \text{if } M=L T^rS^{-1} \in \Gamma_0(N),\\
0& \text{if }\mu \not \sim \rho
\text{ in }\Gamma_0(N). \end{cases}
$$
Using this notation, we have the following theorem which gives exact formulas for the coefficients and principal part of $\mathcal{P}_L(\tau,m,N,2-k,\nu),$ which is a generalization of Theorem 3.2 of \cite{BriOno_CoeffHmcMaaFrms}.

\begin{theorem}  \label{PoincareTheorem} Suppose that $\frac{3}{2}\leq k\in \frac{1}{2}\Z$, and
suppose $\rho=L^{-1}\infty$ is a cusp of $\Gamma_0(N).$ If $m$ is a positive integer, then  $\mathcal{P}_{L}(\tau,m,N,2-k,\nu)$
is in $H_{2-k}(\Gamma_0(N),\nu)$. Moreover, the following are true:
\begin{enumerate}
\item
We have 
 $$
 \mathcal{P}_{L}^+(\tau,m,N,2-k,\nu)
 =\delta_{\rho,I}(m)\cdot 
 q^{-m+\kappa_\infty} + \sum_{n \geq 0} a^+(n) q^n.
 $$
 Moreover, if $n>0$, then $a^+(n)$ is given by
 \begin{equation*}
  -i^k 2 \pi \left| \frac{-m +\kappa_\rho}{t_\rho (n+\kappa_\infty)} \right|^{ \frac{k-1}{2}  }  
\hspace{-10pt}\sum_{\substack{ c>0 \\ (c,N)=(\gamma,N)    }} 
\hspace{-10pt}\frac{K_c(2-k,L,\nu,-m,n)}{c} \cdot 
I_{k-1} 
\left(\frac{4 \pi}{ c } \sqrt{    \frac{\left|-m+\kappa_\rho\right| \left|n+\kappa_\infty\right|}{t_\rho} } \right),
\end{equation*} 
where $I_k$ is the usual $I$-Bessel function.
\item
If $S\in \SL_2(\Z)$, then there is some $c\in \CC$ so that the principal part of $\mathcal{P}_{L}(\tau,m,N,2-k)$ at the cusp $\mu=S^{-1}\infty$ is given by 
 $$ 
\delta_{L,S}(m)
   q^{\frac{-m+\kappa_\rho}{t_{\rho}} } +c
 $$
\end{enumerate}
\end{theorem}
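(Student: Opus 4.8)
The plan is to follow the now-standard Poincar\'e-series template of Rademacher, Fay, and Niebur, in the form developed by Bringmann and the third author in \cite{BriOno_CoeffHmcMaaFrms}, adapting it to an arbitrary cusp $\rho = L^{-1}\infty$, arbitrary level $N$, and the multiplier $\nu$. First I would settle convergence and harmonicity. A direct majorant estimate shows that the defining series (\ref{Poincare}) converges absolutely and locally uniformly for $\Re(s) > 1$, where it manifestly satisfies the weight $2-k$ modular transformation for $\Gamma_0(N)$ with multiplier $\nu$; this is immediate from averaging over the cosets $\Gamma_\rho\backslash\Gamma$ together with the cocycle condition on $\nu$ recorded after Lemma \ref{HMFparts}. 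A computation with the $M$-Whittaker equation shows that $\phi_s$ is a $\Delta_{2-k}$-eigenfunction with eigenvalue $s(1-s) + (k^2-2k)/4$, which vanishes exactly at $s = k/2$. Hence at $s = k/2$ (legitimate since $k \geq 3/2$ forces $\Re(s) \geq 3/4$) every summand, and therefore by local uniform convergence and elliptic regularity the whole series, is annihilated by $\Delta_{2-k}$. The one genuinely delicate analytic input is the conditional convergence at the edge $s = k/2$ when the weight $2-k$ is near $1/2$: this requires nontrivial cancellation in sums of the Kloosterman sums $K_c$, exactly as in \cite{BriOno_CoeffHmcMaaFrms}, and I expect this to be the main obstacle, though here it is imported rather than re-proved.

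Next I would compute the Fourier expansion at $\infty$. Decompose the coset sum according to the lower-left entry $c$ of a representative $M$. The $c = 0$ terms survive only when $\rho \sim \infty$ in $\Gamma_0(N)$, and they contribute precisely $\delta_{\rho,I}(m)\,q^{-m+\kappa_\infty}$ to the holomorphic part, accounting for the principal term in part (1). For each fixed $c > 0$ one groups representatives by the residues of $a$ and $d$ modulo $c$, applies the classical Lipschitz (Poisson summation) formula in the remaining integer parameter, and evaluates the resulting integral transform of $\mathcal{M}_s(4\pi y)e^{2\pi i x}$; at $\Re(s) = k/2$ this is a standard integral producing the $I$-Bessel function $I_{k-1}$, while the arithmetic of the residue classes packages into the modified Kloosterman sum $K_c(2-k,L,\nu,-m,n)$ of (\ref{Kloostermann}). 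Assembling the pieces yields the stated formula for $a^+(n)$, and separating off the non-holomorphic contribution via Lemma \ref{HMFparts} confirms that the holomorphic part is as claimed; this also shows $\mathcal{P}_L$ lies in $H_{2-k}(\Gamma_0(N),\nu)$.

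Finally, the principal parts at the remaining cusps $\mu = S^{-1}\infty$ are obtained by the same analysis applied to $\mathcal{P}_L(\tau,m,N,2-k,\nu)|_{2-k}S$: the $c = 0$ portion of the transformed coset sum contributes $\delta_{L,S}(m)\,q^{(-m+\kappa_\rho)/t_\rho}$ plus a constant, and every $c > 0$ term is $O(1)$ as $\tau \to \infty$, which gives part (2). The rest is bookkeeping modeled line-by-line on \cite{BriOno_CoeffHmcMaaFrms}: tracking the cusp widths $t_\rho$, cusp parameters $\kappa_\rho$, the scaling matrices $L_\rho$, and the principal branch of the half-integral weight automorphy factor when $k \in \Z + \tfrac12$.
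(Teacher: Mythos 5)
Your proposal is correct and follows essentially the same route as the paper: both generalize the Maass--Poincar\'e series computation of Bringmann and Ono to an arbitrary scaling matrix $L$ and multiplier $\nu$, organizing the coset sum by the lower-left entry (equivalently the double cosets $\Gamma_\rho\backslash L^{-1}\Gamma_0(N)/\Gamma_\infty$) to extract the Bessel--Kloosterman formula and the principal parts, while importing the delicate conditional convergence at $s=k/2$ from \cite{BriOno_CoeffHmcMaaFrms}. The paper's sketch differs only in emphasis, leaning on Rankin's parametrization of the right cosets by $\Gamma_0(N)L_\rho T^r$ to justify that $\delta_{L,S}(m)$ is well defined for arbitrary $S$, which your slash-operator argument at the other cusps accomplishes equivalently.
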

\begin{remark}
We shall use Theorem~\ref{PoincareTheorem} to prove Theorem~\ref{distribution}. We note that one could prove Theorem~\ref{distribution} without referring to the theory
of Maass--Poincar\'e series. One could make use of ordinary weakly holomorphic Poincar\'e series. However, we have chosen to use Maass--Poincar\'e series and the theory of harmonic Maass forms
because these results are more general, and because of the recent appearance of harmonic Maass forms in the theory of {\it umbral moonshine} (see \S\ref{sec:um}).
\end{remark}

\begin{proof}[Sketch of the proof]
Writing $\rho=\frac{\alpha}{\gamma}$, Bringmann and the third author prove this theorem for the case that $\gamma \mid N$ and $(\alpha,N)=1,$ along with the assumption that $\mu$ and $\rho$ are in a fixed complete set of inequivalent cusps, so that $\delta_{\mu,\rho}=1$ or $0$. This general form is useful to us particularly since it works equally well for the cusps $\infty$ with $L$ taken to be the identity, and for $0$ with $L$ taken to be $\begin{pmatrix}0&-1\\1&0\end{pmatrix}$.
 
 Here and for the remainder of the paper we let $\mathcal S_N$ denote any complete set of inequivalent cusps of $\Gamma_0(N),$ and for each $\rho\in \mathcal S_N$, we fix some $L_\rho$ with $\rho=L_\rho^{-1}\infty.$ Rankin notes \cite{MR0498390} (Proof of Theorem 4.1.1(iii)) that given some choice of $\mathcal S_N,$ each right coset of $\Gamma_0(N)\backslash \SL_2(\Z)$ is in $\Gamma_0(N)\cdot L_\rho T^r$ for some unique $\rho\in \mathcal S_N.$ Moreover, the $r$ in the statement is unique modulo $t_\rho,$ so the function $\delta_{L,S}(m)$ given above is well-defined on all matrices in $\SL_q(\Z)$.

In the proof given by Bringmann and the third author, the sum of Kloosterman sums $\displaystyle \sum_{\substack{ c>0 \\ (c,N)=(\gamma,N)    }} 
\frac{K_c(2-k,L,\nu,-m,n)}{c} \dots$ 
is written as a sum over representatives of the double coset $\Gamma_\rho\backslash L^{-1}\Gamma_0(N)\slash\Gamma_\infty$ (omitting the identity if present). Following similar arguments, but without the assumptions on $\alpha$ and $\gamma,$ we find the indices of summation given in (\ref{Kloostermann}) and Theorem \ref{PoincareTheorem}. As in their case, we find that the principal part of $ \mathcal{P}_{L_\rho}(\tau,m,N,2-k)$ at a cusp $\mu$ is constant if $\mu \not \sim \rho$ and is $\delta_{L_\rho,L_\mu}q^{\frac{-m}{t_{\rho}} } +c$ for some constant if $\mu=\rho$. Therefore, if $\mu$ is a cusp with $ L_\mu=M^{-1} L_\rho T^{r}$ for some $M \in \Gamma_0(N,)$, then the principal part of $\mathcal{P}_{L_\rho}(\tau,m,N,2-k,\nu)$ at $\mu$ is clearly $\nu(M)^{-1}e^{2\pi i r\frac{-m+\kappa}{t_\rho}}q^{\frac{-m+\kappa_\rho}{t_{\rho}} } +c.$
\end{proof}
Since harmonic Maass forms with a nonholomorphic part have a non-constant principal part at some cusp, we have the following theorem.
\begin{theorem}{\cite[Theorem 1.1]{BriOno_CoeffHmcMaaFrms}}\label{BasisConstruction}
Assuming the notation above, if $\frac{3}{2}\leq k \in \frac12\Z$, and $F(\tau)\in H_{2-k}(\Gamma_0(N),\nu)$ has principal part $P_{\rho}(\tau)=\sum_{m\geq 0}a_\rho(-m)q^{\frac{-m+\kappa_\rho}{t_\rho}}$ for each cusp $\rho\in \mathcal S_N$, then 
$$
F(\tau)=\sum_{\rho\in \mathcal S_N}\sum_{m>0}a_\rho(-m) \mathcal{P}_{\rho}(\tau,m,N,2-k,\nu)+g(\tau),
$$
where $g(\tau)$ is a holomorphic modular form. Moreover, we have that $c=0$ whenever $k>2$, and is a constant when $k=2$.
\end{theorem}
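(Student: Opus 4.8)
The plan is to subtract from $F$ an explicit \emph{finite} combination of Maass--Poincar\'e series that carries all of the prescribed polar behaviour, and then to argue that the remainder is forced to be a holomorphic modular form by the rigidity of harmonic Maass forms recorded in Lemma~\ref{ppexists}.

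First I would form the candidate
\[
\widetilde F(\tau):=\sum_{\rho\in\mathcal S_N}\sum_{m>0}a_\rho(-m)\,\mathcal P_\rho(\tau,m,N,2-k,\nu).
\]
Since $F$ has finitely many cusps and each $P_\rho$ is a polynomial, this sum is finite, and by Theorem~\ref{PoincareTheorem} each summand --- hence $\widetilde F$ --- lies in $H_{2-k}(\Gamma_0(N),\nu)$. I would then read off from part~(2) of Theorem~\ref{PoincareTheorem} that the principal part of $\mathcal P_\rho(\tau,m,N,2-k,\nu)$ at a cusp $\mu\in\mathcal S_N$ is $\delta_{L_\rho,L_\mu}(m)\,q^{(-m+\kappa_\rho)/t_\rho}$ plus a constant; because the cusps of $\mathcal S_N$ are pairwise inequivalent, $\delta_{L_\rho,L_\mu}(m)$ vanishes unless $\mu=\rho$, in which case (taking $M=I$, $r=0$) it equals $1$. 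So the non-constant part of the principal part of $\widetilde F$ at each $\rho$ is exactly $\sum_{m>0}a_\rho(-m)q^{(-m+\kappa_\rho)/t_\rho}$, matching that of $F$.

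Next I would set $g:=F-\widetilde F\in H_{2-k}(\Gamma_0(N),\nu)$; by the previous step its principal part is constant at every cusp. Now the rigidity kicks in: if $k\geq 2$ then $2-k\leq 0$, so a nonzero non-holomorphic part $g^-$ would, by Lemma~\ref{ppexists}, force a non-constant principal part somewhere --- a contradiction --- whence $g^-=0$; and if $k=\tfrac32$ then $2-k>0$ and every harmonic Maass form of positive weight is already weakly holomorphic, so again $g^-=0$. Thus $g$ is a weakly holomorphic modular form of weight $2-k$ for $\Gamma_0(N)$ with multiplier $\nu$, bounded at every cusp, hence a holomorphic modular form, giving $F=\widetilde F+g$. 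For the ``moreover'', I would observe that a holomorphic modular form of weight $2-k$ vanishes when $2-k<0$ (i.e. $k>2$) and is a constant when $2-k=0$ (i.e. $k=2$), while for $k=\tfrac32$ the positive weight $1/2$ leaves room for a genuine form.

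I expect the main obstacle to be the cusp-by-cusp bookkeeping in the first step rather than anything structural: one must be sure that the principal part of $\mathcal P_\rho$ is concentrated --- up to additive constants --- at the single cusp $\rho$, with normalized leading coefficient $1$ there, and that the constant terms contributed at a cusp by Poincar\'e series attached to the other cusps are harmless because they are simply absorbed into $g$. The secondary point needing care is the case split on $k$: for $k\ge 2$ one genuinely invokes the Bruinier--Funke pairing behind Lemma~\ref{ppexists}, whereas for $k=\tfrac32$ it is the positivity of the weight $2-k$ that does the work, and the two arguments must be stitched together to cover all $\tfrac32\le k\in\tfrac12\Z$.
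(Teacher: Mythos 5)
Your proposal is correct and follows essentially the same route as the paper (which itself only sketches the argument via the citation to Bringmann--Ono): subtract the finite linear combination of Maass--Poincar\'e series carrying the prescribed principal parts, and use Lemma~\ref{ppexists} (together with the fact that positive-weight harmonic Maass forms are weakly holomorphic, for the case $k=\tfrac32$) to force the remainder to be a holomorphic modular form. Your cusp-by-cusp bookkeeping with $\delta_{L_\rho,L_\mu}$ and the weight-based case split for the ``moreover'' clause match the intended argument.
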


\subsection{Exact formulas for $T_g^{(-m)}$}\label{SectionExactTg}
Using Theorems \ref{PoincareTheorem} and \ref{BasisConstruction}, we can write exact formulas for the coefficients of the $T_g^{(-m)}$ provided we know its principal parts at all cusps of $\Gamma_0(Nh).$ 
With this in mind, we now regard $T_g$ as a modular function on $\Gamma_0(Nh)$ with trivial multiplier. The location and orders of the poles were determined by Harada and Lang \cite{MR1372718}. 
\begin{lemma}{\cite[Lemma 7, 9]{MR1372718}}\label{HaradaLang}
Suppose the $\Gamma_g$ is given by the symbol $N||h+e,f...,$ and let $L=\begin{pmatrix}-\delta&\beta\\ \gamma&-\alpha\end{pmatrix}\in \SL_2(\Z).$  Then $T_g|_0L$ has a pole if and only if $\left(\frac{\gamma}{(\gamma,h)},\frac{N}{h}\right) = \frac{N}{eh},$ for some $e\in \mathcal W_g$ (Note, here we allow $e=1$).
The order of the pole is given by $\frac{(h,\gamma)^2}{e h^2}.$
\end{lemma}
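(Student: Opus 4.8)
The plan is to read the lemma off from the fact (Theorem~\ref{thm:intro:class-borcherds}) that $T_g$ is a Hauptmodul for $\Gamma_g$, by analyzing the natural covering of modular curves $X_0(Nh)\to X_{\Gamma_g}$, where $X_\Gamma:=\Gamma\backslash\HH^*$. Since $\Gamma_0(Nh)\subseteq\ker\sigma_g=\Gamma_g$ (Lemma~\ref{GammaEigenvalues}(a)), this is a holomorphic branched covering, and for $L=\left(\begin{smallmatrix}-\delta&\beta\\\gamma&-\alpha\end{smallmatrix}\right)\in\SL_2(\Z)$ the function $T_g|_0L$ is exactly the Fourier expansion, at the cusp $\rho=L^{-1}\infty$ of $\Gamma_0(Nh)$ (which has denominator $\gamma$, necessarily in lowest terms), of the function on $X_0(Nh)$ pulled back from $T_g$.

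First I would record the behaviour downstairs. As $T_g$ is the normalized Hauptmodul for $\Gamma_g$, and $\Gamma_g$ has width one at infinity, the relation $T_g=q^{-1}+O(q)$ shows that, viewed as a meromorphic function on the Riemann sphere $X_{\Gamma_g}$, $T_g$ has exactly one pole, a simple one, at the cusp $[\infty]$, and is holomorphic elsewhere. Consequently $T_g|_0L$ has a pole at $\rho$ if and only if $\rho$ lies in the fibre over $[\infty]$, i.e. if and only if $\rho$ is $\Gamma_g$-equivalent to $\infty$; and in that case the order of the pole, measured against the local uniformizer at $\rho$, equals the ramification index $e_\rho=[(\Gamma_0(Nh))_\rho:(\Gamma_g)_\rho]$ of the covering at $\rho$ (because $T_g$ has a simple pole at $[\infty]$), while the order measured against $q=e^{2\pi i\tau}$ is $e_\rho/t_\rho$, with $t_\rho=Nh/\gcd(\gamma^2,Nh)$ the width of $\rho$ as a cusp of $\Gamma_0(Nh)$. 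The ``only if'' direction of the lemma is then immediate from the holomorphy of $T_g$ on $X_{\Gamma_g}\setminus\{[\infty]\}$, and the ``if'' direction uses only that ramification never annihilates a pole.

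The substance is therefore to identify the $\Gamma_g$-orbit of $\infty$ among the cusps of $\Gamma_0(Nh)$, and to compute the stabilizer index. Here I would use the eigenform property: since $T_g(\gamma\tau)=\sigma_g(\gamma)T_g(\tau)$ for all $\gamma\in E_g$ with $\sigma_g$ valued in roots of unity, the pole locus of $T_g$ on $\HH^*$ is actually $E_g$-invariant, so the $\Gamma_g$-orbit of $\infty$ coincides with its $E_g$-orbit, and $E_g=\Gamma_0(N|h)+e,f,\dots$ is generated modulo $\Gamma_0(Nh)$ by $\Gamma_0(N/h)$-conjugates and by the Atkin--Lehner involutions $W_e$ ($e\in\mathcal W_g$), all of which act as explicit permutations of the finitely many cusps of $\Gamma_0(Nh)$. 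Representing a cusp by its denominator, the outer conjugation by $\left(\begin{smallmatrix}h&0\\0&1\end{smallmatrix}\right)$ splits the role of $\gamma$ into the factor $(\gamma,h)$ and the complementary factor $\gamma/(\gamma,h)$, and the involution $W_e$ for $\Gamma_0(N/h)$ sends the cusp $\infty$ (whose $\gcd$-with-$(N/h)$ class is $N/h$) to the class $N/(eh)$; chasing $\infty$ through these generators produces precisely the condition $\bigl(\gamma/(\gamma,h),\,N/h\bigr)=N/(eh)$ for some $e\in\mathcal W_g$. Feeding the same computation into the stabilizer index $e_\rho$ and dividing by $t_\rho$ then yields $e_\rho/t_\rho=(h,\gamma)^2/(eh^2)$.

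The main obstacle is exactly this cusp bookkeeping in the presence of the ``$|h$'' decoration of the symbol: the rescaling $\left(\begin{smallmatrix}1/h&0\\0&1\end{smallmatrix}\right)\!\cdots\!\left(\begin{smallmatrix}h&0\\0&1\end{smallmatrix}\right)$ forces one to split $\gamma$ into its part coprime to $h$ and its part supported on the primes dividing $h$, and this interacts with the Atkin--Lehner theory for $\Gamma_0(Nh)$ in a way that must be tracked carefully (remembering $h\mid(N,24)$ and that each $e$ exactly divides $N/h$). Obtaining the orbit condition is the easy half; pinning down the precise exponents in $(h,\gamma)^2/(eh^2)$ — rather than, say, $(h,\gamma)/(eh)$ — requires following cusp widths, not merely cusp orbits, through each generator, and it is this delicate computation that Harada and Lang carry out in \cite{MR1372718}.
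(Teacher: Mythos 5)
Your argument is sound and reaches the right conclusion, but it is packaged differently from the proof the paper attributes to Harada--Lang. They argue by exhibiting an explicit upper-triangular matrix $U=\left(\begin{smallmatrix}eh/(h,\gamma)& u/h\\ 0& (h,\gamma)/h\end{smallmatrix}\right)$, for a suitable integer $u$, such that $LU$ is an Atkin--Lehner involution $W_e\in E_g$; the eigenform property then gives $T_g|_0L=\sigma_g(LU)\,T_g\!\left(\tfrac{(h,\gamma)^2}{eh^2}\tau-\tfrac{u(h,\gamma)}{eh^2}\right)$, from which the pole condition and the order $\tfrac{(h,\gamma)^2}{eh^2}$ are read off in one stroke. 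Your covering-space formulation --- unique simple pole of the Hauptmodul at $[\infty]$, $E_g$-invariance of the pole locus forcing $\Gamma_g\cdot\infty=E_g\cdot\infty$, pole order equal to ramification index over cusp width --- is the geometric shadow of exactly that factorization: writing $L=W_eU^{-1}$ with $U$ upper triangular is precisely the assertion that the cusp determined by $L$ lies in the $E_g$-orbit of $\infty$, and the diagonal of $U$ encodes your $e_\rho/t_\rho$. What your phrasing buys is conceptual clarity (the ``only if'' direction, and the reduction of everything to cusp combinatorics for $E_g$ acting on the cusps of $\Gamma_0(Nh)$, become transparent); what it costs is that the entire computational content --- which cusps lie in the $E_g$-orbit of $\infty$, and with what index --- is asserted and deferred, whereas the single matrix $U$ settles both at once. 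Two small slips that do not affect the outcome: the ramification index should be $[(\Gamma_g)_\rho:(\Gamma_0(Nh))_\rho]$ rather than the reverse, since the stabilizer in the larger group is the larger one; and with the normalization $\rho=L^{-1}\infty=\alpha/\gamma$ the function $T_g|_0L$ actually records the behaviour of $T_g$ near $L\infty=-\delta/\gamma$ --- harmless here because both cusps have denominator $\gamma$ and the stated criterion depends only on $\gamma$.
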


Harada and Lang prove this lemma by showing that if $u$ is an integer chosen such $\frac{u\gamma-\alpha \cdot (h,\gamma)}{h}$ is integral and  divisible by $e$ and $U=\begin{pmatrix}\frac{e\cdot h}{(h,\gamma)}& \frac{u}{h}\\ 0& \frac{(h,\gamma)}{h} \end{pmatrix}$, then $LU$ is an Atkin--Lehner involution $W_e\in E_g.$ Therefore, we have that
\begin{equation}\label{HaradaLangTransformation}
T_g|_0L=\sigma_g(LU)T_g\left(\frac{(h,\gamma)^2}{e h^2}\tau-\frac{u\cdot (h,\gamma)}{eh^2}\right).
\end{equation}
 Harada and Lang do not compute $\sigma_g(LU),$ however we will need these values in order to apply Theorem \ref{BasisConstruction}. Using Lemma~\ref{GammaEigenvalues}, the following procedure allows us to compute $\sigma_g(M)$ for any matrix $M\in E_g.$

Given a matrix $M\in E_g$, we may write $M$ as $M=\begin{pmatrix}ae&\frac bh\\cN&de\end{pmatrix}$ with $e\in \mathcal W_g$ and $ade-bc\frac{N}{eh}=1$. We may also write $h=h_e\cdot h_{\overline e}$, where $h_{\overline e}$ is the largest divisor of $h$ co-prime to $e$. Since $c\frac{N}{eh}$ is co-prime to both $d$ and $e$, we may chose integers $A,B,$ and $C$ $\pmod{h}$ such that:
\begin{itemize}
\item $c\frac{N}{eh}A+d$ is co-prime to $h_{\overline e}$ but is divisible by $h_e,$
\item $B\equiv -(eaA+b)(c\frac{N}{h}A+ed)^{-1}\pmod {h_{\overline e}}$ and $Bc\frac{N}{eh}+b\equiv0\pmod{h_e}$,
\item $C\equiv -c(c\frac{N}{h}A+ed)^{-1}\pmod {h_{\overline e}}$, and $C\equiv 0\pmod{h_e}.$
\end{itemize}

A calculation shows that $\widehat M:=\begin{pmatrix}1&\frac{B}{h}\\0&1\end{pmatrix}M\begin{pmatrix}1&\frac{A}{h}\\0&1\end{pmatrix}\begin{pmatrix}1&0\\CN&1\end{pmatrix}\begin{pmatrix}h_e&0\\0&h_e\end{pmatrix}$ is an Atkin--Lehner involution $W_E$ for $\Gamma_0(Nh)$ where $E=e\cdot h_e^2.$ By Lemma~\ref{GammaEigenvalues}, this implies $\sigma_g(\widehat M)=1$, and therefore 
$$\sigma_g(M)=exp\left(\frac{2\pi i}{h}(A+B+\lambda_gC)\right).$$ 
Combined with Lemma~\ref{HaradaLang}, this leads us to the following proposition.

\begin{proposition}\label{TransformationPrincipalPart}
Given a matrix $L=\begin{pmatrix}-\delta&\beta\\ \gamma&-\alpha\end{pmatrix}\in \SL_2(\Z),$ let $u$ and $U$ be chosen as above, and define 
$$
\epsilon_g(L):=\sigma_g\left(LU\right)\cdot e^{2 \pi i \frac{u\cdot(h,\gamma)}{eh^2}}.
$$
Then by (\ref{HaradaLangTransformation}), we have that
$$
T_g|_0L=\epsilon_g(L)q^{-\frac{(h,\gamma)^2}{e h^2}}+O(q).
$$
\end{proposition}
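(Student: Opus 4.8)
The plan is to read off the expansion of $T_g|_0L$ by substituting the Fourier expansion of $T_g$ into the Harada--Lang transformation formula (\ref{HaradaLangTransformation}). First I would recall, from the normalization in Theorem~\ref{thm:intro:class-borcherds}, that $T_g(\tau)=q^{-1}+O(q)$; in particular $T_g$ has no constant term, so as a $q$-series $T_g=\sum_{n\geq -1}c_g(n)q^n$ with $c_g(-1)=1$ and $c_g(0)=0$.

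Next I would track the nome through the substitution $\tau\mapsto\tau':=\frac{(h,\gamma)^2}{eh^2}\tau-\frac{u(h,\gamma)}{eh^2}$ occurring on the right-hand side of (\ref{HaradaLangTransformation}). Writing $r:=\frac{(h,\gamma)^2}{eh^2}>0$, one has $e^{2\pi i\tau'}=e^{-2\pi i u(h,\gamma)/(eh^2)}\,q^{r}$ (with $q^{r}$ understood as $e^{2\pi i r\tau}$), so that
\[
T_g(\tau')=e^{2\pi i u(h,\gamma)/(eh^2)}\,q^{-r}+O(q^{r}).
\]
Multiplying by $\sigma_g(LU)$ as dictated by (\ref{HaradaLangTransformation}) gives
\[
T_g|_0L=\sigma_g(LU)\,e^{2\pi i u(h,\gamma)/(eh^2)}\,q^{-r}+O(q^{r}),
\]
whose leading coefficient is exactly $\epsilon_g(L)$ by its definition; since $r>0$ the remaining terms vanish at the cusp $L^{-1}\infty$, and this is the asserted statement (with the implied error being $O(q^{(h,\gamma)^2/(eh^2)})$, loosely written $O(q)$).

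This is essentially bookkeeping: all the substance is already furnished by Lemma~\ref{HaradaLang} (which supplies both (\ref{HaradaLangTransformation}) and the pole order $\tfrac{(h,\gamma)^2}{eh^2}$) and by the explicit evaluation of $\sigma_g$ on $LU\in E_g$ via the Atkin--Lehner reduction recorded immediately above the proposition (write $LU$ in the form $\begin{pmatrix}ae&b/h\\cN&de\end{pmatrix}$, reduce it to an Atkin--Lehner involution $W_E$ for $\Gamma_0(Nh)$, and apply the formula $\sigma_g(M)=\exp(\tfrac{2\pi i}{h}(A+B+\lambda_gC))$ together with Lemma~\ref{GammaEigenvalues}). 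The only points demanding any care are the vanishing of the constant term of $T_g$ (so that $T_g(\tau')$ contributes nothing at $q^{0}$) and keeping the sign of the root of unity $e^{2\pi i u(h,\gamma)/(eh^2)}$ arising from the translation part of $\tau\mapsto\tau'$ consistent with the definition of $\epsilon_g(L)$; there is no genuine obstacle.
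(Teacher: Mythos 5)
Your argument is correct and is precisely the computation the paper intends: the proposition is stated as an immediate consequence of the Harada--Lang transformation formula (\ref{HaradaLangTransformation}), and your bookkeeping of the nome through $\tau\mapsto \tfrac{(h,\gamma)^2}{eh^2}\tau-\tfrac{u(h,\gamma)}{eh^2}$, together with $T_g=q^{-1}+O(q)$, reproduces exactly the leading coefficient $\epsilon_g(L)=\sigma_g(LU)e^{2\pi i u(h,\gamma)/(eh^2)}$. Your parenthetical remark that the error term is really $O(q^{(h,\gamma)^2/(eh^2)})$, i.e.\ that the expansion at the cusp is in powers of $q^{1/t_\rho}$ with no further polar or constant terms, is the right reading of the paper's loosely written $O(q)$.
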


Using this notation, we are equipped to find exact formulas for the $T_g^{(-m)}$.
\begin{theorem}\label{ExactTg}
Let $g\in \MM,$ with $\Gamma_g=N|h+e,f,\dots,$ and let $\mathcal S_{Nh}$ and $\mathcal W_g$ be as above.
if $m$ and $n$ are positive integers, then there is a constant $c$ for which
$$
T^{(-m)}_g(\tau)=c+\sum_{e\in \mathcal W_g}\sum_{\substack{\frac{\alpha}{\gamma}\in \mathcal S_{Nh}\\ \left(\frac{\gamma}{(\gamma,h)},\frac{N}{h}\right) = \frac{N}{eh}}} \epsilon_g(L_\rho)^m \mathcal{P}_{\alpha/\gamma}^+(\tau,m,Nh,0).
$$
 The $n$-th coefficient of $T_g^{(-m)}(\tau)$ is given by 
\begin{equation*}
\begin{split}
\sum_{e\in \mathcal W_g}\sum_{\substack{\rho=\frac{\alpha}{\gamma}\in \mathcal S_{Nh}\\ \left(\frac{\gamma}{(\gamma,h)},\frac{N}{h}\right) = \frac{N}{eh}}}
\epsilon_g(L_\rho)^m&
2 \pi \left|\frac{-m}{n}\cdot \frac{(h,\gamma)^2 }{eh^2} \right|^{ \frac{1}{2}  } \times \\
&\sum_{\substack{ c>0 \\ (c,Nh)=(\gamma,Nh)   }} 
\frac{K_c(0,L,-m,n)}{c} \cdot 
I_{1} 
\left(\frac{4 \pi}{ c } \sqrt{   \left|\frac{-mn \cdot  (h,\gamma)^2}{eh^2}\right| } \right) .
\end{split}
\end{equation*}
\end{theorem}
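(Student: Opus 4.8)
\textit{Proof proposal.} The plan is to realise $T^{(-m)}_g$ as a weakly holomorphic modular form of weight zero on the congruence subgroup $\Gamma_0(Nh)$ with trivial multiplier --- this is legitimate because $\Gamma_0(Nh)\subseteq\Gamma_g$ by Lemma~\ref{GammaEigenvalues}(a), while by Theorem~\ref{usefultheorem} $T^{(-m)}_g$ is a weight-zero weakly holomorphic modular form for $\Gamma_g$ --- then to compute its principal part at every cusp of $\Gamma_0(Nh)$, and finally to apply the Maass--Poincar\'e series basis of Theorem~\ref{BasisConstruction} together with the explicit coefficient formula of Theorem~\ref{PoincareTheorem}. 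Since a weakly holomorphic modular form is a harmonic Maass form with vanishing nonholomorphic part, Lemma~\ref{ppexists} shows that $T^{(-m)}_g$ is pinned down, up to an additive constant, by its polar parts at the cusps; so the whole argument reduces to identifying those polar parts and then substituting into the two cited theorems.

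First I would locate the poles and compute the principal parts. By Lemma~\ref{HaradaLang}, for $L=\left(\begin{smallmatrix}-\delta&\beta\\\gamma&-\alpha\end{smallmatrix}\right)\in\SL_2(\Z)$ the form $T_g$, regarded on $\Gamma_0(Nh)$, has a pole at the cusp $\rho=\tfrac{\alpha}{\gamma}=L^{-1}\infty$ exactly when $\bigl(\tfrac{\gamma}{(\gamma,h)},\tfrac{N}{h}\bigr)=\tfrac{N}{eh}$ for some $e\in\mathcal W_g$ (which is then uniquely determined by $\gamma$), and (\ref{HaradaLangTransformation}), via the explicit shape of $U$, shows that the width of $\rho$ in $\Gamma_0(Nh)$ is $t_\rho=eh^2/(h,\gamma)^2$; Proposition~\ref{TransformationPrincipalPart} then gives $T_g|_0L=\epsilon_g(L)\,q^{-(h,\gamma)^2/(eh^2)}+O(q)$. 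These cusps are precisely the $\Gamma_g$-orbit of $\infty$. Because $T^{(-m)}_g$ is $\Gamma_g$-invariant and, by (\ref{eqn:tower-Tmg}), has the single polar term $q^{-m}$ and no constant term at $\infty$, its expansion at such a $\rho$ is obtained from its expansion at $\infty$ by the width-$t_\rho$ rescaling and a root-of-unity phase (since $\Gamma_g$ contains an element carrying $\infty$ to $\rho$); hence
$$
T^{(-m)}_g|_0L=\epsilon_g(L)^{m}\,q^{-m(h,\gamma)^2/(eh^2)}+O\bigl(q^{(h,\gamma)^2/(eh^2)}\bigr),
$$
again a single polar term, where the coefficient is $\epsilon_g(L)^m$ because the $m=1$ instance of this identity, compared with Proposition~\ref{TransformationPrincipalPart}, identifies the phase as $\epsilon_g(L)$. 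At every other cusp of $\Gamma_0(Nh)$ the function $T^{(-m)}_g$ is holomorphic, so its polar part there vanishes.

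Feeding these polar parts into Theorem~\ref{BasisConstruction} with $k=2$, level $Nh$ and trivial multiplier --- and using that the index-$m$ Poincar\'e series $\mathcal{P}_{\alpha/\gamma}(\tau,m,Nh,0)$ has principal part $q^{-m/t_\rho}=q^{-m(h,\gamma)^2/(eh^2)}$ at $\rho$, by Theorem~\ref{PoincareTheorem}(2) --- yields
$$
T^{(-m)}_g(\tau)=c+\sum_{e\in\mathcal W_g}\;\sum_{\substack{\frac{\alpha}{\gamma}\in\mathcal S_{Nh}\\ \left(\frac{\gamma}{(\gamma,h)},\frac{N}{h}\right)=\frac{N}{eh}}}\epsilon_g(L_\rho)^m\,\mathcal{P}_{\alpha/\gamma}^+(\tau,m,Nh,0),
$$
where the double indexing merely organises the sum over the poles of $T_g$ by the associated divisor $e$, the constant $c$ occurs because a holomorphic weight-zero modular form on $\Gamma_0(Nh)$ descends to a constant on the compact curve $X_0(Nh)$, and the passage from $\mathcal{P}$ to $\mathcal{P}^+$ is justified since $T^{(-m)}_g-c$ is holomorphic, forcing the nonholomorphic parts of the $\mathcal{P}_{\alpha/\gamma}$ to cancel. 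The $n$-th Fourier coefficient is then read off from Theorem~\ref{PoincareTheorem}(1): setting $k=2$, $\kappa_\rho=\kappa_\infty=0$, $t_\rho=eh^2/(h,\gamma)^2$ and $-i^k=-i^2=1$, and rewriting $\bigl|\tfrac{-m}{t_\rho n}\bigr|^{1/2}=\bigl|\tfrac{-m}{n}\cdot\tfrac{(h,\gamma)^2}{eh^2}\bigr|^{1/2}$ and $\tfrac{mn}{t_\rho}=\tfrac{mn(h,\gamma)^2}{eh^2}$ inside the $I$-Bessel function produces exactly the stated expression, with the Kloosterman sums $K_c(0,L,-m,n)$ and the summation condition $(c,Nh)=(\gamma,Nh)$ inherited verbatim from Theorem~\ref{PoincareTheorem}.

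The structural skeleton above is short; the main obstacle is the arithmetic bookkeeping beneath it: verifying that the Harada--Lang cusps are exactly the $\Gamma_g$-orbit of $\infty$, pinning down the cusp width $t_\rho=eh^2/(h,\gamma)^2$ so that the index appearing in the Poincar\'e series is literally $m$, and tracking the root-of-unity phase carefully enough to see that the leading coefficient of $T^{(-m)}_g|_0L$ really is $\epsilon_g(L)^m$ with $\epsilon_g$ normalised as in Proposition~\ref{TransformationPrincipalPart} (which itself rests on Lemma~\ref{GammaEigenvalues} and the explicit procedure computing $\sigma_g$). Once these compatibilities are confirmed, the theorem follows by direct substitution into Theorems~\ref{BasisConstruction} and~\ref{PoincareTheorem}.
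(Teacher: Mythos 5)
Your proposal is correct and follows the same skeleton as the paper's proof: compute the principal part of $T^{(-m)}_g$ at every cusp of $\Gamma_0(Nh)$, use Lemma~\ref{ppexists} and Theorem~\ref{BasisConstruction} to conclude that the matching linear combination of Maass--Poincar\'e series equals $T^{(-m)}_g$ up to an additive constant, and read off the coefficients from Theorem~\ref{PoincareTheorem}. The one step you handle differently is the identification of the leading phase as $\epsilon_g(L)^m$: the paper first shows $T^{(-m)}_g|_0M=\sigma_g(M)^mT^{(-m)}_g$ for $M\in E_g$ — using Lemma~\ref{GammaEigenvalues}(c) to see that the coefficients of $T_g$ are supported on $n\equiv-1\pmod h$, so that $T^{(-m)}_g$ is a sum of powers $T_g^j$ with $j\equiv m\pmod h$ — and then slashes by $U^{-1}$ as in (\ref{HaradaLangTransformation}); you instead transport the $q$-expansion through an element of $\Gamma_g$ carrying $\infty$ to the cusp. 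Your route is valid, but the phrase ``a root-of-unity phase'' needs sharpening: to conclude that the order-$m$ polar coefficient is the $m$-th power of the order-$1$ one, you must use that the induced phase on the coefficient of $q^{n/t_\rho}$ is $\zeta^{n}$ for a fixed root of unity $\zeta$ (which is what the upper-triangular factor $\left(\begin{smallmatrix}a&b\\0&a^{-1}\end{smallmatrix}\right)$ gives via $q^n\mapsto e^{2\pi i nab}q^{na^2}$), not a single overall phase multiplying the whole expansion — under the latter reading the comparison with $m=1$ would yield $\epsilon_g(L)$ rather than $\epsilon_g(L)^m$. With that made explicit, and with the (easily supplied) justification that the Harada--Lang cusps are exactly the $\Gamma_g$-orbit of $\infty$ because $T_g$ is a principal modulus whose unique pole sits at the infinite cusp of $\Gamma_g$, your argument matches the paper's; your value $t_\rho=eh^2/(h,\gamma)^2$ is the correct cusp width and is consistent with the exponent $q^{-m/t_\rho}$ in Theorem~\ref{PoincareTheorem}.
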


\begin{proof} Every modular function is a harmonic Maass form. Therefore, the idea is to exhibit a linear combination of Maass--Poincar\'e series with exactly the same
principal parts at all cusps as $T_g^{(-m)}$. By Lemma~\ref{ppexists} and Theorem~\ref{BasisConstruction}, this form equals $T_g^{(-m)}$ up to an additive constant.
Lemma~\ref{GammaEigenvalues} (c) implies that the coefficients $c_g(n)$ of $T_g$ are supported on the arithmetic progression $n\equiv -1 \pmod h.$ The function $T^{(-m)}_g$ is a polynomial in $T_g,$ and as such must be the sum of powers of $T_g$ each of which is congruent to $m\pmod h.$ Therefore, if $M\in \Gamma_g,$ then $T^{(-m)}|_0M=\sigma_g(M)^mT^{(-m)}.$ Given $L\in \SL_2(\Z)$, let $U$ be a matrix as in (\ref{HaradaLangTransformation}) so that $LU\in \Gamma_g.$ By applying Proposition \ref{TransformationPrincipalPart}, we find
$$T_g^{(-m)}|_0L=\sigma_g(LU)^{m}T_g^{(-m)}|_0U^{-1}=\epsilon_g(L)^mq^{-m\frac{(h,\gamma)^2}{e h^2}}+O(q).$$
Theorem \ref{BasisConstruction}, along with the observations that $t_\rho=\frac{(h,\gamma)^2}{e h^2}$ and $\kappa_\rho=0$ for every $\rho=\frac{\alpha}{\gamma}=L_\rho^{-1}\infty$, implies the first part of the theorem. The formula for the coefficients follows by Theorem \ref{PoincareTheorem}.
\end{proof}

\subsection{Exact formulas for $U_g$ up to a theta function}
Following a similar process to that in the previous section, we may construct a series $\widehat{U}_g(\t)=q^{-\frac{23}{24}}+O(q^{\frac{1}{24}})$ with principal parts 
matching those of $\eta(\t) T_g(\t)$ at all cusps.  Then according to (\ref{eqn:tower-UgTg}), 
the difference $q^{\frac{1}{24}}(U_g-1)-\widehat{U}_g$ 
is a weight $\frac{1}{2}$ holomorphic modular form, which by a celebrated result \cite{SerreStark} of Serre--Stark, is a finite linear combination of
unary theta functions. This will not affect the asymptotics in  Theorem~\ref{distribution}. The functions $T_g$ and $\widehat{U}_g$ differ primarily in their weight, and in that $\widehat{U}_g$ has a non-trivial multiplier $\nu_\eta:M\to \frac{\eta(M\tau)}{(M:\tau)^{1/2}\eta(\tau)}.$ They also have slightly different orders of poles, which is accounted for by the fact that the multiplier $\nu_\eta$ implies that $\kappa_\rho=t_\rho/24$ at every cusp $\rho$ for the $\widehat{U}_g$, rather than $0$ for the $T_g$. The proof of the following theorem is the same as that of Theorem~\ref{ExactTg}, \emph{mutatis mutandis.}

\begin{theorem}\label{ExactYHatg}
Let $g\in \MM,$ with $\Gamma_g=N|h+e,f,\dots,$ and let $\mathcal S_{Nh}$ and $\mathcal W_g$ be as above.
If $m$ is a positive integer then 
$$
\widehat {U}_g=\sum_{e\in \mathcal W_g}\sum_{\substack{\rho=\frac{\alpha}{\gamma}\in \mathcal S_{Nh}\\ \left(\frac{\gamma}{(\gamma,h)},\frac{N}{h}\right) = \frac{N}{eh}}} \epsilon_g(L_\rho) \mathcal{P}_{L_\rho}^+(\tau,1,Nh,1/2,\nu_\eta).
$$
For $n$ a non-negative integer, the coefficient of $q^{n+\frac{1}{24}}$ in $\widehat{U}_g$ is given by
\begin{equation*}
\begin{split}
\sum_{e\in \mathcal W_g}&\sum_{\substack{\rho=\frac{\alpha}{\gamma}\in \mathcal S_{Nh}\\ \left(\frac{\gamma}{(\gamma,h)},\frac{N}{h}\right) = \frac{N}{eh}}}
\epsilon_g(L_\rho)
\frac{1-i}{\sqrt{2}}2 \pi \left|\frac{- \frac{(h,\gamma)^2 }{eh^2}+\frac{1}{24}}{n+\frac{1}{24}} \right|^{ \frac{1}{4}  } \times \\
&\hspace{20pt}\sum_{\substack{ c>0 \\ (c,Nh)=(\gamma,Nh)   }} 
\frac{K_c(\frac{1}{2},L,\nu_\eta,-1,n)}{c} \cdot 
I_{\frac12} 
\left(\frac{4 \pi}{ c } \sqrt{   \left|-\frac{(h,\gamma)^2 }{eh^2}+\frac{1}{24}\right|~ \left|n+\frac{1}{24}\right| } \right) .
\end{split}
\end{equation*}
\end{theorem}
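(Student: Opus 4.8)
The plan is to follow the proof of Theorem~\ref{ExactTg} essentially verbatim, adjusting only for the half-integral weight and the nontrivial multiplier. The point of departure is the identity $\eta(\tau)=q^{1/24}(q)_\infty$, which turns (\ref{eqn:tower-UgTg}) into $q^{1/24}\bigl(U_g(\tau)-1\bigr)=\eta(\tau)T_g(\tau)$. Since $T_g$ is a weight-zero modular function with trivial multiplier on $\Gamma_0(Nh)\subseteq\Gamma_g$ (Lemma~\ref{GammaEigenvalues}(a)), and $\eta$ is a nowhere-vanishing holomorphic form of weight $1/2$ with multiplier $\nu_\eta$ on $\SL_2(\Z)$, the product $\eta T_g$ is a weakly holomorphic modular form of weight $1/2$ and multiplier $\nu_\eta$ on $\Gamma_0(Nh)$, hence an element of $H_{1/2}(\Gamma_0(Nh),\nu_\eta)$ with vanishing nonholomorphic part. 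So it suffices, exactly as in \S\ref{SectionExactTg}, to exhibit a linear combination of Maass--Poincar\'e series $\mathcal P_{L_\rho}(\tau,1,Nh,1/2,\nu_\eta)$ whose principal parts at all cusps of $\Gamma_0(Nh)$ match those of $\eta T_g$; by Lemma~\ref{ppexists} and Theorem~\ref{BasisConstruction} (applied with $k=3/2$) the two then differ by a holomorphic modular form of weight $1/2$, which by the theorem of Serre--Stark \cite{SerreStark} is a finite combination of unary theta functions and, as noted before the statement, does not affect the asymptotics.

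First I would compute the principal part of $\eta T_g$ at a cusp $\rho=\alpha/\gamma\in\mathcal S_{Nh}$. For the factor $T_g$ this is precisely Proposition~\ref{TransformationPrincipalPart}, resting on Harada--Lang (Lemma~\ref{HaradaLang}) and the explicit evaluation of $\sigma_g$ given above: the expansion $T_g|_0L_\rho$ is polar exactly when $\bigl(\tfrac{\gamma}{(\gamma,h)},\tfrac Nh\bigr)=\tfrac N{eh}$ for some $e\in\mathcal W_g$, and then $T_g|_0L_\rho=\epsilon_g(L_\rho)\,q^{-(h,\gamma)^2/(eh^2)}+O(q)$. The factor $\eta$ contributes, at each cusp, only a root of unity---absorbed into the multiplier $\nu_\eta$, and hence into $\epsilon_g(L_\rho)$---together with an overall shift by $q^{1/24}$ in the local variable; in the language of \S\ref{SectionExactTg} this is the statement that $\widehat U_g$ carries cusp parameter $\kappa_\rho=t_\rho/24$ at every cusp, with $\kappa_\infty=1/24$ and $t_\infty=1$. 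Thus $(\eta T_g)|_{1/2}L_\rho$ has leading term $\epsilon_g(L_\rho)\,q^{-(h,\gamma)^2/(eh^2)+1/24}$ at the cusps picked out by the divisibility condition, and is holomorphic at the remaining cusps.

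With these principal parts in hand, Theorem~\ref{BasisConstruction} identifies $\eta T_g$---up to the harmless theta contribution---with
\[
\widehat U_g=\sum_{e\in\mathcal W_g}\ \sum_{\substack{\rho=\alpha/\gamma\in\mathcal S_{Nh}\\ (\gamma/(\gamma,h),\,N/h)=N/(eh)}}\epsilon_g(L_\rho)\,\mathcal P_{L_\rho}^+(\tau,1,Nh,1/2,\nu_\eta),
\]
and the formula for the coefficient of $q^{n+1/24}$ is read off from Theorem~\ref{PoincareTheorem}(1) specialized to $2-k=1/2$, i.e. $k=3/2$, with Poincar\'e index $m=1$: here $-i^{k}\,2\pi=\tfrac{1-i}{\sqrt2}\,2\pi$, the exponent $(k-1)/2$ equals $1/4$, the Bessel function $I_{k-1}$ is $I_{1/2}$, the Kloosterman sums are the $K_c(1/2,L,\nu_\eta,-1,n)$ of (\ref{Kloostermann}), and $(-m+\kappa_\rho)/t_\rho=-(h,\gamma)^2/(eh^2)+1/24$, which together reproduce the displayed series. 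Convergence at the boundary value $s=k/2=3/4$ is part of Theorem~\ref{PoincareTheorem} itself (cf. \cite{BriOno_CoeffHmcMaaFrms}).

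The one step needing real care, just as in Theorem~\ref{ExactTg}, is the half-integral-weight bookkeeping: pushing the multiplier $\nu_\eta$ and the nonzero cusp parameter $\kappa_\rho=t_\rho/24$ through the Atkin--Lehner normalization underlying Proposition~\ref{TransformationPrincipalPart}, checking that $\epsilon_g(L_\rho)$ (with the $\eta$-transformation root of unity folded in) is exactly the right coefficient at each contributing cusp, and verifying that the congruence conditions on the summation index agree with those in (\ref{Kloostermann}). Everything else is a direct transcription of the proof of Theorem~\ref{ExactTg}.
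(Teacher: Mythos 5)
Your proposal is correct and follows exactly the route the paper intends: the paper's entire proof of Theorem~\ref{ExactYHatg} is the remark that it is ``the same as that of Theorem~\ref{ExactTg}, \emph{mutatis mutandis},'' and your write-up supplies precisely those mutanda --- the identity $q^{1/24}(U_g-1)=\eta T_g$ from (\ref{eqn:tower-UgTg}), the shift to weight $1/2$ with multiplier $\nu_\eta$ and cusp parameters $\kappa_\rho=t_\rho/24$, the matching of principal parts via Proposition~\ref{TransformationPrincipalPart} and Theorems~\ref{PoincareTheorem}--\ref{BasisConstruction}, and the Serre--Stark theta ambiguity. Your closing paragraph correctly isolates the only point of genuine bookkeeping (tracking the $\eta$-multiplier root of unity through the Atkin--Lehner normalization so that $\epsilon_g(L_\rho)$ is the right coefficient), which is exactly the care the paper's ``mutatis mutandis'' silently assumes.
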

This immediately admits the following corollary.
\begin{corollary}\label{ExactYg}
Given the notation above, there is a weight $\frac12$ linear combination of theta functions $h_g(\tau)$ for which the coefficient $q^n$ in $U_g(\tau)-q^{-\frac{1}{24}}h_g(\tau)$ coincides with the coefficient of $q^{n+\frac{1}{24}}$ in $\widehat U_g$, given explicitly in Theorem \ref{ExactYHatg}.
\end{corollary}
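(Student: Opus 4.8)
The plan is to obtain the corollary from the identity $U_g(\tau)=(q)_\infty T_g(\tau)+1$ of (\ref{eqn:tower-UgTg}) together with Theorem~\ref{ExactYHatg}. Since $(q)_\infty=q^{-1/24}\eta(\tau)$, multiplying through by $q^{1/24}$ gives $\eta(\tau)T_g(\tau)=q^{1/24}\bigl(U_g(\tau)-1\bigr)$, so the whole question reduces to understanding the weakly holomorphic modular form $\eta(\tau)T_g(\tau)$ of weight $\tfrac12$ on $\Gamma_0(Nh)$ with multiplier $\nu_\eta$.

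First I would record the principal part of $\eta(\tau)T_g(\tau)$ at every cusp $\rho=\alpha/\gamma$ of $\Gamma_0(Nh)$. For $T_g$ alone this is Proposition~\ref{TransformationPrincipalPart}, built on the Harada--Lang pole analysis (Lemma~\ref{HaradaLang}): $T_g|_0L_\rho=\epsilon_g(L_\rho)q^{-(h,\gamma)^2/(eh^2)}+O(q)$ exactly when $\bigl(\tfrac{\gamma}{(\gamma,h)},\tfrac{N}{h}\bigr)=\tfrac{N}{eh}$ for some $e\in\mathcal W_g$, and $T_g$ is regular at all other cusps. Multiplying by the classical transformation law of $\eta$ shifts each cuspidal exponent by $\tfrac{1}{24}$ and multiplies the leading coefficient by a root of unity --- precisely the data encoded by the cusp parameter $\kappa_\rho=t_\rho/24$ with $t_\rho=(h,\gamma)^2/(eh^2)$ and by the $\nu_\eta$-twisted Kloosterman sums $K_c(\tfrac12,L,\nu_\eta,-1,n)$ appearing in Theorem~\ref{ExactYHatg}. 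So $\eta T_g$ and the Maass--Poincar\'e combination $\Psi_g$ of Theorem~\ref{ExactYHatg} (whose holomorphic part is $\widehat U_g$) have the same principal part at every cusp, up to additive constants.

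It follows that $\eta(\tau)T_g(\tau)-\Psi_g(\tau)$ is a weight $\tfrac12$ harmonic Maass form with constant principal part at every cusp. By Lemma~\ref{ppexists} its nonholomorphic part vanishes; since $\eta T_g$ is weakly holomorphic this forces $\Psi_g=\widehat U_g$, and then $\eta T_g-\widehat U_g$ is a holomorphic modular form of weight $\tfrac12$ on $\Gamma_0(Nh)$ with multiplier $\nu_\eta$. By the Serre--Stark theorem \cite{SerreStark} it is a finite linear combination of unary theta functions; take this to be $h_g(\tau)$. Then $q^{1/24}\bigl(U_g(\tau)-1\bigr)=\widehat U_g(\tau)+h_g(\tau)$, i.e. $U_g(\tau)-q^{-1/24}h_g(\tau)=1+q^{-1/24}\widehat U_g(\tau)$; for $n\ge 1$ the coefficient of $q^n$ on the left is therefore the coefficient of $q^{n+1/24}$ in $\widehat U_g$ (the stray constant $1$ touches only the $n=0$ coefficient, and, having bounded contributions, the theta correction $h_g$ does not disturb the asymptotics of Theorem~\ref{distribution}).

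The genuine work is not in the corollary --- granting Theorem~\ref{ExactYHatg} it is a one-line rearrangement --- but in the step that Theorem~\ref{ExactYHatg} dispatches \emph{mutatis mutandis} from Theorem~\ref{ExactTg}: checking that $\eta(\tau)T_g(\tau)$ genuinely carries the asserted principal parts at all cusps. The delicate point is the bookkeeping --- transporting the Harada--Lang pole data through multiplication by $\eta$, correctly inserting the new cusp parameter $\kappa_\rho=t_\rho/24$ and the $\nu_\eta$-twist into the Kloosterman sums, and confirming that the leading cuspidal coefficients remain $\epsilon_g(L_\rho)$ (to the first power, since $\widehat U_g$ plays the role of $T_g^{(-1)}$). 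With that in hand, excluding a nonholomorphic component via Lemma~\ref{ppexists} and invoking Serre--Stark are immediate.
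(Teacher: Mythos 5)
Your proposal is correct and follows essentially the same route as the paper: use $(q)_\infty=q^{-1/24}\eta(\tau)$ and (\ref{eqn:tower-UgTg}) to identify $q^{1/24}(U_g-1)$ with $\eta(\tau)T_g(\tau)$, match its principal parts at all cusps with the Maass--Poincar\'e combination of Theorem~\ref{ExactYHatg} (via Lemma~\ref{ppexists} to exclude a nonholomorphic part), and invoke Serre--Stark to identify the holomorphic weight-$\tfrac12$ difference with a combination of unary theta functions $h_g$. Your observations about the stray constant at $n=0$ and about where the real work lies (the \emph{mutatis mutandis} bookkeeping behind Theorem~\ref{ExactYHatg}) are accurate and consistent with the paper's presentation.
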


\subsection{Proof of Theorem~\ref{distribution}}
\begin{proof}[Proof of Theorem~\ref{distribution}]

Following Harada and Lang \cite{MR1372718}, we begin by defining the functions 
\begin{equation}\label{T_chi Def}
T_{\chi_i}^{(-m)}(\tau):=\frac{1}{|\MM|}\sum_{g\in \MM} \chi_i(g)T_g^{(-m)}(\tau).
\end{equation}
The orthogonality of characters imply that for $g$ and $h\in \MM$,
\begin{equation}\label{Orthogonality}
\sum_{i=1}^{194} \overline{\chi_i(g)}\chi_i(h)=\begin{cases}|C_\MM(g)| & $ if $ g $ and $ h $ are conjugate,$\\
0 & $ otherwise.$
\end{cases}
\end{equation}
Here $|C_\MM(g)|$ is the order of the centralizer of $g$ in $\MM$. Since the order of the centralizer times the order of the conjugacy class of an element is the order of the group, (\ref{Orthogonality}) and (\ref{T_chi Def}) together imply the inverse relation
$$
T_{g}^{(-m)}(\tau)=\sum_{i=1}^{194} \overline{\chi_i(g)}T_{\chi_i}^{(-m)}(\tau).
$$
In particular we have that $T_e^{(-m)}(\tau)=\displaystyle\sum_{i=1}^{194}\dim(\chi_i)T_{\chi_i}^{(-m)}(\tau)$, and therefore we can identify the $\m_i(-m,n)$ as the Fourier coefficients of the $T_{\chi_i}^{(-m)}(\tau)=\displaystyle\sum_{n=-m}^{\infty}\m_i(-m,n)q^n.$

Using Theorem~\ref{ExactTg}, we obtain exact formulas for the coefficients of $T_{\chi_i}^{(-m)}(\tau)$.
Let $g\in \MM$ with $\Gamma_g=N_g||h_g+e_g,f_g,\dots.$ If $m$ and $n$ are positive integers, then the $n$th coefficient is given exactly by
\begin{equation*}
\begin{split}
\frac{1}{|\MM|}\sum_{g\in \MM}\chi_i(g)\sum_{e\in \mathcal W_g}&\sum_{\substack{\frac{\alpha}{\gamma}\in \mathcal S_{N_gh_g}\\ \left(\frac{\gamma}{(\gamma,h_g)},\frac{N_g}{h_g}\right) = \frac{N_g}{eh_g}}} 
\epsilon_g(L_\rho)^m
2 \pi \left|\frac{-m}{n}\cdot \frac{(h_g,\gamma)^2 }{eh_g^2 } \right|^{ \frac{1}{2}  }  \\
&\hspace{-.3in}\sum_{\substack{ c>0 \\ (c,N_gh_g)=(\gamma,N_gh_g)   }} 
\frac{K_c(2-k,L,\nu,-m,n)}{c} \cdot 
I_{1} 
\left(\frac{4 \pi}{ c } \sqrt{  \left| \frac{-m n \cdot  (h_g,\gamma)^2}{eh_g^2} \right|} \right),
\end{split}
\end{equation*}
where  $\mathcal S_{N_gh_g}$ and  $\mathcal W_g$ are given as above.

Using the well-known asymptotics for the $I$-Bessel function
$$
I_k(x)\sim\frac{e^x}{\sqrt{2\pi x}}\left(1-\frac{4k^2-1}{8x}+\dots\right),
$$
we see that the formula for $\m_i(-m,n)$ is dominated by the $c=1$ term which appears only for $g=\mathrm e$ (so that $N_{\mathrm e}=h_{\mathrm e}=1$). This term yields the asymptotic

$$\m_i(-m,n)\sim
\frac{\chi_i(\mathrm e)\cdot |m|^{1/4}
}{\sqrt{2} n^{3/4}|\MM|} \cdot e^{4 \pi \sqrt{|mn|}}
$$
as in the statement of the theorem. 

The asymptotics for $\n_i(n)$ follows similarly, using the formula
$$
U_{\chi_i}(\tau):=\frac{1}{|\MM|}\sum_{g\in \MM} \chi_i(g)U_g^{(-m)}(\tau).
$$
We note that the coefficients of the theta functions $h_g(\tau)$ in Corollary~\ref{ExactYg} are bounded by constants and so do not affect the asymptotics. This yields 
$$\n_i(n)\sim
\frac{\sqrt{12}~\chi_i(\mathrm e)
}{|24n+1|^{1/2}|\MM|} \cdot e^{\frac{\pi}{6} \sqrt{23|24n+1|}}
$$
as in the theorem.
\end{proof}

\subsection{Examples of the exact formulas}
We conclude with a few examples illustrating the exact formulas for the McKay--Thompson series. 
These formulas for the coefficients generally converge rapidly. However the rate of convergence is not uniform and often requires many more terms to converge to a given precision.

\begin{example}
We first consider the case that $g$ is the identity element. Then we have $\Gamma_g=\SL_2(\Z),$ which has only the cusp infinity. In this case Theorem \ref{ExactTg} reduces to
the well known expansion

\begin{equation*}
T_{g}=J(\tau)-744=q^{-1}+\sum_{n\geq 1}
\frac{2 \pi}{\sqrt{n}}\cdot
\sum_{c>0} 
\frac{K_c(\infty,-m,n)}{c} \cdot 
I_{1} 
\left(\frac{4 \pi \sqrt{   n}}{ c }  \right)  q^n.
\end{equation*}
Table 2 below contains several approximations made by bounding the size of the $c$ term in the summation.

\medskip

\begin{table}[h]
\caption{}\label{Japprox}
\begin{tabular}{|r|c|cc|cc|}\hline
&$n=1$&&$n=5$&&$n=10$\\
\hline
$c\leq25$&$196883.661\dots$&&$333202640598.254\dots$&&$22567393309593598.047\dots$ \\
$\leq50$&$196883.881\dots$&&$333202640599.429\dots$&&$22567393309593598.660\dots$\\
$\leq75$&$196883.840\dots$&&$333202640599.828\dots$&&$22567393309593599.369\dots$\\
$\leq100$&$196883.958\dots$&&$333202640599.827\dots$&&$22567393309593599.681\dots$\\
$\infty$&$196884$&&$333202640600$&&$22567393309593600$\\ \hline
\end{tabular}
\end{table}
\end{example}

\begin{example}
The second example we consider is $g$ in the conjugacy class 4B. In this case we have $\Gamma_g=4||2+2\supset \Gamma_0(8)$. The function $T_g$ has a pole at each of the four cusps of  $\Gamma_0(8)$:
\begin{enumerate}
\item The cusp $\infty$ has $e=1,$ width $t=1,$ and coefficient $\epsilon(L_\infty)=1$.
\item The cusp $0$  has $e=2,$ width $t=8,$ and coefficient $\epsilon(L_0)=1$.
\item The cusp $1/2$  has $e=2,$ width $t=2,$ and coefficient $\epsilon(L_{1/2})=i$.
\item The cusp $1/4$  has $e=1$,  width $t=1$, and $\epsilon(L_{1/4})=-1$.
\end{enumerate}

\noindent Table 3 below contains several approximations as in Table \ref{Japprox}.

\medskip
\begin{table}[h]
\caption{}
\begin{center}
\begin{tabular}{|r|c|cc|cc|}\hline
&$n=1$&&$n=5$&&$n=10$\\
\hline
$c\leq25$&$51.975\dots$&&$4760.372\dots$&&$0.107\dots$\\
$\leq50$&$52.003\dots$&&$4759.860\dots$&&$0.117\dots$\\
$\leq75$&$52.041\dots$&&$4760.066\dots$&&$0.092\dots$\\
$\leq100$&$51.894\dots$&&$4760.049\dots$&&$0.040\dots$\\
$\infty$&$52$&&$4760$&&$0$\\ \hline
\end{tabular}
\end{center}
\end{table}
\end{example}

\section{Umbral Moonshine}\label{sec:um}

In this penultimate section, we review the recently discovered, and rapidly developing field of umbral moonshine. 

\subsection{K3 Surfaces}\label{sec:um:k3}

Eguchi--Ooguri--Tachikawa reignited the field of moonshine with their 2010 observation \cite{Eguchi2010} that dimensions of representations of the largest Mathieu group, $M_{24}$, occur as multiplicities of superconformal algebra characters in the K3 elliptic genus. 

To formulate their observation more precisely, recall that a {\em complex K3 surface} is a compact connected complex manifold $\md$ of dimension $2$, with $\Omega_\md^2\simeq \mathcal{O}_\md$ and $H^1(\md,\mathcal{O}_\md)=0$. 
(See \cite{MR2030225,MR785216} for introductory accounts.) Following Witten \cite{MR970278,MR885560}, the {\em elliptic genus} of a complex manifold $\md$ of dimension $d$ is defined to be 
\begin{gather}\label{eqn:um:k3-ZM}
Z_\md:=\int_\md\ch(\mathbb{E}_{q,y})\td(\md),
\end{gather}  
where $\td(\md)$ is the Todd class of $\md$, and $\ch(\mathbb{E}_{q,y})$ is the Chern character of the formal power series
\begin{gather}\label{eqn:um-Eqy}
	\mathbb{E}_{q,y}=y^{\frac d 2}\bigotimes_{n= 1}^\infty
	\left(
	\bigwedge\nolimits_{-y^{-1}q^{n-1}}T_\md\otimes\bigwedge\nolimits_{-yq^n}T_\md^*
	\otimes \bigvee\nolimits_{q^n}T_\md\otimes\bigvee\nolimits_{q^n}T_\md^*
	\right),
\end{gather}
whose coefficients are virtual vector bundles, obtained as sums of tensor products of the exterior and symmetric powers of the holomorphic tangent bundle $T_\md$, and its dual bundle $T_\md^*$. (Cf. also \S1 of \cite{Gri_EllGenCYMnflds} or Appendix A of \cite{Dijkgraaf1997}.) In (\ref{eqn:um-Eqy}) we interpret $\bigvee_tE$ in direct analogy with $\bigwedge_tE$ (cf. (\ref{eqn:class-bigwedget})), replacing exterior powers $\wedge^kE$ with symmetric powers $\vee^kE$.

Since a complex K3 surface $\md$ has trivial canonical bundle, and hence vanishing first Chern class, its {elliptic genus} is a {\em weak Jacobi form} $Z_\md(\tau,z)$ of weight zero and index $\dim(\md)/2=1$---see \cite{Gri_EllGenCYMnflds} or \cite{2004math......5232H} for proofs of this fact---once we set $q=e(\tau)$ and $y=e(z)$. This means\footnote{See \cite{Dabholkar:2012nd} or \cite{eichler_zagier} for more on Jacobi forms, including the general transformation formula in case of weight different from zero or index different from one.} that $Z_\md(\tau,z)$ is a holomorphic function on $\HH\times\CC$, satisfying
\begin{gather}
	\begin{split}
	Z_\md(\tau,z)
	&=
	e\left(\frac{-cz^2}{c\tau+d}\right)
	Z_\md\left(\frac{a\tau+b}{c\tau+d},\frac{z}{c\tau+d}\right)\\
	&=
	e(\lambda^2\tau+2\lambda z)
	Z_\md(\tau,z+\lambda \tau+\mu)
	\end{split}
\end{gather}
for $\left(\begin{smallmatrix}a&b\\c&d\end{smallmatrix}\right)\in \SL_2(\Z)$ and $\lambda,\mu\in\Z$, with a Fourier expansion $Z_\md(\tau,z)=\sum_{n,r}c(n,r)q^ny^r$ such that $c(n,r)=0$ whenever $n<0$. 

It is known that $Z_\md(\tau,z)$ specializes to the Euler characteristic $\chi(\md)=24$ when $z=0$ (cf. e.g. \cite{Gri_EllGenCYMnflds}). The space of weak Jacobi forms of weight zero and index one is one-dimensional (cf. \cite{eichler_zagier}), so $Z_\md$ is independent of the choice of $\md$. In fact, we have
\begin{gather}\label{eqn:um-ZK3}
	Z_\md(\tau,z)=8\left(
	\frac{\theta_2(\tau,z)^2}{\theta_2(\tau,0)^2}
	+\frac{\theta_3(\tau,z)^2}{\theta_3(\tau,0)^2}
	+\frac{\theta_4(\tau,z)^2}{\theta_4(\tau,0)^2}
	\right),
\end{gather}
where the $\theta_i(\tau,z)$ are the usual Jacobi theta functions:
\begin{gather}
	\begin{split}
	\theta_2(\tau,z)&:=\sum_{n\in\Z}y^{n+1/2}q^{(n+1/2)^2/2}
	=y^{1/2}q^{1/8}\prod_{n>0}(1+y^{-1}q^{n-1})(1+yq^{n})(1-q^n)\\
	\theta_3(\tau,z)&:=\sum_{n\in\Z}y^{n}q^{n^2/2}=\prod_{n>0}(1+y^{-1}q^{n-1/2})(1+yq^{n-1/2})(1-q^n)\\
	\theta_4(\tau,z)&:=\sum_{n\in\Z}(-1)^ny^{n}q^{n^2/2}=\prod_{n>0}(1-y^{-1}q^{n-1/2})(1-yq^{n-1/2})(1-q^n).
	\end{split}
\end{gather}

In Witten's original analysis \cite{MR970278,MR885560} the elliptic genus $Z_\md$ is the graded trace of an integer-valued operator on a Hilbert space arising from a {\em supersymmetric nonlinear sigma model} on $\md$. 
In the case that $\md$ is a K3 surface---see \cite{MR1479699,MR1416354} for analyses of the sigma models associated to K3 surfaces---it is expected that the corresponding Hilbert space admits an unitary action by the (small) $N=4$ superconformal algebra (cf. \cite{Eguchi1987}). At least this is known for some special cases, so  (\ref{eqn:um-ZK3}) can be written as an integer combination of the irreducible unitary $N=4$ algebra characters. This leads (cf. \cite{Eguchi2009a,Eguchi1989}) to an expression 
\begin{gather}
	Z_\md(\tau,z)=24\mu(\tau,z)\cdot \frac{\theta_1(\tau,z)^2}{\eta(\tau)^3}+H(\tau)\cdot \frac{\theta_1(\tau,z)^2}{\eta(\tau)^3},
\end{gather}
where $\theta_1(\tau,z)$ is the Jacobi theta function
\begin{gather}
	\theta_1(\tau,z):=i\sum_{n\in\Z}(-1)^ny^{n+1/2}q^{(n+1/2)^2/2}
	=iy^{1/2}q^{1/8}\prod_{n>0}(1-y^{-1}q^{n-1})(1-yq^n)(1-q^n),
\end{gather}
$\mu(\tau,z)$ denotes the Appell--Lerch sum defined by
\begin{gather}
	\mu(\tau,z):=\frac{i y^{1/2}}{\theta_1(\tau,z)}\sum_{n\in\Z}(-1)^n\frac{y^nq^{n(n+1)/2}}{1-y q^n},
\end{gather}
and $q^{1/8}H(\tau)$ is a power series in $q$ with integer coefficients,
\begin{gather}\label{eqn:um-HFouexp}
	H(\tau)=-2q^{-1/8}+90q^{7/8}+462q^{15/8}+1540q^{23/8}+4554q^{31/8}+11592q^{39/8}+\ldots.
\end{gather}

The surprising observation of \cite{Eguchi2010} is that each coefficient of a non-polar term appearing in (\ref{eqn:um-HFouexp}) is twice the dimension of an irreducible representation (cf. \cite{ATLAS}) of the sporadic simple group $M_{24}$, discovered by \'Emile Mathieu \cite{Mat_1861,Mat_1873} more than 150 years ago. (Generally, the coefficient of a positive power of $q$ in (\ref{eqn:um-HFouexp}) is some non-negative integer combination of dimensions of representations of $M_{24}$.) Thus $H(\tau)$ serves as an analogue of $J(\tau)$ (cf. (\ref{eqn:intro-expJ})) for $M_{24}$, and 
\begin{gather}
90=45+45
\end{gather}
is the Mathieu analogue of McKay's monstrous observation (\ref{eqn:intro-mckobs}). 

The analogy with monstrous moonshine was quickly taken up, with the determination by Cheng \cite{MR2793423}, Eguchi--Hikami \cite{Eguchi2010a}, and Gaberdiel--Hohenegger--Volpato \cite{Gaberdiel2010a, Gaberdiel2010}, of {\em Mathieu McKay--Thompson series} 
\begin{gather}\label{eqn:um-HgtrK}
	H_g(\tau)=-2q^{-1/8}+\sum_{n>0}\tr(g|K_{n-1/8})q^{n-1/8}
\end{gather}
associated to a graded $M_{24}$-module $K=\bigoplus_{n>0}K_{n-1/8}$, such that $K_{7/8}$ is the sum of the two $45$-dimensional irreducible representations of $M_{24}$, and $K_{15/8}$ the sum of the two $231$-dimensional irreducible representations, etc. (See \cite{MR2985326} for a detailed review of Mathieu moonshine, and explicit descriptions of the $H_g$ in particular.) 
We have the following beautiful recent result of Gannon \cite{MR2985326}.

\begin{theorem}[Gannon]
There is a graded $M_{24}$-module $K=\bigoplus_{n>0}K_{n-1/8}$ for which (\ref{eqn:um-HgtrK}) is true (given that the $H_g$ are as described in 
\cite{MR2985326}).
\end{theorem}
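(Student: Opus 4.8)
The plan follows Gannon's approach, recast in the language of the Maass--Poincar\'e series developed above: we run, for the mock modular Mathieu McKay--Thompson series $H_g$, the same strategy that underlies Theorem~\ref{distribution} for the $T_g$. We take as given the explicit functions $H_g$ of \cite{MR2793423,Eguchi2010a,Gaberdiel2010a,Gaberdiel2010,MR2985326}, and we \emph{define} virtual $M_{24}$-modules $K_{n-1/8}$ by declaring $\tr(g\mid K_{n-1/8})$ to be the coefficient of $q^{n-1/8}$ in $H_g$; equivalently, for each of the $26$ irreducible characters $\chi_i$ of $M_{24}$ we set $m_i(n):=\frac1{|M_{24}|}\sum_{g}\overline{\chi_i(g)}\,[q^{n-1/8}]H_g$. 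The theorem is then equivalent to the two assertions that every $m_i(n)$ is an integer and that every $m_i(n)$ is non-negative.

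First I would record the modularity of the isotypic projections $H_{\chi_i}(\tau):=\frac1{|M_{24}|}\sum_g\chi_i(g)H_g(\tau)$. Each $H_g$ is a mock modular form of weight $\tfrac12$ with a unary-theta shadow, so the completion $\widehat H_{\chi_i}$ obtained by adjoining the relevant non-holomorphic Eichler integral is a harmonic Maass form of weight $\tfrac12$ on some $\Gamma_0(N)$ with a multiplier of $\eta^3$-type; this places $\widehat H_{\chi_i}$ squarely in the setting of Theorems~\ref{PoincareTheorem} and~\ref{BasisConstruction}. The pole orders and leading coefficients of each $H_g$ at every cusp can be determined exactly as in the Harada--Lang analysis recalled in Lemma~\ref{HaradaLang} and Proposition~\ref{TransformationPrincipalPart}; feeding this principal-part data into Theorem~\ref{BasisConstruction} expresses each $\widehat H_{\chi_i}$ as an explicit finite combination of Maass--Poincar\'e series plus a weight $\tfrac12$ holomorphic modular form, which by Serre--Stark \cite{SerreStark} is a combination of unary theta series, exactly in the spirit of Corollary~\ref{ExactYg}. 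Theorem~\ref{PoincareTheorem} then yields an exact Kloosterman-sum/$I$-Bessel-function formula for each multiplicity $m_i(n)$.

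Next I would extract asymptotics from these exact formulas, just as in the proof of Theorem~\ref{distribution}. Using $I_\nu(x)\sim e^{x}/\sqrt{2\pi x}$, the dominant contribution to $m_i(n)$ comes from the $c=1$ term attached to the cusp of smallest width for the identity class $g=\mathrm e$, and produces a main term of size $\asymp \dim(\chi_i)\cdot e^{\pi\sqrt{2n}}$ times a fixed negative power of $n$, with a strictly positive constant; meanwhile every remaining term---larger $c$, a non-identity conjugacy class, or the bounded contribution of the theta series---grows at a strictly smaller exponential rate. Hence there is an effective $n_0$ with $m_i(n)>0$ for all $i$ and all $n\ge n_0$, and for the finitely many $n<n_0$ one checks $m_i(n)\in\Z_{\ge0}$ directly from the explicit $q$-expansions of the $26$ functions $H_g$. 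Integrality of $m_i(n)$ for general $n$ is obtained by showing that $|M_{24}|$ divides $\sum_g\chi_i(g)[q^{n-1/8}]H_g$; this reduces to a congruence one verifies prime by prime over the primes dividing $|M_{24}|$, using the known behaviour of the $H_g$ under the relevant Atkin--Lehner and Hecke symmetries, or alternatively by checking in the exact formula that the a priori denominators cancel.

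The hard part will be the analytic bookkeeping needed to make the main-term/error-term separation rigorous: at weight $\tfrac12$ the relevant Rademacher and Poincar\'e sums sit in the delicate range near weight one (cf.\ the discussion of sums of Kloosterman sums in \S\ref{sec:intro:radsums}), so one must control those sums of Kloosterman sums uniformly in $n$, and one must rule out the possibility that some secondary cusp or non-identity conjugacy class accidentally contributes at the same exponential rate $e^{\pi\sqrt{2n}}$ as the leading term. The second genuinely delicate ingredient is integrality, which is the one step that cannot come from soft harmonic-Maass-form input and must exploit arithmetic special to $M_{24}$. Granting both, non-negativity for large $n$ is automatic, and the theorem follows after the finite verification for $n<n_0$.
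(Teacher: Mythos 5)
The paper does not actually prove this statement: it is imported verbatim as a theorem of Gannon, proved in \cite{Gannon:2012ck} (with the $H_g$ specified as in \cite{MR2985326}), and the paper only appends the remark that a concrete construction of $K$ remains unknown. So there is no in-paper argument to compare against. That said, your outline is a faithful reconstruction of the strategy that is actually used, both by Gannon and by the authors for the analogous multiplicity questions (Theorem \ref{distribution}, and the forthcoming \cite{DGO_UMC1}): define the candidate multiplicities $m_i(n)$ by orthogonality of characters, prove integrality, and prove non-negativity via exact formulas and asymptotics, with a finite check for small $n$. Your main term $\dim(\chi_i)\,e^{\pi\sqrt{2n}}$ (from the $c=1$ term of the identity class, polar order $1/8$) is correct, and your worry about convergence is misplaced: weight $1/2$ corresponds to $k=3/2$ in Theorem \ref{PoincareTheorem}, which is exactly the boundary case Bringmann--Ono handle, so the Maass--Poincar\'e machinery of the paper does apply.

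Two substantive caveats. First, your fallback for integrality --- ``checking in the exact formula that the a priori denominators cancel'' --- is not viable; the exact formulas are infinite sums of Bessel functions against Kloosterman sums and integrality cannot be read off from them. The working argument is genuinely group-theoretic: one shows the class function $g\mapsto [q^{n-1/8}]H_g$ is a virtual character via Brauer/Thompson-type criteria, which reduces to congruences relating the coefficients of $H_g$ and $H_{g^p}$ modulo powers of $p$ for elements of prime-power order; these congruences must be established separately from the modular definitions of the $H_g$, and this is where most of the arithmetic work lies. Second, for positivity you must actually verify, class by class, that no $g\neq e$ and no secondary cusp contributes at the rate $e^{\pi\sqrt{2n}}$; this does hold --- each $H_g$ for $g\neq e$ lives on $\Gamma_0(o(g))$ with its only pole at the width-one infinite cusp, so its Kloosterman sums are supported on $c\equiv 0\pmod{o(g)}$ and the growth is $e^{\pi\sqrt{2n}/o(g)}$ --- but it is a finite verification over the $26$ classes, not an automatic consequence, and effective error bounds are needed to pin down $n_0$. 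With those two steps supplied, your plan is the correct one.
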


\begin{remark}
A concrete construction of $K$ remains unknown.
\end{remark}

The observer may ask: how were the $H_g$ determined, if the module $K$ is as yet unknown? To explain this, note that the subscript in $M_{24}$ is a reference to the fact that $M_{24}$ 
is distinguished amongst permutation groups: it may be characterized as the unique proper subgroup of the alternating group $A_{24}$ that acts quintuply transitively on $24$ points (cf. \cite{MR1010366}). Write $\chi_g$ for the number of fixed points of an element $g\in M_{24}$, in this defining permutation representation. 

The first few terms of $H_g$ are determined by the Eguchi--Ooguri--Tachikawa observation on (\ref{eqn:um-HFouexp}), for it indicates that the coefficient of $q^{7/8}$ in $H_g$ should be the trace of $g$ on the sum of the two $45$-dimensional irreducible representations, and the coefficient of $q^{15/8}$ should be the trace of $g$ on the sum of the two $231$-dimensional irreducible representations, etc. To determine the remaining infinitely many terms, modularity may be used: the series $H_g$, determined in \cite{MR2793423,Eguchi2010a,Gaberdiel2010a, Gaberdiel2010}, have the property that
\begin{gather}\label{eqn:um-Zg}
	Z_g(\tau,z):=\chi_g\mu(\tau,z)\cdot \frac{\theta_1(\tau,z)^2}{\eta(\tau)^3}+H_g(\tau)\cdot \frac{\theta_1(\tau,z)^2}{\eta(\tau)^3}
\end{gather}
is a weak Jacobi form of weight zero and index one for $\Gamma_0^J(N):=\Gamma_0(N)\ltimes\Z^2$ (with non-trivial multiplier when $\chi_g=0$), where $N=o(g)$ is the order of $g$, and 
$\Gamma_0(N)$ is as in (\ref{Gamma0N}).

Thus Mathieu moonshine entails twisted, or {\em twined} versions (\ref{eqn:um-Zg}) of the K3 elliptic genus (\ref{eqn:um-ZK3}), but the single variable series $H_g(\tau)$ may also be studied in their own right, as automorphic objects of a particular kind: it turns out that they are {\em mock modular forms\footnote{The notion of mock modular form has arisen recently, from Zwegers' foundational work \cite{zwegers} on Ramanujan's mock theta functions \cite{MR947735,MR2280843}, and the subsequent contributions \cite{BringmannOno2006} and \cite{zagier_mock}. 
We refer to \cite{Dabholkar:2012nd,Ono_unearthing,zagier_mock} for nice introductions to the theory.}} of weight $1/2$, for various groups $\Gamma_0(N)$, with {\em shadows} $\chi_g\eta(\tau)^3$. This means that the {\em completed} functions 
\begin{gather}
	\widehat{H}_g(\tau):=H_g(\tau)+
	\chi_g\frac{1}{2\sqrt{i}} \int_{-\overline{\t}}^{\infty}
	\overline{\eta\left(-\overline{w}\right)^3}
	\frac{{\rm d}w}{\sqrt{w+\t}}
\end{gather}
are harmonic Maass forms of weight $1/2$, with the same multiplier system as $\eta(\tau)^{-3}$ when $\chi_g\neq 0$. (In case $\chi_g=0$, i.e. when $H_g$ is already a modular form, the multiplier is slightly different. See e.g. \cite{MR2985326}. The groups $\Gamma_0(o(g))$ for which $\chi_g\neq 0$ are characterized in \cite{2012arXiv1212.0906C}.)
The function $H_g(\tau)$, being the holomorphic part of $\widehat{H}_g(\tau)$, is the {\it mock modular form}.

In contrast to the twined K3 elliptic genera $Z_g$, the mock modular forms $H_g$ are distinguished, in a manner directly analogous to the McKay--Thompson series $T_g$ of monstrous moonshine: it is shown in \cite{Cheng2011} that the $H_g$ admit a uniform description in terms of Rademacher sums, in direct analogy with Theorem \ref{thm:radsums-TgTGammag}. 
(We refer to \cite{Cheng2011} or the review \cite{MR2985326} for a precise statement of this result.) Since the coincidence between the monstrous McKay--Thompson series and (normalized) Rademacher sums depends in a crucial way upon the genus zero property of monstrous moonshine, as evidenced by Theorem \ref{thm:intro:qgrav-Rgammainv}, it is natural to identify the Rademacher sum realization of the $H_g$ as the Mathieu moonshine counterpart to the genus zero property of monstrous moonshine.

As we have hinted above, the Rademacher sum property that distinguishes the $T_g$ and $H_g$ does not hold for the weight zero Jacobi forms $Z_g$ (cf. (\ref{eqn:um-Zg})). A Poincar\'e series approach to Jacobi forms is described in \cite{jacteg}, 
using the foundations established in \cite{MR2680205,MR2805582},
and it is verified there that the $Z_g$ are not all realized in this way. On the other hand, the main result of \cite{jacteg} is the Poincar\'e series construction of certain Maass--Jacobi forms of weight one, naturally associated to elements of $M_{24}$. Thus we can expect that Jacobi forms of weight one, rather than the $Z_g$ of (\ref{eqn:um-Zg}), will play an important role in a comprehensive conceptual explanation of the Mathieu moonshine phenomenon.

Note that some of the functions $Z_g$ admit a geometric interpretation in terms of K3 surfaces. Namely, it has been established in \cite{MR3271177} that if $\bar g$ is a symplectic automorphism of a K3 surface $M$ then the natural $\bar g$-equivariant modification of (\ref{eqn:um:k3-ZM}) coincides with $Z_g$, for a suitable element $g\in M_{24}$. However, not all $Z_g$ arise in this way. Please see \S\ref{sec:um:sigma} for a fuller discussion of this.

\subsection{Niemeier Lattices}\label{sec:um:niemeier}

Vector-valued versions of the Rademacher sums that characterize the $H_g$ were used in \cite{UM} to identify Mathieu moonshine as a special case of six directly similar correspondences, between conjugacy classes in certain finite groups and distinguished (vector-valued) mock modular forms of weight $1/2$. 
Since the mock modular forms arising seemed to be characterized by their shadows, this was dubbed {\em umbral moonshine} in \cite{UM}. 

The conjectures of \cite{UM} were greatly expanded in \cite{MUM}, following an observation of Glauberman (cf. the Acknowledgement in \cite{MUM}), that the finite groups identified in \cite{UM} also appear as automorphism groups of codes associated to deep holes in the Leech lattice (cf. \cite{ConParSlo_CvgRadLeeLat} or \cite{ATLAS}). 

To explain the significance of this, recall that an {\em integral lattice} is a free abelian group $L$ together with a symmetric bilinear form $\langle\cdot\,,\cdot\rangle:L\times L\to\Z$. A lattice $L$ is called {\em positive-definite} if $\langle \lambda,\lambda\rangle\geq 0$ for all $\lambda\in L$, with equality only when $\lambda=0$. It is called {\em even} if $\langle\lambda,\lambda\rangle\in 2\Z$ for all $\lambda\in L$, and {\em self-dual} if $L=L^*$, for $L^*$ the {\em dual} of $L$,
\begin{gather}\label{eqn:um-duallat}
	L^*:=\left\{\mu\in L\otimes_{\Z}\QQ\mid \langle\lambda,\mu\rangle\in\Z\Leftarrow \lambda\in L\right\}.
\end{gather}

The even self-dual positive-definite lattices of rank $24$ have been classified \cite{Nie_DefQdtFrm24} (see also \cite{MR666350,MR558941}) by Niemeier: there are $24$ in total, up to isomorphism. They are characterized by their root systems---i.e. the configurations of their vectors with square length equal to $2$---and the Leech lattice $\Lambda$ (cf. \S\ref{previouswork}) is the unique such lattice whose root system is empty. We refer to the remaining $23$ as the {\em Niemeier lattices}. The {\em Niemeier root systems} are the root systems of the Niemeier lattices, and they are described explicitly as
\begin{gather}\label{eqn:um-NX1}
A_1^{24},\;A_2^{12},\;A_3^{8},\;A_4^6,\;A_6^4,\;A_{12}^2,\\
	\begin{split}
A_5^4D_4,\;A_7^2D_5^2,\;A_8^3,\;A_9^2D_6,\;&A_{11}D_7E_6,\;A_{15}D_9,\;A_{17}E_7,\;A_{24},\\\label{eqn:um-NX2}
D_4^6,\;D_6^4,\;D_8^3,\;D_{10}E_7^2,\;&D_{12}^2,\;D_{16}E_8,\;D_{24},E_6^4,\;E_8^3,
	\end{split}
\end{gather}
in terms of the irreducible, simply-laced (i.e. ADE type) root systems. (See \cite{MR1662447} or \cite{MR0323842} for more on root systems.) 

In (\ref{eqn:um-NX1}) and (\ref{eqn:um-NX2}) we use juxtaposition as a shorthand for direct sum, so that $A_1^{24}$ denotes $24$ copies of the $A_1$ root system, and $A_{11}D_7E_6$ is shorthand for $A_{11}\oplus D_7\oplus E_6$, etc.
The subscripts indicate ranks. The {Coxeter numbers} of the ADE root systems are given by 
\begin{gather}\label{eqn:um-Coxeternumbers}
m(A_n)=n+1,\; m(D_n)=2n-2,\; m(E_6)=12,\; m(E_7)=18,\; m(E_8)=30,
\end{gather}
and one can check that the Niemeier root systems (\ref{eqn:um-NX1}), (\ref{eqn:um-NX2}) are exactly those unions of ADE type root systems for which the total rank is $24$, and the Coxeter number is constant across irreducible components. 

For $X$ a Niemeier root system and $N^X$ the corresponding Niemeier lattice, define the {\em outer automorphism group} of $N^X$ by setting 
\begin{gather}
	\Out(N^X):=\Aut(N^X)/W^X,
\end{gather}
where $W^X$ denotes the subgroup of $\Aut(N^X)$ generated by reflections in root vectors. Applying this construction to the Leech lattice, corresponding to $X=\emptyset$, we obtain the {\em Conway group}, 
\begin{gather}\label{eqn:um-Co0}
\Co_0:=\Aut(\Lambda), 
\end{gather}
so named in light of Conway's detailed description \cite{MR0237634,MR0248216} of its structure. A number of the $26$ sporadic simple groups appear as subgroups, or quotients of subgroups of $\Co_0$, including the three sporadic simple Conway groups, $\Co_1$, $\Co_2$ and $\Co_3$. The Conway group $\Co_0$ is a double cover of the first, and largest of these,
\begin{gather}\label{eqn:um-Co1}
\Co_1\simeq \Aut(\Lambda)/\{\pm \Id\}.
\end{gather}
Note that $M_{24}$ is naturally a subgroup of $\Co_0$, and also $\Co_1$, for if $\{\lambda_i\}\subset\Lambda$ is a set of $24$ vectors such that $\langle\lambda_i,\lambda_j\rangle=8\delta_{ij}$, then the subgroup of $\Co_0$ that stabilizes this set $\{\lambda_i\}$ is a copy of $M_{24}$.
\begin{gather}\label{eqn:um-M24}
	M_{24}\simeq\left\{ g\in \Co_0\mid \{g(\lambda_i)\}\subset\{\lambda_i\}\right\}.
\end{gather}

According to Conway--Parker--Sloane \cite{ConParSlo_CvgRadLeeLat}, the Niemeier root systems classify the {\em deepest holes} in the Leech lattice, being the points in $\Lambda\otimes_{\Z}\RR$ at maximal distance from vectors in $\Lambda$. 
Moreover, this correspondence is strong enough that the Niemeier outer automorphism groups $\Out(N^X)$ are also visible inside the Conway group, $\Co_0$. More precisely, if $x\in \Lambda\otimes_{\Z}\RR$ is a deep hole, with corresponding Niemeier root system $X$ according to \cite{ConParSlo_CvgRadLeeLat}, then the stabilizer $\Aut(\Lambda,x)$ of $x$ in $\Aut(\Lambda)$ has a normal subgroup $C^x$ such that 
\begin{gather}\label{eqn:um-AutCOut}
	\Aut(\Lambda,x)/C^x\simeq \Out(N^X).
\end{gather}
The subgroup $C^x$ even encodes a method for constructing $N^X$, as is explained in detail in \cite{MR661720}, for if $L^X$ denotes the sub lattice of $N^X$ generated by roots, then $N^X$ is determined by its image in $(L^X)^*/L^X$ (cf. (\ref{eqn:um-duallat})) under the natural map $N^X\to (L^X)^*/L^X$. Write $C^X$ for this subgroup of $(L^X)^*/L^X$, called the {\em glue code} of $X$ in \cite{MR661720} (see also \cite{MR666350}). Then $C^x$ is isomorphic to $C^X$, according to \cite{MR661720}.
\begin{gather}\label{eqn:um-CAutOut}
	1\to C^X\simeq C^x\to\Aut(\Lambda,x)\to\Out(N^X)\to 1
\end{gather}

Thus $\Out(N^X)$ acts as automorphisms on the glue code $C^X$, and Glauberman's observation suggests an extension of the results of \cite{UM}, whereby distinguished vector-valued mock modular forms $H^X_g=(H^X_{g,r})$ are associated to elements $g$ in the {\em umbral groups} 
\begin{gather}\label{eqn:um-GX}
G^X:=\Out(N^X), 
\end{gather}
for each Niemeier root system $X$. The realization of this suggestion is described in detail in \cite{MUM}. 
For $X=A_1^{24}$, the glue code $C^X$ is a copy of the {\em extended binary Golay code} (cf. \cite{MR1662447} or \cite{MR1667939}), and $G^X$ is its full automorphism group, $M_{24}$. Thus, from the Niemeier root system perspective, Mathieu moonshine is the special case of umbral moonshine corresponding to the root system $A_1^{24}$.

In (\ref{eqn:um-NX1}) we have separated out the Niemeier root systems of the form $A_{n}^d$ with $d=24/n$ even. It is exactly these cases of umbral moonshine that are discussed in \cite{UM}. The original umbral moonshine observation of Eguchi--Ooguri--Tachikawa stemmed from consideration of the weight zero, index one weak Jacobi form $Z_\md$ (cf. (\ref{eqn:um-ZK3})), realized as the K3 elliptic genus. The analysis of \cite{UM} is, to some extent, similarly motivated, including the attachment of a weight zero, index $n$ weak Jacobi form $Z^{(n+1)}_g(\tau,z)$ to each $g\in G^X$, for each Niemeier root system $X=A_n^d$ with $d=24/n$ even. 

A notion of {\em extremal Jacobi form} is formulated in \cite{UM}, motivated by the representation theory of the $N=4$ superconformal algebra, and it is proven\footnote{The main step in the classification given in \cite{UM} is a demonstration that the existence of an extremal Jacobi form of index $m-1$ implies the vanishing of $L(f,1)$ for all new forms $f$ of weight $2$ and level $m$, where $L(f,s)$ is the Dirichlet series naturally attached to $f$ (cf. e.g. \S3.6 of \cite{Shi_IntThyAutFns}). At this point one expects extremal Jacobi forms to be very few in number, on the strength of the Birch--Swinnerton-Dyer conjecture (cf. \cite{MR0179168,MR2238272}), for example. This machinery is evidently quite powerful, and we may anticipate further applications to umbral moonshine in the future.} there that the six functions $Z^{(n+1)}:=Z^{(n+1)}_e$, for $n\in \{1,2,3,4,6,12\}$, exhaust all examples. Thus the cases (\ref{eqn:um-NX1}) of umbral moonshine considered in \cite{UM} are distinguished from the point of view of Jacobi forms of weight zero. 

By contrast, there seems to be no natural way to associate weight zero Jacobi forms to the Niemeier root systems not\footnote{The cases $A_8^3$ and $A_{24}$ do come with weight zero Jacobi forms attached, which are obtained via a slight weakening of the notion of extremal Jacobi form formulated in \cite{UM}. Cf. \S4.3 of \cite{MUM}.} of the {\em pure A-type}, $A_n^d$. Rather, the mock modular forms $H^X_g$ described in \cite{MUM} naturally appear as the {\em theta-coefficients} of {\em finite parts} of certain meromorphic Jacobi forms $\psi^X_g$ of weight $1$ and index $m$, 
\begin{gather}\label{eqn:um-psiXg}
	\psi^X_g(\tau,z)=\psi^{X,P}_g(\tau,z)+\sum_{r\pmod{2m}}H^X_{g,r}(\tau)\theta_{m,r}(\tau,z),
\end{gather}
where $m=m(X)$ is the Coxeter number of any irreducible component of $X$ (cf. (\ref{eqn:um-Coxeternumbers})). 

Here, meromorphic means that we allow poles in the functions $z\mapsto \psi^X_g(\t,z)$, at torsion points $z\in \QQ\t+\QQ$. The Weierstrass $\wp$ function
\begin{gather}
	\wp(\t,z):=\frac{1}{z^2}+\sum_{\substack{\omega\in\Z\t+\Z\\\omega\neq 0}}\frac{1}{(z+\omega)^2}-\frac{1}{\omega^2}
\end{gather}
is a natural example (with weight two and index zero). 

The decomposition (\ref{eqn:um-psiXg}) is described in detail in \cite{MUM}, following the general structural results on  meromorphic Jacobi forms established in \cite{Dabholkar:2012nd,zwegers}. For now let us just mention that the first summand on the right hand side is the {\em polar part} of $\psi^X_g$, defined as in \S8.2 of \cite{Dabholkar:2012nd}, and
\begin{gather}
\theta_{m,r}(\tau,z):=\sum_{k\in\Z}y^{2km+r}q^{(2km+r)^2/4m}
\end{gather} 
evidently depends only on $r$ modulo $2m$. 

A number of the meromorphic Jacobi forms attached to Niemeier root systems by umbral moonshine also appear amongst the specific examples of \cite{Dabholkar:2012nd}, where the main application is the computation of quantum degeneracies of black holes in certain string theories. However, whilst some speculations are offered in \S5.5 of \cite{UM}, no direct relationship between umbral moonshine and string theory has been formulated as yet. 

We have seen in \S\ref{sec:um:k3} that the mock modular forms attached to $M_{24}$ by Mathieu moonshine (i.e. umbral moonshine for $X=A_1^{24}$) may be characterized as Rademacher sums, and this serves as an umbral analogue of the principal modulus/genus zero property of monstrous moonshine, on the strength of Theorem \ref{thm:intro:qgrav-Rgammainv}. It is natural to ask for an extension of this result to all cases of umbral moonshine. 

Conjecture 5.4 of \cite{UM} amounts to the prediction that vector-valued generalizations of the Rademacher sums of \cite{Cheng2011} will recover the $H^X_g$ for $X=A_n^d$ with $d$ even (cf. (\ref{eqn:um-NX1})), and Conjecture 3.2 of \cite{mumcor} is an extension of this to all Niemeier root systems $X$.
Thus a positive solution to Conjecture 3.2 of \cite{mumcor} will verify the umbral analogue of the principal modulus property of monstrous moonshine. So far, the Rademacher sum conjecture for umbral moonshine is known to be true only in the case that $X=A_1^{24}$, but a program to analyze the Rademacher sum conjecture for more general cases of umbral moonshine, via the theory of Maass--Jacobi forms (cf. \cite{MR2680205,MR2805582}), has been initiated in \cite{jacteg}.

A notion of optimal growth was formulated in \S6.3 of \cite{MUM}, following the work \cite{Dabholkar:2012nd}, with a view to extending Conjecture 5.4 of \cite{UM}. It is now known that this condition does not uniquely determine the $H^X_g$ for general $X$ (see \cite{mumcor} for a full discussion of this), but all the $H^X_g$ serve as examples. With this in mind, it is interesting to note that many of Ramanujan's mock theta functions \cite{MR947735,MR2280843} appear as components of the umbral McKay--Thompson series $H^X_g$. (Cf. \S4.7 of \cite{UM} and \S5.4 of \cite{MUM}.)

\subsection{Modules}\label{sec:um:mods}

As we have explained above, the Rademacher sum property of umbral moonshine is a natural counterpart to the principal modulus, or genus zero property of monstrous moonshine (cf. \S\ref{previouswork}), formulated in a detailed way by Conway--Norton \cite{MR554399}. 

The natural counterpart to Thompson's conjecture, Conjecture \ref{conj:Thompson}, verified by the Frenkel--Lepowsky--Meurman construction \cite{FLMPNAS,FLMBerk,FLM} of the moonshine module $\Vnat$, together with Borcherds' work \cite{borcherds_monstrous}, is the 
following (cf. \S6.1 of \cite{MUM}, and \S2 of \cite{mumcor}).

\begin{conjecture}[Cheng--Duncan--Harvey]\label{conj:umc1}
For each Niemeier root system $X$, there is a bi-graded $G^X$-module 
\begin{gather}\label{eqn:um-KX}
\check{K}^X=\bigoplus_{r\in I^X}\bigoplus_{\substack{D\in\Z\\D=r^2\pmod{4m}}}\check{K}^X_{r,-D/4m},
\end{gather} 
such that the vector-valued umbral McKay--Thompson series $H^X_g=(H^X_{g,r})$ is recovered\footnote{In the original formulation, Conjecture 6.1 of \cite{MUM}, the function $H^X_{g,r}$ in (\ref{eqn:um-HXgrKXrd}) is replaced with $3H^X_{g,r}$ in the case that $X=A_8^3$. It also predicted that $\check{K}^X_{r,-D/4m}$ is a virtual $G^X$-module in case $X=A_8^3$ and $D=0$. Recently, a modification of the specification of the $H^X_{g}$ for $X=A_8^3$ has been discovered, which leads to the simpler, more uniform formulation appearing here. We refer to \cite{mumcor} for a full discussion of this.} from the graded trace of $g$ on $\check{K}^X$ via
\begin{gather}\label{eqn:um-HXgrKXrd}
	H^X_{g,r}(\tau)=-2q^{-1/4m}\delta_{r,1}
	+\sum_{\substack{D\in\Z\\D=r^2\pmod{4m}}}\tr(g|\check{K}^X_{r,-D/4m})q^{-D/4m}
\end{gather}
for $r\in I^X$.
\end{conjecture}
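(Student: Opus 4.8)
The plan is to transplant the strategy of the proof of Theorem~\ref{distribution} --- exact formulas for Fourier coefficients via Maass--Poincar\'e series, followed by a dominant-term estimate --- into the vector-valued, weight $\tfrac12$ mock modular setting of umbral moonshine, so that the argument becomes a uniform generalization of the $A_1^{24}$ (i.e.\ Mathieu) case treated by Gannon \cite{MR2985326}. Fix a Niemeier root system $X$, write $m=m(X)$ for the common Coxeter number of its components, and for each irreducible character $\chi$ of $G^X$ and each $r\in I^X$ introduce the \emph{isotypic umbral McKay--Thompson series}
\[
 H^X_{\chi,r}(\tau):=\frac{1}{|G^X|}\sum_{g\in G^X}\overline{\chi(g)}\,H^X_{g,r}(\tau).
\]
By the orthogonality relations \eqref{Orthogonality}, the conjecture is equivalent to the assertion that, for every such $\chi$ and $r$, the coefficient of $q^{-D/4m}$ in $H^X_{\chi,r}$ is a non-negative integer for all $D>0$ with $D\equiv r^2\pmod{4m}$; the polar term $-2q^{-1/4m}\delta_{r,1}$ is produced by the trivial character alone. (For $X=A_8^3$ one must use the corrected specification of the $H^X_g$ from \cite{mumcor} in order to obtain an honest --- rather than virtual --- module in discriminant $0$.)

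First I would assemble the modular input. By \cite{UM,MUM}, each $H^X_{g,r}$ is a component of a vector-valued mock modular form of weight $\tfrac12$ with a prescribed unary-theta shadow and an explicit multiplier system for a congruence group $\Gamma_0(N_g)$; consequently each $H^X_{\chi,r}$ is again such a form, on a common group $\Gamma_0(M)$ with $M$ a suitable multiple of the relevant levels, and its harmonic Maass completion $\widehat H^X_{\chi,r}$ lies in a space $H_{1/2}(\Gamma_0(M),\nu)$. The principal parts of $\widehat H^X_{\chi,r}$ at the cusps of $\Gamma_0(M)$ are determined, through the character sum, by the principal parts of the individual $H^X_{g,r}$, which in turn are read off from their explicit eta-product/Frame-shape descriptions, exactly as the poles of $T_g$ were located in \S\ref{SectionExactTg} via the data of Harada--Lang (Lemma~\ref{HaradaLang}). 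Feeding these principal parts into Theorems~\ref{PoincareTheorem} and \ref{BasisConstruction} (in the half-integral weight form of \cite{BriOno_CoeffHmcMaaFrms}) --- up to a finite linear combination of unary theta series, which is harmless for asymptotics as in Corollary~\ref{ExactYg} --- expresses the $q^{-D/4m}$-coefficient of $H^X_{\chi,r}$ as an absolutely convergent sum of Kloosterman-sum--weighted $I_{1/2}$-Bessel values, directly parallel to the exact formula for $\m_i(-m,n)$ obtained in the proof of Theorem~\ref{distribution}.

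Granting these formulas, positivity for large discriminant follows as before: the entire expression is dominated by its $c=1$ term, which occurs only for $g=e$ (where $N_e=1$), and the asymptotic $I_{1/2}(x)\sim e^{x}/\sqrt{2\pi x}$ shows this term is positive and grows like $\tfrac{\chi(e)}{|G^X|}$ times an explicit exponential in $\sqrt{D}$, while a crude estimate of the Kloosterman sums and of the tail of the Bessel series bounds the contribution of the terms with $c\ge 2$ or $g\ne e$; hence the coefficient is positive once $D>D_0(X)$ for an explicit $D_0(X)$. For the finitely many $D\le D_0(X)$ one checks non-negativity (and integrality) directly from the known $q$-expansions of the $H^X_{g,r}$. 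Integrality for all $D$ is then reduced, via Brauer's characterization of virtual characters, to verifying that the restriction of the candidate module $\check K^X_{r,\bullet}$ to each elementary subgroup of $G^X$ is an honest virtual module --- a finite check that uses only the integrality of the $q$-coefficients of the $H^X_{g,r}$ together with their compatibility under $g\mapsto g^a$, $(a,o(g))=1$, with the Galois action, both established in \cite{MUM}; positivity then upgrades ``virtual'' to ``non-virtual''.

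The hard part will be the effective positivity step. In the monstrous setting one had a single group and, for $\SL_2(\Z)$, a single cusp; here the Poincar\'e-series analysis must be carried out uniformly across all $23$ Niemeier root systems, with their varying levels, multiplier systems, and multiple cusps, and for genuinely mock (shadow-carrying) components --- and, crucially, one must extract a bound $D_0(X)$ small enough that the residual finite verification is actually feasible. Controlling the sums of Kloosterman sums (or, failing that, pushing the finite computation far enough to meet a cruder bound) is where the real labor lies; the $A_8^3$ case, where the uniform recipe must be replaced by the corrected prescription of \cite{mumcor}, is the one genuine exception that has to be handled separately by hand.
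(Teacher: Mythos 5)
First, a point of orientation: the paper does not prove Conjecture \ref{conj:umc1}; it records it as a conjecture and asserts its truth as Theorem \ref{thm:um:mods}, deferring the actual proof to the companion work \cite{DGO_UMC1}. So there is no in-paper argument to compare yours against line by line. That said, your outline is exactly the strategy the paper telegraphs: generalize the Maass--Poincar\'e machinery of \S\ref{sec:mmdist} to the vector-valued, weight $1/2$ mock modular setting, form the isotypic series $H^X_{\chi,r}$ by orthogonality, extract exact formulas for their coefficients from Theorems \ref{PoincareTheorem} and \ref{BasisConstruction}, isolate the dominant $g=e$, $c=1$ term, and finish with a finite check plus a Brauer-type integrality argument. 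This is, in substance, the route taken in \cite{DGO_UMC1}, generalizing Gannon's treatment of the $A_1^{24}$ case \cite{Gannon:2012ck,MR2985326}.

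As a proof, however, the proposal has genuine gaps, all of which you flag and none of which you close. (i) Effective positivity: at weight $1/2$ the trivial bound on Kloosterman sums does not even give absolute convergence of $\sum_{c}K_c(\cdot)\,c^{-1}I_{1/2}(A/c)$ --- this is precisely the delicate range $0\le w\le 2$ the paper warns about in \S\ref{sec:intro:radsums} --- so one needs real cancellation in sums of Kloosterman sums, made effective enough that the resulting $D_0(X)$ is within reach of a finite computation, uniformly over all $23$ root systems with their varying levels, cusps and multipliers. (ii) Integrality: the reduction via Brauer/Thompson to elementary or cyclic subgroups requires precise compatibilities of the coefficients of $H^X_{g,r}$ under $g\mapsto g^a$ and under the relevant Galois actions; these are substantive inputs established case by case, not formal consequences of orthogonality. (iii) The Serre--Stark ambiguity: principal parts determine $H^X_{\chi,r}$ only up to a finite combination of unary theta series, which is harmless for asymptotics but must be pinned down exactly before the finite verification at small $|D|$ can be carried out. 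Finally, a small sign slip: the module components sit at exponents $-D/4m\ge 0$, so the coefficients whose non-negativity is at issue are those with $D\le 0$ (equivalently $D=r^2-4mn$, $n\ge 0$), not $D>0$. In short: the right plan, matching the authors' announced proof, but the decisive analytic and arithmetic content is still to be supplied.
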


In (\ref{eqn:um-KX}) and (\ref{eqn:um-HXgrKXrd}), $m=m(X)$ is the Coxeter number of any irreducible component of $X$, as in (\ref{eqn:um-psiXg}). The $H^X_{g,r}$ satisfy $H^X_{g,-r}=-H^X_{g,r}$, so the umbral McKay--Thompson series $H^X_g$ is determined by its components $H^X_{g,r}$ with $0<r<m$. If the highest rank irreducible component of $X$ is of type D or E then there are more symmetries amongst the $H^X_{g,r}$, and the definition of the set $I^X\subset\Z/2m\Z$ reflects this: if $X$ has an A-type component then $I^X:=\{1,2,3,\ldots,m-1\}$. If the highest rank component of $X$ is of type D then $m=2\mod 4$, and $I^X:=\{1,3,5,\ldots,m/2\}$. The remaining cases are $X=E_6^4$, in which case $I^X:=\{1,4,5\}$, and $X=E_8^3$, for which $I^X:=\{1,7\}$.

As mentioned in \S\ref{sec:um:k3}, the existence of the module $\check{K}^X$ for $X=A_1^{24}$ has been proven by Gannon \cite{Gannon:2012ck}. More specifically, Gannon has shown that the coefficients of the non-negative powers of $q$ in $H^X_{g}$ for $X=A_1^{24}$ are traces of elements of $M_{24}$ on direct sums of irreducible $M_{24}$-modules. A priori, we might have needed $\CC$-linear combinations of such traces in order to recover the $H^X_{g,r}$. 

In forthcoming work \cite{DGO_UMC1}, the authors confirm the validity of Conjecture \ref{conj:umc1}.
\begin{theorem}[Duncan--Griffin--Ono]\label{thm:um:mods}
Conjecture~\ref{conj:umc1} is true.
\end{theorem}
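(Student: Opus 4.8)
The plan is to follow the template of Atkin--Fong--Smith (Theorem~\ref{firstthm}) and Gannon's proof for $X=A_1^{24}$, reducing Conjecture~\ref{conj:umc1} to the assertion that certain explicitly computable multiplicities are nonnegative integers, and then establishing this via the exact-formula machinery for weight $\tfrac12$ (mock) modular forms from \S\ref{sec:mmdist}. First I would recast the conjecture in representation-theoretic terms: for each Niemeier root system $X$, with $G^X$ having irreducible characters $\chi_i$, set $m=m(X)$ and let $c^X_{g,r}(D)$ denote the coefficient of $q^{-D/4m}$ in $H^X_{g,r}(\tau)+2q^{-1/4m}\delta_{r,1}$, and define
$$
\operatorname{mult}\big(\chi_i;X,r,D\big):=\frac{1}{|G^X|}\sum_{g\in G^X}\overline{\chi_i(g)}\,c^X_{g,r}(D).
$$
By orthogonality of characters, the module $\check{K}^X=\bigoplus_{r,D}\check{K}^X_{r,-D/4m}$ of (\ref{eqn:um-KX}) exists and satisfies (\ref{eqn:um-HXgrKXrd}) if and only if each $\operatorname{mult}(\chi_i;X,r,D)$ is a nonnegative integer (the $\delta_{r,1}$ term accounting for the polar contribution, which is forced to be the trivial $G^X$-module).

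Second, integrality. Here one uses that $g\mapsto H^X_{g,r}$ is a class function on $G^X$ whose Fourier coefficients are algebraic integers obeying the expected Galois-equivariance (the action of $\sigma_k\in\Gal(\overline{\QQ}/\QQ)$ on coefficients mirrors $g\mapsto g^k$), so that the averaged quantities $\operatorname{mult}(\chi_i;X,r,D)$ are a priori rational with controlled denominators; combined with a verification of sufficiently many low-order coefficients against the explicit tables of $H^X_g$ in \cite{MUM,mumcor}, this forces $\operatorname{mult}(\chi_i;X,r,D)\in\Z$. This step is essentially bookkeeping once the $H^X_{g,r}$ have been pinned down precisely --- via Rademacher sums \cite{Cheng2011,UM} in the pure A-type cases with $d$ even, and via the meromorphic Jacobi form decomposition (\ref{eqn:um-psiXg}) of \cite{MUM,mumcor} in the remaining cases.

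Third --- the heart of the matter --- nonnegativity. Each $H^X_{g,r}$ is the holomorphic part of a harmonic Maass form $\widehat H^X_{g,r}$ of weight $\tfrac12$ whose shadow is a weight $\tfrac32$ theta-like form built from $\eta(\tau)^3$, and which is determined by its principal parts at the cusps of the relevant $\Gamma_0(o(g))$. Applying a vector-valued version of the Maass--Poincar\'e series construction of Theorem~\ref{PoincareTheorem} together with the basis result Theorem~\ref{BasisConstruction} --- exactly as in the proof of Theorem~\ref{distribution} --- yields exact formulas for $c^X_{g,r}(D)$ as absolutely convergent sums of Kloosterman-type sums times $I$-Bessel functions. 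Feeding these into the definition of $\operatorname{mult}(\chi_i;X,r,D)$ and using the asymptotics of $I_k$, the dominant term as $D\to\infty$ comes from $g=e$ at the smallest-denominator cusp and is a strictly positive constant times $\dim(\chi_i)$ times $|D|^{-3/4}e^{\,c\sqrt{|D|}}$, while every other contribution has strictly smaller exponential order or is suppressed by a higher power of the Kloosterman modulus $c$. Hence there is an effective bound $D_0(X)$ beyond which $\operatorname{mult}(\chi_i;X,r,D)>0$ for all $i$ and all $r\in I^X$, and the finitely many residual cases $D\le D_0(X)$ are dispatched by direct computation.

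The hard part will be making this third step genuinely effective and uniform across all $23$ Niemeier root systems and all components $r\in I^X$: one must carefully track the contribution of the non-holomorphic completion $\widehat H^X_{g,r}-H^X_{g,r}$ to the Fourier coefficients, carry out the comparison of exponential growth rates across \emph{every} cusp of \emph{every} group $\Gamma_0(o(g))$ that occurs, and --- since weight $\tfrac12$ lies in the delicate range near weight one, where nontrivial cancellation in sums of Kloosterman sums is required (cf.\ \S\ref{sec:intro:radsums}) --- obtain bounds strong enough that the resulting $D_0(X)$ is small enough for the finite check to be computationally feasible. Verifying the (relatively few) remaining small coefficients, case by case over the umbral groups, then completes the proof.
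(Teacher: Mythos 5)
The paper does not actually prove Theorem~\ref{thm:um:mods} here: it defers entirely to the forthcoming work \cite{DGO_UMC1}, and the architecture of that proof is exactly the one you outline --- reduce, via orthogonality of characters, to showing that the multiplicities $\frac{1}{|G^X|}\sum_{g}\overline{\chi_i(g)}\,c^X_{g,r}(D)$ are nonnegative integers; establish positivity for $|D|$ large by showing that the $g=e$ term dominates in exact Kloosterman--Bessel formulas of precisely the type developed in \S\ref{sec:mmdist}; and dispatch the finitely many remaining coefficients by computation. So your approach is essentially the intended one, and your identification of where the real work lies (effective, cusp-by-cusp estimates in the delicate weight $\tfrac12$ range) is accurate.

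Two cautions. First, the integrality step is not mere bookkeeping as written: verifying finitely many low-order coefficients forces integrality of \emph{all} coefficients only once you know that the multiplicity generating function lies in a finite-dimensional space admitting a basis with integral Fourier expansions (equivalently, after a Galois-equivariance plus $p$-integrality argument for each prime dividing $|G^X|$); you gesture at this but do not supply it. Second, the polar term $-2q^{-1/4m}\delta_{r,1}$ in (\ref{eqn:um-HXgrKXrd}) is simply subtracted off in the statement of Conjecture~\ref{conj:umc1} rather than being ``the trivial $G^X$-module,'' a minor but worth-correcting slip. As submitted, the proposal is a correct strategy outline matching the paper's (deferred) proof, but the two hard analytic and arithmetic steps are flagged rather than executed.
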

Theorem \ref{thm:um:mods} serves, to a certain extent, as the umbral counterpart to Borcherds' result, Theorem \ref{thm:intro:class-borcherds}. Indeed, the method of \cite{DGO_UMC1} may be used to give an alternative proof of the existence of the $\MM$-module $V^{\natural}$, for which the associated graded trace functions are the normalized principal moduli of the genus zero groups $\Gamma_g$, identified by Conway--Norton in \cite{MR554399}.

Nonetheless, there is still work to be done, for in order to have a direct counterpart to Theorem \ref{thm:intro:class-borcherds} we require concrete constructions of the $\check{K}^X$. 
In the case of monstrous moonshine, the construction of $\Vnat$ due to Frenkel--Lepowsky--Meurman came equipped with rich algebraic structure, ultimately leading to the notion of vertex operator algebra, and powerful connections to physics.
We can expect that a full explanation of the umbral moonshine phenomena will require analogues of this for all the $\check{K}^X$.

Just such an analogue for $X=E_8^3$ has recently been obtained in \cite{mod3e8}, where a super vertex operator algebra $V^X$ is constructed, together with an action of $G^X\simeq S_3$, such that the components of the vector-valued mock modular forms $H^X_{g}=(H^X_{g,r})$ are recovered from traces of elements of $G^X$ on canonically-twisted modules for $V^X$. The main ingredient in the construction of \cite{mod3e8} is an adaptation of the familiar (to specialists) lattice vertex algebra construction (cf. \cite{Bor_PNAS,FLM}), to cones in indefinite lattices. The choice of cone is in turn inspired by Zwegers' work \cite{MR2558702} on a particular pair of the fifth order mock theta functions of Ramanujan.

In \cite{umvan2,umvan4} a different approach to the module problem is considered, whereby meromorphic Jacobi forms associated to the $H^X_g$ are recovered as graded traces on canonically-twisted modules for certain super vertex algebras. In \cite{umvan4} constructions are given for the $\psi^X_g$ of (\ref{eqn:um-psiXg}), for $X\in\{A_3^8, A_4^6, A_6^4,A_{12}^2\}$. In \cite{umvan2} certain half-integral index analogues of the $\psi^X_g$ are recovered, for $X\in\{D_6^4, D_8^3, D_{12}^2, D_{24}\}$.

As we will explain in more detail in the next section, recent work \cite{2014arXiv1406.5502C,2015arXiv150307219C} constructs modules underlying assignments of vector-valued mock modular forms to the sporadic simple groups $M_{24}$, $M_{23}$ and $M_{22}$. Here, $M_{23}$ denotes the maximal subgroup of $M_{24}$ composed of elements fixing any given point in the defining permutation representation (cf. \S\ref{sec:um:k3}), and $M_{22}$ is obtained similarly from $M_{23}$, as the subgroup stabilizing a point in its natural permutation representation of degree $23$. Although the mock modular forms realized in \cite{2014arXiv1406.5502C,2015arXiv150307219C} are not directly related to the $H^X_g$, it seems likely that the construction used therein holds important hints for future developments in the module problem for umbral moonshine.

\subsection{Sigma Models}\label{sec:um:sigma}

Recall from \S\ref{sec:um:k3} that $\Omega_M^2\simeq\mathcal{O}_M$ for $M$ a complex K3 surface. An automorphism of $M$ that induces the trivial action on $H^0(M,\Omega_M^2)$ is called {\em symplectic}. It is a celebrated result of Mukai \cite{Mukai} (cf. also \cite{Kondo}), that the finite groups of symplectic automorphisms of complex K3 surfaces are, up to isomorphism, precisely the subgroups of $M_{24}$ that have at least five orbits in the unique non-trivial permutation representation on $24$ points, including at least one fixed point. 

Since a symplectic automorphism of a complex K3 surface $M$ induces a supersymmetry preserving automorphism of a sigma model attached to $M$ (cf. \S\ref{sec:um:k3}), and since it is the supersymmetry preserving automorphisms of a K3 sigma model that can be used to twine the K3 elliptic genus (\ref{eqn:um-ZK3}), the problem of classifying the supersymmetry preserving automorphism groups of nonlinear K3 sigma models was considered by Gaberdiel--Hohenegger--Volpato in \cite{MR2955931}.

One might have anticipated that all supersymmetry preserving K3 sigma model automorphism groups would be contained in $M_{24}$, but this is not the case. Rather, the main result of \cite{MR2955931}, being a quantum analogue of Mukai's classification of finite symplectic automorphism groups of K3 surfaces, is that 
the groups of supersymmetry preserving automorphisms of K3 sigma models are, up to isomorphism, precisely the subgroups of $\Co_0=\Aut(\Lambda)$ (cf. (\ref{eqn:um-Co0})) that fix a sublattice of $\Lambda$ with rank at least four.

Note that the results of \cite{MR2955931} are obtained subject to certain conjectural assumptions about the structure of the moduli space of K3 sigma models. Nonetheless, it seems fair to conclude that the K3 sigma models do not furnish quite the right theoretical setting for solving the mysteries of umbral moonshine. For not all of the $M_{24}$-twinings (\ref{eqn:um-Zg}) of the K3 elliptic genus (\ref{eqn:um-ZK3}) arise as twinings defined by K3 sigma model automorphisms, since, for example, there are elements of $M_{24}$ (cf. (\ref{eqn:um-M24})) that do not fix a rank four lattice in the Leech lattice, $\Lambda$. (Cf. also, the last sentence of \S\ref{sec:um:k3}.)

That notwithstanding, we can expect to learn useful information about umbral moonshine from further investigation of K3 sigma models. The history of monstrous moonshine provides a useful point of comparison: in advance of his proof of the Conway--Norton conjectures, Borcherds considered a certain BKM algebra (cf. \S\ref{previouswork}) in \cite{MR1069386}, which was, at the time, called the monster Lie algebra, although it turned out to be only indirectly connected to the monster. The Lie algebra constructed in \cite{MR1069386} is now known as the {\em fake monster Lie algebra} (cf. \S2 of \cite{borcherds_monstrous}), and has found a number of applications outside of moonshine. For example, the denominator function of the fake monster Lie algebra (cf. Example 2 in \S10 of \cite{Bor_AutFmsSngGrs}) is used to prove facts about families of K3 surfaces in \cite{MR1620702,MR1686793,MR2087068}.

At the level of vertex operator algebras, the fake monster Lie algebra corresponds to the lattice vertex algebra $V_{\Lambda}$ attached to the Leech lattice. This may be regarded as a {``fake'' moonshine module}, for it has exactly the same graded dimension as $\Vnat$, up to the constant term,
\begin{gather}
	\dimq V_{\Lambda} 
	=J(\tau)+24.
\end{gather}
(Cf. (\ref{eqn:mod-dimqM}).) There is no action of the monster on $V_{\Lambda}$, although there is an action by a group\footnote{Given groups $A$ and $B$, say that a group $G$ has the {\em shape} $A.B$, and write $G=A.B$, if $G$ contains a normal subgroup isomorphic to $A$ such that $G/A\simeq B$. In this setting it is typical to write $p^n$ as a shorthand for an elementary abelian $p$-group with $p^n$ elements.} of the shape $2^{24}.\Co_0=2^{24}.(2.\Co_1)$ (cf. (\ref{eqn:um-Co1})), whereas the monster contains a maximal subgroup with the shape $(2.2^{24}).\Co_1$.
The Frenkel--Lepowsky--Meurman construction of $\Vnat$ takes $V_{\Lambda}$ as a main ingredient. (Cf. (\ref{eqn:class-Leechtorusorb}).)

It is striking that the Conway group $\Co_0=2.\Co_1$ plays a prominent role in so many of the objects we have discussed: it is visible within the monster, and within the automorphism group of $V_{\Lambda}$. It serves for K3 sigma models as $M_{24}$ does for K3 surfaces, as discussed above, and all of the umbral moonshine groups (\ref{eqn:um-GX}) are visible within $\Co_0$, according to (\ref{eqn:um-AutCOut}). 

Moreover, there is moonshine for the Conway group, in direct analogy with that for the monster, in the sense that there is an assignment of normalized principal moduli $T^s_g$ to elements $g\in \Co_0$ which are realized as trace functions on a graded infinite-dimensional $\Co_0$-module. A proof of this statement has recently appeared in \cite{vacogm}. 

To explain this, take $g\in \Co_0$, let $\{\varepsilon_i\}$ be the eigenvalues associated to the action of $g$ on $\Lambda\otimes_{\Z}\CC$, and define
\begin{gather}
	T^s_g(\tau):=q^{-1/2}\prod_{n>0}\prod_{i=1}^{24}(1-\varepsilon_iq^{n-1/2})+\chi_g,
\end{gather}
where 
\begin{gather}\label{eqn:um-chig}
	\chi_g:=\sum_i\varepsilon_i
\end{gather} 
is the character value associated to the action of $g$ on $\Lambda\otimes_{\Z}\CC$. Then $T^s_g(2\tau)=q^{-1}+O(q)$ is the normalized principal modulus for a genus zero group, according to Conway--Norton \cite{MR554399} and Queen \cite{MR628715}. (See also \cite{MR780666}.) 

It has been demonstrated in \cite{vacogm} that the functions $T^s_g$ are the graded traces attached to the action of $\Co_0$ on a distinguished\footnote{The super vertex operator algebra $\Vsnat$ admits actions by both $\Co_0$ (cf. (\ref{eqn:um-Co0})) and the simple group $\Co_1$ (cf. (\ref{eqn:um-Co1})). It's construction as a $\Co_1$-module was sketched first in \S15 of \cite{FLMBerk}, described later in \S5 of \cite{MR1390654}, and subsequently studied in detail in \cite{Dun_VACo}. The $\Co_0$-module structure on $\Vsnat$ is mentioned in \cite{Dun_VACo}, following \cite{MR1390654}, but it seems that the modular properties of the trace functions associated to the $\Co_0$-action were not considered until \cite{vacogm}.} super vertex operator algebra, $\Vsnat=\bigoplus_{n=-1}^\infty\Vsnat_{n/2}$. 
\begin{gather}\label{eqn:um:sigma-Tsg}
	T^s_g(\tau)=\sum_{n=-1}^{\infty}\tr(\gt{z}g|\Vsnat_{n/2})q^{n/2}
\end{gather}
(In (\ref{eqn:um:sigma-Tsg}) we write $\gt{z}$ for the super space involution, acting as $(-1)^n\Id$ on $\Vsnat_{n/2}$.) Thus the super vertex operator algebra $\Vsnat$ solves the Conway moonshine analogue of Thompson's Conjecture \ref{conj:Thompson}, and $\Vsnat$ is the natural analogue of the moonshine module $\Vnat$ for the Conway group, $\Co_0$.

The Conway module $\Vsnat$ is closely related to monstrous moonshine, for, in addition to being directly analogous to $\Vnat$, many of the discrete groups $\Gamma_g<\SL_2(\RR)$, for $g\in \MM$, also arise as invariance groups of principal moduli attached to $\Co_0$ via its action on $\Vsnat$. (Cf. \cite{vacogm}.) On the other hand, $\Vsnat$ enjoys a close connection to K3 sigma models, for it is shown in \cite{vacoeg} that the data defining a K3 sigma model gives rise to a bi-grading on a distinguished, canonically-twisted\footnote{Twisted modules for vertex algebras are discussed in \S\ref{sec:intro:mod}. In the case of a super vertex algebra there is a canonically defined involution coming from the superspace structure, called the {\em parity involution}. A module for a super vertex algebra that is twisted with respect to the parity involution is called {\em canonically-twisted}.} $\Vsnat$-module 
\begin{gather}\label{eqn:um-Vsnattwnr}
\Vsnat_\tw=\bigoplus_{n,r}(\Vsnat_\tw)_{n,r},
\end{gather} 
such that the associated graded traces of compatible elements of $\Co_0$ are weak Jacobi forms. 

More specifically, following \S2.1 of \cite{MR2955931}, we may regard the data of a K3 sigma model as equivalent\footnote{This convention excludes some interesting K3 sigma models, such as those considered in \cite{2014arXiv1406.0619C}. We refer to \cite{MR1479699,MR1416354} for detailed discussions of K3 sigma model moduli.} to a choice of positive-definite $4$-space $\Pi<I\!I_{4,20}\otimes_{\Z}\RR$ (cf. (\ref{eqn:class-II11})), such that 
\begin{gather}\label{eqn:um-PiperpII}
	\delta\in\Pi^{\perp}\cap I\!I_{4,20}\implies \langle\delta,\delta\rangle\neq -2.
\end{gather}
Then the supersymmetry preserving automorphism group of the nonlinear sigma model defined by $\Pi$ is the group 
\begin{gather}\label{eqn:um-GPi}
G_{\Pi}:=\Aut(I\!I_{4,20},\Pi),
\end{gather} 
composed of orthogonal transformations of $I\!I_{4,20}$ that extend to the identity on $\Pi$, according to \S2.2 of \cite{MR2955931}. One of the main results of \cite{MR2955931} is that $G_{\Pi}$ may be identified with a subgroup of $\Co_0$.

The construction of \cite{vacoeg} uses $\Vsnat_\tw$ to attach a graded trace function
\begin{gather}\label{eqn:um-phig}
	\phi_g(\tau,z):=-\sum_{n,r}\tr(\gt{z} g|(\Vsnat_\tw)_{n,r})q^ny^r
\end{gather}
to each pair $(g,\Pi)$, where $\Pi<I\!I_{4,20}\otimes_\Z\RR$ satisfies (\ref{eqn:um-PiperpII}), and $g\in G_\Pi$, and $\gt{z}$ is a certain naturally defined involution on $\Vsnat_\tw$ (analogous to the $\gt{z}$ in (\ref{eqn:um:sigma-Tsg})). It is shown in \cite{vacoeg} that $\phi_g$ is a weak Jacobi form of weight zero and index one for $\Gamma_0^J(N)$ (cf. (\ref{eqn:um-Zg})), for some $N$, for all choices of $\Pi$ and $g\in G_{\Pi}$. Moreover, $\phi_g$ is found to coincide with the $g$-twined K3 elliptic genus associated to the sigma model defined by $\Pi$, for all the examples computed in \cite{MR2955931,Gaberdiel:2012um,2014arXiv1403.2410V}. (These examples account for about half of the conjugacy classes of $\Co_0$ that fix a $4$-space in $\Lambda\otimes_{\Z}\RR$.) In particular, taking $g=e$ in (\ref{eqn:um-phig}) recovers the K3 elliptic genus (\ref{eqn:um-ZK3}), but in the form
\begin{gather}
	\phi_e(\tau,z)=
	-2^{11}\frac{\theta_2(\tau,z)^2}{\theta_2(\tau,0)^2}\frac{\Delta(2\tau)}{\Delta(\tau)}
	+\frac{1}{2}\frac{\theta_3(\tau,z)^2}{\theta_3(\tau,0)^2}\frac{\Delta(\tau)^2}{\Delta(2\tau)\Delta(\tau/2)}
	-\frac{1}{2}\frac{\theta_4(\tau,z)^2}{\theta_4(\tau,0)^2}\frac{\Delta(\tau/2)}{\Delta(\tau)},
\end{gather}
where $\Delta(\tau):=\eta(\tau)^{24}=q\prod_{n=1}^\infty(1-q^n)^{24}$.

Thus $\Vsnat_\tw$ serves as a kind of universal object for K3 sigma models. This is interesting, for generally it is difficult to construct the Hilbert spaces underlying a K3 sigma model, and therefore difficult to compute the associated twined K3 elliptic genera, for instance, for all but a few special examples.

In \cite{2013arXiv1309.6528H}, Huybrechts has related the positive-definite $4$-spaces $\Pi<I\!I_{4,20}\otimes_\Z\RR$ satisfying (\ref{eqn:um-PiperpII}) to pairs $(X,\sigma)$, where $X$ is a projective complex K3 surface, and $\sigma$ is a stability condition on the bounded derived category of coherent sheaves on $X$. In this way he has obtained an alternative analogue of Mukai's result \cite{Mukai}, whereby symplectic automorphisms of K3 surfaces are replaced by symplectic derived autoequivalences. (The results of \cite{vacoeg} are formulated in this language.)

A number of the functions $Z_g$ (cf. (\ref{eqn:um-Zg})) arising in Mathieu moonshine are realized as $\phi_g$ for some $g\in \Co_0$. So the construction of \cite{vacoeg} relates $\Vsnat$ to umbral moonshine, but the connection goes deeper, for it is shown in \cite{vacoeg} that a natural generalization of the definition (\ref{eqn:um-phig}) recovers a number of the Jacobi forms attached to other root systems of the form $X=A_n^d$ (cf. \S\ref{sec:um:niemeier}), beyond the special case $X=A_1^{24}$. It is interesting to compare this to the results of \cite{2014arXiv1406.0619C} (see also the precursor \cite{2013arXiv1307.7717H}), which demonstrate a role for K3 surface geometry in all cases of umbral moonshine (i.e., for all the Niemeier root systems), by considering sigma models attached to K3 surfaces admitting du Val singularities. Indeed, many of the Jacobi forms computed in \cite{2014arXiv1406.0619C} appear also in \cite{vacoeg}. 

From the discussion above we see that the Conway module $\Vsnat$ provides evidence for a deep connection between monstrous and umbral moonshine. Further support for the notion that monstrous and umbral moonshine share a common origin is obtained in \cite{2014arXiv1403.3712O}, where the generalized Borcherds products of \cite{MR2726107} are used to relate the trace functions of monstrous and umbral moonshine directly.

We elaborate now upon the results of \cite{2014arXiv1406.5502C,2015arXiv150307219C}, which, as mentioned at the end of \S\ref{sec:um:mods}, fall outside of umbral moonshine as formulated in \cite{MUM}, but are nonetheless related. Actually, these works are further applications of the Conway moonshine module $\Vsnat$, for in \cite{2014arXiv1406.5502C} the canonically-twisted $\Vsnat$-module $\Vsnat_\tw$ (cf. (\ref{eqn:um-Vsnattwnr})) is equipped with module structures for the $N=2$ and $N=4$ superconformal algebras, which in turn give rise to an assignment of distinguished vector-valued mock modular forms to elements of the sporadic simple Mathieu groups $M_{23}$ and $M_{22}$, respectively. This work furnishes the first examples of concretely constructed modules for sporadic simple groups, such that the associated graded trace functions define mock modular forms. The methods of \cite{2014arXiv1406.5502C} are extended in \cite{2015arXiv150307219C}, to the case of the ${Spin}(7)$ algebra (a certain extension of the $N=1$ superconformal algebra, cf. \cite{2014arXiv1412.2804B} and references therein) and vector-valued mock modular forms for $M_{24}$ are obtained. Interestingly, Conway's sporadic groups $\Co_2$ and $\Co_3$, and the sporadic groups of McLaughlin and Higman--Sims (all rather larger than $M_{24}$, or any of the other umbral groups $G^X$) also appear in the analysis of \cite{2014arXiv1406.5502C,2015arXiv150307219C}.

As a further indication of the important role that K3 sigma models will play in illuminating umbral moonshine, we mention the interesting work \cite{Gaberdiel:2013psa,MR3106313,Taormina:2013mda,Taormina:2013jza}, which seeks to explain the Mathieu moonshine observation by formulating a precise mechanism for combining symmetries of distinct K3 sigma models into a single group. We note in particular, that a fixed-point-free maximal subgroup of $M_{24}$ is constructed in this way in \cite{Taormina:2013jza}.

We conclude this section with references \cite{Cheng:2013kpa,Harrison:2013bya,2013arXiv1307.7717H,Harvey:2014cva,2014arXiv1407.3181K,Paquette:2014rma,Wrase:2014fja} to a number of other occurrences of umbral groups in geometry and physics, all promising connections to Mathieu moonshine, or umbral moonshine more generally. We also note the recent work \cite{ChedeLWha_GUM}, which analyzes all cases of generalized umbral moonshine, thereby extending the investigation of generalized Mathieu moonshine that was initiated in \cite{MR3108775}.

\section{Open Problems}\label{sec:open}

We conclude the article by identifying some open problems for future research that are suggested by our results in \S\ref{sec:tower} and \S\ref{sec:mmdist}, and the developments described in \S\ref{sec:um}.

\begin{problem}
We have seen that the known connections between monstrous moonshine and physics owe much to the Frenkel--Lepowsky--Meurman construction of the moonshine module $\Vnat$, and its associated vertex operator algebra structure. Just as the vertex operator algebra structure on $\Vnat$ gives a strong solution to Thompson's conjecture, Conjecture \ref{conj:Thompson}, we can expect concrete constructions of the $\check{K}^X$---whose existence is now guaranteed thanks to Theorem \ref{thm:um:mods}---to be necessary for the elucidation of the physical origins of umbral moonshine. 
As we have described in \S\ref{sec:um:mods}, progress on this problem has been obtained recently in \cite{umvan2,mod3e8,umvan4}, and the related work \cite{2014arXiv1406.5502C} may also be useful, in the determination of a general, algebraic solution to the module problem for umbral moonshine.
\end{problem}

\begin{problem}
Norton's generalized moonshine conjectures were discussed in \S\ref{sec:intro:mod}, and generalized umbral moonshine has been investigated in \cite{ChedeLWha_GUM,MR3108775}. 
A special case of generalized moonshine for the Conway group is established in \cite{vacogm}, but the full formulation and proof of generalized Conway moonshine remains open. Given the close connections between $\Vsnat$ and umbral moonshine discussed in \S\ref{sec:um:sigma}, it will be very interesting to determine the precise relationship between the corresponding generalized moonshine theories. We can expect that the elucidation of these structures will be necessary for a full understanding of the role that umbral moonshine plays in physics. 
\end{problem}

\begin{problem}
As discussed in \S\ref{sec:um:niemeier}, the 
fact (cf. Theorem \ref{thm:radsums-TgTGammag}) that the McKay--Thompson series of monstrous moonshine are realized as Rademacher sums admits conjectural analogues for umbral moonshine. (See Conjecture 3.2 of \cite{mumcor} for a precise formulation.) So far this has been established only for $X=A_1^{24}$, corresponding to Mathieu moonshine (cf. \cite{Cheng2011}), and the general case remains open. 
As explained in \S\ref{sec:um:niemeier}, a positive solution to Conjecture 3.2 of \cite{mumcor} will establish an umbral moonshine counterpart to the principal modulus/genus zero property of monstrous moonshine. 
\end{problem}

\begin{problem}
As discussed above, in \S\ref{sec:intro:radsums} and in \S\ref{sec:um:niemeier}, Rademacher sums play a crucial role in both monstrous and umbral moonshine, by serving to demonstrate the distinguished nature of the automorphic functions arising in each setting. In the case of monstrous moonshine, the Rademacher sum property also indicates a potentially powerful connection to physics, via three-dimensional gravity, as explained in \S\ref{sec:intro:qgrav}. Thus it is an interesting problem to formulate umbral moonshine analogues of the conjectures of \cite{DunFre_RSMG}, discussed here in \S\S\ref{sec:intro:qgrav},\ref{sec:tower}.
\end{problem}

\begin{problem}
Relatedly, it follows from the results of \cite{DunFre_RSMG} that the McKay--Thompson series $T^s_g$ (cf. (\ref{eqn:um:sigma-Tsg})), attached to elements $g$ in the Conway group $\Co_0$ via its action on $\Vsnat$ (cf. \S\ref{sec:um:sigma}), are also realized as Rademacher sums. That is, Theorem \ref{thm:radsums-TgTGammag} generalizes naturally to Conway moonshine. Thus it is natural to investigate the higher order analogues $V^{s(-m)}$ of the super vertex operator algebra $\Vsnat$, and the Conway group analogues of the three-dimensional gravity conjectures of \cite{DunFre_RSMG}. Some perspectives on this are available in \cite{Hoe_SDVOSALgeMinWt,MalWit_QGPtnFn3D,Witten2007}.
\end{problem}

\begin{problem}
The notion of extremal vertex operator algebra is defined by (\ref{eqn:qgrav-ext}). So far the only known example is the moonshine module $\Vnat$. As explained in \S\ref{sec:intro:qgrav}, the construction of a series of extremal vertex operator algebras, with central charges the positive integer multiples of $24$, would go a long way towards the construction of a chiral three-dimensional quantum gravity theory. This problem also has a super analogue, cf. \cite{Hoe_SDVOSALgeMinWt}.
\end{problem}

\begin{problem}
The monster modules $V^{(-m)}$, defined in \S\ref{sec:tower}, cannot be vertex operator algebras for $m>1$, for an action of the Virasoro algebra would generate a non-zero vector with non-positive eigenvalue for $L(0)-{\bf c}/24$, and this would violate the condition $\sum_n\dim(V^{(-m)}_n)q^n=q^{-m}+O(q)$. Nonetheless, we may ask: do the $V^{(-m)}$ admit vertex algebra structure? Or, is there another natural algebraic structure, which characterizes the monster group actions on the $V^{(-m)}$? 
\end{problem}

\begin{problem}
Relatedly, $\Vir\times\MM$-modules satisfying the extremal condition (\ref{eqn:qgrav-ext}) may be easily constructed from the monster modules $V^{(-m)}$, as is mentioned at the conclusion of \S\ref{sec:tower}.
What is the algebraic significance of these spaces? We know from \cite{Gaiotto:2008jt,Hoe_SDVOSALgeMinWt} that they cannot admit vertex operator algebra structure compatible with the given $\Vir\times \MM$ actions. Is there some other kind of algebraic structure which is compatible with this symmetry?
\end{problem}

\begin{problem}\label{prob:Griess}
The result of Corollary \ref{cor:dist} implies that the $V^{(-m)}_n$ and $W^{\natural}_n$ tend to direct sums of copies of the regular representation of $\MM$, as $n\to \infty$. This means that if we write each homogeneous subspace of each module, particularly the moonshine module $\Vnat$, as the sum of a {free} part (free over the group ring of $\MM$) and a non-free part, then the non-free part tends to $0$ (relative to the free part) as $n\to \infty$. Is there something to be learnt from an analysis of the non-free parts of $V^{(-m)}$, $W^{\natural}$? As one can see from Table \ref{tab:dist-exp}, some irreducible representations of the monster feature more often in the non-free part than others. We thank Bob Griess for posing this question.
\end{problem}

\begin{problem}
It is a natural problem to generalize the methods employed in \S\ref{sec:mmdist}, to determine the distributions of irreducible representations of the umbral groups $G^X$ (cf. (\ref{eqn:um-GX})) in the umbral moonshine modules $\check{K}^X$ (cf. (\ref{eqn:um-KX})). Similarly, one may also consider the distributions of the $\Co_0$-modules in $\Vsnat=V^{s(-1)}$, and in the $V^{s(-m)}$ more generally. In all of these cases, questions analogous to Problem \ref{prob:Griess} may reward investigation.
\end{problem}

\begin{problem}\label{prob:qdim}
In Corollary \ref{cor:qdims} we have used our asymptotic results (Theorem \ref{distribution}) on multiplicities of monster modules inside $\Vnat$ to compute the quantum dimensions of the monster orbifold, and in so doing confirmed a special case of Conjecture 6.7 of \cite{MR3105758}. How generally can this method be applied, to orbifolds $V^G$, where $V$ is a vertex operator algebra and $G$ is a compact group of automorphisms of $V$? Note the following strengths of the asymptotic approach: thanks to Proposition 3.6 of \cite{MR3105758}, we did not need to verify that the monster orbifold of $\Vnat$ is a rational vertex operator algebra, nor did we need to assume the positivity condition of Theorem 6.3 in \cite{MR3105758} (which in any case does not hold for $\Vnat$).
\end{problem}

\clearpage

\appendix
\section{Monstrous Groups}\label{Appendix}

The table below contains the symbols $\Gamma_g=N||h+e,f,\dots,$ for each conjugacy class of the monster. Following \cite{ConMcKSebDiscGpsM}, if $h=1,$ we omit the `$||1$' from the symbol. If $\mathcal W_g=\{1\}$, then we write $N||h,$ whereas if it contains every exact divisor of $N/h,$ we write $N||h+$.

The naming of the conjugacy classes is as in \cite{ATLAS}. We follow the convention of writing $23AB$ as a shorthand for $23A\cup 23B$, since these conjugacy classes are related by inversion in the monster. (There are 22 such pairs. The Monster group has $196$ conjugacy classes in total.) Since the monstrous McKay--Thompson series have real coefficients, $T_g=T_{g^{-1}}$ and $\Gamma_g=\Gamma_{g^{-1}}$ for all $g$ in the monster. Note however that $27A$ and $27B$ are not related by inversion, even though $\Gamma_{27A}=\Gamma_{27B}$. To the authors best knowledge, this coincidence has not yet been explained.

\begin{multicols}{3}
\begin{tabular}{ll}
1A  & $ 1 $\\
2A  & $ 2+ $\\
2B  & $ 2 $\\
3A  & $ 3+ $\\
3B  & $ 3 $\\
3C  & $ 3||3 $\\
4A  & $ 4+ $\\
4B  & $ 4||2+ $\\
4C  & $ 4 $\\
4D  & $ 4||2 $\\
5A  & $ 5+ $\\
5B  & $ 5 $\\
6A  & $ 6+ $\\
6B  & $ 6+6 $\\
6C  & $ 6+3 $\\
6D  & $ 6+2 $\\
6E  & $ 6 $\\
6F  & $ 6||3 $\\
7A  & $ 7+ $\\
7B  & $ 7 $\\
8A  & $ 8+ $\\
8B  & $ 8||2+ $\\
8C  & $ 8||4+ $\\
8D  & $ 8||2 $\\
8E  & $ 8 $\\
8F  & $ 8||4 $\\
9A  & $ 9+ $\\
9B  & $ 9 $\\
10A  & $ 10+ $\\
10B  & $ 10+5 $\\
10C  & $ 10+2 $\\
10D  & $ 10+10 $\\
10E  & $ 10 $\\
11A  & $ 11+ $\\
12A  & $ 12+ $\\
12B  & $ 12+4 $
\end{tabular}

\columnbreak

\begin{tabular}{ll}

12C  & $ 12||2+ $\\
12D  & $ 12||3+ $\\
12E  & $ 12+3 $\\
12F  & $ 12||2+6 $\\
12G  & $ 12||2+2 $\\
12H  & $ 12+12 $\\
12I  & $ 12 $\\
12J  & $ 12||6 $\\
13A  & $ 13+ $\\
13B  & $ 13 $\\
14A  & $ 14+ $\\
14B  & $ 14+7 $\\
14C  & $ 14+14 $\\
15A  & $ 15+ $\\
15B  & $ 15+5 $\\
15C  & $ 15+15 $\\
15D  & $ 15||3 $\\
16A  & $ 16||2+ $\\
16B  & $ 16 $\\
16C  & $ 16+ $\\
17A  & $ 17+ $\\
18A  & $ 18+2 $\\
18B  & $ 18+ $\\
18C  & $ 18+9 $\\
18D  & $ 18 $\\
18E  & $ 18+18 $\\
19A  & $ 19+ $\\
20A  & $ 20+ $\\
20B  & $ 20||2+ $\\
20C  & $ 20+4 $\\
20D  & $ 20||2+5 $\\
20E  & $ 20||2+10 $\\
20F  & $ 20+20 $\\
21A  & $ 21+ $\\
21B  & $ 21+3 $\\
21C  & $ 21||3+ $\\
\end{tabular}

\columnbreak

\begin{tabular}{ll}

21D  & $ 21+21 $\\
22A  & $ 22+ $\\
22B  & $ 22+11 $\\
23AB  & $ 23+ $\\
24A  & $ 24||2+ $\\
24B  & $ 24+ $\\
24C  & $ 24+8 $\\
24D  & $ 24||2+3 $\\
24E  & $ 24||6+ $\\
24F  & $ 24||4+6 $\\
24G  & $ 24||4+2 $\\
24H  & $ 24||2+12 $\\
24I  & $ 24+24 $\\
24J  & $ 24||12 $\\
25A  & $ 25+ $\\
26A  & $ 26+ $\\
26B  & $ 26+26 $\\
27A  & $ 27+ $\\
27B  & $ 27+ $\\
28A  & $ 28||2+ $\\
28B  & $ 28+ $\\
28C  & $ 28+7 $\\
28D  & $ 28||2+14 $\\
29A  & $ 29+ $\\
30A  & $ 30+6,10,15 $\\
30B  & $ 30+ $\\
30C  & $ 30+3,5,15 $\\
30D  & $ 30+5,6,30 $\\
30E  & $ 30||3+10 $\\
30F  & $ 30+2,15,30 $\\
30G  & $ 30+15 $\\
31AB  & $ 31+ $\\
32A  & $ 32+ $\\
32B  & $ 32||2+ $\\
33A  & $ 33+11 $\\
33B  & $ 33+ $\\
\end{tabular}

\columnbreak

\begin{tabular}{ll}
34A  & $ 34+ $\\
35A  & $ 35+ $\\
35B  & $ 35+35 $\\
36A  & $ 36+ $\\
36B  & $ 36+4 $\\
36C  & $ 36||2+ $\\
36D  & $ 36+36 $\\
38A  & $ 38+ $\\
39A  & $ 39+ $\\
39B  & $ 39||3+ $\\
39CD  & $ 39+39 $\\
40A  & $ 40||4+ $\\
40B  & $ 40||2+ $\\
40CD  & $ 40||2+20 $\\
41A  & $ 41+ $\\
42A  & $ 42+ $\\
42B  & $ 42+6,14,21 $\\
42C  & $ 42||3+7 $\\
42D  & $ 42+3,14,42 $\\
44AB  & $ 44+ $\\
45A  & $ 45+ $\\
46AB  & $ 46+23 $\\
\end{tabular}

\columnbreak

\begin{tabular}{ll}
46CD  & $ 46+ $\\
47AB  & $ 47+ $\\
48A  & $ 48||2+ $\\
50A  & $ 50+ $\\
51A  & $ 51+ $\\
52A  & $ 52||2+ $\\
52B  & $ 52||2+26 $\\
54A  & $ 54+ $\\
55A  & $ 55+ $\\
56A  & $ 56+ $\\
56BC  & $ 56||4+14 $\\
57A  & $ 57||3+ $\\
59AB  & $ 59+ $\\
60A  & $ 60||2+ $\\
60B  & $ 60+ $\\
60C  & $ 60+4,15,60 $\\
60D  & $ 60+12,15,20 $\\
60E  & $ 60||2+5,6,30 $\\
60F  & $ 60||6+10 $\\
62AB  & $ 62+ $\\
66A  & $ 66+ $\\
66B  & $ 66+6,11,66 $\\
\end{tabular}

\columnbreak

\begin{tabular}{ll}
68A  & $ 68||2+ $\\
69AB  & $ 69+ $\\
70A  & $ 70+ $\\
70B  & $ 70+10,14,35 $\\
71AB  & $ 71+ $\\
78A  & $ 78+ $\\
78BC  & $ 78+6,26,39 $\\
84A  & $ 84||2+ $\\
84B  & $ 84||2+6,14,21 $\\
84C  & $ 84||3+ $\\
87AB  & $ 87+ $\\
88AB  & $ 88||2+ $\\
92AB  & $ 92+ $\\
93AB  & $ 93||3+ $\\
94AB  & $ 94+ $\\
95AB  & $ 95+ $\\
104AB  & $ 104||4+ $\\
105A  & $ 105+ $\\
110A  & $ 110+ $\\
119AB  & $ 119+ $\\
\end{tabular}

\end{multicols}

\providecommand{\bysame}{\leavevmode\hbox to3em{\hrulefill}\thinspace}
\providecommand{\MR}{\relax\ifhmode\unskip\space\fi MR }

\providecommand{\MRhref}[2]{  
\href{http://www.ams.org/mathscinet-getitem?mr=#1}{#2}
}
\providecommand{\href}[2]{#2}


\begin{thebibliography}{100}


\bibitem{MR1200252}
D.~Alexander, C.~Cummins, J.~McKay, and C.~Simons, \emph{Completely replicable
  functions}, Groups, combinatorics \& geometry ({D}urham, 1990), London Math.
  Soc. Lecture Note Ser., vol. 165, Cambridge Univ. Press, Cambridge, 1992,
  pp.~87--98. \MR{1200252 (94g:11029)}

\bibitem{MR2072045}
M. Aschbacher, \emph{The status of the classification of the finite simple
  groups}, Notices Amer. Math. Soc. \textbf{51} (2004), no.~7, 736--740.
  \MR{2072045}

\bibitem{MR1479699}
P.~S. Aspinwall, \emph{{$K3$} surfaces and string duality}, Fields, strings and
  duality ({B}oulder, {CO}, 1996), World Sci. Publ., River Edge, NJ, 1997,
  pp.~421--540. \MR{1479699 (98j:81243)}

\bibitem{MR1416354}
P.~S. Aspinwall and D.~R. Morrison, \emph{String theory on {$K3$}
  surfaces}, Mirror symmetry, {II}, AMS/IP Stud. Adv. Math., vol.~1, Amer.
  Math. Soc., Providence, RI, 1997, pp.~703--716. \MR{1416354 (97i:81128)}

\bibitem{MR1463812}
A.~Astashkevich, \emph{On the structure of {V}erma modules over {V}irasoro and
  {N}eveu-{S}chwarz algebras}, Comm. Math. Phys. \textbf{186} (1997), no.~3,
  531--562. \MR{1463812 (98h:17028)}

\bibitem{MR1043170}
M.~F. Atiyah, \emph{{$K$}-theory}, second ed., Advanced Book Classics,
  Addison-Wesley Publishing Company, Advanced Book Program, Redwood City, CA,
  1989, Notes by D. W. Anderson. \MR{1043170 (90m:18011)}

\bibitem{Bagchi:2012yk}
A. Bagchi, S. Detournay, and D. Grumiller, \emph{{Flat-Space
  Chiral Gravity}}, Phys.Rev.Lett. \textbf{109} (2012), 151301.

\bibitem{MR1236812}
M. Ba{\~n}ados, M. Henneaux, C.Teitelboim, and J. Zanelli,
  \emph{Geometry of the {$2+1$} black hole}, Phys. Rev. D (3) \textbf{48}
  (1993), no.~4, 1506--1525. \MR{1236812 (94g:83079)}

\bibitem{BanTeiZan_BTZBlckHle}
M. Ba{\~n}ados, C. Teitelboim, and J. Zanelli, \emph{Black hole
  in three-dimensional spacetime}, Phys. Rev. Lett. \textbf{69} (1992), no.~13,
  1849--1851. \MR{MR1181663 (93e:83042)}

\bibitem{MR2030225}
W.~P. Barth, K. Hulek, C. A.~M. Peters, and A.  Van~de Ven,
  \emph{Compact complex surfaces}, second ed., Ergebnisse der Mathematik und
  ihrer Grenzgebiete. 3. Folge. A Series of Modern Surveys in Mathematics
  [Results in Mathematics and Related Areas. 3rd Series. A Series of Modern
  Surveys in Mathematics], vol.~4, Springer-Verlag, Berlin, 2004. \MR{2030225
  (2004m:14070)}

\bibitem{MR785216}
A. Beauville~et al., \emph{G\'eom\'etrie des surfaces {$K3$}: modules et
  p\'eriodes}, Soci\'et\'e Math\'ematique de France, Paris, 1985, Papers from
  the seminar held in Palaiseau, October 1981--January 1982, Ast{\'e}risque No.
  126 (1985). \MR{785216 (87h:32052)}

\bibitem{2014arXiv1407.4479B}
L.~{Beneish} and H.~{Larson}, \emph{{Traces of singular values of
  Hauptmoduln}}, Int. J. Numb. Th., accepted for publication.


\bibitem{2014arXiv1412.2804B}
N. {Benjamin}, S. M. {Harrison}, S. {Kachru}, N. M. {Paquette}, and D. {Whalen}, \emph{{On the elliptic genera of manifolds of Spin(7) holonomy}}, ArXiv e-prints (2014).

\bibitem{MR0179168}
B.~J. Birch and H.~P.~F. Swinnerton-Dyer, \emph{Notes on elliptic curves.
  {II}}, J. Reine Angew. Math. \textbf{218} (1965), 79--108. \MR{0179168 (31
  \#3419)}

\bibitem{Bor_PNAS}
R.  Borcherds, \emph{Vertex algebras, {Kac}-{Moody} algebras, and the
  {Monster}}, Proceedings of the National Academy of Sciences, U.S.A.
  \textbf{83} (1986), no.~10, 3068--3071.

\bibitem{Bor_GKM}
\bysame, \emph{Generalized {K}ac-{M}oody algebras}, J. Algebra
  \textbf{115} (1988), no.~2, 501--512. \MR{943273 (89g:17004)}

\bibitem{MR1069386}
\bysame, \emph{The monster {L}ie algebra}, Adv. Math. \textbf{83}
  (1990), no.~1, 30--47. \MR{1069386 (91k:17027)}

\bibitem{borcherds_monstrous}
\bysame, \emph{{Monstrous moonshine and monstrous Lie superalgebras}}, Invent.
  Math. \textbf{109, No.2} (1992), 405--444.

\bibitem{Bor_AutFmsSngGrs}
\bysame, \emph{Automorphic forms with singularities on {G}rassmannians},
  Invent. Math. \textbf{132} (1998), no.~3, 491--562. \MR{1625724 (99c:11049)}

\bibitem{MR1865087}
\bysame, \emph{Quantum vertex algebras}, Taniguchi {C}onference on
  {M}athematics {N}ara '98, Adv. Stud. Pure Math., vol.~31, Math. Soc. Japan,
  Tokyo, 2001, pp.~51--74. \MR{1865087 (2002k:17054)}

\bibitem{MR1620702}
R. Borcherds, L. Katzarkov, T. Pantev, and N.~I. Shepherd-Barron,
  \emph{Families of {$K3$} surfaces}, J. Algebraic Geom. \textbf{7} (1998),
  no.~1, 183--193. \MR{1620702 (99d:14029)}

\bibitem{MR1390654}
R. Borcherds and A. J.~E. Ryba, \emph{Modular {M}oonshine. {II}},
  Duke Math. J. \textbf{83} (1996), no.~2, 435--459. \MR{1390654 (98b:17030)}

\bibitem{MR0089473}
R. Bott, \emph{Homogeneous vector bundles}, Ann. of Math. (2) \textbf{66}
  (1957), 203--248. \MR{0089473 (19,681d)}

\bibitem{jacteg}
K.~{Bringmann}, J.~{Duncan}, and L.~{Rolen}, \emph{{Maass-Jacobi Poincar\'e
  series and Mathieu Moonshine}}, ArXiv e-prints (2014).

\bibitem{BringmannOno2006}
K. Bringmann and K. Ono, \emph{The {$f(q)$} mock theta function
  conjecture and partition ranks}, Invent. Math. \textbf{165} (2006), no.~2,
  243--266. \MR{2231957 (2007e:11127)}

\bibitem{BriOno_CoeffHmcMaaFrms}
\bysame, \emph{Coefficients of harmonic maass forms}, Partitions, q-Series, and
  Modular Forms (Krishnaswami Alladi and Frank Garvan, eds.), Developments in
  Mathematics, vol.~23, Springer New York, 2012, pp.~23--38.

\bibitem{MR2680205}
K. Bringmann and O. Richter, \emph{Zagier-type dualities and lifting
  maps for harmonic {M}aass-{J}acobi forms}, Adv. Math. \textbf{225} (2010),
  no.~4, 2298--2315. \MR{2680205 (2011h:11052)}

\bibitem{MR2805582}
\bysame, \emph{Exact formulas for coefficients of {J}acobi forms}, Int. J.
  Number Theory \textbf{7} (2011), no.~3, 825--833. \MR{2805582 (2012f:11095)}

\bibitem{MR836000}
J. D. Brown and M. Henneaux, \emph{Central charges in the canonical realization of asymptotic symmetries: an example from three-dimensional gravity}, Comm. Math. Phys. \textbf{104} (1986), no.~2, 207--226. \MR{836000 (87d:83034)}


\bibitem{MR2726107}
J. H.  Bruinier and K. Ono, \emph{Heegner divisors, {$L$}-functions and harmonic
  weak {M}aass forms}, Ann. of Math. (2) \textbf{172} (2010), no.~3,
  2135--2181. \MR{2726107 (2012c:11101)}


\bibitem{BruFun_TwoGmtThtLfts}
J. H. Bruinier and J. Funke, \emph{On two geometric theta lifts}, Duke
  Math. J. \textbf{125} (2004), no.~1, 45--90. \MR{2097357 (2005m:11089)}

\bibitem{BruinierKohnenOno}
J. H. Bruinier, W. Kohnen, and K. Ono, \emph{The arithmetic of the
  values of modular functions and the divisors of modular forms}, Compos. Math.
  \textbf{140} (2004), no.~3, 552--566. \MR{2041768 (2005h:11083)}


\bibitem{Carlip:2005zn}
S. Carlip, \emph{{Conformal field theory, (2+1)-dimensional gravity, and
  the BTZ black hole}}, Class.Quant.Grav. \textbf{22} (2005), R85--R124.

\bibitem{Carlip:2011vr}
\bysame, \emph{{Effective Conformal Descriptions of Black Hole Entropy}},
  Entropy \textbf{13} (2011), 1355--1379.

\bibitem{MR2728485}
S. Carnahan, \emph{Generalized moonshine {I}: genus-zero functions}, Algebra
  Number Theory \textbf{4} (2010), no.~6, 649--679. \MR{2728485 (2012b:11065)}

\bibitem{MR2904095}
\bysame, \emph{Generalized moonshine, {II}: {B}orcherds products}, Duke Math.
  J. \textbf{161} (2012), no.~5, 893--950. \MR{2904095}

\bibitem{Carnahan:2012gx}
\bysame, \emph{{Generalized Moonshine IV: Monstrous Lie algebras}},  (2012).

\bibitem{Car_MnsLieAlg}
\bysame, \emph{{Monstrous Lie Algebras}}, RIMS K\^oky\^uroku \textbf{1872}
  (2014).

\bibitem{MR2793423}
M.~Cheng, \emph{{$K3$} surfaces, {$N=4$} dyons and the {M}athieu
  group {$M_{24}$}}, Commun. Number Theory Phys. \textbf{4} (2010), no.~4,
  623--657. \MR{2793423 (2012e:11076)}

\bibitem{ChedeLWha_GUM}
M. Cheng, P. de Lange, and D. Whalen, \emph{{Generalised Umbral Moonshine}}, (2015).

\bibitem{2014arXiv1406.5502C}
M.~{Cheng}, X.~{Dong}, J.~{Duncan}, S.~{Harrison}, S.~{Kachru}, and
  T.~{Wrase}, \emph{{Mock Modular Mathieu Moonshine Modules}}, ArXiv e-prints
  (2014).

\bibitem{Cheng:2013kpa}
M.~Cheng, X. ~Dong, J. Duncan, J. Harvey, S. Kachru, and T.~{Wrase}, \emph{{Mathieu Moonshine and N=2 String Compactifications}}, JHEP
  \textbf{1309} (2013), 030.


\bibitem{MR2985326}
M.~Cheng and J.~Duncan, \emph{The largest {M}athieu group
  and (mock) automorphic forms}, String-{M}ath 2011, Proc. Sympos. Pure Math.,
  vol.~85, Amer. Math. Soc., Providence, RI, 2012, pp.~53--82. \MR{2985326}


\bibitem{Cheng2011}
\bysame, \emph{{On Rademacher Sums, the Largest Mathieu Group, and the
  Holographic Modularity of Moonshine}}, Commun. Number Theory Phys. \textbf{6}
  (2012), no.~3.

\bibitem{2012arXiv1212.0906C}
\bysame, \emph{{On the Discrete Groups of
  Mathieu Moonshine}}, Perspectives in Representation Theory, Contemporary
  Mathematics \textbf{610} (2014).


\bibitem{Cheng:2012qc}
\bysame, \emph{Rademacher sums and rademacher
  series}, Conformal Field Theory, Automorphic Forms and Related Topics
  (Winfried Kohnen and Rainer Weissauer, eds.), Contributions in Mathematical
  and Computational Sciences, vol.~8, Springer Berlin Heidelberg, 2014,
  pp.~143--182 (English).


\bibitem{umvan2}
\bysame, \emph{{M}eromorphic {J}acobi {F}orms
  of {H}alf-{I}ntegral {I}ndex and {U}mbral {M}oonshine {M}odules},  (2015).

\bibitem{UM}
M.~Cheng, J. Duncan, and J.~Harvey, \emph{{Umbral
  Moonshine}}, Commun. Number Theory Phys. \textbf{8} (2014), no.~2.

\bibitem{MUM}
\bysame, \emph{{Umbral Moonshine and the Niemeier Lattices}}, Research in the
  Mathematical Sciences \textbf{1} (2014), no.~3.

\bibitem{mumcor}
\bysame, \emph{{Weight One Jacobi Forms and Umbral Moonshine}}, (2015).


\bibitem{2014arXiv1406.0619C}
M.~{Cheng} and S.~{Harrison}, \emph{{Umbral Moonshine and K3 Surfaces}},
  ArXiv e-prints (2014).


\bibitem{2015arXiv150307219C}
M. C. N. {Cheng}, S. M. {Harrison}, S. {Kachru}, and D. {Whalen}, \emph{{Exceptional Algebra and Sporadic Groups at c=12}}, ArXiv e-prints (2015).


\bibitem{MR0237634}
J.~H. Conway, \emph{A perfect group of order {$8,315,553,613,086,720,000$} and
  the sporadic simple groups}, Proc. Nat. Acad. Sci. U.S.A. \textbf{61} (1968),
  398--400. \MR{0237634 (38 \#5915)}

\bibitem{Con_ChrLeeLat}
\bysame, \emph{A characterisation of {L}eech's lattice}, Invent. Math.
  \textbf{7} (1969), 137--142. \MR{0245518 (39 \#6824)}

\bibitem{MR0248216}
\bysame, \emph{A group of order {$8,315,553,613,086,720,000$}}, Bull. London
  Math. Soc. \textbf{1} (1969), 79--88. \MR{0248216 (40 \#1470)}

\bibitem{Con_CnstM}
\bysame, \emph{A simple construction for the {F}ischer-{G}riess monster group},
  Invent. Math. \textbf{79} (1985), no.~3, 513--540. \MR{MR782233 (86h:20019)}

\bibitem{ATLAS}
J.H. Conway, R.T. Curtis, S.P. Norton, R.A. Parker, and R.A. Wilson,
  \emph{{Atlas of finite groups. Maximal subgroups and ordinary characters for
  simple groups. With comput. assist. from J. G. Thackray.}}, Oxford: Clarendon
  Press, 1985.

\bibitem{ConMcKSebDiscGpsM}
J. Conway, J. McKay, and A. Sebbar, \emph{On the discrete groups of
  {M}oonshine}, Proc. Amer. Math. Soc. \textbf{132} (2004), 2233--2240.


\bibitem{MR554399}
J.~H. Conway and S.~P. Norton, \emph{Monstrous moonshine}, Bull. London Math.
  Soc. \textbf{11} (1979), no.~3, 308--339. \MR{554399 (81j:20028)}

\bibitem{ConParSlo_CvgRadLeeLat}
J.~H. Conway, R.~A. Parker, and N.~J.~A. Sloane, \emph{The covering radius of
  the {L}eech lattice}, Proc. Roy. Soc. London Ser. A \textbf{380} (1982),
  no.~1779, 261--290. \MR{660415 (84m:10022b)}

\bibitem{MR666350}
J.~H. Conway and N.~J.~A. Sloane, \emph{On the enumeration of lattices of
  determinant one}, J. Number Theory \textbf{15} (1982), no.~1, 83--94.
  \MR{666350 (84b:10047)}

\bibitem{MR661720}
\bysame, \emph{Twenty-three constructions for the {L}eech lattice}, Proc. Roy.
  Soc. London Ser. A \textbf{381} (1982), no.~1781, 275--283. \MR{661720
  (84m:10020)}

\bibitem{MR1662447}
\bysame, \emph{Sphere packings, lattices and groups}, third ed., Grundlehren
  der Mathematischen Wissenschaften [Fundamental Principles of Mathematical
  Sciences], vol. 290, Springer-Verlag, New York, 1999, With additional
  contributions by E. Bannai, R. E. Borcherds, J. Leech, S. P. Norton, A. M.
  Odlyzko, R. A. Parker, L. Queen and B. B. Venkov. \MR{1662447 (2000b:11077)}

\bibitem{MR3271177}
T. Creutzig and G. H\"ohn, \emph{Mathieu moonshine and the geometry of {K}3 surfaces}, Commun. Number Theory Phys. \textbf{8} (2014), no.~2, 295--328. \MR{3271177}

\bibitem{MR1465329}
C.~J. Cummins and T.~Gannon, \emph{Modular equations and the genus zero
  property of moonshine functions}, Invent. Math. \textbf{129} (1997), no.~3,
  413--443. \MR{1465329 (98k:11046)}

\bibitem{MR1010366}
R.~T. Curtis, \emph{Natural constructions of the {M}athieu groups}, Math. Proc.
  Cambridge Philos. Soc. \textbf{106} (1989), no.~3, 423--429. \MR{1010366
  (90h:20020)}

\bibitem{Dabholkar:2012nd}
A. Dabholkar, S. Murthy, and D. Zagier, \emph{{Quantum Black Holes,
  Wall Crossing, and Mock Modular Forms}},  (2012).

\bibitem{MR965585}
S.~Deser and R.~Jackiw, \emph{Erratum: ``{T}opologically massive gauge
  theories'' [{A}nn.\ {P}hysics {\bf 140} (1982), no.\ 2, 372--411; {MR}0665601
  (84j:81128)] by {D}eser, {J}ackiw and {S}. {T}empleton}, Ann. Physics
  \textbf{185} (1988), no.~2, 406. \MR{965585 (90a:81077b)}

\bibitem{MR665601}
S.~Deser, R.~Jackiw, and S.~Templeton, \emph{Topologically massive gauge
  theories}, Ann. Physics \textbf{140} (1982), no.~2, 372--411. \MR{665601
  (84j:81128)}

\bibitem{Deser:1982vy}
S. Deser, R.~Jackiw, and S.~Templeton, \emph{{Three-Dimensional Massive
  Gauge Theories}}, Phys.Rev.Lett. \textbf{48} (1982), 975--978.

\bibitem{Dijkgraaf2007}
R. Dijkgraaf, J. Maldacena, G. Moore, and E. Verlinde, \emph{A
  black hole farey tail},  (2007).

\bibitem{Dijkgraaf1997}
R. Dijkgraaf, G. Moore, E. Verlinde, and H. Verlinde,
  \emph{{Elliptic genera of symmetric products and second quantized strings}},
  Commun. Math. Phys. \textbf{185} (1997), 197--209.

\bibitem{MR968697}
L.~Dixon, P.~Ginsparg, and J.~Harvey, \emph{Beauty and the beast:
  superconformal symmetry in a {M}onster module}, Comm. Math. Phys.
  \textbf{119} (1988), no.~2, 221--241. \MR{968697 (90b:81119)}

\bibitem{MR818423}
L.~Dixon, J.~A. Harvey, C.~Vafa, and E.~Witten, \emph{Strings on orbifolds},
  Nuclear Phys. B \textbf{261} (1985), no.~4, 678--686. \MR{818423
  (87k:81104a)}

\bibitem{MR851703}
\bysame, \emph{Strings on orbifolds. {II}}, Nuclear Phys. B \textbf{274}
  (1986), no.~2, 285--314. \MR{851703 (87k:81104b)}

\bibitem{MR3105758}
C. Dong, X. Jiao, and F. Xu, \emph{Quantum dimensions and quantum {G}alois theory}, Trans. Amer. Math. Soc., \textbf{365} (2013), no.~12, 6441--6469. \MR{3105758}


\bibitem{MR1372716}
C. Dong, H. Li, and G. Mason, \emph{Some twisted sectors for
  the {M}oonshine module}, Moonshine, the {M}onster, and related topics
  ({S}outh {H}adley, {MA}, 1994), Contemp. Math., vol. 193, Amer. Math. Soc.,
  Providence, RI, 1996, pp.~25--43. \MR{1372716 (97b:17021)}

\bibitem{MR1420556}
\bysame, \emph{Compact automorphism groups of vertex operator algebras}, Internat. Math. Res. Notices, (1996), no.~18, 913--921. \MR{1420556 (98a:17044)}

\bibitem{MR1615132}
\bysame, \emph{Twisted representations of vertex operator algebras}, Math. Ann.
  \textbf{310} (1998), no.~3, 571--600. \MR{1615132 (99d:17030)}

\bibitem{Dong2000}
\bysame, \emph{Modular invariance of
  trace functions in orbifold theory and generalized {M}oonshine},
  Communications in Mathematical Physics \textbf{214} (2000), 1--56.

\bibitem{MR1284796}
C. Dong and G. Mason, \emph{Nonabelian orbifolds and the
  boson-fermion correspondence}, Comm. Math. Phys. \textbf{163} (1994), no.~3,
  523--559. \MR{1284796 (95i:17031)}

\bibitem{MR1430435}
\bysame, \emph{On quantum {G}alois theory}, Duke Math. J. \textbf{86} (1997), no.~2, 305--321. \MR{1430435 (97k:17042)}

\bibitem{MR1684904}
\bysame, \emph{Quantum {G}alois theory for compact {L}ie groups}, J. Algebra, \textbf{214} (1999), no.~1, 92--102. \MR{1684904 (2000g:17043b)}

\bibitem{Dun_VACo}
J. F. Duncan, \emph{Super-{M}oonshine for {C}onway's largest sporadic group},
  Duke Math. J. \textbf{139} (2007), no.~2, 255--315.

\bibitem{DunFre_RSMG}
J. F. R. Duncan and I. B. Frenkel, \emph{Rademacher sums, moonshine and
  gravity}, Commun. Number Theory Phys. \textbf{5} (2011), no.~4, 1--128.

\bibitem{DGO_UMC1}
J. F. R. Duncan, M. Griffin and K. Ono, \emph{Proof of the Umbral Moonshine Conjecture}, ArXiv e-prints (2015).

\bibitem{mod3e8}
J. F. R. Duncan and J. A. Harvey, \emph{{T}he {U}mbral {M}ooonshine
  {M}odule for the {U}nique {U}nimodular {N}iemeier {R}oot {S}ystem}, ArXiv e-prints (2014).

\bibitem{vacogm}
J. F. R. {Duncan} and S.~{Mack-Crane}, \emph{{The Moonshine Module for Conway's
  Group}}, ArXiv e-prints (2014).


\bibitem{vacoeg}
\bysame, \emph{{Derived Equivalences of K3
  Surfaces and Twined Elliptic Genera}}, (2015).

\bibitem{umvan4}
J. Duncan and A. O'Desky, \emph{{Super Vertex Algebras, Meromorphic
  Jacobi Forms, and Umbral Moonshine}},  (2015).
  
  
\bibitem{2014arXiv1411.5354D}
J.~F.~R. {Duncan} and K.~{Ono}, \emph{{The Jack Daniels Problem}}, ArXiv
  e-prints (2014).



\bibitem{Eguchi2009a}
T. Eguchi and K. Hikami, \emph{{Superconformal Algebras and Mock Theta
  Functions 2. Rademacher Expansion for K3 Surface}}, Communications in Number
  Theory and Physics \textbf{3,} (2009), 531--554.

\bibitem{Eguchi2010a}
\bysame, \emph{{Note on Twisted Elliptic Genus of K3
  Surface}}, Phys.Lett. \textbf{B694} (2011), 446--455.

\bibitem{Eguchi2010}
T. Eguchi, H. Ooguri, and Y. Tachikawa, \emph{{Notes on the K3 Surface
  and the Mathieu group $M_{24}$}}, Exper.Math. \textbf{20} (2011), 91--96.

\bibitem{Eguchi1989}
T. Eguchi, H. Ooguri, A. Taormina, and S.-K.  Yang,
  \emph{{Superconformal Algebras and String Compactification on Manifolds with
  SU(N) Holonomy}}, Nucl. Phys. \textbf{B315} (1989), 193.

\bibitem{Eguchi1987}
T. Eguchi and A. Taormina, \emph{Unitary representations of the {$N=4$}
  superconformal algebra}, Phys. Lett. B \textbf{196} (1987), no.~1, 75--81.
  \MR{910253 (88j:17022)}

\bibitem{eichler_zagier}
M. Eichler and D. Zagier, \emph{{The theory of Jacobi forms}},
  Birkh{\"a}user, 1985.

\bibitem{MR0506038}
J. Fay, \emph{Fourier coefficients of the resolvent for a {F}uchsian
  group}, J. Reine Angew. Math. \textbf{293/294} (1977), 143--203. \MR{0506038
  (58 \#21944)}

\bibitem{MR770243}
B.~L. Fe{\u\i}gin and D.~B. Fuchs, \emph{Verma modules over the {V}irasoro
  algebra}, Topology ({L}eningrad, 1982), Lecture Notes in Math., vol. 1060,
  Springer, Berlin, 1984, pp.~230--245. \MR{770243 (86g:17004)}

\bibitem{MR1291027}
D. Ford, J. McKay, and S. Norton, \emph{More on replicable functions},
  Comm. Algebra \textbf{22} (1994), no.~13, 5175--5193. \MR{1291027
  (95i:11036)}

\bibitem{MR2082709}
E. Frenkel and D.  Ben-Zvi, \emph{Vertex algebras and algebraic curves},
  second ed., Mathematical Surveys and Monographs, vol.~88, American
  Mathematical Society, Providence, RI, 2004. \MR{2082709 (2005d:17035)}

\bibitem{FreKac_AffLieDualRes}
I.~B. Frenkel and V.~G. Kac, \emph{Basic representations of affine {L}ie
  algebras and dual resonance models}, Invent. Math. \textbf{62} (1980/81),
  no.~1, 23--66. \MR{84f:17004}

\bibitem{FLMPNAS}
I. Frenkel, J.  Lepowsky, and A. Meurman, \emph{A natural
  representation of the {F}ischer-{G}riess {M}onster with the modular function
  {$J$} as character}, Proc. Nat. Acad. Sci. U.S.A. \textbf{81} (1984), no.~10,
  Phys. Sci., 3256--3260. \MR{MR747596 (85e:20018)}

\bibitem{FLMBerk}
\bysame, \emph{A moonshine module for the {M}onster}, Vertex operators in
  mathematics and physics (Berkeley, Calif., 1983), Math. Sci. Res. Inst.
  Publ., vol.~3, Springer, New York, 1985, pp.~231--273. \MR{86m:20024}

\bibitem{FLM}
\bysame, \emph{Vertex operator algebras and the {M}onster}, Pure and Applied
  Mathematics, vol. 134, Academic Press Inc., Boston, MA, 1988. \MR{90h:17026}

\bibitem{Gaberdiel:1999mc}
M. Gaberdiel, \emph{{An Introduction to conformal field theory}},
  Rept.Prog.Phys. \textbf{63} (2000), 607--667.

\bibitem{Gaberdiel:2005sk}
\bysame, \emph{{2D conformal field theory and vertex operator algebras}},
  (2005).

\bibitem{Gaberdiel2010a}
M. Gaberdiel, S. Hohenegger, and R. Volpato, \emph{{Mathieu
  Moonshine in the elliptic genus of K3}}, JHEP \textbf{1010} (2010), 062.

\bibitem{Gaberdiel2010}
\bysame, \emph{{Mathieu twining characters for K3}}, JHEP \textbf{1009} (2010),
  058, 19 pages.

\bibitem{MR2955931}
\bysame, \emph{Symmetries of {K}3 sigma models}, Commun. Number Theory Phys.
  \textbf{6} (2012), no.~1, 1--50. \MR{2955931}

\bibitem{MR2725054}
M. Gaberdiel, C. Keller, and R. Volpato, \emph{Genus
  two partition functions of chiral conformal field theories}, Commun. Number
  Theory Phys. \textbf{4} (2010), no.~2, 295--363. \MR{2725054 (2011h:81208)}

\bibitem{MR3108775}
M. Gaberdiel, D. Persson, H. Ronellenfitsch, and R.
  Volpato, \emph{Generalized {M}athieu {M}oonshine}, Commun. Number Theory
  Phys. \textbf{7} (2013), no.~1, 145--223. \MR{3108775}

\bibitem{Gaberdiel:2013psa}
M. Gaberdiel, A. Taormina, R. Volpato, and K. Wendland,
  \emph{{A K3 sigma model with $Z_2^8:M_{20}$ symmetry}},  (2013).

\bibitem{Gaberdiel:2012um}
M. Gaberdiel and R. Volpato, \emph{{Mathieu Moonshine and
  Orbifold K3s}},  (2012).

\bibitem{Gaiotto:2008jt}
D. Gaiotto, \emph{{Monster symmetry and Extremal CFTs}},  (2008).

\bibitem{Gaiotto:2007xh}
D. Gaiotto and X.~Yin, \emph{{Genus two partition functions of extremal
  conformal field theories}}, JHEP \textbf{0708} (2007), 029.

\bibitem{MR2201600}
T. Gannon, \emph{Monstrous moonshine: the first twenty-five years}, Bull.
  London Math. Soc. \textbf{38} (2006), no.~1, 1--33. \MR{2201600
  (2006k:11071)}

\bibitem{Gannon:2012ck}
\bysame, \emph{{Much ado about Mathieu}},  (2012).

\bibitem{MR0414645}
H. Garland and J. Lepowsky, \emph{Lie algebra homology and the
  {M}acdonald-{K}ac formulas}, Invent. Math. \textbf{34} (1976), no.~1, 37--76.
  \MR{0414645 (54 \#2744)}

\bibitem{MR0399248}
R. L. Griess, Jr., \emph{The structure of the ``monster'' simple group},
  Proceedings of the {C}onference on {F}inite {G}roups ({U}niv. {U}tah, {P}ark
  {C}ity, {U}tah, 1975), Academic Press, New York, 1976, pp.~113--118.
  \MR{0399248 (53 \#3099)}

\bibitem{MR605419}
\bysame, \emph{A construction of {$F_{1}$} as automorphisms of a
  {$196,883$}-dimensional algebra}, Proc. Nat. Acad. Sci. U.S.A. \textbf{78}
  (1981), no.~2, part 1, 686--691. \MR{605419 (82d:20022)}

\bibitem{MR671653}
\bysame, \emph{The friendly giant}, Invent. Math. \textbf{69} (1982), no.~1,
  1--102. \MR{671653 (84m:20024)}

\bibitem{Gri_EllGenCYMnflds}
V.~Gritsenko, \emph{Elliptic genus of {C}alabi-{Y}au manifolds and {J}acobi and
  {S}iegel modular forms}, Algebra i Analiz \textbf{11} (1999), no.~5,
  100--125. \MR{1734348 (2001i:11051)}

\bibitem{2008JHEP...07..134G}
D. Grumiller and N. Johansson, \emph{{Instability in cosmological topologically massive gravity at the chiral point}}, Journal of High Energy Physics, \textbf{7} (2008).

\bibitem{Grumiller:2014qma}
D. Grumiller, R. McNees, and J. Salzer, \emph{{Black holes and
  thermodynamics - The first half century}},  (2014).

\bibitem{Grumiller:2013at}
D.~Grumiller, W.~Riedler, J.~Rosseel, and T.~Zojer, \emph{{Holographic
  applications of logarithmic conformal field theories}}, J.Phys. \textbf{A46}
  (2013), 494002.

\bibitem{Gubser:1998bc}
S. S. Gubser, I. R. Klebanov, and A. M. Polyakov, \emph{{Gauge theory
  correlators from noncritical string theory}}, Phys.Lett. \textbf{B428}
  (1998), 105--114.

\bibitem{MR1682988}
A. Hanaki, M. Miyamoto, and D. Tambara, 
\emph{Quantum {G}alois theory for finite groups},
Duke Math. J. \textbf{97} (1999), no.~3, 541--544. \MR{1682988 (2000g:17043a)}

\bibitem{MR1372718}
K. Harada and M.-L. Lang, \emph{The {M}c{K}ay-{T}hompson series
  associated with the irreducible characters of the {M}onster}, Moonshine, the
  {M}onster, and related topics ({S}outh {H}adley, {MA}, 1994), Contemp. Math.,
  vol. 193, Amer. Math. Soc., Providence, RI, 1996, pp.~93--111. \MR{1372718
  (97b:11059)}

\bibitem{MR1650637}
\bysame, \emph{Modular forms associated with the {M}onster module}, The
  {M}onster and {L}ie algebras ({C}olumbus, {OH}, 1996), Ohio State Univ. Math.
  Res. Inst. Publ., vol.~7, de Gruyter, Berlin, 1998, pp.~59--83. \MR{1650637
  (2000f:11046)}

\bibitem{MR0201267}
G.~H. Hardy, \emph{Collected papers of {G}. {H}. {H}ardy ({I}ncluding {J}oint
  papers with {J}. {E}. {L}ittlewood and others). {V}ol. {I}}, Edited by a
  committee appointed by the London Mathematical Society, Clarendon Press,
  Oxford, 1966. \MR{0201267 (34 \#1151)}

\bibitem{MR2280879}
G.~H. Hardy and S.~Ramanujan, \emph{Asymptotic formul\ae\ in combinatory
  analysis [{P}roc. {L}ondon {M}ath. {S}oc. (2) {\bf 17} (1918), 75--115]},
  Collected papers of {S}rinivasa {R}amanujan, AMS Chelsea Publ., Providence,
  RI, 2000, pp.~276--309. \MR{2280879}

\bibitem{Harrison:2013bya}
S. Harrison, S. Kachru, and N. Paquette, \emph{{Twining Genera
  of (0,4) Supersymmetric Sigma Models on K3}}, JHEP \textbf{1404} (2014), 048.

\bibitem{2013arXiv1307.7717H}
J.~A. {Harvey} and S.~{Murthy}, \emph{{Moonshine in Fivebrane Spacetimes}},
  ArXiv e-prints (2013).

\bibitem{Harvey:2014cva}
J.~A. Harvey, S. Murthy, and C. Nazaroglu, \emph{{ADE Double Scaled Little String Theories, Mock Modular Forms and Umbral Moonshine}},  ArXiv e-prints (2013).


\bibitem{Hoe_GenMnsBbyMns}
G.~{Hoehn}, \emph{{Generalized Moonshine for the Baby Monster}},  (2003).

\bibitem{2004math......5232H}
\bysame, \emph{{Komplexe elliptische Geschlechter und $S^{1}$-aequivariante
  Kobordismustheorie (Complex elliptic genera and $S^{1}$-equivariant cobordism
  theory)}}, ArXiv Mathematics e-prints (2004).

\bibitem{Hoe_SDVOSALgeMinWt}
G.~Hoehn, \emph{Self-dual vertex operator superalgebras of large minimal
  weight}, arXiv:0801.1822v1, January 2008.

\bibitem{Hohenegger:2012zz}
S. Hohenegger, \emph{{Automorphic products, generalized Kac-Moody algebras
  and string amplitudes}}, J.Phys.Conf.Ser. \textbf{346} (2012), 012008.

\bibitem{MR0323842}
J. Humphreys, \emph{Introduction to {L}ie algebras and representation
  theory}, Springer-Verlag, New York, 1972, Graduate Texts in Mathematics, Vol.
  9. \MR{0323842 (48 \#2197)}

\bibitem{2013arXiv1309.6528H}
D.~{Huybrechts}, \emph{{On derived categories of K3 surfaces, symplectic
  automorphisms and the Conway group}}, ArXiv e-prints (2013).

\bibitem{MR1915258}
R. Ivanov and M. Tuite, \emph{Rational generalised moonshine from
  abelian orbifoldings of the moonshine module}, Nuclear Phys. B \textbf{635}
  (2002), no.~3, 435--472. \MR{1915258 (2003g:11040)}

\bibitem{MR1915259}
\bysame, \emph{Some irrational generalised moonshine from orbifolds}, Nuclear
  Phys. B \textbf{635} (2002), no.~3, 473--491. \MR{1915259 (2003f:11057)}

\bibitem{MR1395597}
E. Jurisich, \emph{An exposition of generalized {K}ac-{M}oody algebras},
  Lie algebras and their representations ({S}eoul, 1995), Contemp. Math., vol.
  194, Amer. Math. Soc., Providence, RI, 1996, pp.~121--159. \MR{1395597
  (97e:17035)}

\bibitem{MR1600542}
\bysame, \emph{Generalized {K}ac-{M}oody {L}ie algebras, free {L}ie algebras
  and the structure of the {M}onster {L}ie algebra}, J. Pure Appl. Algebra
  \textbf{126} (1998), no.~1-3, 233--266. \MR{1600542 (99b:17032)}

\bibitem{MR2681781}
\bysame, \emph{Borcherd's proof of the {C}onway-{N}orton conjecture},
  Moonshine: the first quarter century and beyond, London Math. Soc. Lecture
  Note Ser., vol. 372, Cambridge Univ. Press, Cambridge, 2010, pp.~219--235.
  \MR{2681781 (2012a:20026)}


\bibitem{MR1327230}
E.~Jurisich, J.~Lepowsky, and R.~L. Wilson, \emph{Realizations of the {M}onster
  {L}ie algebra}, Selecta Math. (N.S.) \textbf{1} (1995), no.~1, 129--161.
  \MR{1327230 (96e:17059)}

\bibitem{MR0259961}
V.~G. Kac, \emph{Simple irreducible graded {L}ie algebras of finite growth},
  Izv. Akad. Nauk SSSR Ser. Mat. \textbf{32} (1968), 1323--1367. \MR{0259961
  (41 \#4590)}

\bibitem{MR0374210}
\bysame, \emph{Infinite-dimensional {L}ie algebras, and the {D}edekind {$\eta
  $}-function}, Funkcional. Anal. i Prilo\v zen. \textbf{8} (1974), no.~1,
  77--78. \MR{0374210 (51 \#10410)}

\bibitem{MR513845}
\bysame, \emph{Infinite-dimensional algebras, {D}edekind's {$\eta $}-function,
  classical {M}\"obius function and the very strange formula}, Adv. in Math.
  \textbf{30} (1978), no.~2, 85--136. \MR{513845 (83a:17014a)}

\bibitem{MR563927}
\bysame, \emph{An elucidation of: ``{I}nfinite-dimensional algebras,
  {D}edekind's {$\eta $}-function, classical {M}\"obius function and the very
  strange formula''. {$E_{8}^{(1)}$} and the cube root of the modular invariant
  {$j$}}, Adv. in Math. \textbf{35} (1980), no.~3, 264--273. \MR{563927
  (83a:17014b)}

\bibitem{MR1104219}
\bysame, \emph{Infinite-dimensional {L}ie algebras}, third ed., Cambridge
  University Press, Cambridge, 1990. \MR{1104219 (92k:17038)}

\bibitem{MR1651389}
\bysame, \emph{Vertex algebras for beginners}, second ed., University
  Lecture Series, vol.~10, American Mathematical Society, Providence, RI, 1998.
  \MR{1651389 (99f:17033)}


\bibitem{MR750341}
V.~G. Kac and D.~H. Peterson, \emph{Infinite-dimensional {L}ie algebras,
  theta functions and modular forms}, Adv. in Math. \textbf{53} (1984), no.~2,
  125--264. \MR{750341 (86a:17007)}

\bibitem{MR1021978}
V.~G. Kac and A.~K. Raina, \emph{Bombay lectures on highest weight
  representations of infinite-dimensional {L}ie algebras}, Advanced Series in
  Mathematical Physics, vol.~2, World Scientific Publishing Co., Inc., Teaneck,
  NJ, 1987. \MR{1021978 (90k:17013)}


\bibitem{2014arXiv1407.3181K}
S.~{Katz}, A.~{Klemm}, and R.~{Pandharipande}, \emph{{On the motivic stable
  pairs invariants of K3 surfaces}}, ArXiv e-prints (2014).

\bibitem{Kno_RadonJPoinSerNonPosWtsEichCohom}
M.~I. Knopp, \emph{Rademacher on {$J(\tau),$} {P}oincar\'e series of
  nonpositive weights and the {E}ichler cohomology}, Notices Amer. Math. Soc.
  \textbf{37} (1990), no.~4, 385--393. \MR{MR1050700 (91e:11047)}

\bibitem{Kno_ConstMdlrFnsI}
\bysame, \emph{Construction of a class of modular functions and
  forms}, Pacific J. Math. \textbf{11} (1961), 275--293. \MR{MR0122994 (23
  \#A325)}

\bibitem{Kno_ConstMdlrFnsII}
\bysame, \emph{Construction of a class of modular functions and forms. {II}},
  Pacific J. Math. \textbf{11} (1961), 661--678. \MR{MR0130229 (24 \#A96)}

\bibitem{Kno_ConstAutFrmsSuppSeries}
\bysame, \emph{Construction of automorphic forms on {$H$}-groups and
  supplementary {F}ourier series}, Trans. Amer. Math. Soc. \textbf{103} (1962),
  168--188. \MR{MR0136735 (25 \#197)}

\bibitem{Kno_AbIntsMdlrFns}
\bysame, \emph{On abelian integrals of the second kind and modular functions},
  Amer. J. Math. \textbf{84} (1962), 615--628. \MR{MR0146379 (26 \#3901)}

\bibitem{MR0364425}
D. Knutson, \emph{{$\lambda $}-rings and the representation theory of the
  symmetric group}, Lecture Notes in Mathematics, Vol. 308, Springer-Verlag,
  Berlin-New York, 1973. \MR{0364425 (51 \#679)}

\bibitem{Kondo}
S. Kond{\=o}, \emph{Niemeier lattices, {M}athieu groups, and finite
  groups of symplectic automorphisms of {$K3$} surfaces}, Duke Math. J.
  \textbf{92} (1998), no.~3, 593--603, With an appendix by Shigeru Mukai.
  \MR{1620514 (99i:14042)}

\bibitem{MR1686793}
\bysame, \emph{On the {K}odaira dimension of the moduli space of {$K3$}
  surfaces. {II}}, Compositio Math. \textbf{116} (1999), no.~2, 111--117.
  \MR{1686793 (2000j:14055)}

\bibitem{MR780666}
T. Kondo, \emph{The automorphism group of {L}eech lattice and elliptic
  modular functions}, J. Math. Soc. Japan \textbf{37} (1985), no.~2, 337--362.
  \MR{780666 (86k:11033)}

\bibitem{MR0142696}
B. Kostant, \emph{Lie algebra cohomology and the generalized
  {B}orel-{W}eil theorem}, Ann. of Math. (2) \textbf{74} (1961), 329--387.
  \MR{0142696 (26 \#265)}

\bibitem{MR970278}
P.~S. Landweber (ed.), \emph{Elliptic curves and modular forms in algebraic
  topology}, Lecture Notes in Mathematics, vol. 1326, Springer-Verlag, Berlin,
  1988. \MR{970278 (91a:57021)}

\bibitem{Lee_SphPkgHgrSpc}
J. Leech, \emph{Some sphere packings in higher space}, Canad. J. Math.
  \textbf{16} (1964), 657--682. \MR{0167901 (29 \#5166)}

\bibitem{Lee_SphPkgs}
\bysame, \emph{Notes on sphere packings}, Canad. J. Math. \textbf{19} (1967),
  251--267. \MR{0209983 (35 \#878)}

\bibitem{MR0444765}
J. Leon and C. Sims, \emph{The existence and uniqueness of a
  simple group generated by {$\{3,4\}$}-transpositions}, Bull. Amer. Math. Soc.
  \textbf{83} (1977), no.~5, 1039--1040. \MR{0444765 (56 \#3113)}

\bibitem{MR2023933}
J. Lepowsky and H. Li, \emph{Introduction to vertex operator algebras
  and their representations}, Progress in Mathematics, vol. 227, Birkh\"auser
  Boston Inc., Boston, MA, 2004. \MR{2023933 (2004k:17050)}

\bibitem{MR0573075}
J.  Lepowsky and R. L. Wilson, \emph{Construction of the affine {L}ie
  algebra {$A_{1}^{{(1)}}$}}, Comm. Math. Phys. \textbf{62} (1978), no.~1,
  43--53. \MR{0573075 (58 \#28089)}

\bibitem{LiSonStr_ChGrav3D}
W. Li, W. Song, and A.  Strominger, \emph{Chiral gravity in three
  dimensions}, J. High Energy Phys. (2008), no.~4, 082, 15. \MR{2425221
  (2010a:83101)}

\bibitem{MR0441867}
K.~Mahler, \emph{On a class of non-linear functional equations connected with
  modular functions}, J. Austral. Math. Soc. Ser. A \textbf{22} (1976), no.~1,
  65--118. \MR{0441867 (56 \#258)}

\bibitem{2012qchs.confE...9M}
J.~{Maldacena}, \emph{{The gauge gravity duality}}, Proceedings of Xth Quark
  Confinement and the Hadron Spectrum (Confinement X). 8-12 October, 2012. TUM
  Camous Garching, Munich, Germany. 

\bibitem{MaldacenaAdv.Theor.Math.Phys.2:231-2521998}
\bysame, \emph{The large {N} limit of superconformal field theories
  and supergravity}, Adv.Theor.Math.Phys. \textbf{2} (1998), 231--252.

\bibitem{Maldacena:1998bw}
J. Maldacena and A. Strominger, \emph{{AdS(3) black holes and a
  stringy exclusion principle}}, JHEP \textbf{9812} (1998), 005.

\bibitem{Maloney:2009ck}
A. Maloney, W. Song, and A. Strominger, \emph{{Chiral Gravity, Log
  Gravity and Extremal CFT}}, Phys.Rev. \textbf{D81} (2010), 064007.


\bibitem{MalWit_QGPtnFn3D}
A. Maloney and E. Witten, \emph{Quantum gravity partition functions
  in three dimensions}, J. High Energy Phys. (2010), no.~2, 029, 58.
  \MR{2672754 (2011m:83022)}

\bibitem{Man_AdS3PFnsRecon}
J. Manschot, \emph{{${\rm AdS}\sb 3$} partition functions reconstructed}, J.
  High Energy Phys. (2007), no.~10, 103, 7. \MR{MR2357875}

\bibitem{Manschot2007}
J. Manschot and G. W. Moore, \emph{A Modern Fareytail}, 
Commun.Num.Theor.Phys. \textbf{4} (2007), 103--159. 

\bibitem{MR933359}
G. Mason, \emph{Finite groups and modular functions}, The {A}rcata
  {C}onference on {R}epresentations of {F}inite {G}roups ({A}rcata, {C}alif.,
  1986), Proc. Sympos. Pure Math., vol.~47, Amer. Math. Soc., Providence, RI,
  1987, With an appendix by S. P. Norton, pp.~181--210. \MR{933359 (89c:11066)}

\bibitem{Mat_1861}
\'E. Mathieu, \emph{M\'emoire sur l'\'etude des fonctions de plusiers
  quantit\'es, sur la mani\`ere de les former et sur les substitutions qui les
  laissent invariables}, Journal de Math\'ematiques Pures et Appliqu\'ees
  \textbf{6} (1861), 241--323.

\bibitem{Mat_1873}
\bysame, \emph{Sur la fonction cinq fois transitive de 24 quantit\'es}, Journal
  de Math\'ematiques Pures et Appliqu\'ees \textbf{18} (1873), 25--46.

\bibitem{MR0207783}
R.~V. Moody, \emph{Lie algebras associated with generalized {C}artan matrices},
  Bull. Amer. Math. Soc. \textbf{73} (1967), 217--221. \MR{0207783 (34 \#7598)}

\bibitem{MR0229687}
\bysame, \emph{A new class of {L}ie algebras}, J. Algebra \textbf{10}
  (1968), 211--230. \MR{0229687 (37 \#5261)}

\bibitem{Mukai}
S. Mukai, \emph{Finite groups of automorphisms of {$K3$} surfaces and the
  {M}athieu group}, Invent. Math. \textbf{94} (1988), no.~1, 183--221.
  \MR{958597 (90b:32053)}

\bibitem{Nie_ClassNonAnalyticAutFuns}
D. Niebur, \emph{A class of nonanalytic automorphic functions}, Nagoya
  Math. J. \textbf{52} (1973), 133--145. \MR{MR0337788 (49 \#2557)}

\bibitem{Nie_ConstAutInts}
\bysame, \emph{Construction of automorphic forms and integrals}, Trans. Amer.
  Math. Soc. \textbf{191} (1974), 373--385. \MR{MR0344196 (49 \#8936)}

\bibitem{Nie_DefQdtFrm24}
H. V. Niemeier, \emph{Definite quadratische {F}ormen der {D}imension
  {$24$} und {D}iskriminante {$1$}}, J. Number Theory \textbf{5} (1973),
  142--178. \MR{0316384 (47 \#4931)}

\bibitem{generalized_moonshine}
S. P.~Norton, \emph{{Generalized Moonshine}}, Proc. Symp. Pure Math \textbf{47}
  (1987), 208--209.

\bibitem{MR760657}
S. P.~Norton, \emph{More on moonshine}, Computational group theory ({D}urham,
  1982), Academic Press, London, 1984, pp.~185--193. \MR{760657 (86h:20020)}

\bibitem{MR1877765}
\bysame, \emph{From {M}oonshine to the {M}onster}, Proceedings on
  {M}oonshine and related topics ({M}ontr\'eal, {QC}, 1999), CRM Proc. Lecture
  Notes, vol.~30, Amer. Math. Soc., Providence, RI, 2001, pp.~163--171.
  \MR{1877765 (2002k:20025)}

\bibitem{Ogg_AutCrbMdl}
A.~P. Ogg, \emph{Automorphismes de courbes modulaires}, S\'eminaire
  {D}elange-{P}isot{P}oitou (16e ann\'ee: 1974/75), {T}h\'eorie des nombres,
  {F}asc. 1, {E}xp. {N}o. 7, Secr\'etariat Math\'ematique, Paris, 1975, p.~8.
  \MR{0417184 (54 \#5242)}


\bibitem{Ono_unearthing}
K. Ono, \emph{Unearthing the visions of a master: harmonic {M}aass forms and
  number theory}, Current developments in mathematics, 2008, Int. Press,
  Somerville, MA, 2009, pp.~347--454. \MR{2555930 (2010m:11060)}


\bibitem{2014arXiv1403.3712O}
K.~{Ono}, L.~{Rolen}, and S.~{Trebat-Leder}, \emph{{Classical and Umbral
  Moonshine: Connections and $p$-adic Properties}}, 
  J. Ramanujan Math. Soc., to appear (ArXiv e-prints (2014)).


\bibitem{Paquette:2014rma}
N. Paquette and T. Wrase, \emph{{Comments on M$_{24}$ representations
  and $CY_3$ geometries}},  (2014).

\bibitem{Pet_UbrEntAutFrm}
H. Petersson, \emph{\"{U}ber die {E}ntwicklungskoeffizienten der automorphen
  {F}ormen}, Acta Math. \textbf{58} (1932), no.~1, 169--215. \MR{1555346}

\bibitem{MR628715}
L. Queen, \emph{Modular functions arising from some finite groups}, Math.
  Comp. \textbf{37} (1981), no.~156, 547--580. \MR{628715 (83d:20008)}

\bibitem{Rad_PtnFn}
H. Rademacher, \emph{On the {P}artition {F}unction $p(n)$}, Proc. London
  Math. Soc. (2) \textbf{43} (1937), 241--254.

\bibitem{Rad_FouCoeffMdlrInv}
\bysame, \emph{The {F}ourier {C}oefficients of the {M}odular {I}nvariant
  {J}({$\tau$})}, Amer. J. Math. \textbf{60} (1938), no.~2, 501--512.
  \MR{MR1507331}

\bibitem{Rad_FuncEqnModInv}
\bysame, \emph{The {F}ourier {S}eries and the {F}unctional {E}quation of the
  {A}bsolute {M}odular {I}nvariant {J}({$\tau$})}, Amer. J. Math. \textbf{61}
  (1939), no.~1, 237--248. \MR{MR1507375}

\bibitem{MR1667939}
E. Rains and N.~J.~A. Sloane, \emph{Self-dual codes}, Handbook of coding
  theory, {V}ol. {I}, {II}, North-Holland, Amsterdam, 1998, pp.~177--294.
  \MR{1667939}

\bibitem{MR947735}
S. Ramanujan, \emph{The lost notebook and other unpublished papers},
  Springer-Verlag, Berlin, 1988, With an introduction by George E. Andrews.
  \MR{947735 (89j:01078)}

\bibitem{MR2280843}
\bysame, \emph{Collected papers of {S}rinivasa {R}amanujan}, AMS Chelsea
  Publishing, Providence, RI, 2000, Edited by G. H. Hardy, P. V. Seshu Aiyar
  and B. M. Wilson, Third printing of the 1927 original, With a new preface and
  commentary by Bruce C. Berndt. \MR{2280843 (2008b:11002)}

\bibitem{MR0498390}
R. A. Rankin, \emph{Modular forms and functions}, Cambridge University
  Press, Cambridge-New York-Melbourne, 1977. \MR{0498390 (58 \#16518)}

\bibitem{MR1803076}
U. Ray, \emph{Generalized {K}ac-{M}oody algebras and some related topics},
  Bull. Amer. Math. Soc. (N.S.) \textbf{38} (2001), no.~1, 1--42 (electronic).
  \MR{1803076 (2001j:17044)}

\bibitem{MR1213794}
W. Reynolds, \emph{Thompson's characterization of characters and sets
  of primes}, J. Algebra \textbf{156} (1993), no.~1, 237--243. \MR{1213794
  (94b:20012)}

\bibitem{MR626704}
G. Segal, \emph{Unitary representations of some infinite-dimensional
  groups}, Comm. Math. Phys. \textbf{80} (1981), no.~3, 301--342. \MR{626704
  (82k:22004)}

\bibitem{Ser_CrsArth}
J.-P. Serre, \emph{A course in arithmetic}, Springer-Verlag, New York, 1973,
  Translated from the French, Graduate Texts in Mathematics, No. 7. \MR{0344216
  (49 \#8956)}
  
\bibitem{SerreStark}
J.-P. Serre and H. M. Stark, \emph{Modular forms of weight $1/2$}, in Modular functions of one variable, VI (Proc. Second Internat. Conf., Univ. Bonn, Bonn, 1976), 27--67. Lecture Notes in Math., Vol. 627. Springer, Berlin, 1977.   

\bibitem{Shi_IntThyAutFns}
G. Shimura, \emph{Introduction to the arithmetic theory of automorphic
  functions}, Publications of the Mathematical Society of Japan, No. 11.
  Iwanami Shoten, Publishers, Tokyo, 1971, Kan{\^o} Memorial Lectures, No. 1.
  \MR{MR0314766 (47 \#3318)}

\bibitem{MR822245}
S. D.  Smith, \emph{On the head characters of the {M}onster simple group},
  Finite groups---coming of age ({M}ontreal, {Q}ue., 1982), Contemp. Math.,
  vol.~45, Amer. Math. Soc., Providence, RI, 1985, pp.~303--313. \MR{822245
  (87h:20037)}

\bibitem{MR1663704}
J. Stachel, \emph{The early history of quantum gravity (1916--1940)}, Black
  holes, gravitational radiation and the universe, Fund. Theories Phys., vol.
  100, Kluwer Acad. Publ., Dordrecht, 1999, pp.~525--534. \MR{1663704}

\bibitem{Strominger:2008dp}
A. Strominger, \emph{{A Simple Proof of the Chiral Gravity Conjecture}},
  (2008).


\bibitem{1995JMP....36.6377S}
L.~{Susskind}, \emph{{The world as a hologram}}, Journal of Mathematical
  Physics \textbf{36} (1995), 6377--6396.

\bibitem{1993gr.qc....10026T}
G.~{'t Hooft}, \emph{{Dimensional Reduction in Quantum Gravity}}, ArXiv General
  Relativity and Quantum Cosmology e-prints (1993).

\bibitem{MR3106313}
A.~Taormina and K.~Wendland, \emph{The overarching finite symmetry group of
  {K}ummer surfaces in the {M}athieu group {$M_{24}$}}, J. High Energy Phys.
  (2013), no.~8, 125, front matter+62. \MR{3106313}

\bibitem{Taormina:2013mda}
\bysame, \emph{{A twist in the M24 moonshine story}},
   (2013).

\bibitem{Taormina:2013jza}
\bysame, \emph{{Symmetry-surfing the moduli space of Kummer K3s}},  (2013).

\bibitem{Tho_FinGpsModFns}
J.~G. Thompson, \emph{Finite groups and modular functions}, Bull. London Math.
  Soc. \textbf{11} (1979), no.~3, 347--351. \MR{MR554401 (81j:20029)}

\bibitem{Tho_NmrlgyMonsEllModFn}
\bysame, \emph{Some numerology between the {F}ischer-{G}riess {M}onster and the
  elliptic modular function}, Bull. London Math. Soc. \textbf{11} (1979),
  no.~3, 352--353. \MR{MR554402 (81j:20030)}

\bibitem{MR768989}
J.~Tits, \emph{On {R}. {G}riess' ``friendly giant''}, Invent. Math. \textbf{78}
  (1984), no.~3, 491--499. \MR{768989 (86f:20019)}

\bibitem{Tuite_CMP92}
M.~P. Tuite, \emph{{M}onstrous {M}oonshine from orbifolds}, Commun. Math. Phys. \textbf{146} (1992), 277--309.

\bibitem{Tuite_CMP95}
\bysame, \emph{{O}n the relationship between monstrous moonshine and the uniqueness of the moonshine module}, Commun. Math. Phys. \textbf{166} (1995), 495--532.

\bibitem{Tuite_CM95}
\bysame, \emph{{G}eneralized moonshine from abelian orbifold constructions}, Contemp. Maths. \textbf{193} (1995), 353--368.

\bibitem{MR1435742}
R.~C. Vaughan, \emph{The {H}ardy-{L}ittlewood method}, second ed., Cambridge
  Tracts in Mathematics, vol. 125, Cambridge University Press, Cambridge, 1997.
  \MR{1435742 (98a:11133)}

\bibitem{MR558941}
B.~B. Venkov, \emph{On the classification of integral even unimodular
  {$24$}-dimensional quadratic forms}, Trudy Mat. Inst. Steklov. \textbf{148}
  (1978), 65--76, 273, Algebra, number theory and their applications.
  \MR{558941 (81d:10024)}

\bibitem{2014arXiv1403.2410V}
R. Volpato, \emph{{On symmetries of N=(4,4) sigma models on T4}}, ArXiv
  e-prints (2014).

\bibitem{MR2238272}
A. Wiles, \emph{The {B}irch and {S}winnerton-{D}yer conjecture}, The
  millennium prize problems, Clay Math. Inst., Cambridge, MA, 2006, pp.~31--41.
  \MR{2238272}

\bibitem{MR885560}
E. Witten, \emph{Elliptic genera and quantum field theory}, Comm. Math.
  Phys. \textbf{109} (1987), no.~4, 525--536. \MR{885560 (89i:57017)}

\bibitem{Witten:1998qj}
\bysame, \emph{{Anti-de Sitter space and holography}}, Adv.Theor.Math.Phys.
  \textbf{2} (1998), 253--291.

\bibitem{Witten2007}
\bysame, \emph{{Three-Dimensional Gravity Revisited}},  (2007).

\bibitem{Wrase:2014fja}
T. Wrase, \emph{{Mathieu moonshine in four dimensional $\mathcal{N}=1$
  theories}}, JHEP \textbf{1404} (2014), 069.

\bibitem{MR2442775}
X.~Yin, \emph{Partition functions of three-dimensional pure gravity}, Commun.
  Number Theory Phys. \textbf{2} (2008), no.~2, 285--324. \MR{2442775
  (2009m:83105)}


\bibitem{MR2087068}
K.-I. Yoshikawa, \emph{Nikulin's {$K3$} surfaces, adiabatic limit of
  equivariant analytic torsion, and the {B}orcherds {$\Phi$}-function}, Complex
  analysis in several variables---{M}emorial {C}onference of {K}iyoshi {O}ka's
  {C}entennial {B}irthday, Adv. Stud. Pure Math., vol.~42, Math. Soc. Japan,
  Tokyo, 2004, pp.~339--345. \MR{2087068 (2005h:14093)}

\bibitem{zagier_mock}
D. Zagier, \emph{Ramanujan's mock theta functions and their applications
  (after {Z}wegers and {O}no-{B}ringmann)}, Ast\'erisque (2009), no.~326, Exp.
  No. 986, vii--viii, 143--164 (2010), S{\'e}minaire Bourbaki. Vol. 2007/2008.
  \MR{2605321 (2011h:11049)}

\bibitem{Zhu_ModInv}
Y.  Zhu, \emph{Modular invariance of characters of vertex operator
  algebras}, Journal of the American Mathematical Society \textbf{9} (1996),
  no.~1, 237--302.

\bibitem{zwegers}
S. Zwegers, \emph{{Mock Theta Functions}}, Ph.D. thesis, Utrecht
  University, 2002.

\bibitem{MR2558702}
\bysame, \emph{On two fifth order mock theta functions}, Ramanujan J.
  \textbf{20} (2009), no.~2, 207--214. \MR{2558702 (2011a:11046)}

\end{thebibliography}
\end{document}